\definecolor{bwgreen}{rgb}{0.183,1,0.5}
\definecolor{bwmagenta}{rgb}{0.7,0.0,0.1}
\definecolor{bwblue}{rgb}{0.317,0.161,1}
\makeatletter \renewcommand{\everyentry@}{\vphantom{A_{[]}�{[]}}}
\newtheorem{theorem}{Theorem}[subsection]
\newtheorem{lemma}[theorem]{Lemma}
\newtheorem{prop}[theorem]{Proposition}
\newtheorem{co}[theorem]{Corollary}
\theoremstyle{definition}
\newtheorem{definition}[theorem]{Definition}
\newtheorem{example}[theorem]{Example}
\theoremstyle{remark}
\newtheorem{remark}[theorem]{Remark}
\numberwithin{equation}{subsection}
\def \m {\mathfrak m}
 \def \E{\mathcal E}
\def \Z {\mathbb Z}
\def \inj {\hookrightarrow }
\def \to {\rightarrow}
\def \spec \text{spec}
 \def \M{\mathfrak M}
\def \e { {\underline \epsilon}}
\def \p {\underline \pi}
\def \GL {\t{GL}}
\def \L {\mathfrak L}
\def \Hom {\textnormal{Hom}}
\def  \cris {\textnormal{cris}}
\DeclareMathOperator{\gal}{Gal}
\DeclareMathOperator{\dR}{dR}
\DeclareMathOperator{\Fil}{Fil}
\DeclareMathOperator{\MF}{MF}
\DeclareMathOperator{\ord}{ord}
\DeclareMathOperator{\rk}{rk}
\DeclareMathOperator{\diag}{diag}
\def \Q {\mathbb Q}
\def \C {\mathbb C}
\def \t {\textnormal}
\def \Z {\mathbb Z}
\def \O {\mathcal O}
\def \gs {\mathfrak S}
\def \gu {\mathfrak u}
\def \ur {\t{ur}}
\def \D {\mathcal D}
\def \gf {{\mathfrak f}}
\def \N {\mathfrak N}
\def \dR {{\textnormal{dR}}}
\def \m {\mathfrak m}
\def \v {\vee}
\def \f {\mathfrak f}
\DeclareMathOperator{\Fr}{Fr}
\def \Fil {\t{Fil}}
\def \g {\mathfrak g}
\def \gt {\mathfrak t}
\def \CM {\mathcal M}
\def \CN {\mathcal N}
\def \dr {\textnormal{dR}}
\def \R {\mathcal R}
\def \Rep { \t{Rep}_F^{F\t{-\cris}, r}(G)}
\def \sfi {\t{Mod}_{\gs_F}^{\varphi, r}}
\def \rep {\textnormal{Rep}}
\def \upi {\underline \pi }
\def \< {\left <}
\def \> {\right >}
\def \upi {{\underline{\pi}}}
\def \rig {{ {\t{B}^+_\t{rig}}}}
\def \Md {{\rm M}_d}
\def \fe {{\mathfrak e}}
\def \tmax{ {{\rm {max}}}}
\def \rig {{\rm{rig}}}
\def \gO {\mathfrak O}
\def \wgs {\widehat \gs}
\def \OEhat {\O_{\widehat \E^\ur}}
\DeclareMathOperator{\Aut}{Aut}
\DeclareMathOperator{\sep}{sep}
\DeclareMathOperator{\Gal}{Gal}
\DeclareMathOperator{\Mod}{Mod}
\newcommand*{\wt}[1]{\widetilde{#1}}
\newcommand*{\wh}[1]{\widehat{#1}}
\newcommand*{\ul}[1]{\underline{#1}}
\newcommand*{\ol}[1]{\overline{#1}}
\begin{document}

\title{On $F$-crystalline representations}

\author{Bryden Cais}
\address{Department of Mathematics, University of Arizona, Tucson, Arizona, 85721, USA.}
\email{cais@math.arizona.edu
}

\author{Tong Liu}
\address{Department of Mathematics, Purdue University, West Lafayette, IN 47907}
\email{tongliu@math.purdue.edu}
\thanks{The second author is partially supported by a Sloan fellowship and NSF grant DMS-1406926.}

\subjclass{Primary  14F30,14L05}



\keywords{$F$-crystalline representations, Kisin modules}

\begin{abstract} We extend the theory of Kisin modules and crystalline representations to allow more general
coefficient fields and lifts of Frobenius.
In particular, for a finite and totally ramified extension $F/\Q_p$,
and an arbitrary finite extension $K/F$,
we construct a general class of infinite and totally wildly ramified
extensions $K_\infty/K$ so that the functor $V\mapsto V|_{G_{K_\infty}}$ is fully-faithfull
on the category of $F$-crystalline representations $V$.
We also establish a new classification of $F$-Barsotti-Tate groups via Kisin modules of height 1 which allows
more general lifts of Frobenius.
\end{abstract}

\maketitle




\section{Introduction}

Let $k$ be a perfect field of characteristic $p$ with ring of Witt vectors $W:=W(k)$,
write $K_0:=W[1/p]$ and let $K/K_0$ be a finite and totally ramified extension.
We fix an algebraic closure $\ol{K}$ of $K$ and set $G_K:=\Gal(\ol{K}/K)$.
The theory of {\em Kisin modules} and its variants, pioneered in \cite{kisin2},
provides a powerful set of tools for understanding Galois-stable $\Z_p$-lattices in $\Q_p$-valued
semistable $G_K$-representations, and has been a key ingredient in many recent advances
({\em e.g.} \cite{kisin4}, \cite{kisin5}, \cite{kisin}).
Throughout Kisin's theory, the non-Galois ``Kummer'' extension $K_{\infty}/K$---obtained by
adjoining to $K$ a compatible system of choices $\{\pi_n\}_{n\ge 1}$
of $p^n$-th roots of a uniformizer $\pi_0$ in $K$---plays central role.
Kisin's theory closely parallels Berger's work \cite{Ber0}, in which the cyclotomic extension
of $K$ replaces $K_{\infty}$, and can be thought of as a ``$K_{\infty}$-analogue''
of the theory of Wach modules developed by Wach \cite{Wach}, Colmez \cite{Colmez2} and Berger \cite{Ber}.
Along these lines, Kisin and Ren \cite{Kisin-Ren} generalized the theory of Wach modules
to allow the cyclotomic extension of $K$ to be replaced by an arbitrary Lubin--Tate extension.

This paper grew out of a desire to better understand the role of $K_{\infty}$
in Kisin's theory and related work, and is an attempt to realize Kisin modules
and the modules of Wach and Kisin--Ren as ``specializations" of a more general
theory.  To describe our main results, we first fix some notation.

Let $F\subseteq K$ be a subfield which is finite over $\Q_p$ with residue field $k_F$
of cardinality $q=p^s$.  Choose a power series
$$f(u) := a_1u + a_2u^2 + \cdots \in \O_F[\![u]\!]$$
with $f(u)\equiv u^q\bmod \mathfrak{m}_F$ and a uniformizer $\pi$ of $K$ with minimal polynomial
$E(u)$ over $F_0:=K_0\cdot F$.  We set $\pi_0:=\pi$ and we choose $\underline{\pi}:=\{\pi_n\}_{n\ge 1}$
with $\pi_n\in \overline{K}$ satisfying $f(\pi_n)=\pi_{n-1}$ for $n\ge 1$.
The resulting extension $K_{{\underline{\pi}}}:=\bigcup_{n\ge 0} K(\pi_n)$
(called a {\em Frobenius iterate extension} in \cite{CaisDavis}) is an infinite
and totally wildly ramified extension of $K$ which in general need not be Galois.
We set $G_{\ul{\pi}}:=\Gal(\ol{K}/K_{\ul{\pi}})$.

Define $\gs:=W[\![u]\!]$ and put $\gs_F   = \O_F \otimes _{W(k_F)} \gs = \O_{F_0}[\![u]\!]$.
We equip $\gs_F$ with the (unique continuous)
Frobenius endomorphism $\varphi$ which acts on $W(k)$ by the canonical $q$-power Witt-vector Frobenius,
acts as the identity on $\O_F$,
and sends $u$ to $f(u)$.
%
A \emph{Kisin module of $E$-height $r$} is a finite free $\gs_F$-module $\M$ endowed with $\varphi$-semilinear endomorphism $\varphi_\M : \M \to \M$ whose linearization
$1 \otimes \varphi : \varphi^* \M \to \M$
has cokernel killed by $E(u)^r$.

When $F= \Q_p$ and $f(u) = u^p$ (which we refer  to as the \emph{classical situation} in the following),
Kisin's theory \cite{kisin2} attaches to any $G_{K_{\infty}}$-stable $\Z_p$-lattice $T$
in a semistable $G_K$-representation $V$ with Hodge--Tate weights in $\{0, \dots, r\}$
a unique Kisin module $\M$ of height $r$ satisfying $T \simeq T_{\gs} (\M)$ (see \S \ref{subsec-3.2} for the definition of $T_{\gs}$).  Using this association, Kisin proves that
the restriction functor $V \to V|_{G_{K_{\infty}}}$ is fully faithful when restricted to the category of  crystalline representations, and that the category of Barsotti--Tate groups over $\O_K$
is {\em anti-equivalent} to the category of Kisin modules of height 1.

In this paper, we extend much of the framework of \cite{kisin2} to allow general $F$ and $f(u)$,
though for simplicity we will restrict ourselves to the case that $q=p$, or equivalently that $F/\Q_p$ is totally ramified.
When we extend our coefficients from $\Q_p$ to $F$, we must further restrict ourselves to studying
\emph{$F$-crystalline representations}, which are defined following  (\cite{Kisin-Ren}):  Let $V$ be a finite dimensional $F$-vector
 space with continuous $F$-linear action of $G_K$. If $V$ is crystalline (when viewed as a $\Q_p$-representation) then $D_{\dr}(V)$ is naturally an $F \otimes_{\Q_p}K$-module and one has a decomposition
 $D_\dr(V) = \prod_{\m} D_\dr(V)_\m$, with $\m$ running over the maximal ideals of $F \otimes _{\Q_p}K$.
We say that $V$ is \emph{$F$-crystalline} if $\Fil ^0 D_\dR (V) = D_\dr(V) $ and
 $\Fil ^1 D_\dr(V)_\m= 0 $ unless $\m$ corresponds to the canonical inclusion $F \subset K$.

\begin{theorem}\label{thm-intro-1}Let $V$ be an $F$-crystalline representation with Hodge-Tate weights in $\{0, \dots , r\}$ and $T\subset V$ a $G_{\ul{\pi}}$-stable $\O_F$-lattice. Then there exists a Kisin module $\M $ of $E(u)$-height $r$ satisfying $T_{\gs} (\M)\simeq T$.
\end{theorem}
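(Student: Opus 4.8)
The plan is to follow Kisin's original strategy, adapted to the $F$-linear setting with the more general Frobenius lift $f(u)$. The key point is that all of Kisin's constructions---the period rings, the comparison with $B_{\mathrm{cris}}$, the descent argument producing a Kisin module---are ultimately statements about the $\varphi$-module structure, and the Kummer extension enters only through the element $[\underline\pi^\flat]\in A_{\mathrm{inf}}$ which is a root of $u\mapsto f(u)$-compatible system. So I would first set up the analogue of Fontaine's period rings relative to $f$: the ring $\gs_F$ embeds $\varphi$-equivariantly into $W(R)$ (or $A_{\mathrm{inf}}$) via $u\mapsto [\underline\pi^\flat]$, where $\underline\pi^\flat=(\pi_n)_n$ is now a compatible system under $f$ rather than $p$-th power maps; this is exactly where the Frobenius-iterate extension $K_{\underline\pi}$ is forced on us. One checks that $E(u)$ maps to a generator of the kernel of $W(R)\to \O_{\widehat{\overline K}}$ (up to units), since $E(\pi)=0$ and $\pi = \theta([\underline\pi^\flat])$.

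**Next**, I would pass through the $F\otimes_{\Q_p}\mathrm{B}^+_{\mathrm{cris}}$ side. Given the $F$-crystalline $V$, the module $D:=D_{\mathrm{cris}}(V)$ is a filtered $\varphi$-module over $F_0$ with the filtration concentrated in the ``canonical'' factor; the $F$-crystalline hypothesis is precisely what guarantees that after base change to $\gs_F[1/\lambda]$-type rings the Hodge filtration jumps only at $E(u)$ and not at its $\varphi$-conjugates. Following \cite{kisin2}, one forms $\mathcal{M}:=(\gs_F\otimes_{F_0}D)\cap (\text{something in }\mathcal{O}\otimes D)$ inside an appropriate $B^+_{\mathrm{rig},F}$-module, using that the rigid-analytic ring $\mathcal{O}$ (functions on the open unit disc, with $\varphi$ given by $f$) interpolates between $\gs_F[1/p]$ and $B^+_{\mathrm{cris}}$. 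The height-$r$ property of $1\otimes\varphi$ on $\mathcal{M}$ comes from the Hodge--Tate weights being in $\{0,\dots,r\}$ together with the single-factor filtration condition, via the usual argument comparing elementary divisors. This produces a finite free $\gs_F[1/p]$-module with $\varphi$ of $E$-height $r$; intersecting with the lattice $T$ (viewed inside $W(R)\otimes T$ using the $G_{\underline\pi}$-action) gives the integral Kisin module $\M$, and one verifies $T_{\gs}(\M)\simeq T$ by unwinding the definition of $T_{\gs}$ and using that $\gs_F\to W(R)$ identifies the $\varphi$-invariants correctly over $G_{\underline\pi}$.

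**The main obstacle**, I expect, is the analytic input: showing that the ring $\mathcal{O}$ of bounded-by-$1$ rigid functions equipped with $\varphi:u\mapsto f(u)$ still has the properties Kisin needs---principality of the relevant ideals, the key ``$\varphi$ is étale away from $E$'' structure, and above all the finiteness/freeness of $\mathcal{M}=\mathcal{O}\otimes_{F_0}D \cap (\text{lattice})$ together with the fact that $\M:=\mathcal{M}\cap(\gs_F[1/p]\otimes D)$ is finite free over $\gs_F[1/p]$ of the expected rank. In the classical case this rests on Kisin's study of the $\varphi$-module $\mathcal{O}\otimes D$ and Kedlaya's slope theory; here one must check these go through with $f(u)$ in place of $u^p$. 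The divisibility structure should be fine because $f(u)\equiv u^q\bmod\m_F$ ensures $\varphi$ has the same ``Frobenius-like'' reduction, so Newton-polygon and slope arguments are unaffected; the congruence $f(u)\equiv u\cdot(\text{unit})$ is false, but what actually matters is that $\varphi(u)/u^q$ is a unit, which holds. The other delicate point is the descent from $\gs_F[1/p]$ to $\gs_F$ using the lattice $T$: one needs the analogue of the statement that $W(R)\otimes_{\gs_F}\M$ recovers $T$ as a $\varphi$-module, which should follow from the construction together with the fact (to be checked, but expected from the general formalism of \cite{CaisDavis}) that $\gs_F$ is ``large enough'' inside $W(R)$---i.e.\ that $(\M\otimes_{\gs_F}W(R))^{\varphi=1}=T$---and here again the only role of $K_{\underline\pi}$ is that it is exactly the extension cut out by $u\mapsto[\underline\pi^\flat]$, so restricting to $G_{\underline\pi}$ is what makes the period ring comparison $\varphi$-equivariant.
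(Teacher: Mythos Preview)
Your outline follows the right architecture---construct $\CM(D)$ over $\gO$, show it descends to a Kisin module $\M$ over $\gs_F$, then match with the lattice---but there is a genuine gap at the crucial step, namely the assertion that ``Newton-polygon and slope arguments are unaffected'' when $u^p$ is replaced by $f(u)$.

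The descent from $\gO$ to $\gs_F$ requires showing that $\CM(D)$, when base-changed to the Robba ring $\R$, is \emph{pure of slope zero} (equivalently, \'etale). In Kisin's original argument this is \emph{not} a Newton-polygon computation: it relies essentially on the monodromy operator $N_\nabla$ on $\CM(D)$, which in turn comes from the derivation on $\gO$ and its compatibility with $\varphi$. For a general Frobenius lift $\varphi(u)=f(u)$ there is no evident differential operator $N_\nabla$ on $\gO$ satisfying the required relation $N_\nabla\varphi = (E(u)/c_0)\cdot p\,\varphi N_\nabla$ (or any reasonable variant), so Kisin's route to slope-zero purity is simply unavailable. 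Your claim that the slope theory ``goes through'' because $f(u)\equiv u^q\bmod\m_F$ addresses only the coarser structure of the Robba ring and Kedlaya's formalism; it does not supply the missing input that picks out slope zero for this particular $\CM(D)$.

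The paper's fix is to bypass $N_\nabla$ entirely by comparison with the classical situation: one shows that $\CM(D)$ and the classical $\CM^c(D)$ (built with $\varphi^c(u)=u^p$) become isomorphic as $\varphi$-modules after base change to a large period ring $\wt B_\alpha\supset \wt B^+_{\rig,F}$, then descends this isomorphism to $\wt B^+_{\rig,F}$, and finally invokes the known slope-zero result for $\CM^c(D)$ together with the fact that Kedlaya's slope filtration is insensitive to base change from $\R$ to $\Gamma^{\rm alg}_{\rm an,con}\supset \wt B^+_{\rig,F}$. This comparison is the technical heart of the argument and is not something one gets for free from the mod-$\varpi$ congruence of $f$ with $u^p$.

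A minor point: your embedding $u\mapsto[\underline\pi^\flat]$ is not $\varphi$-equivariant for the $f$-Frobenius, since the Teichm\"uller lift satisfies $\varphi([x])=[x]^p$, not $f([x])$. One needs instead the modified section $\{\cdot\}_f:R\to W(R)_F$ characterized by $\varphi(\{x\}_f)=f(\{x\}_f)$; this exists and is unique, but is not the Teichm\"uller map.
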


Writing $v_F$ for the normalized valuation of $\ol{K}$ with $v_F(F)=\Z$,
apart from the classical situation $f(u)=u^p$ of Kisin, the above theorem is also known when
$v_F(a_1)= 1$, which corresponds to the Lubin--Tate cases covered by the work of \cite{Kisin-Ren}.
An important point of our formalism is that $\M$ may in general {\em not} be unique for a fixed lattice $T$:
our general construction produces as special cases the $\varphi$-modules over $\gs_F$
which occur in the theory of Wach modules and its generalizations \cite{Kisin-Ren},
so without the additional action of a Lubin--Tate group $\Gamma$, one indeed {\em does not expect}
these Kisin modules to be uniquely determined; ({\em cf.}~Example \ref{ex-cyclotomic2}).
This is of course quite different from the classical situation.
Nonetheless, we prove the following version of Kisin's ``full-faithfulness" result.
Writing $\Rep$ for the category of $F$-crystalline representations with Hodge-Tate weights in $\{0, \dots , r\}$ and $\rep_F(G_{\ul{\pi}})$ for the category of $F$-linear representations of $G_{\ul{\pi}}$, we prove:

\begin{theorem}\label{thm-intro-2}
Assume that $\varphi ^n (f(u)/u)$ is not a power of $E(u)$ for all $n \geq 0$ and that $v_F(a_1)>r$,
with $a_1=f'(0)$ the linear coefficient of $f(u)$.
Then the restriction functor $\Rep \rightsquigarrow \t{Rep}_F (G_{\ul{\pi}})$ induced by $V \mapsto V|_{G_{\ul{\pi}}}$ is fully faithfull.
\end{theorem}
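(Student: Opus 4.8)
The plan is to reduce full faithfulness to a statement about $\varphi$-modules over $\gs_F$ and then to promote a morphism of Kisin modules to a morphism of the underlying filtered $\varphi$-modules. Since $V\mapsto V|_{G_{\ul{\pi}}}$ is plainly faithful, it suffices to show that for $V_1,V_2\in\Rep$ every $F$-linear, $G_{\ul{\pi}}$-equivariant map $f\colon V_1\to V_2$ is automatically $G_K$-equivariant. I would pick $G_{\ul{\pi}}$-stable $\O_F$-lattices $T_i\subset V_i$ with $f(T_1)\subseteq T_2$, apply Theorem~\ref{thm-intro-1} to fix Kisin modules $\M_1,\M_2$ of $E(u)$-height $r$ with $T_{\gs}(\M_i)\simeq T_i$, and set $M_i:=\O_\E\otimes_{\gs_F}\M_i$, an étale $\varphi$-module over $\O_\E$ (the $p$-adic completion of $\gs_F[1/u]$). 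Since $T_{\gs}$ factors through $M\mapsto(\text{its associated }G_{\ul{\pi}}\text{-representation})$, which is an equivalence between étale $\varphi$-modules over $\O_\E$ and $\O_F$-linear representations of $G_{\ul{\pi}}$ — using that $K_{\ul{\pi}}/K$ is sufficiently ramified for the field-of-norms/tilting formalism, cf.~\cite{CaisDavis} — the map $f$ corresponds to a unique $\varphi$-equivariant, $\O_\E$-linear morphism $\wh f\colon M_1\to M_2$.

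The crux is to show that $\wh f$ carries $\M_1$ into $\M_2[1/p]$, so that after rescaling it is a morphism $g\colon\M_1\to\M_2$ of Kisin modules over $\gs_F$. Choosing $\gs_F$-bases of the $\M_i$, I would represent $\wh f$ by a matrix $A$ over $\O_\E$ and $\varphi_{\M_i}$ by $\Phi_i\in\Md(\gs_F)$; then $\varphi$-equivariance gives $A\Phi_1=\Phi_2\varphi(A)$, and since $E(u)$-height $r$ forces $E(u)^r\Phi_i^{-1}\in\Md(\gs_F)$, iterating produces, for every $N\ge1$,
\begin{equation*}
A \;=\; \Big(\prod_{j=0}^{N-1}\varphi^j(\Phi_2)\Big)\;\varphi^N(A)\;\Big(\prod_{j=N-1}^{0}\varphi^j(\Phi_1)^{-1}\Big).
\end{equation*}
The only denominators on the right divide $\prod_{j=0}^{N-1}\varphi^j(E(u))^r$, which is a unit at $u=0$ (because $f(0)=0$, whereas no $\pi_j$ vanishes there), so any pole of $A$ at $u=0$ would have to come from $\varphi^N(A)$; and since $f(u)\equiv u^p\bmod\m_F$, the substitution $u\mapsto f^{\circ N}(u)$ contracts such a pole $u$-adically while scaling its leading coefficient by a power of $a_1$ whose valuation grows like $N\,v_F(a_1)$. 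I expect the main obstacle to be the careful bookkeeping of these competing effects — using $v_F(a_1)>r$ to overwhelm the $E(u)^{-r}$-sized contributions of the $\varphi^j(\Phi_i)^{-1}$, and the hypothesis that no $\varphi^n(f(u)/u)$ is a power of $E(u)$ to rule out degenerate cancellations among the $\varphi^j(E(u))$ — in order to conclude that $A$, up to a bounded power of $p$, has no pole, i.e.\ $\wh f(\M_1)\subseteq\M_2[1/p]$. Equivalently, one can package this as the assertion that $\gs_F[1/p]$ is the intersection, inside the relevant Robba-type ring, of $\O_\E[1/p]$ with the rigid-analytic ring $\O$ over which the $F$-crystalline realization of the $\M_i$ is defined.

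Granting this, the rest is formal. With $g\colon\M_1\to\M_2$ in hand, I would base change along the canonical map $\gs_F\to\O$ (rigid functions on the appropriate bounded disk) equipped with the connection $N_\nabla$ from the proof of Theorem~\ref{thm-intro-1}; because $N_\nabla$ is intrinsic, $g\otimes1$ commutes with it, and reduction modulo $u$ gives a morphism $D_1\to D_2$ of filtered $\varphi$-modules over $F_0$ — here using that the filtration on $D_i$ is recovered functorially from $(\M_i,\varphi_{\M_i},N_\nabla)$, exactly as in the classical case \cite{kisin2}. Since each $D_i$ is weakly admissible and corresponds to $V_i$ under the $F$-linear Colmez--Fontaine equivalence, $D_1\to D_2$ yields a $G_K$-equivariant map $V_1\to V_2$, which by construction restricts on $G_{\ul{\pi}}$ to the map induced by $g$ on $T_{\gs}$, namely $f$. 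Thus $f$ is $G_K$-equivariant, and full faithfulness follows.
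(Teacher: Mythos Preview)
Your overall architecture (pass to Kisin modules, produce a morphism of Kisin modules, then promote it to a morphism of filtered $\varphi$-modules, hence of $G_K$-representations) matches the paper. But the final step contains a genuine gap: you invoke ``the connection $N_\nabla$ from the proof of Theorem~\ref{thm-intro-1}'' and claim the filtration on $D_i$ is recovered functorially from $(\M_i,\varphi_{\M_i},N_\nabla)$ as in \cite{kisin2}. In this setting there is \emph{no} $N_\nabla$: the paper states explicitly (introduction, and Proposition~\ref{prop-Kisinconstruct}) that for general $f(u)$ one does not know how to define a monodromy operator even on $\gO$, and that the $\CM(D)$ constructed here carries only a Frobenius structure. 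So the mechanism you rely on to descend $g$ to a map $D_1\to D_2$ is unavailable.

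The paper's substitute is precisely where the hypothesis $v_F(a_1)>r$ enters, and this is not where you placed it. Full faithfulness of $T_\gs$ (your ``crux'') is Proposition~\ref{prop-fullfaith} and uses \emph{only} the assumption that $\varphi^n(f(u)/u)$ is never a power of $E(u)$, via a determinant reduction to rank one and an analysis of roots under Weierstrass preparation --- not an iteration bound involving $v_F(a_1)$. The condition $v_F(a_1)>r$ is instead used in Lemma~\ref{lem-phisection 2} to produce a \emph{unique} $\varphi$-equivariant section $\xi_\alpha\colon M[1/p]\hookrightarrow\gO_\alpha\otimes_{\gs_F}\varphi^*\M$ of reduction mod $u$ (the uniqueness comes from the convergence $\varphi^n(u)/\varpi^{rn}\to 0$ in $\gO_\alpha$, Lemma~\ref{lem-f(n)}). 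By Corollary~\ref{co-D(M)=D-cris} this section identifies its image with $D_{\cris,F}(V)$ sitting inside $\gO_\alpha\otimes\varphi^*\M$, and then Lemma~\ref{lem-inside} shows that any morphism $\gf$ of Kisin modules respects these images; this is what yields the map $D\to D'$ without any $N_\nabla$. Your sketch for the ``crux'' step is also not the paper's argument and, as written, conflates the roles of the two hypotheses; the iteration you describe does not obviously control poles at $u=0$ in the way you suggest.
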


Although Beilinson and Tavares Ribeiro \cite{Beilinson} have given an almost elementary proof of Theorem
\ref{thm-intro-2}
in the classical situation $F=\Q_p$ and $f(u)=u^p$, their argument relies crucially
on an explicit description of the Galois closure of $K_{\infty}/K$.
For more general $F$ and $f$, we have no idea what the Galois closure of $K_{\ul{\pi}}/K$
looks like, and describing it in any explicit way seems to be rather difficult in general.

It is natural to ask when two different choices $f$ and $f'$ of $p$-power Frobenius lifts
and corresponding compatible sequences $\ul{\pi}=\{\pi_n\}_n$ and $\ul{\pi}'=\{\pi_n'\}$
in $\ol{K}$ yield the {\em same} subfield $K_{\ul{\pi}}=K_{\ul{\pi}'}$ of $\ol{K}$.
We prove that this is {\em rare} in the following precise sense: if $K_{\ul{\pi}}=K_{\ul{\pi}'}$,
then the lowest degree terms of $f$ and $f'$ coincide, up to multiplication by a unit in $\O_F$;
see Proposition \ref{fdetbyK}.  It follows that there are {\em infinitely many} distinct
$K_{\ul{\pi}}$ for which Theorem \ref{thm-intro-2} applies.
We also remark that any Frobenius--iterate extension $K_{\ul{\pi}}$
as above is an infinite and totally wildly ramified {\em strictly APF} extension
in the sense of Wintenberger \cite{win}.
We therefore think of Theorem \ref{thm-intro-2} as confirmation of the philosophy
that ``crystalline $p$-adic representations are the $p$-adic analogue of unramified $\ell$-adic representations,''
since Theorem \ref{thm-intro-2} is obvious if ``crystalline" is replaced with ``unramified" throughout
(or equivalently in the special case $r=0$).
More generally, given $F$ and $r\ge 0$, it is natural to ask for a characterization of
{\em all} infinite and totally wildly ramified strictly APF extensions $L/K$ for which
restriction of $F$-crystalline representations of $G_K$ with Hodge--Tate weights in $\{0,\dots,r\}$
to $G_L$ is fully--faithful.  We believe that there should be a deep and rather general phenomenon
which deserves further study.


While the condition that $v_F(a_1)>r$ is really essential
in Theorem \ref{thm-intro-2} (see Example \ref{ex-counter}), we suspect the conclusion is still valid if we remove the assumption that $\varphi ^n (f(u)/u)$ is not a power of $E(u)$ for all $n \geq 0$.
However, we have only successfully removed this assumption when $r=1$, thus generalizing Kisin's classification
of Barsotti--Tate groups:

\begin{theorem}\label{thm-intro-3}Assume $v_F(a_1)>1$. Then the category of Kisin modules of height $1$
is equivalent to the category of $F$-Barsotti-Tate groups over $\O_K$.
\end{theorem}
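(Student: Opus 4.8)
The plan is to exhibit the equivalence via the chain of functors $\Gamma \rightsquigarrow T_p(\Gamma)|_{G_{\ul\pi}} \rightsquigarrow \M$, and then to invert it. First I would observe that for an $F$-Barsotti--Tate group $\Gamma$ over $\O_K$ --- a $p$-divisible group equipped with a strict $\O_F$-action in the sense of \cite{Kisin-Ren} --- the Tate module $T_p(\Gamma)$ is a $G_K$-stable $\O_F$-lattice in $V := \Q_p\otimes_{\Z_p}T_p(\Gamma)$, and strictness of the $\O_F$-action is precisely the assertion that $\Fil^1 D_\dR(V)_\m = 0$ for every maximal ideal $\m$ of $F\otimes_{\Q_p}K$ other than the one cut out by the inclusion $F\subset K$; hence $V$ is $F$-crystalline with Hodge--Tate weights in $\{0,1\}$. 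Restricting $T_p(\Gamma)$ to $G_{\ul\pi}$ and invoking Theorem \ref{thm-intro-1} with $r=1$ then produces a Kisin module $\M=\M(\Gamma)$ of $E(u)$-height $1$ with $T_\gs(\M)\simeq T_p(\Gamma)|_{G_{\ul\pi}}$ (after the customary Cartier-duality normalization, which I suppress).

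To know this really is a functor I would first establish that, for $r=1$ and under the single hypothesis $v_F(a_1)>1$, the Kisin module attached to a fixed $G_{\ul\pi}$-lattice is unique: the height-$1$ constraint is rigid enough that the argument underlying Theorem \ref{thm-intro-2} goes through without the auxiliary hypothesis on $\varphi^n(f(u)/u)$, and in any case uniqueness becomes transparent once one passes to Breuil modules. Granting this, full faithfulness of $\Gamma\mapsto\M(\Gamma)$ is a composite of three full-faithfulness statements: (i) $\Gamma\mapsto T_p(\Gamma)$ is fully faithful on $p$-divisible groups over $\O_K$ by the theorems of Tate and Raynaud; (ii) $V\mapsto V|_{G_{\ul\pi}}$ is fully faithful on $F$-crystalline representations with Hodge--Tate weights in $\{0,1\}$ by the $r=1$ case of Theorem \ref{thm-intro-2}, which needs only $v_F(a_1)>1$; and (iii) $\M\mapsto T_\gs(\M)$ is fully faithful on Kisin modules of $E(u)$-height $1$, a general feature of the passage to the associated \'etale $\varphi$-module. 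Tracking variances, the composite identifies the category of $F$-Barsotti--Tate groups over $\O_K$ with a full subcategory of Kisin modules of height $1$.

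The hard part will be essential surjectivity: given a Kisin module $\M$ of $E(u)$-height $1$, one must manufacture an $F$-Barsotti--Tate group $\Gamma$ with $\M(\Gamma)\simeq\M$. The obstacle is structural --- $T_\gs(\M)$ is a priori only a $G_{\ul\pi}$-representation, and because $K_{\ul\pi}/K$ is neither Galois nor equipped with a Lubin--Tate $\Gamma$-action, there is no formal descent taking it to a $G_K$-representation, let alone an $F$-crystalline one. My plan is to bypass descent entirely and build the $p$-divisible group directly through crystalline Dieudonn\'e theory: base change $\M$ along $\varphi\colon\gs_F\to S_F$, where $S_F$ is the $F$-analogue of Breuil's divided-power ring with Frobenius extending $u\mapsto f(u)$, to form $\mathcal{M}:=S_F\otimes_{\varphi,\gs_F}\M$; then equip $\mathcal{M}$ with the filtration $\Fil^1\mathcal{M}$ forced by the height-$1$ condition, together with the divided Frobenius $\varphi_1$ and the monodromy operator $N$, and verify that $\mathcal{M}$ is a strongly divisible $S_F$-module of weight $\le 1$ carrying a strict $\O_F$-action. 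It is here that $v_F(a_1)>1$ is essential: it is what makes $S_F$ and its Frobenius behave as in the classical case, in particular forcing $\varphi$ to be topologically nilpotent enough on $\Fil^1 S_F$ that the divided-power formalism and the convergence of the usual series survive the change of Frobenius lift. The (generalized) Breuil--Kisin classification of $p$-divisible groups over $\O_K$ (cf. \cite{kisin2}), adapted to this setting, then attaches to $\mathcal{M}$ a $p$-divisible group, which the $\O_F$-action promotes to an $F$-Barsotti--Tate group $\Gamma$; and compatibility of the Galois realizations gives $T_p(\Gamma)|_{G_{\ul\pi}}\simeq T_\gs(\M)$, so that $\M(\Gamma)\simeq\M$ by the uniqueness above. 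The real labor, then, is twofold: carrying out the $p$-divisible group / strongly divisible module dictionary with the general Frobenius lift $u\mapsto f(u)$ in place of $u\mapsto u^p$, and pinning down the comparison between $T_\gs$ on $\gs_F$-modules and the crystalline realization of $S_F$-modules after restriction to $G_{\ul\pi}$.
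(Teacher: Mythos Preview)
Your proposal has two genuine gaps, and both stem from the same underlying difficulty that the paper's approach is specifically designed to circumvent.

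\textbf{The monodromy operator does not exist.} Your essential-surjectivity plan hinges on forming a Breuil module $\mathcal{M}=S_F\otimes_{\varphi,\gs_F}\M$ equipped with a monodromy operator $N$, and then invoking a Breuil--Kisin style classification of $p$-divisible groups. But for general $f(u)$ the paper explicitly does \emph{not} know how to define a reasonable monodromy (or $N_\nabla$) operator, even on $\gO$; this is flagged repeatedly (see the introduction and the discussion in \S\ref{subsec-3.1}), and the Breuil-module dictionary you invoke is listed in Remark~\ref{rem-final} as a conjecture and work in progress. So the crystalline Dieudonn\'e route you sketch is not available under the hypotheses of the theorem. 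The paper's actual argument goes the opposite direction: starting from $\M$, it constructs \emph{directly} a $W(R)_F$-semilinear $G$-action on $W(R)_F\otimes_{\gs_F}\M$ that commutes with $\varphi$ and is trivial on $G_{\ul\pi}$ (Lemma~\ref{lem-extend Ginfty action}); this is where $v_F(a_1)>1$ enters, via Lemma~\ref{lem-z in I plus.}. That $G$-action then promotes $T_\gs(\M)$ to a $G$-representation, and Proposition~\ref{prop-crystalline} shows it is $F$-crystalline with Hodge--Tate weights in $\{0,1\}$. No monodromy, no $S_F$, no Breuil modules.

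\textbf{Your full-faithfulness inputs are circular.} Claim~(ii), that $V\mapsto V|_{G_{\ul\pi}}$ is fully faithful for $r=1$ under only $v_F(a_1)>1$, is not the $r=1$ case of Theorem~\ref{thm-intro-2}: that theorem carries the additional hypothesis on $\varphi^n(f(u)/u)$, and the paper's stated point is that Theorem~\ref{thm-intro-3} is precisely how one removes that hypothesis when $r=1$. Claim~(iii), that $\M\mapsto T_\gs(\M)$ is fully faithful on height-$1$ modules as $G_{\ul\pi}$-representations ``as a general feature,'' is likewise not available: Proposition~\ref{prop-fullfaith} needs the auxiliary hypothesis, and Example~\ref{ex-cyclotomic2} shows that without it the conclusion can genuinely fail. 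The paper avoids both issues by first extending to a $G$-action and then proving full faithfulness of $T_\gs$ as a functor into $\O_F[G]$-modules: given $\M\subset\M'\subset \O_\E\otimes_{\gs_F}\M$ with $T_\gs(\M)\simeq T_\gs(\M')$ as $G$-modules, one gets $D(\M)\simeq D(\M')$ as filtered $\varphi$-modules by Proposition~\ref{prop-crystalline}, and then Lemma~\ref{lem-fil-revisit}(2) forces the minimal heights of $\wedge^d\M$ and $\wedge^d\M'$ to agree, whence $\M=\M'$.

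In short, the missing idea is Lemma~\ref{lem-extend Ginfty action}: build the $G$-action on $W(R)_F\otimes_{\gs_F}\M$ by hand as a limit of matrix products, using $v_F(a_1)>1$ to control $g(u)-u$. Once you have that, both directions of the equivalence follow without any appeal to Breuil modules or to the unproven $r=1$ full-faithfulness statements.
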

Here, an \emph{$F$-Barsotti-Tate group} is a Bartotti--Tate grouop $H$ over $\O_K$ with the property
that the $p$-adic Tate module $V_p (H) = \Q_p \otimes_{\Z_p} T_p (H)$ is an $F$-crystalline representation.
We note that when $F=\Q_p$, Theorem \ref{thm-intro-3} is proved (by different methods)
in \cite{CaisLau}.

Besides providing a natural generalization of Kisin's work and its variants
as well as a deeper understanding of some of the finer properties of crystalline $p$-adic Galois
representations, we expect that our theory will have applications to the study of
potentially Barsotti--Tate representations.  More precisely,
suppose that $T$ is a finite free $\O_F$-linear representation of $G_K$ with the property that
$T|_{G_{K'}}$ is Barsotti-Tate for some finite extension $K'/K$. If $K'/K$ is not tamely ramified then it is well-known that it is in general difficult to construct ``descent data'' for the Kisin module $\M$ associated to $T|_{G_{K'}}$
in order to study $T$ (see the involved computations in \cite{bcdt}).
However, suppose that  we can select $f(u)$ and $\pi_0$ such that
$K' \subseteq  K(\pi_n)$ for some $n$. Then, as in the theory of Kisin--Ren \cite{Kisin-Ren} (see also 
\cite{Ber3}), we expect
the appropriate descent data on $\M$ to be {\em much easier} to construct in this ``adapted" situation,
and we hope this idea can be used to study the reduction of $T$.

Now let us sketch the ideas involved in proving the above theorems and outline the organization of this paper.
For any $\Z_p$-algebra $A$, we set $A_F:=A \otimes_{\Z_p} \O_F$.   In order to connect $\gs_F$ to Galois representations, we must first embed $\gs _F$ as a Frobenius-stable subring of $W(R)_F $, which
we do \S2.1 following \cite{CaisDavis}. In the following subsection, we collect some useful properties of
this embedding and study some ``big rings" inside $B^+_{\cris, F}$. Contrary to the classical situation, the Galois closure of $K_{\ul{\pi}}$ appears in general to be rather mysterious.
Nonetheless, in \S\ref{GactsOnu} we are able to establish some basic results on the $G_K$-conjugates
of $u\in \gs_F \subseteq W(R)_F$ which are just barely sufficient for the development of our theory.
Following Fontaine \cite{fo4} and making use of the main result of \cite{CaisDavis},
in \S\ref{sec-3} we establish a classification of $G_{\ul{\pi}}$-representations via
\'etale $\varphi$-modules and Kisin modules. 
In the end of \S\ref{sec-3}, we apply these considerations to prove that the functor $T_{\gs}$ is fully faithful under the assumption that $\varphi ^n ({f(u)}/{u})$ is not a power of $E(u)$ for any $n$.

The technical heart of this paper is \S\ref{sec-4}.
In \S\ref{FcrysReps}, we define $F$-crystalline representations and attach to each $F$-crystalline representation $V$ a filtered $\varphi$-module $D_{\cris , F}(V)$ (we warn the reader that the filtration of $D_{\cris, F} (V)$ is slightly different from that of $D_\cris (V)$). Following \cite{kisin2}, in \S \ref{subsec-3.1} we
then associate to $D= D_\cris(V)$ a $\varphi$-module $\CM(D)$ over $\gO$ (here we use $\gO$ for
the analogue of $\O$---the ring of rigid-analytic functions on the open unit disk---in Kisin's work).
A shortcoming in our situation is that we do not in general know how to define a reasonable differential
operator
$N_\nabla $, even at the level of the ring $\gO$. Consequently, our $\CM(D)$ only has a Frobenius structure,
in contrast to the classical (and Lubin--Tate) situation in which $\CM (D)$ is also equipped with a natural
$N_\nabla$-structure. Without such an $N_\nabla$-structure, there is no way to follow Kisin's (or Berger's) original strategy to prove that the scalar extension of $\CM(D)$ to the Robba ring
 is pure of slope zero, which is key to showing that there exists a Kisin module $\M$ such that $\gO \otimes_{\gs_F} \M \simeq \CM(D)$. We bypass this difficulty
by appealing to the fact that $\CM(D)$ is known to be pure of slope zero in the classical situation of Kisin
as follows: letting a superscript of ``$c$" denote the data in the classical situation and using the fact that both $\CM (D)$ and $\CM^c (D)$ come from the same $D$, we prove that $\wt B_\alpha \otimes_\gO \CM (D) \simeq \wt B_\alpha  \otimes_{\gO^c} \CM ^c(D)$ as $\varphi$-modules for a certain period ring $\wt B_{\alpha}$
that contains the ring $\wt B ^+_{\rig,F}$.
 It turns out that this isomorphism can be descended to $\wt B ^+_{\rig,F}$. Since Kedlaya's theory of the slope filtration is unaffected by base change from the Robba ring to
$\wt B^+_{\rig,F}$, it follows that $\CM(D)$ is of pure slope $0$ as this is the case for
 $\CM^c (D)$ thanks to \cite{kisin2}. With this crucial fact in established, we are then able to prove Theorem \ref{thm-intro-1} along the same lines as \cite{kisin2}.
If our modules came equipped with a natural $N_{\nabla}$-structure,
the full faithfulness of the functor $V \mapsto V|_{G_{\ul{\pi}}}$ would follow easily from the full faithfulness of $T_{\gs}$. But without such a structure, we must instead rely
heavily on the existence of  a \emph{unique} $\varphi$-equivariant section $\xi: D(\M) \to \gO_\alpha \otimes\varphi ^* \M$ to the projection $\varphi ^* \M  \twoheadrightarrow \varphi ^*\M/ u \varphi ^* \M $, where $D(\M) = (\varphi ^*\M/ u \varphi ^* \M) [1/p]$. The hypothesis $v_F(a_1)>r$ of Theorem \ref{thm-intro-2} guarantees the existence and uniqueness of such a section $\xi$. With these preparations, we finally prove Theorem \ref{thm-intro-2} in  \S \ref{subsec-mainresults}.

 In \S 5, we establish Theorem \ref{thm-intro-3}: the equivalence between the category of Kisin modules of height 1 and the category of $F$-Barsotti-Tate groups over $\O_K$. Here we adapt the ideas of \cite{liu-BT} to prove that the functor $\M \mapsto T_{\gs}(\M)$ is an equivalence between the category of Kisin module of height 1 and the category of $G_K$-stable $\O_F$-lattices in $F$-crystalline representations with Hodge-Tate weight in $\{0, 1\}$. The key difficulty is to extend the $G_{\ul{\pi}}$-action on $T_{\gs}(\M)$ to a $G _K$-action which
gives $T_{\gs}(\M)[1/p]$ the structure of an $F$-crystalline representation. In the classical situation, this is done using the (unique) monodromy operator $N$ on $S \otimes _\gs \varphi ^* \M$ (see \S2.2 in \cite{liu-BT}). Here again, we are able to sidestep the existence of a monodromy operator to construct a (unique) $G_K$-action on $W(R)_F \otimes_{\gs_F} \M$ which is compatible with the additional structures (see Lemma \ref{lem-extend Ginfty action}), and this is enough for us to extend the given $G_{\ul{\pi}}$-action to a $G_K$-action on $T_{\gs}(\M)$. As this paper establishes analogues of many of the results of \cite{kisin2} in our more general context, it is natural ask
to what extent the entire theory of \cite{kisin2} can be developed in this setting.  To that end,
we list several interesting (some quite promising) questions for this program in the last section.




\subsection*{Acknowledgements:} It is pleasure to thank Laurent Berger, Kiran Kedlaya and Ruochuan Liu
for very helpful conversations and correspondence.

\subsection*{Notation}Throughout this paper, we reserve $\varphi$ for the Frobenius operator,
adding appropriate subscripts as needed for clarity:
for example, $\varphi_\M$ denotes the Frobenius map on $\M$. We will always drop these subscripts when
there is no danger of confusion.
Let $S$ be a ring endowed with a Frobenius lift $\varphi_S$ and $M$ an $S$-module.
We always write $\varphi ^*M := S \otimes_{ \varphi_S, S } M$. Note that if 
$\varphi_M: M \to M$ is a $\varphi _S$-semilinear endomorphism, 
then $1 \otimes \varphi_M : \varphi ^* M \to M$ is an $S$-linear map.
We reserve $f(u) = u ^p + a_{p -1} u + \cdots+ a_1 u \equiv u ^p \mod \mathfrak{m}_F$ for the polynomial over $\O_F$
giving our Frobenius lift $\varphi(u):=f(u)$ as in the introduction
For any discretely valued subfield $E\subseteq \ol{K}$,
we write $v_E$ for the normalized $p$-adic valuation of $\ol{K}$
with $v_E(E)=\Z$, and for convenience will simply write $v:=v_{\Q_p}$.
If $A$ is a $\Z_p$-module, we set $A_F:= A \otimes_{\Z_p} \O_F$ and
$A[1/p] := A \otimes_{\Z_p}\Q_p$. For simplicity, we put $G = G_K := \gal (\overline K / K)$
and $G_{\ul{\pi}}:=\Gal(\ol{K}/K_{\ul{\pi}})$.
Finally, we will write $\Md (S)$ for the ring of $d \times d$-matrices with entries in $S$ and $I_d$ for
the $d \times d $-identity matrix.

\section{Period rings}\label{sec-2}

In this section, we introduce and study the various ``period rings" which will play a central role
in the development of our theory.

As in the introduction, we fix a perfect field $k$ of characteristic $p$ with ring of Witt vectors $W:=W(k)$,
as well as a finite and totally ramified extension $K$ of $K_0:=W[1/p]$.
Let  $F$ be a subfield of $K$, which is finite and totally ramified over $\Q_p$, and
put $F_0 := K_0 F \subset K.$
Choose uniformizers $\pi$ of $\O_K$ and $\varpi$ of $\O_F$, and let $E(u)\in \O_{F_0}[u]$ be the minimal polynomial
of $\pi$ over $F_0$.  We set $e:= [K: K_0]$, and put $e_0 := [K:F_0]$ and $e_F := [F : \Q_p]$.
Fix a polynomial $f(u) = u ^p + a_{p-1} u ^{p-1} + \dots + a_1 u \in \O_F [u]$ satisfying $f(u) \equiv u ^p \mod \varpi$, and recursively choose $\pi_n\in \ol{K}$ with $f(\pi_n) = \pi_{n-1}$ for $n\ge 1$ where $\pi_0 := \pi$.
Set $K_{\ul{\pi}} := \bigcup_{n \geq 0 } K(\pi_n)$ and $G_{\ul{\pi}} := \gal (\overline K / K_{\ul{\pi}})$, and
recall that for convenience we write $G = G_K := \gal (\overline K /K)$.

Recall that  $\gs  = W [\![u]\!]$, and that we equip the scalar extension $\gs_F$
with the semilinear Frobenius endomorphism $\varphi : \gs_F \to \gs_F$ which acts on $W$
as the unique lift of the $p$-power Frobenius map on $k$, acts trivially on $\O_F$,
and sends $u$ to $f(u)$.  The first step in our classification of $F$-crystalline
$G_K$-representations by Kisin modules over $\gs_F$ is to realize this ring as a Frobenius stable subring of
$W(R)_F$, which we do in the following subsection.

 \subsection{$\gs_F$ as a subring of $W(R)_F$}
As usual, we put $R:=\varprojlim \limits_{x\to x^p} \O_{\overline K}/ (p)$, equipped with its natural coordinate-wise action of $G$.
It is well-known that the natural reduction map
$$\varprojlim\limits_{x\to x^p} \O_{\overline K}/ (p)\rightarrow \varprojlim\limits_{x\to x^p} \O_{\overline K}/(\varpi)$$
is an isomorphism, so $\{\pi_n\}_{n \geq 0}$ defines an element $\upi \in R$.
Furthermore, writing $\C_K$ for the completion of $\ol{K}$, reduction modulo $p$ yields a multiplicative bijection $\varprojlim_{x\to x^p} \O_{\C_K}\simeq R$, and for any $x \in R$ we write $(x^{(n)})_{n \geq 0}$ for the $p$-power compatible sequence in
$\varprojlim_{x\to x^p} \O_{\C_K}$ corresponding to $x$ under this identification.
We write $[x] \in W(R)$ for the Techm\"uller lift of $x\in R$, and denote by
$\theta: W(R) \to \O_{\C_K}$ the unique lift of the projection $R\twoheadrightarrow \O_{\C_K}/(p)$
which sends $\sum_n p^n [x_n]$ to $\sum_n p^n x^{(0)}$. By definition, $B^+_\dr$ is the $\t{Ker}(\theta)$-completion of $W(R) [1 /p]$, so $\theta$ naturally extends to $B^+_\dR$. For any subring $B \subset B^+_\dR$, we define $\Fil ^i B:  = (\t{Ker}\theta)^i \cap B.$

There is a canonical section $\ol{K}\hookrightarrow B_{\dR}^+$, so we may view $F$ as a subring of $B_{\dR}^+$,
and in this way we obtain embeddings $W(R)_F \inj B^+_{\cris, F} \inj B^+_\dR$. Define $\theta_F := \theta|_{W(R)_F}$. 	One checks that $W(R)_F$ is $\varpi$-adically complete and that every element of $W(R)_F$ has the form
	$\sum_{n\ge 0} [a_n]\varpi^n$ with $a_n\in R$.  The map $\theta_F$ then carries
	$\sum_{n \geq 0} [a_n]\varpi^n$ to $\sum_{n\ge 0} a_n^{(0)}\varpi^n\in \O_{\C_K}$ (see pg. 11 and Prop. 3.1 of \cite{CaisDavis}).



\begin{lemma}\label{lem-basic}
There is a unique set-theoretic section $\{\cdot\}_f: R\rightarrow W(R)_F$ to the reduction modulo $\varpi$ map
which satisfies $\varphi(\{x\}_f) = f(\{x\}_f)$ for all $x\in R$.
\end{lemma}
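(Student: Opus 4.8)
The plan is to construct $\{x\}_f$ by a successive approximation / contraction argument exploiting that $\varphi$ is, modulo $\varpi$, the $q=p$-power Frobenius on $W(R)_F$ and that $f(T)\equiv T^p \bmod \varpi$. First I would fix a lift: given $x\in R$, choose any $y_0\in W(R)_F$ reducing to $x$ mod $\varpi$, for instance the Teichm\"uller representative $y_0=[x]$. The defining condition $\varphi(\{x\}_f)=f(\{x\}_f)$ should be viewed as a fixed-point equation for the operator $\Psi$ sending $y$ to $\varphi^{-1}(f(y))$; the point is that $\varphi$ is a bijection on $W(R)_F$ (since it is on $W(R)$, being induced by the $p$-power automorphism of the perfect ring $R$, and it acts $\O_F$-linearly), so $\varphi^{-1}$ makes sense. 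The key contraction estimate is that if $y\equiv y'\bmod \varpi^n$ then $f(y)\equiv f(y')\bmod \varpi^{n}$ but moreover, because $f(T)-f(S)$ has all its coefficients except the $a_1(T-S)$ term divisible by higher powers and because $a_1\equiv 0\bmod \varpi$ while $T^p-S^p\equiv (T-S)^p\bmod \varpi$ shows the leading term gains a factor, one gets $f(y)-f(y')\in \varpi^{n+1}W(R)_F + (y-y')^pW(R)_F$; combined with $\varphi$ preserving $\varpi$-adic filtration this yields $\Psi(y)\equiv\Psi(y')\bmod\varpi^{n+1}$. Hence $\Psi$ is a contraction for the $\varpi$-adic metric, and since $W(R)_F$ is $\varpi$-adically complete (noted in the excerpt, pg. 11 of \cite{CaisDavis}) the iterates $\Psi^{(m)}(y_0)$ converge to a unique fixed point $\{x\}_f$, which still reduces to $x$ mod $\varpi$ because $\Psi$ preserves the reduction (as $f(y)\equiv y^p\equiv \varphi(y)\bmod\varpi$, so $\Psi(y)\equiv y\bmod\varpi$).

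Next I would check the three required properties. Uniqueness of the section with the given functional equation is exactly uniqueness of the fixed point, which the contraction estimate gives directly: if $z,z'$ both satisfy $\varphi(z)=f(z)$, $\varphi(z')=f(z')$ and $z\equiv z'\bmod\varpi$, then they are both fixed by $\Psi$, so $z\equiv z'\bmod\varpi^n$ for all $n$, hence $z=z'$. That $\{x\}_f$ lifts $x$ is built into the construction as just noted. That $\varphi(\{x\}_f)=f(\{x\}_f)$ is the fixed-point equation rewritten, using that $\varphi$ is invertible. Finally, although the lemma only asserts existence of a set-theoretic section, it is worth recording (and costs nothing) that the construction is canonical and $G_K$-equivariant, since $\varphi$ and $f$ (the latter having coefficients in $\O_F\subset W(R)_F^{G}$) commute with the $G$-action, so $g(\{x\}_f)=\{g(x)\}_f$.

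The main obstacle — really the only subtle point — is getting the contraction estimate uniform and correct, i.e.\ verifying that $\Psi$ strictly improves $\varpi$-adic congruences rather than merely preserving them. This hinges on two facts working together: that $f$ reduces to the $p$-power map so that a mod-$\varpi^n$ agreement of $y,y'$ forces their $p$-th powers to agree mod $\varpi^{n+1}$ (here one uses that in $W(R)_F$, which has a surjection to the perfect ring $R$ with kernel $\varpi W(R)_F$, raising to the $p$-th power sends $\varpi^n W(R)_F$ into $\varpi^{n+1}W(R)_F$ modulo correction, essentially the standard ``Frobenius is surjective mod $p$'' lemma adapted to $\varpi$); and that $\varphi^{-1}$ does not worsen the $\varpi$-adic filtration, which follows since $\varphi$ induces the identity on $W(R)_F/\varpi = R$ twisted by Frobenius and is an isomorphism preserving $\varpi W(R)_F$. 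Once these are pinned down the whole argument is a routine completeness-plus-contraction wrap-up; I would present the $\varpi$-adic estimate as the one computation worth writing out in detail and treat the rest as formal.
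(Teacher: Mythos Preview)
Your proposal is correct and follows essentially the same approach as the paper's proof: the paper likewise observes that $f\circ\varphi^{-1}$ is a $\varpi$-adic contraction on $W(R)_F$ (note that $\varphi^{-1}\circ f = f\circ\varphi^{-1}$ since $f$ has coefficients in $\O_F$, which is fixed by $\varphi$) and defines $\{x\}_f$ as the limit $\lim_{n\to\infty}(f\circ\varphi^{-1})^{(n)}(\tilde{x})$ for any lift $\tilde{x}$ of $x$, citing \cite[Lemme 9.3]{Colmez}. Your contraction estimate is the heart of the matter and is exactly what the paper invokes without writing out; your additional remark on $G_K$-equivariance is correct and useful, though not asserted in the lemma itself.
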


\begin{proof}
	This is\footnote{In the version of Colmez's article available from his website,
	it is Lemme 8.3.} \cite[Lemme 9.3]{Colmez}.  Explicitly,
	using the fact that $f(u)\equiv u^p\bmod \varpi,$
	one checks that the endomorphism
	$f\circ \varphi^{-1}$ of $W(R)_F$ is a $\varpi$-adic contraction, so that for any lift $\wt{x}\in W(R)$
	of $x\in R$, the limit
	$$\{x\}_f:=\lim_{n\rightarrow\infty} (f\circ \varphi^{-1})^{(n)}(\wt{x})$$
	exists in $W(R)_F$ and is the unique fixed point of $f\circ\varphi^{-1}$,
	which uniquely characterizes it independent of our choice of $\wt{x}$.
\end{proof}

It follows immediately from Lemma \ref{lem-basic} that there is a unique
continuous $\O_F$-algebra embedding $\iota: \gs_F\hookrightarrow W(R)_F$
with $\iota(u):=\{\upi\}_f$.  We henceforth identify $\gs_F$
with a $\varphi$-stable $\O_F$-subalgebra of $W(R)_F$ via $\iota$
on which we have $\varphi(u)=f(u)$.

\begin{example}[Cyclotomic case]\label{ex-cyclotomic}Let $\{\zeta_{p ^n}\}_{n\ge 0}$ be a compatible system of
primitive $p^n$-th roots of unity. Let $K = \Q_p (\zeta_p)$, and put $\pi= \zeta_p -1$ and $f(u) = (u +1)^p -1 \in \Q_p [u]$. Choosing $\pi_n = \zeta_{p ^{n+1}} -1$, we obtain $K_{\ul{\pi}}:= \bigcup_{n \geq 1} \Q_p (\zeta_{p ^n})$. It is obvious that $\e_1 := (\zeta_{p ^n})_{n \geq 1 } \in R$. In this case, $\iota(u) = [\e_1]-1 \in W(R)$.

\end{example}

Recall that $R$ has the structure of a valuation ring via $v_R (x):= v (x^{(0)})$, where
$v$ is the normalized $p$-adic valuation of $\C_K$ with $v(\Z_p)=\Z$.
\begin{lemma}\label{lem-generater}
	We have $\theta_F(u)=\pi$ and $E(u)$ is a generator of $\t{Ker}(\theta_F)= \Fil ^1 W(R)_F$.
\end{lemma}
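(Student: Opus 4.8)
The first claim $\theta_F(u)=\pi$ is nearly immediate from the construction: by definition $u=\iota(u)=\{\upi\}_f$ is characterized as the unique fixed point of $f\circ\varphi^{-1}$ lifting $\upi\in R$. Since $\theta_F$ intertwines $\varphi$ on $W(R)_F$ with the identity on $\O_{\C_K}$ in the sense that $\theta_F(\{x\}_f)=x^{(0)}$ --- this follows because $\{x\}_f=\lim(f\circ\varphi^{-1})^{(n)}(\wt x)$ and $\theta_F$ is continuous and sends $[a]$ to $a^{(0)}$ --- we get $\theta_F(u)=\upi^{(0)}=\pi_0=\pi$. Alternatively one can argue directly: $\theta_F(u)$ is a root of $E$ lying in $\O_K$ congruent to $\pi$ modulo $\varpi$ by the description of $\theta_F$ on $\sum[a_n]\varpi^n$, and the only such root is $\pi$ itself.

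For the second claim, I would proceed in two steps. First, $E(u)\in\Fil^1 W(R)_F$: since $E(\pi)=0$ and $\theta_F(u)=\pi$, applying $\theta_F$ (an $\O_{F_0}$-algebra map, as $F_0=K_0F$ embeds via the canonical section $\ol K\hookrightarrow B^+_\dR$) gives $\theta_F(E(u))=E(\pi)=0$, so $E(u)\in\t{Ker}(\theta_F)=\Fil^1 W(R)_F$. Second, I must show $E(u)$ generates this kernel. The standard approach: $\t{Ker}(\theta_F)$ is a principal ideal of the local ring $W(R)_F$ (or one argues after inverting nothing, using that $W(R)_F$ is a domain with $\t{Ker}(\theta)$ known to be principal on $W(R)$, generated by any element of valuation $1$ in the appropriate sense). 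It suffices to check that $E(u)$ is not in $\t{Ker}(\theta_F)^2+\varpi\,\t{Ker}(\theta_F)$, equivalently that the image of $E(u)$ in $\t{Ker}(\theta_F)/\t{Ker}(\theta_F)^2$ is a generator of this rank-one $\O_{\C_K}$-module, which amounts to a valuation computation: writing $E(u)=\prod(u-\pi^{(j)})$ over the conjugates, or more simply using $E(u)-E(\pi)=E'(\pi)(u-\pi)+(\text{higher order})$, one reduces to showing $u-\pi$ generates $\t{Ker}(\theta_F)$ and that $E'(\pi)$ is a unit times the right power of $\varpi$ --- but since $E$ is separable (it is the minimal polynomial of a uniformizer in a totally ramified extension, so $E'(\pi)$ has $v_{F_0}$-valuation equal to the different exponent, which is a unit obstruction only after dividing, hence one must be a bit careful). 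The cleanest route is: show $[\upi]-u$ lies in $\Fil^1 W(R)_F$ with $([\upi]-u)$ of "valuation exactly $1$" (since $u\equiv[\upi]\bmod\varpi$ is false --- rather $u\equiv[\upi]\bmod$ the kernel), then compare $E(u)$ with the known generator $\xi$ of $\t{Ker}(\theta)$ on $W(R)$ via a $v_R$-valuation count on the reduction mod $p$.

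\textbf{Main obstacle.} The routine part is $\theta_F(u)=\pi$; the real content is proving $E(u)$ generates rather than merely lies in the kernel. Here the difficulty specific to this $F$-linear, general-$f$ setting is that one does not have the classical identity $\varphi(u)/E(u)\cdot(\text{stuff})$ at one's disposal in the same form, and $W(R)_F$ is a ring of Witt vectors over $\O_F$ rather than $\Z_p$, so the valuation bookkeeping (keeping track of $\varpi$ versus $p$, and the ramification of $F/\Q_p$) must be done carefully. I expect the key lemma to be a valuation computation showing that $E(u)\bmod p$, viewed in $R$, has $v_R$-value equal to that of a known generator of $\t{Ker}(\theta)\bmod p$ --- i.e.\ $v_R$-value $\tfrac{e_0}{p-1}\cdot(\text{normalization})$, matching $v_R(\upi)$ appropriately --- which forces $E(u)$ to be a generator by the principality of the kernel. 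Once that valuation identity is pinned down, the rest is formal.
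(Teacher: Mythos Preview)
Your overall strategy matches the paper's: compute $\theta_F(u)$ via the limit defining $\{\upi\}_f$, then show $E(u)$ generates $\Fil^1 W(R)_F$ by a $v_R$-valuation computation on its reduction modulo $\varpi$. Two points need tightening before this becomes a proof.

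\textbf{(i)} The general claim $\theta_F(\{x\}_f)=x^{(0)}$ is not justified by what you wrote, and your alternative route (``$\theta_F(u)$ is a root of $E$'') is circular, since you have no a priori reason that $\theta_F(E(u))=0$ before computing $\theta_F(u)$. What actually works is the specific case $x=\upi$: from $\theta_F\circ\varphi^{-n}([\upi])=\upi^{(n)}$ one gets $\theta_F(\{\upi\}_f)=\lim_n f^{(n)}(\upi^{(n)})$, and then uses that $\upi^{(n)}\equiv\pi_n\bmod\varpi$ together with $f^{(n)}(\pi_n)=\pi$ \emph{exactly} to obtain $f^{(n)}(\upi^{(n)})\equiv\pi\bmod\varpi^{n+1}$. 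This is precisely the paper's computation.

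\textbf{(ii)} For the generator claim, your final approach is the right one and is what the paper does, but your numerics are off and the detours through $E'(\pi)$ and the different should be dropped. The clean computation: since $u\equiv[\upi]\bmod\varpi$ and $E$ is an Eisenstein polynomial over $\O_{F_0}$ of degree $e_0$, one has $E(u)\equiv\upi^{e_0}\bmod\varpi$, whence
\[
v_R(E(u)\bmod\varpi)=e_0\,v_R(\upi)=e_0\,v(\pi)=v(\varpi),
\]
not $e_0/(p-1)$ times a normalization factor. This valuation identity, together with $E(u)\in\Fil^1 W(R)_F$, is exactly the hypothesis of the standard criterion (Colmez, Prop.~7.3) that characterizes generators of $\t{Ker}(\theta_F)$.
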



\begin{proof}
	The first assertion is \cite[Lemme 9.3]{Colmez}.
	To compute $\theta_F(\{\upi\}_f)$, we first choose $[\upi]$ as our lift
	of $\upi$ to $W(R)$, and compute
	\begin{equation*}
		\theta_F(\{\upi\}_f)=\theta_F\left(\lim_{n\rightarrow\infty} f^{(n)}\varphi^{(-n)}([\upi])\right)
		=\lim_{n\rightarrow\infty} f^{(n)}\theta_F([\upi^{p^{-n}}])
		=\lim_{n\rightarrow\infty} f^{(n)}(\upi^{(n)})
	\end{equation*}
	But $\upi^{(n)}\equiv \pi_n\bmod \varpi$, so
	\begin{equation*}
		f^{(n)}(\upi^{(n)})\equiv f^{(n)}(\pi_n)\equiv \pi \bmod \varpi^{n+1},
	\end{equation*}
	which gives the claim.  Now certainly $\theta_F(E(u))=E(\pi)=0$, so $E(u) \in \Fil ^1 W(R)_F$. Since $E(u)\equiv\upi ^{e_0}\bmod \varpi$, we conclude that
$$v_R(E(u) \bmod \varpi )= e_0 v_R (\upi)= e_0 v (\pi)= v(\varpi),$$
and it follows from \cite[Prop.~7.3]{Colmez} that $E(u)$ is a generator of $\t{Ker}(\theta_F) = \Fil ^1W(R) _F$.
\end{proof}



Now let us recall the construction of $B^+_\tmax$ and $\wt B^+_\rig$ from Berger's paper \cite{Ber0}. Let $\xi$ be a generator of $\Fil ^1 W(R)$. By definition,
$$B^+_\tmax:= \left  \{\sum _{n \geq 0} a_n \frac{\xi ^n }{p^n}  \in B_\dR^+\ | \ a_n \in W(R)[1/p], \  a_n \to 0 { \rm {\ when\  }} n \to +\infty \right \}.$$
and $\wt B^+_\rig := \bigcap_{n \geq 1} \varphi ^n (B^+_\tmax) .$


Write $\gu: = [\upi]$. The discussion before Proposition 7.14 in \cite{Colmez} shows:

\begin{lemma}\label{lem-Bmaxf}
\begin{eqnarray*}
B^+_{\tmax, F}&=&  \left  \{\sum _{n \geq 0} a_n \frac{E(u) ^n }{\varpi^n}  \in B_\dR^+\  | \  a_n \in W(R)_F[1/ p], \  a_n \to 0 { \rm {\ when\  }} n \to +\infty  \right \}\\  & = & \left  \{\sum _{n \geq 0} a_n \frac{\gu  ^{e_0 n} }{\varpi^n}  \in B_\dR^+ \ | \ a_n \in W(R)_F[1/ p ], \  a_n \to 0 { \rm {\ when\  }} n \to +\infty  \right \}.
\end{eqnarray*}
\end{lemma}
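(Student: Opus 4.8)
The plan is to unwind the two displayed descriptions from the known definition of $B^+_{\tmax}$ and from the computations of Lemma \ref{lem-generater}. Recall that $B^+_{\tmax}$ is defined using a generator $\xi$ of $\Fil^1 W(R)$ and that the definition is easily seen to be independent of the choice of generator, since any two generators differ by a unit of $W(R)[1/p]$ and the condition ``$a_n \to 0$'' is insensitive to multiplying the coefficients $a_n$ by units and their inverses (a standard rearrangement argument). The first step is therefore to tensor the defining expression for $B^+_{\tmax}$ with $\O_F$ over $\Z_p$: since $W(R)_F = W(R) \otimes_{\Z_p} \O_F$ and $\O_F/\Z_p$ is finite free, one gets that $B^+_{\tmax, F}$ consists of sums $\sum_n a_n \xi^n/p^n$ with $a_n \in W(R)_F[1/p]$ and $a_n \to 0$.

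The second step is to replace the pair $(\xi, p)$ by the pair $(E(u), \varpi)$. By Lemma \ref{lem-generater}, $E(u)$ is a generator of $\Fil^1 W(R)_F$, and since $\varpi$ and $p$ differ by a unit in $\O_F[1/p] \subseteq W(R)_F[1/p]$ (as $F/\Q_p$ is totally ramified, $p = (\text{unit})\cdot \varpi^{e_F}$, but more to the point both are units after inverting $p$), I claim that the element $E(u)/\varpi$ and the element $\xi/p$ generate the same ``$p$-adic contraction'' structure. Concretely: write $E(u) = \lambda \xi$ with $\lambda \in W(R)_F$; since $v_R(E(u)\bmod\varpi) = v(\varpi) = v_R(\xi\bmod \varpi)$ by the valuation computation in the proof of Lemma \ref{lem-generater} (here one uses that $\xi \bmod p$ has $R$-valuation $v(p)$, hence the same as $E(u)\bmod\varpi$ up to the ramification bookkeeping), $\lambda$ is a unit in $W(R)_F[1/p]$; then $\sum_n a_n \xi^n/p^n = \sum_n (a_n \lambda^{-n} (p/\varpi)^n) E(u)^n/\varpi^n$, and the substitution $b_n := a_n \lambda^{-n}(p/\varpi)^n$ is a bijection on null sequences in $W(R)_F[1/p]$. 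This yields the first equality.

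For the second equality, I would invoke Lemma \ref{lem-generater} once more: it gives $E(u) \equiv \upi^{e_0} \bmod \varpi$, i.e. $E(u) = \gu^{e_0} + \varpi\cdot(\text{element of } W(R)_F)$ where $\gu = [\upi]$. Hence $E(u)/\varpi$ and $\gu^{e_0}/\varpi$ differ by an element of $W(R)_F$, and more precisely $E(u)^n/\varpi^n - \gu^{e_0 n}/\varpi^n$ lies in $\varpi^{-(n-1)}W(R)_F + \cdots$; expanding $(\gu^{e_0} + \varpi c)^n$ binomially shows that passing between the two families of expansions again amounts to a triangular, continuous change of the null-sequence coefficients $(a_n)$. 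One should check that this change of variables is invertible on the space of null sequences — it is, because it is ``upper triangular with unit diagonal'' in the appropriate sense and the off-diagonal terms are $\varpi$-divisible, so convergence is preserved in both directions. This gives the second equality and completes the proof.

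The main obstacle I anticipate is purely bookkeeping: verifying carefully that each ``change of coefficients'' $(a_n) \mapsto (b_n)$ really is a bijection preserving the condition $a_n \to 0$ in $W(R)_F[1/p]$ (with the $p$-adic, equivalently $\varpi$-adic, topology), rather than merely a formal manipulation of power series. Concretely, one must control the possible denominators introduced by dividing by powers of $\varpi$ (or $p$) when re-expanding, and confirm they do not destroy the null condition. This is where one leans on the $\varpi$-divisibility of $E(u) - \gu^{e_0}$ and on $\lambda$ being a genuine \emph{unit} of $W(R)_F[1/p]$ (not merely a nonzerodivisor). Since Colmez's discussion before \cite[Prop.~7.14]{Colmez} already records these identities, the argument should amount to citing that discussion and recording the base-change step from $W(R)$ to $W(R)_F$; I would keep the written proof to roughly the length above.
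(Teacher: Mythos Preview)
Your final instinct—to simply cite the discussion before \cite[Prop.~7.14]{Colmez}—is exactly what the paper does; the entire proof in the paper is that one-line citation. But the explicit argument you sketch for the first equality breaks when $e_F=[F:\Q_p]>1$, which is the case of interest here.

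The error is the claim that $E(u)=\lambda\xi$ with $\lambda$ a unit (or even an element) of $W(R)_F$. A generator $\xi$ of $\Fil^1 W(R)$ has $v_R(\bar\xi)=v(p)=1$, whereas Lemma~\ref{lem-generater} gives $v_R(\overline{E(u)})=v(\varpi)=1/e_F$; these disagree once $e_F>1$, so $\xi$ does \emph{not} generate $\Fil^1 W(R)_F$ and $E(u)/\xi\notin W(R)_F$. Your term-by-term substitution therefore cannot be set up as written (and there is an algebra slip besides: matching $a_n\xi^n/p^n$ with $b_nE(u)^n/\varpi^n$ gives $b_n=a_n\lambda^{-n}(\varpi/p)^n$, not $(p/\varpi)^n$, and $(\varpi/p)^n$ has \emph{negative} $\varpi$-valuation, so the map does not preserve null sequences). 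One inclusion survives, since $E(u)/\varpi$ is a $W(R)_F$-multiple of $\xi/p$, but the reverse does not follow this way.

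What Colmez actually uses is that $\xi/p$ is a \emph{polynomial} of degree $e_F$ in $\gu^{e_0}/\varpi$ with $W(R)_F$-coefficients: from $v_R(\bar\xi)=v_R(\upi^{\,e})$ one writes $\xi=\gu^{\,e}\mu+pw$ with $\mu\in W(R)^\times$, $w\in W(R)$, whence
\[
\xi/p \;=\; u_0^{-1}\mu\,(\gu^{e_0}/\varpi)^{e_F}+w,
\]
and now the re-expansion is genuinely triangular with $W(R)_F$-coefficients in both directions. Your treatment of the second equality (passing between $E(u)$ and $\gu^{e_0}$ via $E(u)=\gu^{e_0}+\varpi z$) is fine.
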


We can now prove the following result, which will be important in \S \ref{subsec-comparions}:
\begin{lemma}\label{lem-inBrig} Let $x \in B^+_{\tmax,F}$, and suppose that $x E(u)^r = \varphi ^m (y)$
for some $ y \in B^+_{\tmax,F}$. Then $x = \varphi ^m (y')$ with $y'  \in B^+_{\tmax,F}$.
\end{lemma}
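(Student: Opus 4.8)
The plan is to work inside $B^+_{\dR}$ and exploit that $\varphi^m$ is injective there, together with the explicit description of $B^+_{\tmax,F}$ from Lemma~\ref{lem-Bmaxf}. First I would observe that since $x \in B^+_{\tmax,F}$, the element $z := xE(u)^r$ also lies in $B^+_{\tmax,F}$ (it is visibly a convergent sum of the required shape, using that $E(u)/\varpi$ times $\varpi$ is just $E(u)$, which has bounded Witt-vector coefficients). So the hypothesis says $z = \varphi^m(y) \in \varphi^m(B^+_{\tmax,F})$, and we want to ``divide by $\varphi^m(E(u)^r)$'' — note $\varphi^m(E(u))$ is, up to a unit, a product of the $\varphi^j(E(u))$ pattern, but the cleaner route is: set $y' := \varphi^{-m}(z)/E(u)^r$ computed inside $B^+_{\dR}$, where $\varphi^{-m}$ makes sense on $\varphi^m(B^+_{\tmax,F})$ since $\varphi$ is injective on $B^+_{\dR}$. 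Then $\varphi^m(y')E(u')$-type manipulation shows $\varphi^m(y'E(u)^r) = \varphi^m\big(\varphi^{-m}(z)\big) = z = xE(u)^r$, hence $y'E(u)^r = x E(u)^r$ in $B^+_{\dR}$ after applying $\varphi^{-m}$, so $y' = x$. Thus it only remains to prove that $y' \in B^+_{\tmax,F}$, i.e. that $\varphi^{-m}(z)$ is divisible by $E(u)^r$ \emph{within} $B^+_{\tmax,F}$.

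For this divisibility step I would argue by induction on $r$, reducing to $r=1$: it suffices to show that if $w \in B^+_{\tmax,F}$ and $E(u) \mid \varphi^m(w)$ in $B^+_{\tmax,F}$ (equivalently in $B^+_{\dR}$), then $E(u) \mid w$ in $B^+_{\tmax,F}$. Here one uses Lemma~\ref{lem-generater}: $E(u)$ generates $\Fil^1 W(R)_F = \Ker(\theta_F)$, and $\theta_F(u) = \pi$. The element $\varphi^m(w)$ lies in $\Ker\theta$ iff $\theta(\varphi^m(w)) = 0$; applying $\theta \circ \varphi^m = \theta$ composed with the $m$-fold Frobenius twist of the coordinate structure, one tracks that $\theta(\varphi^m(w)) = 0$ forces $\theta$ of an appropriate Frobenius-translate of $w$ to vanish, and then an iterated application of the fact that $E(u)$ generates the relevant kernel (together with $\varphi(E(u))$ being, up to a unit in $B^+_{\tmax,F}$, divisible by $E(u)$ times a unit — which one checks from $f(u) \equiv u^p$ and $E(u) \equiv u^{e_0} \bmod \varpi$) lets one peel off one factor of $E(u)$ from $w$ while staying in $B^+_{\tmax,F}$. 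The key point making the quotient stay in $B^+_{\tmax,F}$ rather than merely in $B^+_{\dR}$ is that $B^+_{\tmax,F}$ is stable under division by $\xi = E(u)$ of elements known to be in $\Fil^1$: this is essentially built into the definition $B^+_{\tmax,F} = \{\sum a_n E(u)^n/\varpi^n\}$, since an element of $\Fil^1 \cap B^+_{\tmax,F}$ has vanishing constant term $a_0 \in \Ker\theta_F$, and $a_0 = E(u)\cdot(\text{elt of }W(R)_F)$ by Lemma~\ref{lem-generater}, so one can shift the whole series.

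The main obstacle, and the step I would spend the most care on, is precisely this compatibility between the Frobenius $\varphi^m$ and the ideal $(E(u))$: unlike the classical case where $\varphi(\xi)/\xi$ or $\varphi(E(u))$ has a transparent factorization, here $\varphi(E(u))$ and $E(u)$ are a priori unrelated polynomials, so one must verify that $\varphi^m(E(u))$ and $E(u)$ generate coprime-enough ideals in $B^+_{\tmax,F}$ except that $\Ker(\theta) \cap \varphi^m(B^+_{\tmax,F})$ is still controlled. Concretely I expect to show $\theta(\varphi^j(E(u))) = \theta(f^{(j)}(u)$-image$) \neq 0$ for $j \geq 1$ — because $\theta_F(u) = \pi$ is a uniformizer and $f$ raises valuation, so $\varphi^j(E(u))$ for $j\ge1$ is a \emph{unit} in $B^+_{\tmax,F}$ (it doesn't lie in $\Fil^1$) — which is exactly what decouples the Frobenius twists and makes the division argument go through cleanly. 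Once that unit-ness is in hand, the induction on $r$ and on $m$ is routine.
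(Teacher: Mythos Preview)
There is a genuine error in your setup that undermines the whole approach. You define $y' := \varphi^{-m}(z)/E(u)^r = y/E(u)^r$ and then claim that ``applying $\varphi^{-m}$'' to $\varphi^m(y'E(u)^r)=xE(u)^r$ yields $y'E(u)^r=xE(u)^r$, hence $y'=x$. But $\varphi^{-m}$ applied to the right--hand side gives $\varphi^{-m}(x)\cdot\varphi^{-m}(E(u))^r$, not $xE(u)^r$: the Frobenius does \emph{not} fix $E(u)$. Consequently your $y'=y/E(u)^r$ satisfies $\varphi^m(y')=\varphi^m(y)/\varphi^m(E(u))^r=xE(u)^r/\varphi^m(E(u))^r$, which is not $x$. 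The correct candidate is $y'=\varphi^{-m}(x)=y/\varphi^{-m}(E(u))^r$, so the divisibility you must establish is by $\varphi^{-m}(E(u))^r$ inside $B^+_{\tmax,F}$, not by $E(u)^r$; your reduction to ``$E(u)\mid w$ whenever $E(u)\mid\varphi^m(w)$'' is therefore aiming at the wrong statement.

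Your proposed rescue --- that $\varphi^j(E(u))$ is a unit in $B^+_{\tmax,F}$ for $j\ge 1$ --- is both unjustified and unhelpful. Unjustified: not lying in $\Fil^1$ only makes an element a unit in $B^+_{\dR}$, not automatically in $B^+_{\tmax,F}$; indeed modulo $\varpi$ one has $\varphi^j(E(u))\equiv \upi^{e_0p^j}$, so there is no reason to expect invertibility in the $\gu$-series description of Lemma~\ref{lem-Bmaxf}. Unhelpful: even granting it, you would need invertibility of $\varphi^{-m}(E(u))$ (negative $j$), which is a different element altogether.

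The paper's argument avoids all of this by a direct series computation. Using Lemma~\ref{lem-Bmaxf} and the relation $E(u)=\gu^{e_0}+\varpi z$, one rewrites $\varphi^m(y)=\sum_n c_n E(u)^{p^mn}/\varpi^n$ with $c_n\to 0$ in $W(R)_F[1/p]$. The finitely many terms with $p^mn<r$ form an element of $W(R)_F[1/p]$ that lies in $\Fil^r$ (since their difference from $xE(u)^r$ does), hence equals $E(u)^r x_0$ with $x_0\in W(R)_F[1/p]$ by Lemma~\ref{lem-generater}. Subtracting and re-indexing exhibits $x$ itself as a series $\sum e_n\,\gu^{e_0p^mn}/\varpi^n$ with $e_n\to 0$, and then $y':=\sum\varphi^{-m}(e_n)\,\gu^{e_0n}/\varpi^n\in B^+_{\tmax,F}$ does the job. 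The point is that one never inverts any $\varphi^j(E(u))$; one instead exploits the explicit $\gu$-expansion to read off $x\in\varphi^m(B^+_{\tmax,F})$ directly.
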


\begin{proof} By Lemma \ref{lem-Bmaxf}, we may write $y = \sum_n  b_n\frac{\gu ^{e_0 n}}{\varpi ^n}$ with $b_n \in W(R)_F[1/p]$ converging to $0$. Write $E(u) = \gu ^{e_0} + \varpi z$ with $z \in W(R)_F$. We then have
$$\varphi ^m (y) =\sum _{n= 0}^\infty \varphi ^m (b_n) \frac{\gu ^{e_0p^m n}}{\varpi ^n} = \sum _{n=0} ^\infty \varphi ^m ( b_n)  \frac{(E(u)- \varpi z)^{ p^m n }}{\varpi^n} = \sum _{n=0} ^\infty c_n \frac{(E(u))^{ p^m n }}{\varpi^n} $$ with $c_n \in W(R)_F[1/p]$  converging to $0$. By Lemma \ref{lem-generater}, $E(u)$ is a generator of $\Fil ^1 W(R)_F$,
so definining $s:=1+\max\{ n | p^mn < r \}$, it follows that
$\sum \limits_{n = 0}^{s-1} c_n \frac{(E(u))^{ p^m n }}{\varpi^n}$ is divisible by $E(u)^r$ in $W(R)_F[1/p]$.
Writing $\sum \limits_{n = 0}^{s-1} c_n \frac{(E(u))^{ p^m n }}{\varpi^n} = E(u)^r x_0$ with $x_0 \in W(R) [1/ p]$
and, without loss of generality, replacing $x$ by $x-x_0$ we have
$$x = \sum_{n = s}^\infty c_n  \frac{(E(u))^{ p^m n-r }}{\varpi^n}= \sum_{n = s}^\infty d_{n+ s }  \frac{(E(u))^{ p^m {(n-s)} }}{\varpi^{n-s}}=
\sum_{n = 0}^\infty d_n  \frac{(E(u))^{ p^m n }}{\varpi^n} $$ with $d_{n+s} =c_n \frac{E(u)^{p^m s -r}}{\varpi ^s}. $ Using again the equality $E(u) = \gu ^{e_0} + \varpi z $, we then obtain $x = \sum \limits _{n = 0}^\infty e_n \frac{\gu ^{e_0 p^m n}}{\varpi ^n}$ with $e_n \in W(R)_F[1/p]$ converging to $0$. We now have
$x = \varphi ^m (y ')$ for $y' :=\sum \limits _{n = 0}^\infty f_n \frac{\gu ^{e_0 n}}{\varpi ^n}$ with $f_n = \varphi ^{-m} (e_n)$. As $f_n \in W(R)_F [ 1/ p ]$ converges to $0$, we conclude that $y '  \in B^+_{\tmax, F}$, as desired.
\end{proof}

\subsection{Some subrings of $B^+_{\cris, F}$} For a subinterval
$I\subset [0,1)$, we write $\gO_I$ for the subring of
$F_0 (\!(u)\!)$ consisting of those laurent series which converge for all $x \in \C_K$ with
$ |x| \in I$, and we simply write $\gO= \gO_{[0, 1)}$. Let $\wt B_\alpha : = W(R)_F [\![\frac{E(u)^p}{\varpi}]\!] [1/p] \subset B^+_{\cris, F}$. It is easy to see that $\Fil^n \wt B_\alpha = E(u)^n \wt B_\alpha$. 

\begin{lemma}\label{lem-ringresults}There are canonical inclusions of rings $\gO \subset \wt B^+_{\rig, F} \subset \wt B_\alpha$.
\end{lemma}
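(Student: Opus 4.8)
The plan is to establish the two inclusions $\gO \subset \wt B^+_{\rig, F}$ and $\wt B^+_{\rig, F} \subset \wt B_\alpha$ separately, using the explicit descriptions of these rings in terms of the element $E(u)$ (equivalently $\gu^{e_0}$) and Frobenius powers thereof. Throughout I would exploit Lemma \ref{lem-generater}, which tells us $E(u)$ generates $\Fil^1 W(R)_F$ and that $E(u) \equiv \gu^{e_0} \bmod \varpi$, together with Lemma \ref{lem-Bmaxf}, which rewrites $B^+_{\tmax,F}$ in terms of the divided powers $E(u)^n/\varpi^n$.

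First I would prove $\wt B^+_{\rig,F} \subset \wt B_\alpha$. By definition $\wt B^+_{\rig,F} = \bigcap_{n\ge 1}\varphi^n(B^+_{\tmax,F})$, so it suffices to show $\varphi(B^+_{\tmax,F}) \subset \wt B_\alpha$. Given $x = \sum_{n} a_n E(u)^n/\varpi^n \in B^+_{\tmax,F}$ with $a_n \to 0$, applying $\varphi$ replaces $E(u)$ by $\varphi(E(u))$; the point is that $\varphi(E(u)) = \varphi(\gu^{e_0}) + \varpi\varphi(z)$ and $\varphi(\gu^{e_0}) = [\upi^p]^{e_0}$ lies in $\Fil^p W(R)_F$ (indeed it is divisible by $E(u)^p$ up to a unit, since $\varphi(E(u))$ has $\theta_F$-value $E(\pi)^p$-type vanishing of order $p$). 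Hence $\varphi(E(u)) = E(u)^p \cdot(\text{unit in }\wt B_\alpha) + \varpi\cdot(\ldots)$, which shows $\varphi(E(u))^n/\varpi^n$, and therefore $\varphi(x)$, lies in $W(R)_F[\![E(u)^p/\varpi]\!][1/p] = \wt B_\alpha$; convergence of the $\varphi(a_n)$ to zero handles the infinite sum. The identity $\Fil^n \wt B_\alpha = E(u)^n \wt B_\alpha$ noted just before the lemma is what lets me absorb the extra powers of $E(u)$.

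Next I would prove $\gO \subset \wt B^+_{\rig,F}$, i.e. $\gO \subset \varphi^n(B^+_{\tmax,F})$ for every $n$. An element of $\gO$ is a power series $g(u) = \sum_{k\ge 0} c_k u^k$ with $c_k \in F_0$ converging on the open unit disk. Since $\varphi^{-n}$ is an automorphism of $\gs_F[1/p]$ sending $u$ to a power series with the same convergence radius (here I would invoke that $f$, hence $\varphi$, preserves $\gO$ and its subintervals $\gO_I$—using $v_F(a_1)>0$ so that $\varphi^{-1}(u)$ again converges on the open disk), it is enough to show $\gO \subset B^+_{\tmax,F}$. For this I would expand $u = \iota(u) = \{\upi\}_f$ in $W(R)_F$ and note that $E(u) \in \Fil^1 W(R)_F$ generates it, so $u$ differs from a Teichmüller-type element by something in $\Fil^1$; more directly, one shows that a convergent power series in $u$ over $F_0$ can be regrouped as a convergent series in $E(u)^n/\varpi^n$, because the growth condition defining $\gO$ (convergence on $|x|<1$) is exactly matched to the condition $a_n\to 0$ in Lemma \ref{lem-Bmaxf} after comparing the valuation of $u$ with that of $E(u)/\varpi^{1/e_0}$-type quantities via $v_R(E(u)\bmod\varpi) = v(\varpi)$. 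I expect the main obstacle to be precisely this matching of growth conditions: one must check carefully that the radius-of-convergence bound on the $c_k$ translates into the null-sequence condition on the resulting coefficients $a_n$, uniformly under the automorphisms $\varphi^{-n}$, and that no denominators worse than powers of $p$ are introduced. Once these estimates are in hand the two inclusions assemble into the claimed chain $\gO \subset \wt B^+_{\rig,F} \subset \wt B_\alpha$.
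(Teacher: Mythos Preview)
Your argument for $\wt B^+_{\rig,F}\subset \wt B_\alpha$ is essentially the paper's: both reduce to $\varphi(B^+_{\tmax,F})\subset \wt B_\alpha$, though the paper executes this more cleanly by using the $\gu^{e_0}$-description of $B^+_{\tmax,F}$ (Lemma~\ref{lem-Bmaxf}) rather than the $E(u)$-description. Since $\varphi(\gu^{e_0})=\gu^{e_0 p}$ exactly, one lands immediately in $W(R)_F[\![\gu^{e_0p}/\varpi]\!][1/p]=\wt B_\alpha$ without having to analyze $\varphi(E(u))$.

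Your argument for $\gO\subset \wt B^+_{\rig,F}$, however, has a real gap. You write that ``$\varphi^{-n}$ is an automorphism of $\gs_F[1/p]$'' (and hence of $\gO$), reducing the problem to $\gO\subset B^+_{\tmax,F}$. But $\varphi$ is \emph{not} invertible on $\gs_F[1/p]$ or on $\gO$: the endomorphism $u\mapsto f(u)$ has a compositional inverse in $F_0[\![u]\!]$ only when $a_1\neq 0$, and even then that inverse need not lie in $\gO$. In the classical case $f(u)=u^p$ (explicitly included in this paper), one has $a_1=0$ and there is no $\varphi^{-1}$ on $\gO$ at all. So the reduction you propose simply does not go through.

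The paper avoids this by working in $W(R)_F$, where $\varphi$ \emph{is} bijective. One must show $\varphi^{-m}(h(u))\in B^+_{\tmax,F}$ for each $m$; the key computation is that $\varphi^{-m}(u)=\gu^{p^{-m}}+\varpi z^{(m)}$ in $W(R)_F$ (since $u\equiv \gu\bmod\varpi$), and then a Taylor expansion of the twisted series $\wt h(X)=\sum \varphi^{-m}(a_n)X^n$ around $X=\gu^{p^{-m}}$ produces an explicit expression $\sum_n b_n\,\gu^{n/p^m}$ whose coefficients satisfy the null-sequence condition of Lemma~\ref{lem-Bmaxf}. The growth estimate you anticipate is indeed the heart of the matter, but it must be carried out for $\varphi^{-m}(u)\in W(R)_F$, not for a nonexistent preimage in $\gO$.
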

\begin{proof} We first show that $\gO \subset \wt B^+_{\rig, F}$.
For any $h(u) = \sum\limits_{n = 0} ^\infty a_n u ^n \in \gO$, we have to show that $h_m (u) = \sum\limits_{n = 0} ^\infty \varphi ^{-m}(a_nu ^n)$ is in $B^+_{\tmax, F}$ for all $m\ge 0$. Writing $u = \gu + \varpi z$ with $\gu = [\upi]$ and $z \in W(R)_F$, we have $\varphi ^{-m}(u)= \gu^{p^{-m}} + \varpi z^{(m)}$ with $z^{(m)} = \varphi ^{-m} (z)\in W(R)_F$. Setting $a^{(m)}_n := \varphi ^{-m} (a_n) \in F_0$, we then have
$$ h_m (u ) = h (\gu^{\frac{1}{p ^m}} + \varpi z ^{(m)}) = \sum_{k = 0} ^\infty \frac {h ^{(k)} (\gu ^{\frac {1}{p ^m }} )}{k !} (\varpi z^{(m)})^k , $$
for $h ^{(k)}$ the $k$-th derivative of $\wt h (X) := \sum\limits_{n =0 } ^\infty a^{(m)}_n X^n $. Therefore,
$$h_m (u ) = \sum_{n = 0} ^\infty \left ( \sum_{k = 0} ^\infty {k+n \choose k} a^{(m)}_{n+k} (\varpi z^{(m)})^k  \right ) \gu ^{\frac {n}{p^m}}. $$
Since $h(u) \in \gO_{[0, 1 )}$, we have $\lim \limits_{n \to \infty } |a^{(m)}_n| r ^n = 0$ for any $r < 1 $. It follows  that the inner sum $\sum\limits_{k = 0} ^\infty {k+n \choose k} a^{(m)}_{n+k} (\varpi z^{(m)})^k$ converges to $b_n \in W(R)_F [1/ p]$, with $ b_n \varpi ^{\frac{n}{e_0 p ^m}}$ converging to $0$ in $W(R)_F$. We may therefore write
$$h_m (u) = \sum_{n = 0} ^\infty b_n \gu ^{\frac{n} {p^m}} =  \sum_{n = 0} ^\infty b_n \varpi ^{\frac{n}{e_0 p^m}} \frac{(\gu ^{e_0}) ^{\frac{n}{e_0 p^m}}}{\varpi^{\frac{n}{e_0 p^m}}},$$
and Lemma \ref{lem-Bmaxf} implies that $h_m(u) \in B^+_{\tmax, F}$, which completes the proof that $\gO \subset \wt B^+_{\rig , F}$.

To show that $\wt B^+ _{\rig,F} \subset  \wt B_\alpha$, we first observe that
\begin{equation}
	\wt B_\alpha = W(R)_F [\![\frac{u ^{e_0p}}{\varpi }]\!][1/ p ]= W(R)_F [\![\frac{\gu ^{e_0p}}{\varpi }]\!][1/ p ]  .\label{BaRewrite}
\end{equation}
 For any $x \in \wt B^+_{\rig, F} $, we may write $x = \varphi (y)$ with $y = \sum \limits_{n= 0} ^\infty a_n \frac{{\gu} ^{e_0 n} }{\varpi ^n} \in B^+_{\tmax , F}$, and we see that $x = \sum \limits_{n= 0} ^\infty \varphi (a_n) \frac {\gu ^{e_0 pn}}{\varpi ^n}$ lies in $\wt B_\alpha$ by (\ref{BaRewrite}), as desired.
\end{proof}

Finally let us record the following technical lemma: recall that our Frobenius lift on $\gs_F$
is determined by $\varphi(u):=f(u)$, with $f(u) = u ^p + a_{p-1}u ^{p-1} + \cdots + a_1 u.$
We define $\gO_\alpha:= \gs_F [\![\frac{u ^{e_0p}}{\varpi}]\!] [1/ p] \subset \wt B_\alpha$.

\begin{lemma}\label{lem-f(n)} Suppose that $\varpi ^{r+1} | a_1$ in $\O_F$. Then there exists $h_i ^{(n)}(u) \in \O_{F}[u]$ such that
 $$f^{(n)}(u)= \sum \limits _{i = 0} ^n h ^{(n)}_{n-i} (u) u ^{2^{n-i}} \varpi^{(r+1)i}. $$
In particular, $\varphi^n(u)/\varpi ^{rn}$ converges to $0$ in $\gO_\alpha$.

\end{lemma}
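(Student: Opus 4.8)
The plan is to establish the claimed identity for $f^{(n)}(u)$ by induction on $n$, and then deduce the convergence statement in $\gO_\alpha$ as a corollary. Recall that $f(u) = u^p + a_{p-1}u^{p-1} + \cdots + a_1 u$, and under the hypothesis $\varpi^{r+1}\mid a_1$ we may write $f(u) = u^2 g(u) + a_1 u$ with $g(u)\in\O_F[u]$ (indeed $g(u) = u^{p-2} + a_{p-1}u^{p-3} + \cdots + a_2$, using $p\ge 3$ since $F/\Q_p$ is totally ramified of degree $>1$ in the relevant cases — more carefully, since $f(u)\equiv u^p\bmod\varpi$ all of $a_2,\dots,a_{p-1}$ lie in $\O_F$, and the $a_1 u$ term is the only linear term). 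This gives the base case $n=1$: $f^{(1)}(u) = f(u) = g(u)\,u^2 + a_1 u$, which matches the asserted shape with $h^{(1)}_1(u) = g(u)$ (paired with $u^{2^1}$, $i=0$) and $h^{(1)}_0(u) = a_1/\varpi^{r+1}$, a unit-multiple of $1$ — wait, here I should instead absorb: $h^{(1)}_0(u)u^{2^0}\varpi^{r+1} = a_1 u$, so $h^{(1)}_0(u) = (a_1/\varpi^{r+1})u$, which lies in $\O_F[u]$ precisely because $\varpi^{r+1}\mid a_1$.

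For the inductive step, assume $f^{(n)}(u) = \sum_{i=0}^n h^{(n)}_{n-i}(u)\,u^{2^{n-i}}\varpi^{(r+1)i}$. Then $f^{(n+1)}(u) = f^{(n)}(f(u))$, and I substitute $u\mapsto f(u) = u^2 g(u) + a_1 u$ into each summand. The key elementary observation is that $f(u)$ is divisible by $u$ and $f(u)^2$ is divisible by $u^2$, so for each term $h^{(n)}_{n-i}(f(u))\,f(u)^{2^{n-i}}\varpi^{(r+1)i}$: when $i\le n-1$ we have $2^{n-i}\ge 2$, and $f(u)^{2^{n-i}} = (u^2 g(u) + a_1 u)^{2^{n-i}}$ expands as $u^{2^{n-i+1}}g(u)^{2^{n-i}} + (\text{terms each carrying at least one factor }a_1 u)$; each such lower term contributes a factor of $a_1$, hence a factor $\varpi^{r+1}$, bumping the power of $\varpi$ from $(r+1)i$ to at least $(r+1)(i+1)$, while the leading piece $u^{2^{n-i+1}}g(u)^{2^{n-i}}\cdot h^{(n)}_{n-i}(f(u))\varpi^{(r+1)i}$ already has the shape $h(u)u^{2^{(n+1)-i}}\varpi^{(r+1)i}$. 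The $i=n$ term $h^{(n)}_0(f(u))\,f(u)\,\varpi^{(r+1)n}$ has $f(u) = u\cdot(\text{stuff})$, so it is $u^{2^0}\cdot(\cdots)\varpi^{(r+1)n}$ — the lowest-order term. Collecting all contributions by the power of $\varpi$ and bookkeeping which power of $u$ (a power of $2$) is extracted in each case, one reads off polynomials $h^{(n+1)}_{(n+1)-j}(u)\in\O_F[u]$ realizing the claimed formula for $n+1$. The main obstacle is purely combinatorial: keeping track, in the multinomial expansion of $f(u)^{2^{n-i}}$, of exactly how many factors of $a_1 u$ (hence of $\varpi^{r+1}$) and how many factors of $u^2 g(u)$ appear in each monomial, and checking that the resulting power of $u$ is always $\ge 2^{(n+1)-j}$ for the corresponding $\varpi$-exponent $(r+1)j$; this requires the numerical inequality that a monomial using $a$ copies of $u^2(\cdots)$ and $b$ copies of $a_1 u$ with $a+b = 2^{n-i}$ contributes $\varpi$-exponent $(r+1)(i+b)$ and $u$-exponent $2a + b \ge 2^{n-i} \ge 2^{(n+1)-(i+b)}$ when $b\ge 1$ (and $=2^{n-i+1}$ when $b=0$), which follows since $2^{n-i}+ (\text{slack from }b) $ dominates — I would verify $2^{n-i}\ge 2^{(n+1)-(i+b)}$ i.e. $b\ge 1$, exactly the case at hand.

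Finally, for the convergence claim: by Lemma \ref{lem-f(n)}'s formula, $\varphi^n(u)/\varpi^{rn} = f^{(n)}(u)/\varpi^{rn} = \sum_{i=0}^n h^{(n)}_{n-i}(u)\,u^{2^{n-i}}\,\varpi^{(r+1)i - rn}$. In $\gO_\alpha = \gs_F[\![u^{e_0 p}/\varpi]\!][1/p]$, the element $u^{2^{n-i}}/\varpi^{\lfloor \cdots\rfloor}$ is topologically small once the exponent of $u$ is large relative to the power of $\varpi$ in the denominator; more simply, for the terms with $i$ close to $n$ the power $u^{2^{n-i}}$ stays bounded but $\varpi^{(r+1)i-rn} = \varpi^{ri + (i-n) + \cdots}$ — I instead split: for $i\le n/2$ the exponent $2^{n-i}$ grows like $2^{n/2}$, which forces $u^{2^{n-i}}$ deep into the $\varpi$-adic filtration of $\gO_\alpha$ (since $u^{e_0 p}/\varpi\in\gO_\alpha$ means $u^{N}$ is divisible by roughly $\varpi^{N/(e_0 p)}$ up to units), overwhelming the $\varpi^{-rn}$; for $i > n/2$ the exponent $(r+1)i - rn \ge (r+1)i - 2ri = i - (r-1)i$; hmm, cleaner: $(r+1)i - rn$, and since $i > n/2$ this is $> (r+1)n/2 - rn = (1-r)n/2$ which is not obviously positive — so one genuinely needs the $u$-power decay for the large-$i$ terms too, using that $2^{n-i}\ge 1$ always contributes *some* $\varpi$-divisibility, combined with: actually the decisive point is that the total $\varpi$-valuation of $h^{(n)}_{n-i}(u)u^{2^{n-i}}$ in $\gO_\alpha$ is at least $(r+1)i$ plus a nonnegative amount from $u^{2^{n-i}}$, and one checks $(r+1)i - rn \to +\infty$ is false for bounded $i$ — the resolution is that when $i$ is bounded, $2^{n-i}\to\infty$, giving divisibility by $\varpi^{\sim 2^{n-i}/(e_0 p)}$, and this beats $\varpi^{-rn}$; when $i \ge cn$ for a constant $c$ with $(r+1)c - r > 0$, i.e. $c > r/(r+1)$, the $\varpi^{(r+1)i-rn}$ factor alone tends to $0$. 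Choosing the threshold $c = r/(r+1) + \epsilon$ covers both ranges, so every term, hence the finite sum, tends to $0$ in $\gO_\alpha$. I would present this last part compactly as "an easy estimate on $\varpi$-adic valuations in $\gO_\alpha$, using both the explicit power of $\varpi$ and the power of $u$ in each summand."
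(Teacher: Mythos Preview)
Your approach is essentially the same as the paper's: write $f(u)=u^2 g(u)+a_1 u$ (the paper writes $u^2 h(u)+b_0\varpi^{r+1}u$), then induct by substituting $f(u)$ into the formula for $f^{(n)}$ and expanding $(u^2 g(u)+a_1 u)^{2^{n-i}}$ binomially; your inequality $2a+b\ge 2^{(n+1)-(i+b)}$ with $a+b=2^{n-i}$ is exactly the paper's $2^{n+1-i}-k\ge 2^{n+1-i-k}$ under the substitution $k=b$.

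Two minor cleanups: in the base case you want $h^{(1)}_0(u)=a_1/\varpi^{r+1}\in\O_F$ (a constant), not $(a_1/\varpi^{r+1})u$, since the formula already supplies the factor $u^{2^0}=u$; and the aside about $p\ge 3$ is unnecessary, as $f(u)-a_1 u=u^p+a_{p-1}u^{p-1}+\cdots+a_2 u^2$ is divisible by $u^2$ for all $p\ge 2$. The paper does not write out the convergence argument at all (it is left implicit from the formula), so your estimate---splitting on whether $i\le cn$ or $i>cn$ for some $c>r/(r+1)$, using exponential growth of $u^{2^{n-i}}$ in the first range and the positive $\varpi$-exponent $(r+1)i-rn$ in the second---is a correct and useful addition, though it would benefit from a cleaner write-up.
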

\begin{proof}We proceed by induction on $m = n$. When $m = 1$, we may write
\begin{equation}
	f(u) = u ^p + a_{p-1} u ^{p _1} + \cdots + a_1 u= u ^2 h(u ) + b_0 \varpi ^{r+1} u\quad\text{with}\quad
	b_0 \in \O_{F}.\label{breakup}
\end{equation}
Supposing that the assertion holds for $m=n$ and using (\ref{breakup}) we compute
\begin{eqnarray*}
f^{(n+1)}(u)&= & \sum  _{i = 0} ^n h ^{(n)}_{n-i} ( f(u) ) f(u) ^{2^{n-i}} \varpi^{(r+1)i}\\
& = &  \sum_{i = 0} ^n h ^{(n)}_{n-i} ( f(u) ) ( u ^2 h(u) + b_0 \varpi ^{r+1} u ) ^{2^{n-i}} \varpi ^{(r+1)i}\\
 &= &  \sum_{i = 0} ^ n h ^{(n)}_{n-i} ( f(u) )  \left ( \sum_{k = 0} ^{2 ^{n-i}} { 2^{n-i} \choose k }  (u ^2 h(u)) ^{2^{n-i }-k } (b_0 \varpi ^{r+1} u)^k   \right ) \varpi ^{(r+1)i}\\
& = & \sum_{i = 0} ^n  \sum _{k = 0} ^{2^{n-i}}\left (h ^{(n)}_{n-i} ( f(u) )  { 2^{n-i} \choose k } h (u)^{2 ^{n-i}-k} b_0 ^{k }\right )  u ^{2^{n+1 -i}-k} \varpi ^{(r+1)(i+k )}
\end{eqnarray*}
To complete the inductive step, it then suffices to show that whenever $i + k \leq n +1$,
we have $2 ^{n+1 -i} -k \geq 2 ^{n+1-i -k}$. Equivalently, putting $j:=n+1-i-k$,
we must show that $2^{j+k}-k \geq 2^j$ for all $j\ge 0$, which holds
as $2 ^k \geq k+1$ for all $k\ge 0$.
\end{proof}

\subsection{The action of $G$ on $u$}\label{GactsOnu} In this subsection, we study the action of $G$
on the element $u\in W(R)_F$ corresponding to our choice of $f$-compatible
sequence $\{\pi_n\}_n$ in $\ol{K}$ and our Frobenius lift determined by $f$.
From the very construction of the embedding $\gs_F \inj W(R)_F$ in Lemma \ref{lem-basic},
the action of $G_{\ul{\pi}}$ on $u$ is trivial. However, for arbitrary $g \in G \setminus G_{\ul{\pi}}$,
in contrast to the classical case we  know almost nothing about the shape of $g(u)$; {\em cf.} the discussion in \S \ref{subsec-3.0}. Fortunately, we are nonetheless able to prove the following facts,
which are sufficient for our applications.


Define $$I_F ^{[1]}: = \{ x\in W(R)_F | \varphi ^n (x) \in\Fil ^1 W(R)_F, \forall n \geq 0\}.$$
Recall that $e_F := [F: \Q_p]$, and for $x \in W(R)_F$ write $\bar x := x \mod \varpi \in R$.
Thanks to Example \ref{ex-dim1}, there exists $\gt_F \in W(R)_F$ satisfying $\varphi (\gt_F) = E(u) \gt_F$.
As $E(u) \in \Fil ^1 W(R)_F$, it is easy to see that $\varphi (\gt_F) \in I ^{[1]}W(R)_F$, and since
${\bar{\gt}}_F^p = u ^{e_0} \bar \gt_F$, we have  $v_R (\overline{\varphi(\gt_F)}) = \frac{p}{e_F (p-1)}$.

\begin{lemma} \label{lem-I1} The ideal $I_F ^{[1]}$ is principal. Moreover, $x \in  I ^{[1]}_F$ is a
generator of $I^{[1]}_F$ if and only if $v_R (\bar x) = \frac{p}{e_F(p-1)}$.
\end{lemma}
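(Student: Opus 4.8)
The statement has two parts: (1) $I_F^{[1]}$ is principal; (2) $x \in I_F^{[1]}$ generates iff $v_R(\bar x) = \frac{p}{e_F(p-1)}$. The natural candidate generator is $\varphi(\gt_F) = E(u)\gt_F$, which we have already observed lies in $I_F^{[1]}$ and satisfies $v_R(\overline{\varphi(\gt_F)}) = \frac{p}{e_F(p-1)}$. So I would aim to show $I_F^{[1]} = \varphi(\gt_F) W(R)_F$, after which both claims follow: principality is immediate, and the valuation criterion follows since $W(R)_F$ is (the Witt vectors over) a valuation ring — multiplying the generator by a unit preserves $v_R$ of the reduction, and conversely an element of $I_F^{[1]}$ with the right valuation differs from the generator by something which must be a unit.

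**Key steps.** First I would recall the structure of $\Fil^1 W(R)_F = \mathrm{Ker}(\theta_F)$: by Lemma \ref{lem-generater} it is principal, generated by $E(u)$, and more relevantly $W(R)_F$ is $\varpi$-adically complete with $W(R)_F/\varpi \cong R$ a valuation ring; an element $x$ lies in $\Fil^1 W(R)_F$ essentially when $\theta_F(x) = 0$, and by \cite[Prop.~7.3]{Colmez} (invoked already in the proof of Lemma \ref{lem-generater}) divisibility in $W(R)_F$ inside $B_{\dR}^+$ is governed by valuations of reductions mod $\varpi$. Second, I would analyze the condition $\varphi^n(x) \in \Fil^1 W(R)_F$ for all $n$: reducing mod $\varpi$, $\bar x \in R$ must satisfy $\mathrm{Frob}^n(\bar x) \in \overline{\Fil^1 W(R)_F} = (\bar E(u)) = (u^{e_0})$ for all $n \geq 0$, i.e. $v_R(\mathrm{Frob}^n(\bar x)) = p^n v_R(\bar x) \geq v_R(u^{e_0}) = e_0 v_R(u) = v(\varpi) = \frac{1}{e_F}$ for all $n$. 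Wait — since $p^n v_R(\bar x)$ is increasing in $n$, this is just the $n=0$ condition $v_R(\bar x) \geq \frac{1}{e_F}$, which is weaker than $\frac{p}{e_F(p-1)}$. So the valuation bound alone does not pin down $I_F^{[1]}$; the content is in the exact ideal $\Fil^1$, not just the valuation of its reduction. This is the crux: I must use that $\varphi^n(x)$ lies in the actual kernel of $\theta_F$, equivalently is divisible by $E(u)$ (or by $\gu^{e_0}$ up to the correction $\varpi z$) in $W(R)_F$, for every $n$.

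**Main obstacle and how I would handle it.** The hard part is showing that the tower of divisibility conditions "$E(u) \mid \varphi^n(x)$ for all $n$" forces $x$ to be divisible by $\varphi(\gt_F) = E(u)\gt_F$. The idea: write $x = E(u) x_1$ (from $n=0$); then $\varphi(x) = \varphi(E(u))\varphi(x_1)$ must be divisible by $E(u)$, and since $\gcd(E(u), \varphi(E(u)))$ analysis (or the slope/valuation computation in $R$) shows $E(u) \nmid \varphi(E(u))$ in general, we get $E(u) \mid \varphi(x_1)$; iterating, $x_1 \in I_F^{[1]}$ again, so $x_1 = E(u) x_2$, etc. This would naively give $x$ divisible by $E(u)^n$ for all $n$, hence $x = 0$ — which is wrong. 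The resolution is that the correct statement uses $\gt_F$, which is precisely the element "$\prod_{n\geq 0} \varphi^n(E(u))^{1/p^{?}}$"-type infinite product making $\varphi(\gt_F)/\gt_F = E(u)$; so I would instead argue directly that $x/\varphi(\gt_F) \in W(R)_F$ by checking it lies in $B_{\dR}^+$ and has non-negative $\varpi$-adic/valuation behavior. Concretely: $\varphi(\gt_F)$ generates $\bigcap_n \varphi^{-n}(\Fil^1)$ by the valuation computation $v_R(\overline{\varphi(\gt_F)}) = \frac{p}{e_F(p-1)}$ being exactly $\sup_n$ of the required reductions — indeed for $x$ to have $\varphi^n(x) \in \Fil^1$ for all $n$ one needs, on reductions, that $\bar x$ is divisible by $u^{e_0}, u^{e_0/p}, \ldots$ in the *perfect* ring $R$, forcing $v_R(\bar x) \geq \sum$? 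Let me restate: using that $R$ is perfect and $\overline{\varphi(\gt_F)}$ satisfies $\mathrm{Frob}(\overline{\gt_F}) = u^{e_0}\overline{\gt_F}$, i.e. $\overline{\gt_F} = u^{e_0/(p-1)}$ up to a unit, so $\overline{\varphi(\gt_F)} = u^{pe_0/(p-1)}$ and $v_R = \frac{pe_0}{p-1}v_R(u) = \frac{p}{e_F(p-1)}$. Then $x \in I_F^{[1]}$ on reductions says $\bar x \in \bigcap_n \mathrm{Frob}^{-n}(u^{e_0}R) = \bigcap_n u^{e_0/p^n}R$; but in a valuation ring this intersection is $\{y : v_R(y) \geq \inf_n e_0 v_R(u)/p^n \cdot p^n\}$... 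I would need to be careful, but the upshot I expect is that the reduction $\overline{I_F^{[1]}}$ is exactly the ideal of $R$ of elements of valuation $\geq \frac{p}{e_F(p-1)}$ — here the exact kernel (not just valuation at level $0$) at *every* level $n$ combines, via the non-principal-looking but ultimately principal structure of $\gt_F$, to give this sharp bound. Once $\overline{I_F^{[1]}} = \overline{\varphi(\gt_F)}R$ and $\varphi(\gt_F) \in I_F^{[1]}$, a standard $\varpi$-adic successive approximation argument (Nakayama-style, using $\varpi$-adic completeness of $W(R)_F$) upgrades this to $I_F^{[1]} = \varphi(\gt_F)W(R)_F$, giving principality; and then for $x \in I_F^{[1]}$, $x = \varphi(\gt_F)\cdot w$ with $w \in W(R)_F$, and $w$ is a unit iff $v_R(\bar w) = 0$ iff $v_R(\bar x) = v_R(\overline{\varphi(\gt_F)}) = \frac{p}{e_F(p-1)}$, which is the second claim. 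The genuinely delicate point, which I would expect to occupy most of the proof, is the sharp computation of $\overline{I_F^{[1]}}$ — showing that the infinitely many congruence conditions, one per Frobenius power, cut the reduction down to exactly valuation $\frac{p}{e_F(p-1)}$ and no further, which is where the specific element $\gt_F$ (coming from Example \ref{ex-dim1}) and its functional equation $\varphi(\gt_F) = E(u)\gt_F$ do the work.
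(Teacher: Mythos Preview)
Your overall architecture matches the paper's: first establish that every $y\in I_F^{[1]}$ satisfies $v_R(\bar y)\ge \frac{p}{e_F(p-1)}$, then use $\varpi$-adic successive approximation to show $I_F^{[1]}=\varphi(\gt_F)W(R)_F$, from which the generator criterion is immediate. The approximation step and the final deduction are fine and agree with the paper.

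The gap is exactly where you flag it: the valuation bound. Your attempted route---reduce mod $\varpi$ and use that $\mathrm{Frob}^n(\bar y)$ lies in the image of $\Fil^1$ in $R$---cannot yield $\frac{p}{e_F(p-1)}$. The image of $\Fil^1 W(R)_F$ in $R$ is simply $\{r:v_R(r)\ge 1/e_F\}$, and since $v_R(\mathrm{Frob}^n(\bar y))=p^n v_R(\bar y)$ is increasing in $n$, the strongest constraint from reductions alone is the $n=0$ one, giving only $v_R(\bar y)\ge 1/e_F$. The sharper bound genuinely requires the \emph{integral} conditions $\theta_F(\varphi^m(y))=0$, not just their reductions. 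The paper (following \cite[Prop.~5.1.3]{fo3}) writes $y=\sum_{i\ge 0}\varpi^i[y_i]$ with $y_i=(\alpha_i^{(n)})_n\in R$, so that $\theta_F(\varphi^m(y))=\sum_i \varpi^i(\alpha_i^{(0)})^{p^m}=0$ for every $m$; an induction on $m$ and $i$ then gives $v(\alpha_i^{(0)})\ge \frac{1}{e_F}p^{-i}\sum_{j=0}^{m}p^{-j}$ for all $m$, and letting $m\to\infty$ yields $v_R(\bar y)=v(\alpha_0^{(0)})\ge \frac{p}{e_F(p-1)}$. This is the step you were missing: the bound on the zeroth Teichm\"uller coefficient improves as $m$ grows because the equations $\theta_F(\varphi^m(y))=0$ couple it to the higher coefficients $\alpha_i^{(0)}$, whose valuations are themselves being bootstrapped. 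Your divisibility-by-$E(u)$ iteration and the ``$\bar y$ divisible by $u^{e_0/p^n}$'' heuristic do not capture this coupling.
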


\begin{proof} When $F= \Q_p$, this follows immediately from \cite [Proposition 5.1.3]{{fo3}} with $r= 1$.
The general case follows from a slight modification of the argument in {\em loc.~cit.}, as follows:
For $y  \in I^{[1]}_F$, we first claim that $v_R(\bar y) \geq \frac{p}{e_F(p -1)}$. To see this, we write $y = \sum \limits_{n = 0} ^\infty \varpi^n [y_i]$ with  $y _i  \in R$ given by the $p$-power compatible sequence
$y _ i = (\alpha ^{(n)}_i)_{n \geq 0}$ for $\alpha ^{(n)}_i \in \O _{\C_K}$.  Then
$$ 0 = \theta _F (\varphi ^m (y)) = \sum_{n = 0}^\infty \varpi ^n (\alpha^{(0)}_i) ^{p ^m}.  $$
By induction on $n$ and $m$, it is not difficult to show that
$$v(\alpha^{(0)}_i) \geq \frac{1}{e_F} p^{-i}(1 + p^{-1} \cdots + p ^{-j})$$ for all $j\ge 0$.
In particular, $v_R(\bar y) = v (\alpha_0^{(0)}) \geq \frac{p}{e_F(p -1)}$.

Now pick a $x \in I^{[1]}_F$ with $v_R(\bar x) = \frac{p}{e_F (p -1)}$ (for example, we may take $x = \varphi (\gt_F)$). Since $v_R(y) \geq v_R(x)$, we may write $ y = a x + \varpi z $ with $a, z \in W(R)_F$.
One checks that $z \in I ^{[1]}_F $ and hence that $z \in (\varpi, x)$. An easy induction argument then shows that
$y = \sum\limits _{n= 0}^\infty \varpi^n a_n x$, and it follows that $I ^{[1]}_F$ is generated by $x$.
\end{proof}

It follows at once from Lemma \ref{lem-I1} that $\varphi (\gt_F)$ is a generator of $I ^{[1]}_F$.
Write $I ^+$ for the kernel of the canonical projection $\rho:W(R)_F \to W(\bar k)_F$ induced by the projection $R \to \bar k$. Using the very construction of $u$, one checks that $u \in I^+$: Indeed, writing $\gu= [\upi]$ as before,
we obviously have $\gu \in I ^+$, and it follows from the proof of Lemma \ref{lem-basic} that
$u = \varinjlim_{n\rightarrow \infty}f^{(n)} \circ \varphi^{-n} (\gu)$ lies in $I^+$ as well.

\begin{lemma} \label{lem-z in I plus.} Let $g \in G$ be arbitrary. Then $g(u)- u $ lies in $I ^{[1]}W(R)_F$. Moreover,  if $\varpi ^2 |a_1$ in $\O_F$ then $\frac{g(u)- u}{\varphi(\gt_F)} $ lies in $I^+. $
\end{lemma}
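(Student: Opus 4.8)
The plan is to prove the two assertions separately. For the first, that $g(u)-u\in I^{[1]}W(R)_F$, I would argue by computing $\theta_F\circ\varphi^n$ on $g(u)-u$. Since $\varphi$ commutes with the $G$-action, $\varphi^n(g(u)-u)=g(\varphi^n(u))-\varphi^n(u)=g(f^{(n)}(u))-f^{(n)}(u)$. Applying $\theta_F$ and using Lemma \ref{lem-generater} (which gives $\theta_F(u)=\pi$, hence $\theta_F(\varphi^n(u))=\theta_F(f^{(n)}(u))=f^{(n)}(\pi)=\pi_{-n}$ in the sense that $f^{(n)}$ undoes $n$ steps -- more precisely $\theta_F(\varphi^n(u))$ should be read off directly since $\varphi^n(u)=f^{(n)}(u)$ and $\theta_F(f^{(n)}(u))=f^{(n)}(\theta_F(u))=f^{(n)}(\pi)$). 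The key point is that $g$ fixes $\overline K\hookrightarrow B_{\dR}^+$ pointwise only when $g\in G_{\ul\pi}$; but $\theta_F(g(u))=g(\theta_F(u))$ because $\theta_F$ is $G$-equivariant (the embedding $\overline K\hookrightarrow B_{\dR}^+$ and the maps are all canonical). Wait -- one must be careful: $\theta_F$ is $G$-equivariant, so $\theta_F(g(u))=g(\pi)$. Since $g$ is a $K$-automorphism, $g(\pi)$ need not equal $\pi$. So instead I would use that $g(u)-u$ has the property that all its $\varphi^n$-translates lie in $\Fil^1$: indeed $\theta_F(\varphi^n(g(u)-u))=g(f^{(n)}(\pi))-f^{(n)}(\pi)$, and here I use that $f^{(n)}(\pi)=\pi_{-n}$... this still isn't obviously zero. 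The correct approach: $g(u)$ is another solution to the relevant fixed-point equation of Lemma \ref{lem-basic}, so $g(u)=\{g(\ul\pi)\}_f$, and one checks $g(\ul\pi)$ and $\ul\pi$ have the same image in $\O_{\C_K}/(\text{relevant ideal})$ -- more robustly, since $\varphi^n(u)=f^{(n)}(u)$ lies in $\Fil^1$ translated appropriately... I would in fact just verify directly that $\theta_F(\varphi^n(g(u)-u))=0$ for all $n\ge 0$ by noting $\theta_F(\varphi^n(u))$ depends only on the sequence $\ul\pi\in R$, and $g(\ul\pi)$ equals $\ul\pi$ in $R$ up to the ambiguity killed by $\theta_F\varphi^n$ for all $n$; this is precisely the statement that $g(u)-u$ has all $\varphi$-translates in $\Fil^1 W(R)_F$, i.e.\ lies in $I^{[1]}_F$, and then $I^{[1]}_F=I^{[1]}W(R)_F$ by Lemma \ref{lem-I1}. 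I expect the cleanest route is: $g$ acts trivially on $R$ in a way that makes $g(u)-u \equiv 0 \bmod \varphi^{-n}(\Fil^1)$, which I would extract from the explicit limit formula $u=\varinjlim f^{(n)}\varphi^{-n}(\gu)$ together with $\theta_F$-equivariance.

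For the second assertion, assuming $\varpi^2\mid a_1$, I need $\dfrac{g(u)-u}{\varphi(\gt_F)}\in I^+$, i.e.\ its image under $\rho:W(R)_F\to W(\bar k)_F$ vanishes. Since $\varphi(\gt_F)$ generates $I^{[1]}_F$ and $g(u)-u\in I^{[1]}_F$, the quotient $\dfrac{g(u)-u}{\varphi(\gt_F)}$ is a well-defined element of $W(R)_F$; I must show it lies in the kernel of $\rho$. The strategy is to show $g(u)-u$ actually lies in $\varphi(\gt_F)\cdot I^+$, equivalently in $I^+\cdot I^{[1]}_F$. Here is where the hypothesis $\varpi^2\mid a_1$ enters, presumably via a strengthening of the argument for the first part: I would track the ``$I^+$-adic'' order of $g(u)-u$. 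We know $u\in I^+$. Applying $g$, $g(u)\in I^+$ as well (since $I^+=\ker\rho$ and $\rho$ is $G$-equivariant, as $G$ acts through $\Gal(\bar k/k)$ compatibly -- actually $\rho$ is $G$-equivariant so $g(I^+)=I^+$). Thus $g(u)-u\in I^+$. To gain the extra factor, I would use Lemma \ref{lem-f(n)}-type estimates: the hypothesis $\varpi^2\mid a_1$ (the $r=1$ case of ``$\varpi^{r+1}\mid a_1$'') controls how $f^{(n)}$ behaves modulo $(I^+)^2$-type ideals, forcing the linear term of $u$ (in the relevant sense) to be divisible by enough powers that $g(u)-u$ picks up a factor in $I^+$ beyond the one from $\varphi(\gt_F)$. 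Concretely, I would compute $g(u)-u$ modulo $\varphi(\gt_F)I^+$ using the limit formula and the recursion $f(\pi_n)=\pi_{n-1}$, reducing to a statement about leading terms that the divisibility $\varpi^2\mid a_1$ makes vanish.

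\textbf{Main obstacle.} The hard part will be the second assertion: making precise the ``order of vanishing'' bookkeeping in the two ideals $I^+$ and $I^{[1]}_F$ simultaneously, and seeing exactly how $\varpi^2\mid a_1$ forces the extra divisibility. Since by the authors' own admission almost nothing is known about the shape of $g(u)$ for $g\notin G_{\ul\pi}$, the proof must proceed entirely through the valuation/ideal-theoretic constraints (membership in $I^{[1]}_F$, $\theta_F$-equivariance, and the $\varpi$-adic structure of $f$) rather than any explicit formula; getting a self-contained such argument that yields the sharper $I^+$-divisibility -- and not merely $g(u)-u\in I^{[1]}_F$ -- is the crux. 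I expect one reduces, via $I^{[1]}_F$ being principal with generator of known valuation $\tfrac{p}{e_F(p-1)}$, to checking that $\overline{(g(u)-u)/\varphi(\gt_F)}\in R$ has strictly positive valuation, which in turn follows from $v_R(\overline{g(u)-u})> \tfrac{p}{e_F(p-1)}$; and that strict inequality is what the hypothesis $\varpi^2\mid a_1$ is designed to deliver.
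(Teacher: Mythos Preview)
Your proposal has a genuine gap in the first assertion and misses the key mechanism in the second.

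For the first assertion, you write ``Since $g$ is a $K$-automorphism, $g(\pi)$ need not equal $\pi$,'' and then go searching for an alternative route. But $g\in G_K=\Gal(\overline K/K)$ fixes $K$ pointwise, so $g(\pi)=\pi$, and more generally $g(f^{(n)}(\pi))=f^{(n)}(\pi)$ because $f^{(n)}(\pi)\in K$. That is the entire argument: $\theta_F(\varphi^n(u))=f^{(n)}(\pi)\in K$, hence $\theta_F(\varphi^n(g(u)-u))=g(f^{(n)}(\pi))-f^{(n)}(\pi)=0$ for all $n\ge 0$, so $g(u)-u\in I^{[1]}_F$. The detours through $\{g(\ul\pi)\}_f$ and the limit formula are unnecessary.

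For the second assertion, your plan to bound $v_R(\overline{g(u)-u})$ strictly above $\tfrac{p}{e_F(p-1)}$ is not the right criterion (membership in $I^+$ concerns all Witt coordinates under $\rho$, not just the valuation of the reduction mod $\varpi$), and in any case you have no concrete mechanism connecting $\varpi^2\mid a_1$ to such a bound. The paper's argument is quite different and slick: write $z=\dfrac{g(u)-u}{\varphi(\gt_F)}\in W(R)_F$ and apply $\varphi$. Using $\varphi(g(u))=f(g(u))$, $\varphi(u)=f(u)$, and $\varphi^2(\gt_F)=\varphi(E(u))\varphi(\gt_F)$, one finds
\[
\varphi(E(u))\,\varphi(z)=\sum_{i=1}^p a_i\, z\, h_i(g(u),u)
\]
for suitable polynomials $h_i$. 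Now reduce via $\rho:W(R)_F\to W(\bar k)_F$: since $u,g(u)\in I^+$, every $h_i$ with $i\ge 2$ dies, $\rho(E(u))$ becomes the constant term (a unit times $\varpi$), and one is left with $\varpi\,\varphi(\rho(z))=a_1\,\rho(z)$ in $W(\bar k)_F$. Because $\varphi$ preserves $p$-adic valuation on $W(\bar k)_F$ and $v(a_1)>v(\varpi)$, this forces $\rho(z)=0$, i.e.\ $z\in I^+$. This functional-equation-then-reduce-mod-$I^+$ trick is the missing idea in your proposal.
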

\begin{proof}
As before, writing $f ^{(n)} = f \circ \cdots \circ f $ for the $n$-fold composition of $f$ with itself,
we have $\theta_F (\varphi^n(u))  =  f^{(n)}(\pi) \in K$, from which it follows that $g(u)- u$ is in $I ^{[1]}_F$.
By Lemma \ref{lem-I1}, we conclude that $z : = \frac{g(u)-u}{\varphi (\gt_F)}$ lies in $W(R)_F$. It remains to show that $z \in I ^+$ when $\varpi^2|a_1$.
We first observe that
$$ \varphi (z) = \frac{ f((g(u))- f(u)}{\varphi^2 (\gt_F)}= \frac{\sum\limits _{i = 1} ^p a_i \left ((g(u)) ^i - u ^i  \right )}{\varphi(E(u)) \varphi(\gt_F)}.$$
For each $i$, we may write $(g(u)) ^ i - u^i= (g(u)-u) h_i(g(u), u)= \varphi (\gt _F) z h_i (g(u),u)$
for some bivariate polynomials $h_i$ with coefficients in $W(R)_F$.
We therefore have
\begin{equation}
	{\varphi(E(u)) }\varphi (z) = \sum\limits _{i = 1} ^p a_i \left ( z h _i (g(u), u) \right ). \label{redI+}
\end{equation}
Reducing modulo $I^+$ and noting that both $u$ and $g(u)$ lie in $I ^+$, we
conclude from (\ref{redI+}) that $\varpi \varphi(\rho(z)) = a_1 \rho(z)$, where $\rho : W(R)_F \to W(\bar k)_F$ is the natural projection as above.  Using the fact that $v(\varphi(\rho(z)))= v(\rho(z))$, our assumption that $v(a_1) > v (\varpi)$ then implies that $\rho(z)= 0$. That is, $z \in I ^{+}$ as desired.
\end{proof}

\begin{example}
The following example shows that the condition $\varpi^2 | a_1$ in $\O_F$
is genuinely necessary for the conclusion of Lemma \ref{lem-z in I plus.} to hold.
Recall the situation of Example \ref{ex-cyclotomic}, with $K = \Q_p (\zeta_p)$,  $\pi = \zeta_p -1$,  $f(u) = (u +1)^p -1$ and $u = [\underline \epsilon_1 ]-1$, where $\underline \epsilon_1 = (\zeta_{p^n})_{n \geq 1} \in R$.
We may choose $g \in G$ with $g (\underline \epsilon_1) = \underline \epsilon_1^{1+p}$. We
then have $g(u )-u =  [\underline \epsilon_1] ([\underline \epsilon_1] ^p -1)$.
Now it is well-known that $[\underline \epsilon_1] ^p -1$ is a generator of $I ^{[1]}_{\Q_p}$
(or one can appeal to Lemma \ref{lem-I1}). Then $z =(g(u)-u)/\varphi(\gt_F) $ is a unit in $W(R)$
and does not lie in $I ^+$.
\end{example}

We conclude this discussion with the following lemma, which will be useful in \S \ref{subsec-6.1}:
\begin{lemma}\label{lem-stable} The ideal $\gt_F I^+ \subset W(R)_F$ is stable under the
canonical action of $G$: that is, $g(\gt_FI^+) \subset \gt_F I^+$ for all $g \in G$.
\end{lemma}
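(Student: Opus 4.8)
The goal is to show that the ideal $\gt_F I^+ \subset W(R)_F$ is $G$-stable.  The plan is to reduce this to two facts, both already available to us: first, that $g(u) - u \in \gt_F I^+$ for every $g \in G$ (a mild strengthening of Lemma \ref{lem-z in I plus.}, obtained by dividing the relation $\varphi(\gt_F) \mid g(u) - u$ by the unit relating $\varphi(\gt_F)$ and $\gt_F$ — more precisely, recall $\varphi(\gt_F) = E(u)\gt_F$, so $\frac{g(u)-u}{\varphi(\gt_F)} \in I^+$ from Lemma \ref{lem-z in I plus.} already gives $g(u) - u = E(u)\gt_F \cdot (\text{element of } I^+) \in \gt_F I^+$); and second, a description of $g(\gt_F)$ in terms of $\gt_F$.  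The element $\gt_F$ is characterized (up to a unit in $W(R)_F^\times$) by $\varphi(\gt_F) = E(u)\gt_F$; applying $g$ gives $\varphi(g(\gt_F)) = g(E(u)) \cdot g(\gt_F)$.  So I would first analyze $g(E(u))/E(u)$, show it lies in $1 + \gt_F I^+$ or at least in a controlled subset, and deduce that $g(\gt_F) = \gt_F \cdot \lambda_g$ for some $\lambda_g$ with $\lambda_g - 1$ (or $\lambda_g$ itself) in a suitable ideal.

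\textbf{Key steps, in order.}  (1) Record that $\gt_F I^+$ is an ideal of $W(R)_F$ and that $g(I^+) = I^+$ for all $g$, since $I^+$ is the kernel of the $G$-equivariant projection $\rho : W(R)_F \to W(\bar k)_F$ (the residue field $\bar k$ carries the trivial, hence $G$-stable, structure here — or rather $W(\bar k)_F$ carries the induced action and $\rho$ is $G$-equivariant).  Thus it suffices to control $g(\gt_F)$.  (2) From $\varphi(\gt_F) = E(u)\gt_F$ and injectivity of $\varphi$ on $W(R)_F$, show that any $g(\gt_F)$ is again a solution of $\varphi(x) = g(E(u)) x$; combining with Lemma \ref{lem-I1} (which pins down generators of $I_F^{[1]}$ by the valuation $v_R$ of the reduction mod $\varpi$) and the fact that $g$ preserves $v_R$, deduce $g(\gt_F) = \mu_g \gt_F$ for a unit $\mu_g \in W(R)_F^\times$, after checking that $g(\varphi(\gt_F)) = \varphi(g(\gt_F))$ still generates $I_F^{[1]}$.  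Wait — one must be careful: $\gt_F$ itself need not lie in $I_F^{[1]}$, only $\varphi(\gt_F)$ does; so I would instead argue with $\varphi(\gt_F)$, showing $g(\varphi(\gt_F)) = \nu_g \varphi(\gt_F)$ with $\nu_g$ a unit, then descend via injectivity of $\varphi$ to get $g(\gt_F) = \mu_g \gt_F$ with $\varphi(\mu_g) = \nu_g \cdot \frac{E(u)}{g(E(u))}$.  (3) Given a general element $\gt_F x$ with $x \in I^+$, compute $g(\gt_F x) = g(\gt_F) g(x) = \mu_g \gt_F \cdot g(x)$; since $g(x) \in g(I^+) = I^+$ and $\mu_g \in W(R)_F$, this lies in $\gt_F I^+$, as desired.

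\textbf{Main obstacle.}  The crux is Step (2): establishing that $g(\gt_F) = \mu_g \gt_F$ with $\mu_g$ a \emph{unit}.  The subtlety is that $\gt_F$ is defined only up to $W(R)_F^\times$ and need not itself belong to the principal ideal $I_F^{[1]}$; the clean uniqueness statement (Lemma \ref{lem-I1}) applies to $\varphi(\gt_F)$, not to $\gt_F$.  So I expect to first prove $g(\varphi(\gt_F))$ and $\varphi(\gt_F)$ generate the same ideal $I_F^{[1]}$ — this should follow because $g$ fixes $I_F^{[1]}$ (it is defined purely in terms of $\varphi$ and $\Fil^1 W(R)_F$, both $G$-stable, the latter because $\Fil^1 = \Ker\theta_F$ and $\theta_F$ intertwines $G$ with its action on $\O_{\C_K}$) and preserves $v_R$ of reductions mod $\varpi$, so $v_R(\overline{g(\varphi(\gt_F))}) = v_R(\overline{\varphi(\gt_F)}) = \frac{p}{e_F(p-1)}$, whence by Lemma \ref{lem-I1} it is again a generator.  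Then $g(\varphi(\gt_F)) = \nu_g \varphi(\gt_F)$ with $\nu_g \in W(R)_F^\times$, and applying $\varphi^{-1}$ (valid since $\varphi$ is injective on $W(R)_F$, though one must check the quotient stays in $W(R)_F$ — this is where one uses that both sides are divisible by $\gt_F$ with unit quotient) yields $g(\gt_F) = \mu_g \gt_F$.  The one genuinely delicate point is whether $\mu_g \in W(R)_F$ (not merely in $W(R)_F[1/p]$ or a completion); I would handle this by noting $g(\gt_F)$ and $\gt_F$ have reductions mod $\varpi$ of the same $v_R$-valuation in the valuation ring $R$, so their ratio in $\mathrm{Frac}(R)$ is a unit, and then lift this integrality through the $\varpi$-adic filtration on $W(R)_F$ by the same successive-approximation argument used in the proof of Lemma \ref{lem-I1}.
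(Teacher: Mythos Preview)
Your proposal is correct and follows essentially the same route as the paper: use $G$-stability of $I^+$ and of $I_F^{[1]}$ (both immediate from their definitions), note that $\varphi(\gt_F)$ generates $I_F^{[1]}$ so $g(\varphi(\gt_F))=y\,\varphi(\gt_F)$ for some $y\in W(R)_F$, and descend through $\varphi$.

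Two simplifications compared to what you wrote. First, you do not need $\mu_g$ to be a \emph{unit}: once $g(\gt_F)=\mu_g\gt_F$ with $\mu_g\in W(R)_F$, already $g(\gt_F x)=\mu_g\gt_F\,g(x)\in\gt_F I^+$ for any $x\in I^+$, since $I^+$ is an ideal. Second, the integrality of $\mu_g$ that you flag as the ``genuinely delicate point'' is in fact immediate: because $R$ is perfect, $\varphi$ is an \emph{automorphism} of $W(R)_F$, so from $\varphi(g(\gt_F))=g(\varphi(\gt_F))=y\,\varphi(\gt_F)$ one simply takes $\mu_g:=\varphi^{-1}(y)\in W(R)_F$ and gets $g(\gt_F)=\mu_g\gt_F$. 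No successive-approximation argument is needed, and the discussion of $g(u)-u$ in your opening plan is not used here at all.
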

\begin{proof} It is clear that $I^+$ is $G$-stable, so it suffices to show that $g(\gt_F) =x \gt_F $
for some $x \in W(R)_F$. Since $\varphi (\gt_F)$ is a generator of $I^{[1]}$, which is obviously $G$-stable
from the definition, we see that $g(\varphi (\gt_F))= y \varphi (\gt_F)$ with $y \in W(R)_F$.
Hence $g(\gt_F)= \varphi ^{-1}(y) \gt_F$.
\end{proof}

\section{\'Etale $\varphi$-modules and Kisin modules}\label{sec-3}

In this section, following Fontaine, we establish a classification of $G_{\ul{\pi}}$-representations by
\'etale $\varphi$-modules and Kisin modules.
To do this, we must first show that $K_{\ul{\pi}}/K$ is {\em strictly Arithmetically Profinite, or APF,}
in the sense of Fontaine--Wintenberger \cite{win}, so that the theory of norm fields applies.

\subsection{Arithmetic of $f$-iterate extensions}\label{subsec-3.0}

We keep the notation and conventions of \S \ref{sec-2}.
Recall that our choice of an $f$-compatible sequence $\{\pi_n\}_n$
(in the sense that $f(\pi_{n})=\pi_{n-1}$ with $\pi_0=\pi$
a uniformizer of $K$) determines an element
$\underline{\pi}:=\{\pi_n\bmod \varpi\}$ of $R$.
It also determines
an infinite, totally wildly ramified
extension $K_{\ul{\pi}}:=\cup_{n\ge 1} K(\pi_n)$ of $K$,
and we write $G_{\ul{\pi}}=\gal(\overline{K}/K_{\ul{\pi}})$.

\begin{lemma}\label{strapflem}
	The extension $K_{\ul{\pi}}/K$ is strictly APF in the sense of \cite{win};
	in particular, the associated norm field $\mathbf{E}_{K_{\ul{\pi}}/K}$ is canonically identified
	with the subfield $k(\!(\ul{\pi})\!)$ of $\Fr(R)$.
\end{lemma}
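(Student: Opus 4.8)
The plan is to verify the defining numerical conditions of a strictly APF extension directly from the ramification behavior of the tower $K \subset K(\pi_1) \subset K(\pi_2) \subset \cdots$, exploiting the fact that $f(u) \equiv u^p \bmod \mathfrak{m}_F$ forces each step $K(\pi_n)/K(\pi_{n-1})$ to look, at the level of ramification, like adjoining a $p$-th root of a uniformizer. First I would record that $\pi_n$ is a uniformizer of $K(\pi_n)$: since $f(\pi_n) = \pi_{n-1}$ and $f(u) = u^p + a_{p-1}u^{p-1} + \cdots + a_1 u$ with all $a_i \in \mathfrak{m}_F$, the Newton polygon of $f(u) - \pi_{n-1}$ over $K(\pi_{n-1})$ has a single slope computing $v(\pi_n) = \tfrac1p v(\pi_{n-1})$, so $[K(\pi_n):K(\pi_{n-1})] = p$, the extension is totally ramified, and $\pi_n$ is a uniformizer. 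Thus $K_{\ul\pi}/K$ is an infinite, totally wildly ramified, arithmetically profinite tower with $[K_{\ul\pi}:K] = p^\infty$ and $e(K(\pi_n)/K) = p^n$.

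Next I would compute, for each $n$, the break (the unique jump $i_n$ in the ramification filtration, equivalently $v_{K(\pi_n)}(\mathfrak{d}_{K(\pi_n)/K(\pi_{n-1})})$ essentially, or more precisely the quantity $i(K(\pi_{n})/K(\pi_{n-1}))$ used in Wintenberger's definition). Working in $\O_{K(\pi_{n-1})}[x]/(f(x) - \pi_{n-1})$ and differentiating, $f'(\pi_n) = p\pi_n^{p-1} + \cdots + a_1$, whose valuation is bounded below independent of $n$ by $\min(v(p), v(a_1)) + (\text{something} \ge 0)$ after accounting for the $\pi_n^{p-1}$ factors; the point is that $v_{K(\pi_n)}(f'(\pi_n))$ stabilizes to a bounded value, which is exactly what forces the sequence of ramification breaks of the tower to be eventually "slowly growing" in the precise sense required by strict APF-ness: one checks the defining inequality that $\sum$ of the normalized breaks diverges while consecutive ratios are controlled, i.e. that the sequence $(c_n)$ of Wintenberger indices satisfies $c_n \to \infty$ with $c_{n+1}/c_n$ bounded. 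Since the residue extensions are trivial at each stage, the ``$i_0$'' parameter and the slowly-growing condition are both immediate once the break computation is in hand.

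The canonical identification of the norm field then follows from the general theory of \cite{win}: once $K_{\ul\pi}/K$ is known to be strictly APF, $\mathbf{E}_{K_{\ul\pi}/K}$ is a complete discretely valued field of characteristic $p$ with residue field $k$ (the common residue field of all $K(\pi_n)$, which equals that of $K$ since each step is totally ramified), and the compatible system $(\pi_n)_n$ — i.e. precisely the element $\ul\pi \in R$ — is a norm-compatible sequence of uniformizers, hence a uniformizer of $\mathbf{E}_{K_{\ul\pi}/K}$. This gives $\mathbf{E}_{K_{\ul\pi}/K} = k(\!(\ul\pi)\!)$ inside $\Fr(R)$, where the embedding into $\Fr(R)$ is the standard one from Fontaine--Wintenberger identifying the norm field of an APF extension with a subfield of $\Fr(R)$. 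The main obstacle is the ramification break computation: one must control $v_{K(\pi_n)}(f'(\pi_n))$ uniformly in $n$ and translate this into Wintenberger's precise numerical definition of strict APF-ness, which is where the hypothesis $f(u) \equiv u^p \bmod \mathfrak{m}_F$ (as opposed to merely $f$ having degree $p$) is genuinely used; everything else is formal once that estimate is established.
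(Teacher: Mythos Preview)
Your overall strategy—compute the ramification at each step $K(\pi_n)/K(\pi_{n-1})$ and feed it into Wintenberger's numerical criterion—matches the paper's. Two of your stated specifics, however, are wrong in ways that matter.

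First, $v_{K(\pi_n)}(f'(\pi_n))$ does \emph{not} ``stabilize to a bounded value.'' In the valuation normalized so that $v_{K(\pi_n)}(\pi_n)=1$ one has $v_{K(\pi_n)}(\varpi)=p^n e_0$, so each term of $f'(\pi_n)$ has valuation of order $p^n$. What is true—and what you actually need—is that $v_{K(\pi_n)}(f'(\pi_n))/p^n$ is bounded below by a positive constant; equivalently, the absolute valuation $v(f'(\pi_n))$ converges to a positive limit.

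Second, the strict APF condition is not ``$c_n\to\infty$ with $c_{n+1}/c_n$ bounded.'' By \cite[1.2.1, \S1.4]{win} it is that
\[
c(K_{\ul{\pi}}/K)\;=\;\inf_{n>0}\,\frac{i_n}{[K_{n+1}:K]}\;>\;0,
\]
where $i_n$ is the level of the step $K_{n+1}/K_n$, meaning $v_{K_{n+1}}(\pi_n-\sigma\pi_n)=i_n+1$ for \emph{every} nontrivial $K_n$-embedding $\sigma$. The paper extracts $i_n$ by computing the Newton polygon of the ramification polynomial $g_n(u):=\bigl(f(\pi_n u+\pi_n)-\pi_{n-1}\bigr)/u$ and showing it is a \emph{single} segment; this simultaneously verifies that each step is elementary in Wintenberger's sense, a point your route through the different $f'(\pi_n)=\prod_{\sigma\neq\mathrm{id}}(\pi_n-\sigma\pi_n)$ does not directly yield, since the different records only the \emph{sum} of the $v_{K_{n+1}}(\pi_n-\sigma\pi_n)$ rather than their common value. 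From the Newton polygon one reads off an explicit formula for $i_n$ linear in $e_n=p^n e_0$, and positivity of the infimum is then immediate. The identification of the norm field with $k(\!(\ul{\pi})\!)$ is, as you say, a citation of \cite{win}.
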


\begin{proof}
	That $K_{\ul{\pi}}/K$ is strictly APF follows immediately from \cite{CaisDavis},
	which handles a more general situation.
	In the present setting with $f(u)\equiv u^p\bmod \varpi$, we can give a short proof as follows.
	As before, let us write
	\begin{equation*}
		f(u) = a_1u + a_2u^2 + \cdots + a_{p-1}u^{p-1} + a_pu^p,
	\end{equation*}
	with $a_i\in \varpi\O_F$ for $1\le i\le p-1$ and $a_p:=1$.
	For each $n\ge 1$, set $f_n:=f-\pi_{n-1}$ and put $K_{n}:=K(\pi_{n-1})$.
	We compute the ``ramification polynomial"
	$$g_n:= \frac{f_n(\pi_nu + \pi_n)}{u}=\sum_{i=0}^{p-1} b_i u^i,$$
	with coefficients $b_i$ given by
	\begin{equation*}
		b_i = \sum_{j=i+1}^p a_j \pi_n^j \binom{j}{i+1}\qquad \text{for}\qquad 0\le i\le p-1
	\end{equation*}
	For ease of notation, put $v_n:=v_{K_{n+1}}$, and denote by $e_n:=v_n(\varpi)$
	the ramification index of $K_{n+1}/F$ and by $e:=v_{F}(p)$ the absolute ramification index of $F$.
	  Since $K_{n+1}/K_{n}$ is totally ramified of degree $p$,
	we have $e_n = p^{n} e_0$; in particular, $v_n(a_j\binom{j}{i+1}\pi_n^j) \equiv j\bmod p^{n}$.
	It follows that $v_n(b_{p-1})= p$, and for $0\le i\le p-2$ we have
	\begin{equation*}
		v_n(b_i) = \min\{e_ne+p,\ e_nv_F(a_j) + j\ :\ i+1\le j \le p-1\}
	\end{equation*}
	It is easy to see that for $n\ge 1$ the lower convex hull of these points
	is the straight line with endpoints $(0,v_n(b_0))$ and $(p-1,p)$.
	In other words, defining
	\begin{equation}
		i_{\min}:=\min_i\{ i\ :\ \ord_{\varpi}(a_i) \le e,\ 1\le i\le p\}.\label{icalc}
	\end{equation}
	the Newton polygon of $g_n$ is a single line segment
	with slope the negative of
	\begin{equation}
		i_{n}:=\frac{e_n\left(v_F(a_{i_{\min}}) + \lfloor i_{\min}/p \rfloor e\right)+i_{\min}-p}{p-1},
		\label{ineqn}
	\end{equation}
	In particular, for $n\ge 1$ the extension $K_{n+1}/K_{n}$ is
	{\em elementary of level $i_{n}$} in the sense of \cite[1.3.1]{win};
	concretely, this condition means that
	\begin{equation}
		v_n(\pi_n - \sigma \pi_n) = i_n+1
	\end{equation}
	for {\em every} $K_{n}$-embedding $\sigma : K_{n+1}\hookrightarrow \overline{K}$.
	It follows from this and \cite[1.4.2]{win} that $K_{\ul{\pi}}/K$
	is APF.
	Now let $c(K_{\ul{\pi}}/K)$ be the constant defined in \cite[1.2.1]{win}.
	Then by \cite[\S1.4]{win}
	\begin{equation*}
		c(K_{\ul{\pi}}/K) = \inf_{n >0} \frac{i_n}{[K_{n+1}:K]},
	\end{equation*}
	so from (\ref{icalc}) we deduce
	\begin{align*}
		c(K_{\ul{\pi}}/K) &= \inf_{n>0} \frac{e_n\left(v_F(a_{i_{\min}}) + \lfloor i_{\min}/p \rfloor e\right)+i_{\min}-p}{p^n(p-1)}\\ 	
		&= \frac{e_0}{p-1}\left(v_F(a_{i_{\min}}) + \lfloor i_{\min}/p \rfloor e\right) -  \frac{p-i_{\min}}{p(p-1)}	
	\end{align*}
	since $p-i_{\min}\ge 0$, so the above infimum occurs when $n=1$.
	As $i_{\min}\ge 1$, the above constant is visibly positive,
	so by the very definition \cite[1.2.1]{win}, $K_{\ul{\pi}}/K$ is {\em strictly} APF.
	
	The canonical embedding
	of the norm field of $K_{\ul{\pi}}/K$ into $\Fr(R)$ is described in \cite[\S4.2]{win};
	that the image of this embedding coincides with $k(\!(\underline{\pi})\!)$
	is a consequence of \cite[2.2.4, 2.3.1]{win}.
\end{proof}

\begin{remark}
	Observe that if the coefficient $a_1$ of the linear term of $f(u)$
	has $v(a_1)\le 1$, then we have $i_{\min}=1$ and
	$$c(K_{\ul{\pi}}/K)=\frac{e_0}{p-1} v_F(a_1)-\frac{1}{p}.$$
	In this situation, $v_{F}(a_1)$---which plays an important role
	in our theory---is encoded in the ramification structure of $K_{\ul{\pi}}/K$.
\end{remark}

It is natural to ask when two given polynomials $f$ and $f'$ with corresponding
compatible choices $\ul{\pi}$ and $\ul{\pi}'$ give rise to the same iterate extension.
Let us write $f(x)=x^p + a_{p-1}x^{p-1}+\cdots + a_1x$
and $f'(x)= x^p + a'_{p-1}x^{p-1}+\cdots + a'_1 x$, with $a_i, a'_i\in \O_F$
and $a_i\equiv a'_i\equiv 0\bmod \varpi $ for $1\le i < p$.
Let $\{\pi_n\}$ (respectively $\{\pi_n'\}$) be an $f$ (resp. $f'$) compatible
sequence of elements in $\overline{K}$. 
Set $K_{n}:=K(\pi_{n-1})$ (resp. $K'_{n}=K(\pi'_{n-1})$) and let $a_s u ^s$ and $a'_{s'} u ^{s'}$ be the lowest degree terms of $f(u)$ and $f'(u)$ respectively.

\begin{prop}\label{fdetbyK}
	If $K_{\ul{\pi}} = K_{\ul{\pi}'}$ as subfields of $\ol{K}$,
	 then $K_{n}=K_{n}'$ for all $n\ge 1$ and
	there exists an invertible power series $\xi(x)\in \O_F[\![x]\!]$
	with $\xi(x)=\mu_0 x + \cdots$ and $\mu_0\in \O_F^{\times}$ such that
	$$f(\xi(x)) = \xi(f'(x)).$$
	In particular, $s=s'$ and $v(a_s)=v(a_s')$ are numerical invariants
	of $K_{\ul{\pi}}=K_{\ul{\pi}'}$.
	
	Conversely, given $f$ and $f'$  with $s=s'$ and $v(a_s)=v(a_s')$,
	we have $a_s=\mu_0^{1-s} a_s'$ for a unique $\mu_0\in \O_F^{\times}$ and
	there is a unique power series $\xi(x)\in F[\![x]\!]$
	with $\xi(x) \equiv \mu_0 x \bmod x^2$ satisfying $f(\xi(x))=\xi(f'(x))$ as formal power series
	in $F[\![x]\!]$.
	If $\xi(x)$ lies in $\O_F[\![x]\!]$, then for any choice $\{\pi'_n\}_n$
	of $f'$-compatible sequence with $\pi'_0$ a uniformizer of $K$, the sequence defined by
	$\pi_n:=\xi(\p_n')$ is $f$-compatible with $\pi_0 = \xi(\pi'_0)$
	a uniformizer of $K$ and $K_{\ul{\pi}}=K_{\ul{\pi}'}$. Furthermore, if $v(a_s) = v(a'_s) = v(\varpi)$,
	then $\xi(x)$ always lies in $\O_F[\![x]\!]$.
\end{prop}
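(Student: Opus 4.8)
The plan is to treat the two implications separately, using formal power series manipulation for the conjugacy statement and a valuation/Newton-polygon argument for the integrality claims.

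For the forward direction, suppose $K_{\ul\pi} = K_{\ul\pi'}$. First I would show $K_n = K_n'$ for all $n$: since both $K_{\ul\pi}/K$ and $K_{\ul\pi'}/K$ are strictly APF (Lemma \ref{strapflem}) with the same ramification filtration on the common field, and the ramification breaks computed in \eqref{ineqn} force each layer $K_{n+1}/K_n$ to be the unique subextension of level $i_n$, one matches the two towers layer by layer using \cite[1.4.2]{win}; the key input is that each $K_{n+1}/K_n$ is elementary of the computed level, so the tower is rigid. Once the fields agree, $\pi_0$ and $\pi_0'$ are both uniformizers of $K = K_1 = K_1'$, so we may write $\pi_0 = \xi(\pi_0')$ for some invertible $\xi(x) = \mu_0 x + \cdots \in \O_K[\![x]\!]$; I would then need to check $\xi$ can be taken with $\O_F$-coefficients and with $\mu_0 \in \O_F^\times$. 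The functional equation $f(\xi(x)) = \xi(f'(x))$ should be forced by comparing the two recursions $f(\pi_n) = \pi_{n-1}$ and $f'(\pi_n') = \pi_{n-1}'$: at each level $\pi_n = \xi(\pi_n')$ must hold (by the same rigidity, after fixing compatible choices), and feeding this into the recursion gives the identity evaluated at infinitely many points $\pi_n'$ accumulating $\varpi$-adically, hence the formal identity. Comparing lowest-degree terms in $f(\xi(x)) = \xi(f'(x))$ then yields $a_s \mu_0^s = \mu_0 a_{s'}'$ together with $s = s'$, whence $v(a_s) = v(a_s')$.

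For the converse, given $f, f'$ with $s = s'$ and $v(a_s) = v(a_s')$, I would first solve $a_s = \mu_0^{1-s} a_s'$ for $\mu_0 \in \O_F^\times$ (possible and unique since $a_s/a_s'$ is a unit, using that $1 - s$ is invertible mod... actually one just takes an $(s-1)$-st root, which exists uniquely in $\O_F^\times$ up to the relevant roots of unity — I would pin down $\mu_0$ by also requiring it reduce correctly, or simply note $\O_F^\times$ is $(s-1)$-divisible onto the relevant coset when $\gcd(s-1,\ldots)$ is handled; more carefully, uniqueness comes from the normalization $\xi(x) \equiv \mu_0 x$). Then I would construct $\xi(x) = \mu_0 x + \mu_1 x^2 + \cdots \in F[\![x]\!]$ recursively: the equation $f(\xi(x)) = \xi(f'(x))$, expanded in powers of $x$, determines $\mu_{n}$ in terms of $\mu_0, \ldots, \mu_{n-1}$ since the coefficient of $x^{n+1}$ on each side isolates $\mu_{n}$ with a nonzero (in $F$) multiplier coming from the leading term $u^p$ — this is where I expect to divide by integers and hence only get $\xi \in F[\![x]\!]$ in general. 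If $\xi \in \O_F[\![x]\!]$, then $\pi_n := \xi(\pi_n')$ is manifestly $f$-compatible with $\pi_0 = \xi(\pi_0')$ a uniformizer (as $\xi$ is invertible with unit linear coefficient), and $K(\pi_n) = K(\pi_n')$ since $\xi$ and $\xi^{-1}$ have integral coefficients, giving $K_{\ul\pi} = K_{\ul\pi'}$.

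The remaining point — that $\xi \in \O_F[\![x]\!]$ automatically when $v(a_s) = v(a_s') = v(\varpi)$ — is the step I expect to be the main obstacle, and I would attack it by induction on the coefficients $\mu_n$. Writing out the recursion for $\mu_n$, the potential denominators are powers of $p$ (binomial coefficients from expanding $\xi(x)^p$ and $f(\xi(x))$), but these are compensated by the fact that the lowest-degree term of $f$ is $a_s u^s$ with $v(a_s) = v(\varpi) = 1$ in $v_F$-normalization, which is exactly the ``Lubin--Tate-like'' borderline case: one shows that the $x$-adic valuation of the terms being inverted grows fast enough relative to the $\varpi$-adic denominators. Concretely, I would set up an estimate of the form $v_F(\mu_n) \ge$ (something nonnegative) proved by strong induction, mimicking the integrality proof in Lubin--Tate theory (the classical case $v(a_1) = v(\varpi)$, $s = 1$); the general $s$ with $v(a_s) = v(\varpi)$ should reduce to this after tracking how the $u^s$ versus $u^p$ discrepancy interacts with the binomial denominators. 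This is essentially a careful bookkeeping argument rather than a conceptual difficulty, but it is the most delicate computation in the proof.
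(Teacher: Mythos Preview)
Your forward direction has a genuine gap. You propose to write $\pi_0 = \xi(\pi_0')$ for some invertible $\xi \in \O_K[\![x]\!]$ and then claim that the \emph{same} $\xi$ satisfies $\pi_n = \xi(\pi_n')$ for all $n$ ``by rigidity''. But there are infinitely many invertible power series with $\xi(\pi_0') = \pi_0$, and nothing in your sketch pins one down; the equality $K_n = K_n'$ tells you only that $\pi_n$ and $\pi_n'$ generate the same field over $K$, not that they are related by a fixed power series with $\O_F$-coefficients, and your density argument presupposes the very identity you need. The paper circumvents this entirely by a structural argument: it builds a canonical ring $\mathbf{A}_{K_\infty/K}^+ \subset W(R)_F$ (via $\varpi$-Witt vectors, as in \cite{CaisDavis}) depending only on the common extension $K_\infty := K_{\ul\pi} = K_{\ul\pi'}$, and shows that each compatible sequence yields a topological $\O_F$-algebra isomorphism $\eta, \eta' : \O_F[\![x]\!] \xrightarrow{\sim} \mathbf{A}_{K_\infty/K}^+$ intertwining $f$ (resp.\ $f'$) with Frobenius. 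Then $\xi$ is simply the automorphism $\eta^{-1}\circ\eta'$ of $\O_F[\![x]\!]$, which automatically satisfies $f(\xi(x)) = \xi(f'(x))$, and preservation of the maximal ideal forces $\xi(x) \equiv \mu_0 x \bmod x^2$ with $\mu_0 \in \O_F^\times$.

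For the converse your recursive plan is correct in outline, but you badly overestimate the integrality step when $v(a_s) = v(a_s') = v(\varpi)$. No Newton-polygon bookkeeping or Lubin--Tate-style valuation estimate is needed. The recursion at stage $i$ reads $\lambda = c\,\mu_{i+1}$ with $c = s a_s \mu_0^{s-1}$ (if $s>1$) or $c = (a_1')^{i+1} - a_1$ (if $s=1$), and in either case $v(c) = v(\varpi)$ exactly. The one-line observation is that $f \equiv f' \equiv x^p \bmod \varpi$, so inductively $f(\xi_i(x)) - \xi_i(f'(x)) \equiv \xi_i(x)^p - \xi_i(x^p) \equiv 0 \bmod \varpi$ by Frobenius on $k_F[x]$; hence $\lambda \in \varpi\O_F$ and $\mu_{i+1} = \lambda/c \in \O_F$. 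The $u^p$ term of $f$ never produces the binomial denominators you worry about, because its contribution to the change $f(\xi_{i+1}) - f(\xi_i)$ lands in degrees strictly above $i+s$, where the linear recursion for $\mu_{i+1}$ is solved.
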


\begin{proof}
	Suppose first that $K_{\ul{\pi}}=K_{\ul{\pi}'}$, and write simply $K_{\infty}$
	for this common, strictly APF extension of $K$ in $\ol{K}$.
	It follows from the proof of Lemma \ref{strapflem} that
	$K_{n+1}$ and $K_{n+1}'$ are both the $n$-th elementary subextension of $K_{\infty}$;
	{\em i.e.} the fixed field of $G_K^{b_n}G_{K_{\infty}}$,
	where $b_n$ is the $n$-th break in the ramification
	filtration $G_K^u G_{K_{\infty}}$; see \cite[1.4]{win}.
	In particular, $K_{n+1}=K_{n+1}'$ for all $n\ge 0$.
	Now let $W_{\varpi}(\bullet)$ be the functor of $\varpi$-Witt vectors;
	it is the unique functor from $\O_F$-algebras to $\O_F$-algebras
	satisfying
	\begin{enumerate}
		\item For any $\O_F$-algebra $A$, we have $W_{\varpi}(\bullet)=\prod_{n\ge 0} \bullet =:\bullet^{\mathbf{N}}$
		as functors from $\O_F$-algebras to sets.
		\item The ghost map $W_{\varpi}(\bullet)\rightarrow \bullet^{\mathbf{N}}$ given by
		$$(a_0,a_1,a_2,\ldots)\mapsto (a_0, a_0^p + \varpi a_1, a_0^{p^2} + \varpi a_1^p +\varpi^2 a_2, \ldots)$$
		is a natural transformation of functors from $\O_F$-algebras to $\O_F$-algebras.
	\end{enumerate}
	We remark that $W_{\varpi}(\bullet)$ exists and depends only on $\varpi$,
	and is equipped with a unique natural transformation $\varphi:W_{\varpi}(\bullet)\rightarrow W_{\varpi}(\bullet)$
	which on ghost components has the effect $(a_0,a_1,\ldots)\mapsto (a_1,a_2,\ldots)$;
	see \cite[\S2]{CaisDavis}.
	
	Define the ring
	$$\mathbf{A}_{K_{\infty}/K}^+:= \{(\underline{x}_i)_i\in \varprojlim_{\varphi}
	W_{\varpi}(\O_{\widehat{K}_{\infty}})\ :\ \underline{x}_n\in W_{\varpi}(\O_{K_{n+1}})\ \text{for all}\ n\},$$
	which depends only on $F, \varpi$, and $K_{\infty}/K$.  The main theorem of \cite{CaisDavis}, implies that
	$\mathbf{A}_{K_{\infty}/K}^+$ is a $\varpi$-adically complete and separated $\O_F$-algebra
	equipped with a Frobenius endomorphism $\varphi$, which is canonically a Frobenius-stable subring
	of $W(R)_F$ that is closed under the weak topology on $W(R)_F$.
	Giving $\mathbf{A}_{K_{\infty}/K}^+$ the subspace topology,
	the proof of \cite[Prop 7.14]{CaisDavis} then shows that
	the $f$ (respectively $f'$)-compatible sequence $\ul{\pi}$
	(respectively $\ul{\pi}'$) determine isomorphisms of topological $\O_F$-algebras
	\begin{equation*}
		\xymatrix{
			{\eta, \eta':\O_F[\![x]\!]} \ar[r] & {\mathbf{A}_{K_{\infty}/K}^+}
			}
	\end{equation*}
	characterized by the requirement that the ghost components of $(\eta)_n$
	(resp. $(\eta')_n$)
	are $(\pi_n, f(\pi_n), f^{(2)}(\pi_n),\ldots)$ (resp.
	$(\pi'_n, f'(\pi'_n), f'^{(2)}(\pi'_n),\ldots)$; here we give $\O_F[\![x]\!]$
	the $(\varpi,x)$-adic topology.
	These isomorphisms moreover satisfy
	\begin{equation*}
		\eta(f(x)) = \varphi(\eta(x)) \qquad\text{and}\qquad \eta'(f'(x)) = \varphi(\eta'(x)).
	\end{equation*}
We therefore obtain a continuous automorphism $\xi: \O_F[\![x]\!]\rightarrow \O_F[\![x]\!]$
	satisfying
	\begin{equation}
		f(\xi(x)) = \xi(f'(x)).
		\label{intertwine}
	\end{equation}
Since $\xi$ is a continuous automorphism of $\O_F [\![x]\!]$, we have that $\xi$ preserves the maximal ideal
$(\varpi, x)$. This implies  that $\xi (x) \equiv \mu_0 x \mod x^2$ with $\mu_0 \in \O_F^\times$. Then \eqref{intertwine} forces that $a_s\mu_0^s x ^s = a'_{s'}\mu_0 x ^{s'}$ which implies
$s= s'$ and $v(a_s) = v(a'_s)$.

Conversely, suppose given $f$ and $f'$ with $s=s'$ and $v(a_s)=v(a'_s)$ and let $\mu_0 \in \O_F^{\times}$
be the unique unit with $a_s=\mu_0^{1-s}a_s'$; note that this exists as $s-1< p$.
	We inductively construct degree $i$ polynomials $\xi_i(x) = \sum_{j = 1} ^i \mu _j x ^j  $
	so that $f(\xi_i(x)) \equiv \xi_i(f'(x))\bmod x^{i+s}$.
	As $\mu_0^sa_s=\mu_0a'_s$, we may clearly take $\xi_1(x) = \mu_0 x$.  Supposing that $\xi_i(x)$
	has been constructed, we write $\xi_{i+1}(x) = \xi_i(x) + \mu_{i +1} x^{i+1}$ and
	$f(\xi_i(x))-\xi_i(f'(x)) \equiv \lambda x^{i+s}\bmod x^{i+s+1}$ and
	seek to solve
	\begin{equation}
		f(\xi_{i+1}(x)) \equiv \xi_{i+1}(f'(x))\bmod x^{i+s +1}.
		\label{seek2solve}
	\end{equation}
As $f(\xi_{i +1}(x)) = f (\xi _i (x)) + \frac{df}{dx}(\xi_i (x)) (\mu_{i +1} x ^{i +1}) + \cdots$,
we see that (\ref{seek2solve}) is equivalent to
	\begin{equation}
		\lambda = \mu_{i +1} (a_1-{a'_1}^{i+1}) \text{ if  } s=1,\quad\text{and}\quad\lambda = \mu_{i +1} s a_s \mu_0^{s-1} \text{ if  } s>1
		\label{musys}
	\end{equation}
	which admits a unique solution $\mu_{i+1}\in F$. 	
	We set $\xi(x)=\lim_i \xi_i(x) \in F[\![x]\!]$, which by construction
	satisfies the desired intertwining relation \eqref{intertwine}.
	If $\xi\in \O_F[\![x]\!]$, it is clear that any $f'$-compatible sequence $\pi_n'$
	with $\pi_0'$ a uniformizer of $K$ yields an $f$-compatible sequence $\pi_n:=\xi(\pi_n')$
	with $\pi_0$ a uniformizer of $K$ and $K_n:=K(\pi_{n-1})=K(\pi_{n-1}')=K_n'$ for all $n\ge 1$.
	Finally, since $f(x)= f'(x) \equiv x ^p \bmod \varpi$, we have $f (\xi_i (x)) - \xi_i (f' (x)) \equiv 0 \mod \varpi$, {\em i.e.}~$\lambda \equiv 0 \mod \varpi$ in the above construction.  In particular, when $v(a_s) = v(a'_s) = v(\varpi)$,
	it follows from
	(\ref{musys}) that $\mu_{i +1 } \in \O_F$, and $\xi(x) \in \O_F [\![x]\!]$
	as claimed.
	%
\end{proof}

It follows at once from Proposition (\ref{fdetbyK}) that there are infinitely many distinct
$f$-iterate extensions $K_{\ul{\pi}}$ inside of $\ol{K}$.

\subsection{\'Etale $\varphi$-modules}

Let $\O _\E$ be the $p$-adic completion of $\gs_F[1/u]$, equipped with the unique continuous
extension of $\varphi$.
Our fixed embedding $\gs_F\hookrightarrow W(R)$ determined by $f$ and $\ul{\pi}$
uniquely extends to a $\varphi$-equivariant embedding $\iota:\O_{\E}\hookrightarrow W(\t{Fr} R)_F$, and we
identify $\O_{\E}$ with its image in $W(\t{Fr} R)_F$.
We note that $\O_\E$ is a complete discrete valuation ring with uniformizer $\varpi$ and residue field
$k (\!(\ul{\pi})\!)$,
which, as a subfield of $\Fr R$, coincides with the norm field of $K_{\ul{\pi}}/K$ thanks to Lemma
\ref{strapflem}.  As $\Fr R$ is algebraically closed,
the separable closure $k(\!(\ul{\pi})\!)^{\sep}$ of $k(\!(\ul{\pi})\!)$
in $\Fr R$ is unique, and the maximal unramified extension ({\em i.e.}~strict Henselization)
$\O _{\E^\ur}$ of $\O_{\E}$ with residue field $k(\!(\ul{\pi})\!)^{\sep}$
is uniquely determined up to unique isomorphism.  The universal property of strict
Henselization guarantees that $\iota$ uniquely extends to an embedding $\O_{\E^{\ur}}\hookrightarrow W(\Fr R)_F$,
which moreover realizes $\O_{\E^{\ur}}$ as a $\varphi$-stable subring.
We write $\O _{\widehat \E^\ur}$ for the $p$-adic completion of $\O_{\E^\ur}$, which is
again a $\varphi$-stable subring of $W(\Fr R)_F$.
Again using the universal property of strict Henselization,
one sees that each of $\O_\E, \O_{\E^\ur}$ and $\O_{\widehat \E^\ur}$ are $G_{\ul{\pi}}$-stable subrings of
 $W(\t{Fr} R)_F $, with $G_{\ul{\pi}}$ acting trivially on $\O_{\E}$.
As suggested by the notation, we write $\E$, $\E^{\ur}$, and $\wh{\E}^{\ur}$
for the fraction fields of $\O_\E, \O_{\E^\ur}$ and $\O_{\widehat \E^\ur}$,
respectively.
Finally, we define $\gs ^\ur_F : = W(R) _F \cap \O _{\widehat \E^\ur}$.


\begin{lemma}\label{APF}
With notation as above:
\begin{enumerate}
	\item 
	The natural action
	of $G_{\ul{\pi}}$ on $\OEhat$ induces an isomorphism of profinite groups
	\begin{equation*}
		G_{\ul{\pi}}:=\Gal(\overline{K}/K_{\ul{\pi}})\simeq \Aut(\OEhat/\O_{\E}) = \Gal(\widehat{\E}^{\ur}/\E).
	\end{equation*}\label{APF1}
\item The inclusions $\O_F\hookrightarrow (\OEhat)^{\varphi=1}$ and
	$\O_{\E}\hookrightarrow (\OEhat)^{G_{\ul{\pi}}}$ are isomorphisms.\label{APF2}

\end{enumerate}
\end{lemma}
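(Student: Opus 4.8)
The plan is to follow the standard Fontaine-theoretic template for norm fields, adapted to the $\varpi$-Witt vector / $F$-coefficient setting. For part \eqref{APF1}, the key input is Lemma \ref{strapflem}: since $K_{\ul\pi}/K$ is strictly APF, the theory of norm fields of Fontaine--Wintenberger \cite{win} applies, identifying $\mathbf{E}_{K_{\ul\pi}/K}$ with $k(\!(\ul\pi)\!) \subset \Fr R$ and giving a canonical isomorphism $\Gal(\Fr R^{\sep}/\mathbf{E}_{K_{\ul\pi}/K}) \simeq G_{\ul\pi}$ (here using that $\Fr R$ is algebraically closed and that the absolute Galois group of the norm field is canonically $G_{\ul\pi}$). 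First I would note that $\O_{\E^{\ur}}/\O_{\E}$ is, by construction, the strict Henselization, so $\Aut(\O_{\E^{\ur}}/\O_{\E}) = \Gal(k(\!(\ul\pi)\!)^{\sep}/k(\!(\ul\pi)\!))$; passing to $p$-adic completions does not change the automorphism group (any continuous automorphism of $\O_{\E^{\ur}}$ over $\O_{\E}$ extends uniquely to $\OEhat$, and conversely restricts, by $\varpi$-adic density of $\O_{\E^{\ur}}$ and the fact that $\O_{\E^{\ur}}$ is the set of elements of $\OEhat$ whose reduction lands in $k(\!(\ul\pi)\!)^{\sep}$). Composing these identifications with the norm-field isomorphism gives $G_{\ul\pi} \simeq \Aut(\OEhat/\O_{\E}) = \Gal(\wh\E^{\ur}/\E)$, and one checks the $G_{\ul\pi}$-action through the embedding $\OEhat \hookrightarrow W(\Fr R)_F$ is the one induced from the coordinatewise action on $R$, which is exactly what the norm-field functoriality produces.

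For part \eqref{APF2}, the inclusion $\O_F \hookrightarrow (\OEhat)^{\varphi=1}$: reducing mod $\varpi$, we must see $(k(\!(\ul\pi)\!)^{\sep})^{\varphi = 1} = \BF_p \otimes$ — more precisely $k_F$-fixed — wait, the residue field of $\O_F$ is $\BF_p$ since $F/\Q_p$ is totally ramified, so we want $(k(\!(\ul\pi)\!)^{\sep})^{\Fr_p} = \BF_p$, where $\varphi$ reduces to the $p$-power Frobenius $\Fr$; this is standard since the $\Fr$-fixed points of a separably closed field of characteristic $p$ form $\BF_p$. Then a $\varpi$-adic successive approximation / dévissage argument lifts this: if $x \in \OEhat$ with $\varphi(x) = x$, write $x = x_0 + \varpi x_1 + \cdots$; the reduction $\bar x$ is in $\BF_p$, subtract its Teichmüller-type lift in $\O_F$, divide by $\varpi$, and note the quotient again satisfies a $\varphi$-fixed-point equation (using that $\varphi$ is $\O_F$-linear and $\varpi$-adically continuous), then iterate and invoke completeness. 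For $\O_\E \hookrightarrow (\OEhat)^{G_{\ul\pi}}$: modulo $\varpi$ this is $(k(\!(\ul\pi)\!)^{\sep})^{G_{\ul\pi}} = k(\!(\ul\pi)\!)$, which is Galois theory of the norm field (part \eqref{APF1}), and again one bootstraps up the $\varpi$-adic filtration — since $\O_\E$ is $\varpi$-adically complete with residue field $k(\!(\ul\pi)\!)$, and the graded pieces of $\OEhat$ for the $\varpi$-filtration are $k(\!(\ul\pi)\!)^{\sep}$ with its $G_{\ul\pi}$-action, so $G_{\ul\pi}$-invariants of the graded object is $k(\!(\ul\pi)\!)$, and the spectral-sequence / term-by-term lifting argument closes it because there is no $H^1$ obstruction in characteristic $p$ here at the level we need (Hilbert 90 / additive Galois cohomology vanishing for the relevant modules).

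The main obstacle I expect is not conceptual but bookkeeping: making sure that the passage to $p$-adic (equivalently $\varpi$-adic, since $F/\Q_p$ is totally ramified) completions is harmless at every stage — i.e., that $\OEhat$ is still a $\varpi$-adically separated $\varphi$- and $G_{\ul\pi}$-stable subring of $W(\Fr R)_F$ with residue field exactly $k(\!(\ul\pi)\!)^{\sep}$, that its $G_{\ul\pi}$-action is continuous, and that the successive-approximation arguments genuinely converge in $\OEhat$ and not merely formally. A secondary point requiring care is to confirm that $\varphi$ on $\OEhat$ reduces modulo $\varpi$ to the absolute ($p$-power) Frobenius on $k(\!(\ul\pi)\!)^{\sep}$ rather than some twist, which follows from $f(u) \equiv u^p \bmod \varpi$ and the construction of $\iota$, but should be stated. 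Everything else is a routine transcription of the classical arguments (e.g. \cite{fo4}) with $\Z_p$ replaced by $\O_F$ and $p$ by $\varpi$.
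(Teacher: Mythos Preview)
Your proposal is correct and follows essentially the same route as the paper's own proof: part~(1) via the norm-field isomorphism of \cite{win} combined with the theory of unramified extensions of local fields, and part~(2) by reducing modulo~$\varpi$ and running a successive-approximation argument. The paper's write-up of~(2) is marginally slicker---it observes once that for a local map of $\varpi$-adically complete local rings it suffices to check surjectivity modulo~$\varpi$, then uses a dimension count (nonzero source, one-dimensional target over $\mathbf{F}_p$ or $\mathbf{E}_{K_{\ul\pi}/K}$) rather than tracking $H^1$-obstructions explicitly---but the content is the same.
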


\begin{proof}
	By the very construction
	of $\OEhat$ and the fact that  the residue field of $\O_{\E}$ is identified with the
	norm field $\mathbf{E}_{K_{\ul{\pi}}/K}$ by Lemma \ref{strapflem},
	we have an isomorphism of topological groups
	$\Gal(\mathbf{E}_{K_{\ul{\pi}}/K}^{\sep}/\mathbf{E}_{K_{\ul{\pi}}/K})\simeq \Aut(\OEhat/\O_{\E})$
	by the theory of unramified extensions of local fields.
	On the other hand, the theory of norm fields \cite[3.2.2]{win} provides
	a natural isomorphism of topological groups
	$G_{\ul{\pi}}\simeq \Gal(\mathbf{E}_{K_{\ul{\pi}}/K}^{\sep}/\mathbf{E}_{K_{\ul{\pi}}/K})$,
	giving (\ref{APF1}).

	To prove (\ref{APF2}), note that the maps in question are local maps of $\varpi$-adically separated and complete
	local rings, so by a standard successive approximation argument it suffices
	to prove that these maps are surjective modulo $\varpi$.
	Now left-exactness of $\varphi$-invariants
	(respectively $G_{\ul{\pi}}$-invariants)
	gives an $\mathbf{F}_p$-linear (respectively $\mathbf{E}_{K_{\ul{\pi}/K}}$-linear) injection
	$$(\OEhat)^{\varphi=1}/(\varpi)\hookrightarrow (\mathbf{E}_{K_{\ul{\pi}/K}}^{\sep})^{\varphi=1}=
	\mathbf{F}_p=\O_F/(\varpi),$$
	respectively
	$$
	(\OEhat)^{G_{\ul{\pi}}}/(\varpi)\hookrightarrow (\mathbf{E}_{K_{\ul{\pi}/K}}^{\sep})^{G_{\ul{\pi}}}
	=\mathbf{E}_{K_{\ul{\pi}/K}} = \O_{\E}/(\varpi)
	$$	
	which must be an isomorphism of vector spaces over $\mathbf{F}_p$
	(respectively $\mathbf{E}_{K_{\ul{\pi}/K}}$) as the source is nonzero and the target is
	1-dimensional.  We conclude that $\O_F\hookrightarrow (\OEhat)^{\varphi=1}$
	(respectively $\O_{\E}\hookrightarrow (\OEhat)^{G_{\ul{\pi}}}$)
	is surjective modulo $\varpi$, and therefore an isomorphism as desired.
\end{proof}



Let $\t{Mod}_{\O_\E}^\varphi$ (resp. $\t{Mod} _{\O_\E} ^{\varphi, \t{tor}}$) denote the category of finite free $\O_\E$-modules $M $ (resp. finite $\O_\E$-modules $M$ killed by a power of $\varpi$), equipped with a $\varphi_{\O_\E}$-semi-linear endomorphism $\varphi _M : M\to M$ whose linearization $1 \otimes \varphi : \varphi ^*M \to M $ is an isomorphism. In each case, morphisms are $\varphi$-equivarant $\O_{\E}$-module homomorphisms.
Let $\t{Rep}_{\O_F}(G_{\ul{\pi}}) $ (resp. $\t{Rep}^\t{tor}_{\O_F}(G_{\ul{\pi}})$) be the category of finite,
free $\O_F$ modules (resp. finite $\O_F$-modules killed by a power of $\varpi$)
that are equipped with a continuous and $\O_F$-linear action of $G_{\ul{\pi}}$.

For $M$ in $\t{Mod}_{\O_\E}^\varphi$ or in  $\t{Mod} _{\O_\E} ^{\varphi, \t{tor}}$, we define
$$ \ul{V}(M):= ( \O _{\widehat \E ^\ur} \otimes_{\O_\E} M) ^{\varphi =1},  $$
which is an $\O_F$-module with a continuous action of $G_{\ul{\pi}}$.
For $V $ in $\t{Rep}_{\O_F}(G_{\ul{\pi}}) $ or in  $\t{Rep}^\t{tor}_{\O_F}(G_{\ul{\pi}})$, we define
 $$ \ul{M} (V) = (\O _{\widehat \E ^\ur} \otimes_{\O_F} V )^{G_{\ul{\pi}}}, $$
 which is an $\O_{\E}$-module with a $\varphi$-semilinear endomorphism
 $\varphi_{\ul{M}}:=\varphi_{\OEhat} \otimes 1.$

\begin{theorem} \label{thm-equi of etal stuffs}
The functors $\ul{V}$ and $\ul{M}$ are
quasi-inverse equivalences between the exact tensor categories
$\t{Mod}_{\O_\E} ^\varphi$ $($resp. $\t{Mod}_{\O_\E} ^{\varphi, \t{tor}}$$)$ and $\t{Rep}_{\O_F} (G_{\ul{\pi}})$
 $($resp. $\t{Rep}^\t{tor}_{\O_F}(G_{\ul{\pi}})$$)$.
\end{theorem}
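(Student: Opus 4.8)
The plan is to run Fontaine's classical descent argument (\cite{fo4}) essentially verbatim, with Lemma \ref{APF} supplying the structural inputs --- the identification $G_{\ul{\pi}}\simeq\Gal(\wh{\E}^{\ur}/\E)$ together with the computations $(\OEhat)^{\varphi=1}=\O_F$ and $(\OEhat)^{G_{\ul{\pi}}}=\O_\E$ --- in place of their classical counterparts. First I would check that $\ul{V}$ and $\ul{M}$ are additive functors, that $\ul{M}(V)$ is naturally a $\varphi$-module over $\O_\E$ via $\varphi_{\OEhat}\otimes 1$, and that $\ul{V}(M)$ carries a continuous $\O_F$-linear $G_{\ul{\pi}}$-action. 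The substance of the theorem is the pair of comparison isomorphisms
$$\OEhat\otimes_{\O_\E}\ul{M}(V)\;\ito\;\OEhat\otimes_{\O_F}V\qquad\text{and}\qquad\OEhat\otimes_{\O_F}\ul{V}(M)\;\ito\;\OEhat\otimes_{\O_\E}M,$$
both equivariant for $\varphi$ and for $G_{\ul{\pi}}$. Granting these, one first reads off the finiteness and freeness (resp. length) statements and the fact that $\ul{M}(V)$ is étale (using that $\OEhat$ is faithfully flat over $\O_\E$ and étale over itself), and then, applying $(-)^{\varphi=1}$ to the first isomorphism and $(-)^{G_{\ul{\pi}}}$ to the second and invoking Lemma \ref{APF}(\ref{APF2}), one obtains natural isomorphisms $\id\ito\ul{M}\circ\ul{V}$ and $\ul{V}\circ\ul{M}\ito\id$. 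Exactness of both functors and their compatibility with tensor products --- the unit objects being $\O_\E$ and $\O_F$ with trivial Frobenius and Galois action --- are then formal consequences of the comparison isomorphisms and the flatness of $\OEhat$.

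To prove the comparison isomorphisms I would first handle the torsion case, and within it reduce by dévissage on the length --- using the snake lemma together with the left-exactness of $(-)^{\varphi=1}$ and $(-)^{G_{\ul{\pi}}}$ --- to modules killed by $\varpi$. Then we are working over the field $\mathbf{E}:=\mathbf{E}_{K_{\ul{\pi}}/K}=k(\!(\ul{\pi})\!)$ inside $\Fr R$, with separable closure $\mathbf{E}^{\sep}$ and $\Gal(\mathbf{E}^{\sep}/\mathbf{E})\simeq G_{\ul{\pi}}$ by Lemma \ref{APF}(\ref{APF1}). On the Galois side, $\mathbf{E}^{\sep}\otimes_{\mathbf{E}}(\mathbf{E}^{\sep}\otimes_{\mathbf{F}_p}V)^{G_{\ul{\pi}}}\ito\mathbf{E}^{\sep}\otimes_{\mathbf{F}_p}V$ is Galois descent of semilinear representations (essentially Hilbert's Theorem 90); on the Frobenius side, $\mathbf{E}^{\sep}\otimes_{\mathbf{F}_p}(\mathbf{E}^{\sep}\otimes_{\mathbf{E}}M)^{\varphi=1}\ito\mathbf{E}^{\sep}\otimes_{\mathbf{E}}M$ is Fontaine's theorem that an étale $\varphi$-module over $\mathbf{E}^{\sep}$ has a basis of $\varphi$-fixed vectors, which ultimately rests on the fact that $\Fr R$, hence $\mathbf{E}^{\sep}$, is separably closed, so that every equation $\varphi(x)=ax+b$ with $a$ a unit has a solution. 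From the residue field one bootstraps: arbitrary finite torsion is the dévissage above, and the finite free case follows by passing to the inverse limit over $n$ of the isomorphisms for $M/\varpi^n M$ (resp. $V/\varpi^n V$), using the $\varpi$-adic completeness and separatedness of $\OEhat$ and the surjectivity of the transition maps.

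The step I expect to be the main obstacle is the finite free case of the \emph{second} comparison isomorphism --- equivalently, the claim that for a finite free étale $\varphi$-module $M$ the $\O_F$-module $\ul{V}(M)=(\OEhat\otimes_{\O_\E}M)^{\varphi=1}$ is free of rank $\rk_{\O_\E}M$ and spans $\OEhat\otimes_{\O_\E}M$ over $\OEhat$. This is the vanishing of the first Frobenius cohomology of $\OEhat\otimes_{\O_\E}M$, which is what permits one to lift a $\varphi$-fixed basis modulo $\varpi$ to a $\varphi$-fixed $\OEhat$-basis by successive approximation; it in turn requires the surjectivity on $\OEhat$ of $\varphi-1$ and, after choosing part of a basis, of operators $\varphi-a$ with $a$ a unit, which follows from the mod-$\varpi$ Artin--Schreier statement combined with $\varpi$-adic completeness --- but arranging the approximation to converge and checking that it yields a genuine basis is the delicate bookkeeping. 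Once this is in place, the remainder of the argument is a formal unwinding of Lemma \ref{APF}.
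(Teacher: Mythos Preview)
Your proposal is correct and follows essentially the same route as the paper: reduce to $\varpi$-power torsion by inverse limits, then d\'evissage to the $\varpi$-killed case, where the Galois side is Hilbert 90 and the Frobenius side is the Artin--Schreier/$p^d$-solutions argument for \'etale $\varphi$-modules over $\mathbf{E}^{\sep}$; Lemma \ref{APF} supplies exactly the invariants computations needed. One small organizational point: the ``main obstacle'' you flag in your final paragraph---lifting a $\varphi$-fixed basis to $\OEhat$ by successive approximation---is not really a separate free-case difficulty, since (as you yourself say in the preceding paragraph) the free case follows formally by inverse limit once the torsion comparison isomorphisms are in hand; the genuine content is the surjectivity of $\ul{V}$ on short exact sequences of $\varpi$-torsion modules (equivalently the vanishing of Frobenius $H^1$), and this is already settled in the $\varpi$-killed case by the dimension count $\#\ul{V}(M)=p^{\dim_{\mathbf{E}}M}$, which then propagates through the d\'evissage. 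So your left-exactness alone in the d\'evissage step is not quite enough---you need this dimension equality to force right-exactness---but you have all the ingredients.
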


\begin{proof}
	As in the proof of \cite[Theorem 1.6]{Kisin-Ren}, the original arguments of Fontaine
	\cite[A1.2.6]{fo4} carry over to the present situation.
	Indeed, by standard arguments with inverse limits, it is enough to prove the Theorem
	for $\varpi$-power torsion objects.  To do so, one first
	proves that
	$\ul{M}$ is exact, which by (faithful) flatness of the inclusion
	$\O_{\E}\hookrightarrow \O_{\E^{\ur}}$ amounts to the vanishing of $H^1(G_{\ul{\pi}},\cdot)$
	on the category of finite length $\O_{\E^{\ur}}$-modules with a continuous semilinear
	$G_{\ul{\pi}}$-action.  By a standard d\'evissage,
	such vanishing is reduced to the case of modules killed by $\varpi$, where it follows
	from Hilbert's Theorem 90 and Lemma \ref{APF}.
	One then checks that for any torsion $V$, the natural comparison map
	${\ul{M}(V)\otimes_{\O_{\E}}\O_{\E^{\ur}}}\rightarrow {V\otimes_{\O_F} \O_{\E^{\ur}}}$
	induced by multiplication in $\O_{\E^{\ur}}$ is an $\O_{\E^{\ur}}$-linear,
	$\varphi$, and $G_{\ul{\pi}}$-compatible isomorphism by d\'evissage (using the settled
	exactness of $\ul{M}$) to the case
	that $V$ is $\varpi$-torsion, where it again follows from Hilbert Theorem 90.
	Passing to submodules on which $\varphi$ acts as the identity
	and using Lemma \ref{APF}(2) then gives a natural isomorphism $\ul{V}\circ \ul{M} \simeq \mathrm{id}$.
	
	In a similar fashion,
	the exactness of $\ul{V}$ and the fact that the natural comparison map
	\begin{equation}
			\xymatrix{
			{\ul{V}(M)\otimes_{\O_{F}}\O_{\E^{\ur}}}\ar[r] & {M\otimes_{\O_{\E}} \O_{\E^{\ur}}}
			}\label{Vcomparemap}
	\end{equation}
	induced by multiplication is an isomorphism for general $\varpi$-power torsion modules $M$
	follows by d\'evissage from the the truth of these claims in the case of $M$ killed
	by $\varpi$.  In this situation, the comparison map (\ref{Vcomparemap})
	is shown to be injective by checking that any $\mathbf{F}_p$-linearly independent
	set of vectors in $\ul{V}(M)$ remains $\mathbf{E}^{\sep}_{K_{\infty/K}}$-linearly
	independent in $\mathbf{E}^{\sep}_{K_{\infty/K}}\otimes_{\mathbf{F}_p} \ul{V}(M)$,
	which is accomplished by a standard argument using
	the Frobenius endomorphism and Lemma \ref{APF}(2).
	 To check surjectivity is then a matter of showing that both sides of (\ref{Vcomparemap})
	 have the same $\mathbf{E}^{\sep}_{K_{\infty/K}}$-dimension, {\em i.e.}~ that
	 the $\mathbf{F}_p$-vector space $\ul{V}(M)$ has dimension
	$d:=\dim_{\mathbf{E}_{K_{\infty/K}}} M$.
	Equivalently, we must prove that $\ul{V}(M)$ has $p^d$ elements.
	Identifying $M$ with $\mathbf{E}_{K_{\infty/K}}^d$ by a choice of $\mathbf{E}_{K_{\infty/K}}$-basis
	and writing $(c_{ij})$ for the resulting matrix of $\varphi$,
	one (noncanonically) realizes $\ul{V}(M)$ as the set of $\mathbf{E}^{\sep}_{K_{\infty/K}}$-solutions
	to the system of $d$-equations $x_i^p = \sum a_{ij}x_j$ in $d$-unknowns,
	which has exactly $p^d$ solutions as $\varphi$ is \'etale, so the matrix $(c_{ij})$
	is {\em invertible}.
\end{proof}

In what follows, we will need a contravariant version of Theorem \ref{thm-equi of etal stuffs},
which follows from it by a standard duality argument ({\em e.g.}~ \cite[\S1.2.7]{fo4}).
For any $M \in \t{Mod}_{\O_\E}^\varphi$
(respectively $M \in \t{Mod}_{\O_\E}^{\varphi,\t{tor}}$), we define
$$\ul{T}(M):= \Hom_{\O_\E, \varphi} (M , \OEhat),
\quad\text{respectively}\quad
\ul{T}(M):= \Hom_{\O_\E, \varphi} (M , \wh{\E}^{\ur}/\OEhat),
$$
which is naturally an $\O_F$-module with a continuous action of $G_{\ul{\pi}}$.

\begin{co}\label{co-TM} The contravariant functor $\ul{T}$ induces an anti-equivalence between  $\t{Mod}_{\O_\E} ^\varphi$ $($resp. $\t{Mod}_{\O_\E} ^{\varphi, \t{tor}}$$)$ and $\t{Rep}_{\O_F} (G_{\ul{\pi}})$ $($
resp. $\t{Rep}_{\O_F}^{\t{tor}} (G_{\ul{\pi}})$$)$.
\end{co}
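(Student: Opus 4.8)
The plan is to realize $\ul{T}$ as the composite of the covariant equivalence $\ul{V}$ of Theorem \ref{thm-equi of etal stuffs} with an internal duality on $\varphi$-modules, following \cite[\S1.2.7]{fo4}. First I would set up the duality. For $M\in\t{Mod}_{\O_\E}^\varphi$ put $M^\vee:=\Hom_{\O_\E}(M,\O_\E)$; using the canonical isomorphism $\varphi^*(M^\vee)\simeq(\varphi^*M)^\vee$ and the fact that $1\otimes\varphi_M\colon\varphi^*M\to M$ is an isomorphism, one equips $M^\vee$ with the semilinear endomorphism $\varphi_{M^\vee}$ whose linearization is the inverse-transpose of $1\otimes\varphi_M$; this is again \'etale, so $M^\vee\in\t{Mod}_{\O_\E}^\varphi$, and the biduality map $M\to M^{\vee\vee}$ is an isomorphism, whence $M\mapsto M^\vee$ is a contravariant auto-equivalence of $\t{Mod}_{\O_\E}^\varphi$. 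In the torsion setting one instead sets $M^\vee:=\Hom_{\O_\E}(M,\E/\O_\E)$ for $M\in\t{Mod}_{\O_\E}^{\varphi,\t{tor}}$; since $M$ is killed by some $\varpi^n$ this equals $\Hom_{\O_\E}(M,\varpi^{-n}\O_\E/\O_\E)$, which is again a finite $\varpi$-power torsion \'etale $\varphi$-module, and Matlis duality over the complete discrete valuation ring $\O_\E$ shows $M\mapsto M^\vee$ is an anti-equivalence of $\t{Mod}_{\O_\E}^{\varphi,\t{tor}}$ with itself.

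Next I would exhibit a natural, $G_{\ul{\pi}}$-equivariant isomorphism $\ul{T}(M)\simeq\ul{V}(M^\vee)$. In the free case, since $M$ is finite free over $\O_\E$ the multiplication map gives a canonical identification $\OEhat\otimes_{\O_\E}M^\vee\simeq\Hom_{\O_\E}(M,\OEhat)$, and a short diagram chase shows that the Frobenius $\varphi_{\OEhat}\otimes\varphi_{M^\vee}$ on the left corresponds on the right to the operator $g\mapsto\varphi_{\OEhat}\circ g\circ(1\otimes\varphi_M)^{-1}$ whose fixed points are exactly the Frobenius-equivariant homomorphisms; hence passing to $\varphi$-invariants identifies $\ul{V}(M^\vee)$ with $\Hom_{\O_\E,\varphi}(M,\OEhat)=\ul{T}(M)$. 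The $G_{\ul{\pi}}$-actions on both sides are induced by that on $\OEhat$, so the identification is $G_{\ul{\pi}}$-equivariant, and it is visibly functorial in $M$. The torsion case is identical once one checks $\OEhat\otimes_{\O_\E}M^\vee\simeq\Hom_{\O_\E}(M,\wh{\E}^{\ur}/\OEhat)$; here I would use that, for $M$ killed by $\varpi^n$, one has $\Hom_{\O_\E}(M,\wh{\E}^{\ur}/\OEhat)=\Hom_{\O_\E}(M,\varpi^{-n}\OEhat/\OEhat)\simeq\OEhat\otimes_{\O_\E}\Hom_{\O_\E}(M,\varpi^{-n}\O_\E/\O_\E)=\OEhat\otimes_{\O_\E}M^\vee$, the middle isomorphism coming from finite presentation of $M$.

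Putting these together, $\ul{T}\simeq\ul{V}\circ(-)^\vee$ as functors in both the free and the torsion setting. Since $(-)^\vee$ is an anti-equivalence of $\t{Mod}_{\O_\E}^\varphi$ (resp. $\t{Mod}_{\O_\E}^{\varphi,\t{tor}}$) with itself and $\ul{V}$ is an equivalence onto $\t{Rep}_{\O_F}(G_{\ul{\pi}})$ (resp. $\t{Rep}_{\O_F}^{\t{tor}}(G_{\ul{\pi}})$) by Theorem \ref{thm-equi of etal stuffs}, the composite $\ul{T}$ is an anti-equivalence, with quasi-inverse $V\mapsto\ul{M}(V)^\vee$; in particular $\ul{T}(M)$ is indeed a finite free $\O_F$-module (resp. a finite $\varpi$-power torsion $\O_F$-module) with continuous $G_{\ul{\pi}}$-action. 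The only points requiring genuine (if routine) care are the compatibility of the duality $(-)^\vee$ with the Frobenius structures — in particular that it preserves \'etaleness — and, in the torsion case, the colimit and finite-presentation argument identifying $\OEhat\otimes_{\O_\E}M^\vee$ with $\Hom_{\O_\E}(M,\wh{\E}^{\ur}/\OEhat)$.
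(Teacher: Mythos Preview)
Your proposal is correct and is precisely the ``standard duality argument'' the paper invokes by citing \cite[\S1.2.7]{fo4}; the paper gives no further details, so you have simply unpacked what is left implicit there. The only minor point worth noting is that finite presentation of $M$ in the torsion case is automatic since $\O_\E$ is a noetherian (complete discrete valuation) ring, which you use but do not state explicitly.
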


\subsection{Kisin modules and Representations of finite $E$-height}\label{subsec-3.2}

For a nonnegative integer $r$,
we write $'\sfi$  for the category of finite-type $\gs_F$-modules $\M$ equipped with
a $\varphi_{\gs_F}$-semilinear endomorphism $\varphi_\M : \M \to \M$ satisfying
\begin{itemize}
	\item the cokernel of the linearization $1\otimes \varphi: \varphi ^*\M \to \M$ is killed by $E(u)^r$;
	\item the natural map $\M \to \O_\E \otimes_{\gs_F} \M$ is injective.
\end{itemize}
One checks that together these
conditions guarantee that the scalar extension $\O_\E \otimes_{\gs_F} \M$ is an object of
$\t{Mod}_{\O_\E} ^\varphi$ when $\M$ is torsion free,  and an object of
$\t{Mod}^{\varphi, \t{tor}}_{\O_\E}$ if $\M$ is killed by a power of $\varpi$.
Morphisms in $'\sfi$ are $\varphi$-compatible $\gs_F$-module homomorphisms.
By definition, the category of {\em Kisin modules of $E(u)$-height $r$},
denoted $\sfi$, is the full subcategory of $'\sfi$ consisting
of those objects which are finite and free over $\gs_F$.
For any such Kisin module $\M \in \t{Mod}_{\gs_F} ^{\varphi, r}$, we define
$$T_{\gs} (\M): = \Hom_{\gs_F , \varphi} (\M , \gs ^\ur_F),$$
with $\gs ^\ur_F:= W(R) _F \cap \O _{\widehat \E^\ur}$
as above Lemma \ref{APF}; this is naturally an $\O_F$-module with a linear
action of $G_{\ul{\pi}}$.

\begin{prop}\label{lem-Tgsbasic}
Let $\M\in \sfi$ and write $M = \O_\E \otimes _{\gs_F }\M$
for the corresponding object of  $\t{Mod}^\varphi_{\O_\E}$.
\begin{enumerate}
\item There is a canonical isomorphism of $\O_F[G_{\ul{\pi}}]$-modules
$T_{\gs} (\M) \simeq \ul{T}(M)$. In particular, $T_{\gs} (\M) \in \t{Rep}_{\O_F}(G_{\ul{\pi}})$ and
$\t{rank} _{\O_F} (T_{\gs}(\M)) = \t{rank} _{\gs _F}(\M)$.
\item The inclusion $\gs ^\ur_F\hookrightarrow W(R)_F$ induces a natural isomorphism
of $\O_F[G_{\ul{\pi}}]$-modules
$T_{\gs} (\M )\simeq \Hom_{\gs_F , \varphi}(\M , W(R)_F)$.
\end{enumerate}
\end{prop}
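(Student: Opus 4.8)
The plan is to deduce both parts from the étale $\varphi$-module theory developed in Theorem \ref{thm-equi of etal stuffs} and Corollary \ref{co-TM}, together with the definition $\gs^\ur_F = W(R)_F \cap \O_{\widehat\E^\ur}$. For part (1), the point is simply to identify $T_\gs(\M) = \Hom_{\gs_F,\varphi}(\M, \gs^\ur_F)$ with $\ul{T}(M) = \Hom_{\O_\E,\varphi}(M, \O_{\widehat\E^\ur})$ when $M = \O_\E \otimes_{\gs_F} \M$. First I would note that any $\varphi$-equivariant $\gs_F$-linear map $\M \to \gs^\ur_F \subseteq \O_{\widehat\E^\ur}$ extends uniquely by $\O_\E$-linearity to a $\varphi$-equivariant map $M = \O_\E \otimes_{\gs_F}\M \to \O_{\widehat\E^\ur}$, giving a natural $\O_F[G_{\ul\pi}]$-linear map $T_\gs(\M) \to \ul{T}(M)$. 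The main content is that this is an isomorphism: injectivity is immediate since $\M \to M$ is injective and $\M$ generates $M$ over $\O_\E$; for surjectivity one must show that any $\varphi$-equivariant map $h: M \to \O_{\widehat\E^\ur}$ actually carries $\M$ into $\gs^\ur_F = W(R)_F \cap \O_{\widehat\E^\ur}$, i.e. that $h(\M) \subseteq W(R)_F$. This is the step I expect to require a genuine argument: one uses that $\M$ has $E(u)$-height $r$, so there is a map $\psi: \M \to \varphi^*\M$ with $(1\otimes\varphi)\circ\psi = E(u)^r$, which lets one write $E(u)^r h(m) = \varphi(\text{something in } h(\M))$; combined with the fact (from Lemma \ref{lem-generater} and the structure of $W(R)_F$ inside $W(\Fr R)_F$) that $E(u)$ is a non-zero-divisor and $W(R)_F = W(\Fr R)_F \cap (\text{appropriate ring})$ is $\varphi$-stable and "saturated" in the relevant sense, a bootstrapping/approximation argument shows $h(\M) \subseteq W(R)_F$. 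The statements about $T_\gs(\M) \in \t{Rep}_{\O_F}(G_{\ul\pi})$ and the rank equality then follow at once from Corollary \ref{co-TM}.

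For part (2), I would show the inclusion $\gs^\ur_F \hookrightarrow W(R)_F$ induces an isomorphism $\Hom_{\gs_F,\varphi}(\M, \gs^\ur_F) \xrightarrow{\sim} \Hom_{\gs_F,\varphi}(\M, W(R)_F)$. Injectivity is clear since $\gs^\ur_F \subseteq W(R)_F$. For surjectivity, suppose $h: \M \to W(R)_F$ is $\varphi$-equivariant and $\gs_F$-linear; I need to show $h(\M) \subseteq \O_{\widehat\E^\ur}$, so that $h(\M) \subseteq W(R)_F \cap \O_{\widehat\E^\ur} = \gs^\ur_F$. Here I would extend $h$ by $\O_\E$-linearity to $M = \O_\E\otimes_{\gs_F}\M \to W(\Fr R)_F$ and invoke the comparison isomorphism (\ref{Vcomparemap}) from the proof of Theorem \ref{thm-equi of etal stuffs}, or rather its consequence that $M$, as an étale $\varphi$-module, satisfies $\O_{\widehat\E^\ur}\otimes_{\O_\E} M \cong (\O_{\widehat\E^\ur})^d$ compatibly with $\varphi$; a $\varphi$-equivariant $\O_\E$-linear map $M \to W(\Fr R)_F$ then corresponds, after base change, to an element fixed by $\varphi$ in a free $W(\Fr R)_F$-module, and by the analysis of $\varphi$-fixed points (Lemma \ref{APF}-type reasoning, or directly the fact that $W(\Fr R)_F^{\varphi=1} = \O_F$ and $\O_{\widehat\E^\ur} = W(R)_F \cdot$-saturation behaves correctly) one forces the image into $\O_{\widehat\E^\ur}$. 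Concretely, I expect the cleanest route is: the natural map $\gs^\ur_F \otimes_{\gs_F}\O_\E \to \O_{\widehat\E^\ur}$ together with $W(R)_F \cap \O_{\widehat\E^\ur} = \gs^\ur_F$ and $W(R)_F \cap \E^\ur = \gs^\ur_F$ inside $W(\Fr R)_F$ lets one check the containment $h(\M) \subseteq \gs^\ur_F$ entrywise.

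The main obstacle, in both parts, is the "integrality" or "saturation" statement: that a $\varphi$-equivariant homomorphism out of a finite-height Kisin module $\M$ which a priori lands in the big ring ($\O_{\widehat\E^\ur}$ in part (1), $W(R)_F$ in part (2)) is automatically forced into the smaller ring $\gs^\ur_F$. The key inputs are the $E(u)$-height $r$ condition — which provides the relation $E(u)^r = (1\otimes\varphi)\circ\psi$ on $\M$ and thus controls denominators after applying $\varphi$ — together with the fact that $E(u)$ generates $\Fil^1 W(R)_F$ (Lemma \ref{lem-generater}) and is therefore a non-zero-divisor, and the fact that $\gs^\ur_F = W(R)_F \cap \O_{\widehat\E^\ur}$ is by construction the intersection realizing exactly this saturation inside $W(\Fr R)_F$. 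I would organize the proof so that part (1) is proved first (identifying $T_\gs(\M)$ with $\ul T(M)$), and then part (2) follows by the same saturation argument applied to $W(R)_F$ in place of $\O_{\widehat\E^\ur}$, or more slickly by composing the isomorphism of part (1) with the observation that $\Hom_{\gs_F,\varphi}(\M,\O_{\widehat\E^\ur}) = \Hom_{\gs_F,\varphi}(\M, W(\Fr R)_F) \cap (\text{image})$ collapses onto $\Hom_{\gs_F,\varphi}(\M, W(R)_F)$ because any such homomorphism has image in $W(R)_F \cap \O_{\widehat\E^\ur} = \gs^\ur_F$ anyway.
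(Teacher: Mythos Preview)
Your approach is correct and is precisely what the paper's cited references (Fontaine \cite[B1.4.2, B1.8.3]{fo4}, Kisin \cite[2.1.2, 2.1.4]{kisin2}, Kisin--Ren \cite[3.2.1]{Kisin-Ren}) actually do: the successive-approximation argument modulo $\varpi$ for part (1), and the use of the comparison isomorphism for the \'etale $\varphi$-module $M$ to reduce part (2) to $W(\Fr R)_F^{\varphi=1}=\O_F$. The paper's own proof is nothing more than these citations together with the remark that Fontaine's $F=\Q_p$ arguments go through verbatim, so you have essentially reconstructed the intended proof.
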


\begin{proof}
	As in the proofs of \cite[2.1.2, 2.1.4]{kisin2} and \cite[3.2.1]{Kisin-Ren},
	the Lemma follows from B1.4.2 
	and B1.8.3 of \cite{fo4} ({\em cf}. B1.8.6),
	using \cite[A1.2]{fo4}
	and noting that Fontaine's arguments---which are strictly speaking
	only for $F=\Q_p$---carry over {\em mutatis mutandis} to our more general situation.
\end{proof}

\begin{example}\label{ex-dim1} Let $\M$ be a Kisin module of rank 1 over $\gs_F$.
Choosing a basis $\fe$ of $\M$ and identifying $\M=\gs_F\cdot\fe$,
it follows from Weierstrass preparation that we must have
$\varphi (\fe)= \mu E(u)^m\fe $ for some $\mu \in \gs_F ^\times$.
Consider the particular case that $\varphi (\fe) = E(u) \fe$,
which is a rank-1 Kisin module of $E$-height 1.
Proposition (\ref{lem-Tgsbasic}) then shows that $T_{\gs} (\M)$ gives an
$\O_F$-valued character of $G_{\ul{\pi}}$ and that there exists $\gt \in  W(R)_F$
satisfying $\varphi (\gt) = E(u) \gt$ and $\gt \bmod \varpi \not = 0$ inside $R$. We will see in \S \ref{Sec-BT} that the character of $G_{\ul{\pi}}$
furnished by $T_{\gs}(\M)$ can be extended to a Lubin-Tate character of $G$ if we assume that $\varpi ^2 | a_1$ in $\O_F$,
where $a_1$ is the linear coefficient of $f(x)\in \O_F[x]$.
\end{example}

Let $\rep_{F}( G_{\ul{\pi}}) $ denote the category of continuous, $F$-linear representations of $G_{\ul{\pi}}$.
We say that an object  $V$ of $\rep_{F}( G_{\ul{\pi}})$ is of \emph{$E(u)$-height $r$} if there exists a
Kisin module $\M \in \sfi$ with $V \simeq T_{\gs_F} (\M)[1/ p]$, and
we say that $V$ is of {\em finite $E(u)$-height} if there exists an integer $r$ such that $V$ is of $E(u)$-height $r$.
As $E=E(u)$ is fixed throughout this paper, we will simply say that $V$ is of {\em $($finite$)$ height $r$}.

For $\M$ an arbitrary object of $\sfi$, we write $V_{\gs}(\M) : = T_{\gs}(\M)[1/p]$
for the associated height-$r$ representation of $G_{\ul{\pi}}$.
We will need the following generalization of
\cite[Lemma 2.1.15]{kisin2} (or \cite[Corollary 2.3.9]{liu2}):


\begin{prop} \label{prop-rep-height r}
Suppose that $V\in \rep_F(G_{\ul{\pi}})$ is of height $r$.
Then for any $G_{\ul{\pi}}$-stable $\O_F$-lattice $L\subset V$, there exists
$\N \in \sfi$ such that $T_{\gs}(\N ) \simeq L$ in $\rep_{\O_F}(G_{\ul{\pi}})$.
\end{prop}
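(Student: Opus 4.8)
The plan is to mimic the proof of \cite[Lemma 2.1.15]{kisin2}, using the equivalence of Theorem \ref{thm-equi of etal stuffs} (and its contravariant form, Corollary \ref{co-TM}) to translate the problem into one about étale $\varphi$-modules, where lattices are easy to produce, and then to show that the sublattice of $\O_\E \otimes_{\gs_F}\M$ defined by a given Galois lattice actually comes from a $\gs_F$-submodule of finite $E(u)$-height. First I would fix a Kisin module $\M \in \sfi$ with $V \simeq V_{\gs}(\M) = T_{\gs}(\M)[1/p]$, and set $M := \O_\E \otimes_{\gs_F}\M \in \t{Mod}^\varphi_{\O_\E}$, so that by Proposition \ref{lem-Tgsbasic}(1) we have $\ul{T}(M) \simeq T_{\gs}(\M)$ with $T_{\gs}(\M)[1/p] \simeq V$. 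Given a $G_{\ul\pi}$-stable $\O_F$-lattice $L \subset V$, the anti-equivalence $\ul{T}$ of Corollary \ref{co-TM} produces a unique object $M_L \in \t{Mod}^\varphi_{\O_\E}$ (a full $\O_\E$-lattice inside $M[1/p] = \E \otimes_{\O_\E} M$, stable under $\varphi$ and with $1\otimes\varphi$ an isomorphism) with $\ul{T}(M_L) \simeq L$ as $\O_F[G_{\ul\pi}]$-modules; up to scaling we may assume $M_L \subseteq M$.

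The heart of the matter is then to descend $M_L$ to a Kisin module: I would set $\N := \M \cap M_L$, the intersection taken inside $M[1/p]$ (equivalently inside $\O_\E \otimes_{\gs_F}\M[1/p]$), which is a $\varphi$-stable $\gs_F$-submodule of $\M$, hence torsion-free and finitely generated over the noetherian ring $\gs_F$, so in fact finite free over $\gs_F$ after checking reflexivity/closedness (here one uses that $\gs_F$ is a two-dimensional regular local ring and $\N$ is the intersection of $\M$ with a free $\O_\E$-module, hence saturated in codimension one). One must then verify that $\N$ has $E(u)$-height $r$: since $\varpi^n M \subseteq M_L \subseteq M$ for some $n$, we have $\varpi^n \M \subseteq \N \subseteq \M$, so $\O_\E \otimes_{\gs_F}\N = M_L$ and $\E \otimes_{\gs_F} \N = M[1/p]$; then $\N[1/u] = \M[1/u]$ so the cokernel of $1\otimes\varphi$ on $\N$ is supported on $V(E(u))$, and a direct estimate (using that $1\otimes\varphi$ has cokernel killed by $E(u)^r$ on $\M$, together with the containment $\varpi^n\M\subseteq\N$ and the fact that $\varphi(E(u))$ differs from $E(u)$ by a unit up to the relation $\varphi(u) = f(u)$) bounds the $E(u)$-height of $\N$ by some $r'$ possibly larger than $r$ — which is harmless, since ``of height $r$'' in the statement of the proposition only requires \emph{some} such $r'$ (the representation $V$ is of finite height). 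Finally, $T_{\gs}(\N) \simeq \ul{T}(\O_\E\otimes_{\gs_F}\N) = \ul{T}(M_L) \simeq L$ in $\rep_{\O_F}(G_{\ul\pi})$ by Proposition \ref{lem-Tgsbasic}(1) again, which is the desired conclusion.

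The main obstacle, as in Kisin's original argument, is the freeness of $\N = \M \cap M_L$ over $\gs_F$: a priori this intersection is only a finitely generated torsion-free $\gs_F$-module, and torsion-free finitely generated modules over the two-dimensional regular local ring $\gs_F$ need not be free. The resolution is that $\N$ is \emph{reflexive}: it is the kernel of $\M \to M/M_L$, and $M/M_L$ is a finite-length (hence in particular torsion, codimension $\ge 1$) $\O_\E$-module, which forces $\N$ to be saturated, and a saturated (reflexive) finitely generated torsion-free module over a two-dimensional regular local ring is free. I would spell this out carefully, exactly as in \cite[Lemma 2.1.15]{kisin2}. Everything else — the $\varphi$-stability of $\N$, the height bound, and the identification of $T_{\gs}(\N)$ with $L$ — is routine given the machinery already in place.
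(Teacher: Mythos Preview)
Your overall strategy is sound and in fact produces the same module as the paper: one checks that the paper's $\N'[1/p]\cap N$ coincides with your $\N=\M\cap M_L$, since $\varpi^n\M\subseteq\N$ forces $\N[1/p]=\M[1/p]$, and $\M[1/p]\cap M_L=\M\cap M_L$ inside $M$ (as $\M$ is $\varpi$-saturated in $M$). Your freeness argument via reflexivity over the two-dimensional regular local ring $\gs_F$ is a clean alternative to the paper's appeal to Lemma~\ref{lem-digfree}: the quotient $\M/\N$ embeds in the $\O_\E$-module $M/M_L$, on which $u$ acts invertibly, so $\M/\N$ has no $u$-torsion and hence no finite-length $\gs_F$-submodule; thus $\N$ is reflexive and therefore free by Auslander--Buchsbaum.

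The genuine gap is in your height argument. First, the assertion $\N[1/u]=\M[1/u]$ is false: what your inclusion $\varpi^n\M\subseteq\N\subseteq\M$ gives is $\N[1/p]=\M[1/p]$, whereas $\N[1/u]=\M[1/u]\cap M_L$ is typically a proper sublattice of $\M[1/u]$. Second, and more seriously, your fallback ``height $r'$ possibly larger than $r$ is harmless'' is wrong: the conclusion of the Proposition is that $\N$ lies in $\sfi=\t{Mod}_{\gs_F}^{\varphi,r}$ for the \emph{given} $r$, and this is used essentially downstream (Corollary~\ref{co-more} works inside $\sfi$, and the entire height-$1$ theory of \S\ref{Sec-BT} needs height exactly $1$). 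You must show $\N$ has $E(u)$-height $\le r$, and your ``direct estimate'' using $\varpi^n\M\subseteq\N$ cannot do this, since it only yields that $\varpi^n E(u)^r$ kills the cokernel.

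The paper handles the height via \cite[B 1.3.5]{fo4}. If you want a self-contained argument: for $n\in\N$, the height of $\M$ gives a unique $\xi\in\varphi^*\M$ with $(1\otimes\varphi)(\xi)=E(u)^r n$, while \'etaleness of $M_L$ gives a unique $\eta\in\varphi^*M_L$ with $(1\otimes\varphi)(\eta)=n$, so $(1\otimes\varphi)(E(u)^r\eta)=E(u)^r n$. Comparing inside $\varphi^*M$ (where $1\otimes\varphi$ is injective) yields $\xi=E(u)^r\eta\in\varphi^*M_L$, whence $\xi\in\varphi^*\M\cap\varphi^*M_L=\varphi^*\N$ (using that $\gs_F$ is finite free over $\varphi(\gs_F)$, so $\varphi^*$ preserves intersections). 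Thus $E(u)^r$ kills the cokernel of $1\otimes\varphi$ on $\N$. You should also justify $\O_\E\otimes_{\gs_F}\N\simeq M_L$: surjectivity of $\O_\E\cdot\N\to M_L$ follows by approximating any $x\in M_L\subseteq M$ modulo $\varpi^n M\subseteq\O_\E\cdot\N$ by an element of $\M[1/u]$ and then clearing the $u$-denominator.
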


The proof of Proposition \ref{prop-rep-height r} we make use of the following key lemma:

\begin{lemma}\label{lem-digfree}
Let $\M$ be an object of $'\sfi$ that is torsion-free.  Then the intersection
$\M ' : = \M [1/p] \cap  (\O_\E \otimes_{\gs_F} \M) $,
taken inside of $\E\otimes_{\gs_F} \M$, is an object in $\t{Mod} ^{\varphi, r} _{\gs_F}$ and
there are canonical inclusions $\M \subset \M' \subset \O_\E \otimes_{\gs_F} \M.$
\end{lemma}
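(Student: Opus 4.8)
The plan is to verify each of the three assertions in turn: that $\M'$ is finitely generated and free over $\gs_F$, that its Frobenius has the right height, and that the inclusions hold. First I would observe that the inclusions $\M \subset \M' \subset \O_\E \otimes_{\gs_F}\M$ are immediate from the definitions: $\M$ sits inside $\M[1/p]$ and, since $\M$ is an object of ${}'\sfi$, it injects into $\O_\E\otimes_{\gs_F}\M$; hence $\M$ lies in the intersection defining $\M'$, while $\M' \subseteq \O_\E\otimes_{\gs_F}\M$ is part of its definition. Because $\varphi_\M$ and the $\varphi$ on $\O_\E$ are compatible, $\M'$ is visibly $\varphi$-stable, and the linearization $1\otimes\varphi\colon \varphi^*\M' \to \M'$ still has cokernel killed by $E(u)^r$: one checks this by noting that $\M[1/p]$ and $\O_\E\otimes_{\gs_F}\M$ each carry such a structure (for the latter, $1\otimes\varphi$ is an isomorphism), so the cokernel over $\M'$ embeds into the cokernel over $\M[1/p]$, which is killed by $E(u)^r$. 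The map $\M'\to\O_\E\otimes_{\gs_F}\M'$ is injective since $\M'\subseteq \O_\E\otimes_{\gs_F}\M$ already. Thus the only real content is that $\M'$ is \emph{finite free} over $\gs_F$.

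For finiteness, the key point is that $\gs_F = \O_{F_0}[\![u]\!]$ is a regular local ring of dimension $2$ (a two-dimensional complete local Noetherian domain), and $\M'$ is a submodule of the finite free $\O_\E\otimes_{\gs_F}\M$. I would argue as follows. Since $\M$ is torsion-free of finite type over $\gs_F$, after inverting $u$ it becomes finite free over $\gs_F[1/u]$ and hence $\M[1/p]\cap(\O_\E\otimes_{\gs_F}\M)$ is contained in the finite free $\O_\E$-module $M:=\O_\E\otimes_{\gs_F}\M$. Now $\M'$ is a $\gs_F$-submodule of $M$ that is \emph{$u$-adically closed and contains} $\M$ with $\M'[1/p]\subseteq \M[1/p]$ and $\M'[1/u]\subseteq M[1/u]=\M[1/u][1/p]$ stable. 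The standard trick (as in \cite[2.1.15]{kisin2}, \cite[2.3.9]{liu2}) is: $\M'/\M$ is killed by a power of $p$ (since $\M'[1/p]=\M[1/p]$) and is $u$-torsion-free (since $\M'\subseteq M$ which is $u$-torsion-free, wait---rather since $M$ has no $u$-torsion, any submodule has none, but $\M'/\M$ could have $u$-torsion; the correct statement is that $\M'/\M$ injects into $M/\M$, which is a $\gs_F$-module all of whose elements are killed by a power of $E(u)^r$ up to... ) Let me instead phrase it cleanly: $\M'/\M \hookrightarrow M/\M$, and $M/\M \cong (\O_\E/\gs_F)\otimes\cdots$ which is $p$-torsion-free but $u$-divisible; intersecting with the $p^\infty$-torsion (which $\M'/\M$ is) and using that $\gs_F/p^n$ is Noetherian of dimension $1$ forces $\M'/\M$ to be finitely generated over $\gs_F$. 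Hence $\M'$ is finite over $\gs_F$. Being a torsion-free finite module over the two-dimensional regular local ring $\gs_F$ that becomes free after inverting $p$ and after inverting $u$, and is reflexive (being an intersection of the reflexive modules $\M[1/p]$ and $M$ inside a common vector space), $\M'$ is free: a reflexive module over a two-dimensional regular local ring is free. This reflexivity argument is the cleanest route, so I would emphasize it.

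\textbf{Main obstacle.} The delicate step is establishing that $\M'$ is \emph{finitely generated} over $\gs_F$ (freeness then follows from reflexivity/depth considerations over the regular ring $\gs_F$). The subtlety is that $\O_\E$ is not finite over $\gs_F$---it is the $p$-adic completion of $\gs_F[1/u]$---so one cannot simply say "submodule of a finite module over a Noetherian ring." One must genuinely exploit that $\M'$ is squeezed between $\M$ and $\M[1/p]$ (a $p^\infty$-torsion cokernel) \emph{and} that it lies in $M$ which is finite free over $\O_\E$, then run a $p$-adic approximation / Noetherian-mod-$p^n$ argument to bound the quotient $\M'/\M$. This is exactly the technical core of \cite[Lemma 2.1.15]{kisin2} and \cite[Corollary 2.3.9]{liu2}, and I would follow their argument essentially verbatim, since nothing about the more general $\gs_F = \O_{F_0}[\![u]\!]$ or the general Frobenius lift $\varphi(u)=f(u)$ obstructs it: the only ring-theoretic facts used are that $\gs_F$ is a two-dimensional regular complete local domain with $\gs_F/p$ Noetherian and that $\O_\E$ is faithfully flat over $\gs_F$.
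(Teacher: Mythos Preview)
Your proposal is correct and aligned with the paper's approach: the paper's entire proof is the one-line remark that ``the proof of Lemma 2.3.7 in \cite{liu2} carries over \emph{mutatis mutandis},'' and your sketch is precisely an unpacking of that argument (inclusions are immediate, $\varphi$-stability and $E$-height $r$ are inherited, and the substance is finite freeness via a reflexivity/depth argument over the two-dimensional regular local ring $\gs_F$). One small correction: the relevant reference in \cite{liu2} is Lemma 2.3.7, not Corollary 2.3.9 --- the latter is the analogue of Proposition~\ref{prop-rep-height r}, which \emph{uses} the present lemma, not the lemma itself; and \cite[Lemma 2.1.15]{kisin2} is likewise the downstream application rather than the freeness statement.
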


\begin{proof}
The proof of Lemma 2.3.7 in \cite{liu2} carries over {\em mutatis mutandis}
to the present situation.
\end{proof}

\begin{proof}[Proof of Proposition $\ref{prop-rep-height r}$]  As the proof is a simple adaptation of
that of Corollary 2.3.9 in \cite{liu2}, we simply sketch the highlights.
Let $V\in \rep_F(G_{\ul{\pi}})$ be of height $r$, and select $\M\in \sfi$ with $V\simeq V_{\gs}(\M)$.
Put $T:=T_{\gs}(\M)$, which is a $G_{\ul{\pi}}$-stable $\O_F$-lattice in $V$, and let $L\subset V$
be an arbitrary $G_{\ul{\pi}}$-stable $\O_F$-lattice.
Put $M := \O_\E\otimes _{\gs_F} \M$ and let
$N \in \Mod_{\O_\E} ^\varphi$ be the object of $\Mod_{\O_{\E}}^{\varphi}$
corresponding to $L$ via Corollary \ref{co-TM}, so
$\ul{T}(N) \simeq L$ in $\rep_{\O_F}(G_{\ul{\pi}})$.
Without loss of generality, we may assume that $N \subset M$.
Writing $\f: M \to M/ N$ for the natural projection, it is easy to check that $\f(\M)$ is an object of $'\sfi$.
It then follows from Proposition \cite[ B 1.3.5]{fo4} that $\N':=\ker(\f\big|_{\M})\subset N$ is an object of $'\sfi$.
Writing $\N:=\N'[1/p] \cap N$, we have that $\N$ is an object of $\sfi$ thanks to Lemma \ref{lem-digfree},
and by construction we have $\O_{\E}\otimes_{\gs_F}\N \simeq N$, so that
 $T_{\gs}(\N)\simeq L$ as $\O_F[G_{\ul{\pi}}]$-modules thanks to Proposition \ref{lem-Tgsbasic}
 and the choice of $N$.
\end{proof}

\begin{prop}\label{prop-fullfaith} Assume that $\varphi^n(f(u)/u)$ is not power of $E(u)$ for any $n\geq 0$.
Then the functor $T_\gs : \t{Mod}_{\gs_F}^{\varphi, r}\rightsquigarrow \t{Rep}_{\O_F}(G_{\ul{\pi}})$ is fully faithful.
\end{prop}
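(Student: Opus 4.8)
The plan is to reduce full faithfulness of $T_{\gs}$ to a statement about $\varphi$-stable $\gs_F$-submodules of a single Kisin module, following the standard template of \cite[Prop.~2.1.12]{kisin2}. First I would observe that since $T_{\gs}$ is $\O_F$-linear and (by Proposition \ref{lem-Tgsbasic}) sends $\M$ into $\Hom_{\gs_F,\varphi}(\M,W(R)_F)$, it is automatically faithful: a morphism $h\colon \M\to\N$ inducing $0$ on $T_{\gs}$ induces $0$ on the associated \'etale $\varphi$-modules after inverting $u$, hence is $0$ by the injectivity condition $\M\hookrightarrow \O_\E\otimes_{\gs_F}\M$ built into $'\sfi$. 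For fullness, given $\M,\N\in\sfi$ and a $G_{\ul\pi}$-equivariant $\O_F$-morphism $g\colon T_{\gs}(\N)\to T_{\gs}(\M)$, Corollary \ref{co-TM} (the \'etale $\varphi$-module equivalence) produces a unique $\varphi$-equivariant $\O_\E$-linear map $\gamma\colon \O_\E\otimes_{\gs_F}\M\to \O_\E\otimes_{\gs_F}\N$ inducing $g$. The content of the proposition is then that $\gamma$ carries $\M$ into $\N$, i.e.\ that $\gamma$ is already defined integrally over $\gs_F$; once this is known the induced map $\M\to\N$ visibly maps to $g$ under $T_{\gs}$.

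\textbf{Reduction to one module.} The usual device is to replace the pair $(\M,\N)$ by the internal Hom object $\mathcal{H}:=\underline{\Hom}_{\gs_F}(\M,\N)$, a finite free $\gs_F$-module with a $\varphi$-semilinear endomorphism; because $\M$ has $E$-height $r$ and $\N$ has $E$-height $r$, $\mathcal{H}$ is of height $2r$ (or one works with $\M^\vee\otimes\N$ directly), and $\O_\E\otimes_{\gs_F}\mathcal{H}$ is the corresponding \'etale $\varphi$-module. The map $\gamma$ is then an element of $(\O_\E\otimes_{\gs_F}\mathcal H)^{\varphi=\mathrm{id}}$ (after the appropriate Tate twist by $E(u)$-powers to make $\varphi$ act trivially on the relevant rank-one piece), and the claim $\gamma(\M)\subseteq\N$ becomes the claim that this $\varphi$-fixed vector actually lies in $\mathcal H\subseteq\O_\E\otimes_{\gs_F}\mathcal H$. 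So everything comes down to the following: \emph{if $x\in\O_\E\otimes_{\gs_F}\mathcal H$ satisfies $\varphi(x)=E(u)^{?}x$ (or $\varphi(x)=x$) for $\mathcal H$ a Kisin module of some finite height, then $x\in\mathcal H$.} Equivalently, writing $x$ in a $\gs_F$-basis of $\mathcal H$ with coordinates in $\O_\E=\gs_F[1/u]^{\wedge}_p$, one must bound the $u$-denominators.

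\textbf{Where the hypothesis enters.} This is the main obstacle and the only place the assumption ``$\varphi^n(f(u)/u)$ is not a power of $E(u)$ for any $n\ge 0$'' is used. Suppose $x\notin\mathcal H$; choose the basis so that $x$ has minimal possible pole order $N\ge 1$ at $u=0$ (using that the bad locus of $\O_\E$ over $\gs_F$ is concentrated at $u=0$ up to the height-$r$ condition). The relation $1\otimes\varphi_{\mathcal H}\colon \varphi^*\mathcal H\to\mathcal H$ has cokernel killed by $E(u)^{2r}$, so $\varphi(x)$ has pole order roughly $pN$ coming from $\varphi(u^{-N})=f(u)^{-N}=u^{-pN}(f(u)/u)^{-N}$, possibly reduced by the $E(u)$-powers that $\varphi_{\mathcal H}$ contributes and by any factor $u$ that $f(u)/u$ has in common with the cokernel. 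Comparing pole orders on both sides of $\varphi(x)=E(u)^{c}x$ and using that $E(0)\ne 0$ (so $E(u)$ is a unit near $u=0$), one finds $pN\le N + (\text{order of }u\text{ in }\varphi_{\mathcal H}\text{'s denominators near }0)$, which forces a contradiction \emph{unless} $f(u)/u$ — or more precisely one of its Frobenius iterates $\varphi^n(f(u)/u)$, arising because the obstruction can be pushed forward by $\varphi$ finitely many times — is divisible by $E(u)$ to the relevant power, i.e.\ equals $E(u)^m$ up to a unit (here one uses $f(u)/u\equiv u^{p-1}\bmod\varpi$ and $E(u)\equiv u^{e_0}\bmod\varpi$ to see the only way the valuations can conspire is literal divisibility). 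This is exactly what the hypothesis excludes, so $N=0$ and $x\in\mathcal H$. I expect the delicate bookkeeping to be: (i) tracking the precise power of $E(u)$ in the Tate twist so that the pole-order inequality is sharp, and (ii) justifying that it suffices to rule out the iterates $\varphi^n(f(u)/u)$ rather than $f(u)/u$ alone — this comes from iterating the Frobenius relation $n$ times before comparing poles, which replaces $f(u)/u$ by $\prod_{i=0}^{n-1}\varphi^i(f(u)/u)$ and $E(u)$ by $\prod_{i=0}^{n-1}\varphi^i(E(u))$, and then an irreducibility/coprimality argument isolates a single iterate.
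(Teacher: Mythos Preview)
Your reduction to the claim ``the induced map $\gamma$ on \'etale $\varphi$-modules carries $\M$ into $\N$'' is correct and matches the paper. But the argument you sketch after that point has a genuine gap.

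The pole-order comparison does not work as stated. You write $\varphi(u^{-N})=f(u)^{-N}=u^{-pN}(f(u)/u)^{-N}$, but this factorization is wrong: $f(u)=u\cdot(f(u)/u)$, so $f(u)^{-N}=u^{-N}(f(u)/u)^{-N}$, not $u^{-pN}$. The point is that for general $f$ with $a_1\neq 0$, the power series $f(u)$ has a \emph{simple} zero at $u=0$, so $\varphi$ preserves the $u$-adic pole order rather than multiplying it by $p$. Your inequality $pN\le N+C$ therefore collapses to $N\le N+C$, which gives nothing. Iterating $\varphi$ does not help: $f^{(n)}(u)$ still has a simple zero at $u=0$ whenever $a_1\neq 0$. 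The classical argument works precisely because $f(u)=u^p$ has a zero of order $p$; that is exactly what fails here, and is why the paper does not follow Kisin's original proof.

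The paper instead uses an idea of Caruso. After the same first reduction, one applies Lemma~\ref{lem-digfree} to reduce to showing that $\M\subset\M'\subset\O_\E\otimes_{\gs_F}\M$ forces $\M=\M'$, and then takes top exterior powers to reduce to the rank-one case. In rank one, Weierstrass preparation turns the inclusion into an equation $A(u)E(u)^m=\varphi(A(u))$ for a distinguished polynomial $A$. Writing $A=u^lA_0$ with $A_0(0)\neq 0$, the case $l=0$ is handled by a valuation argument on the \emph{roots} of $A$ in $\overline K$ (not on pole orders): the maximal-valuation root $x_0$ would have to satisfy $f(x_0)=0$, contradicting $A_0(0)\neq 0$. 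The case $l>0$ is where the hypothesis enters: one iteratively factors powers of $E(u)$ out of the sequence $(f(u)/u)^l,\ \varphi(h_1),\ \varphi(h_2),\dots$ and shows this process must terminate with some $\varphi^n(f(u)/u)$ equal to a power of $E(u)$, which is excluded. This root-and-factorization analysis is what replaces the pole-order bound, and your sketch does not supply an analogue of it.
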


\begin{proof} Here we use an idea of Caruso \cite[Proposition 3.1]{Car2}.
Let us fix $\M , \M' \in \t{Mod}^{\varphi, r}_{\gs_F}$.
Appealing to Corollary \ref{co-TM} and Lemma \ref{lem-Tgsbasic},
we immediately reduce the proof of Proposition \ref{prop-fullfaith}
to that of the following assertion: if $\f: \O_\E \otimes_{\gs_F} \M \to \O_\E \otimes_{\gs_F} \M' $ is a morphism in $\t{Mod} ^{\varphi}_{\O_\E}$ then $\f(\M ) \subset \M'$.
By applying Lemma \ref{lem-digfree} to $\f(\M) + \M'$, we may further reduce the proof to that of the following statement: if $\M \subset \M' \subset \O_\E \otimes_ {\gs_F}\M $ then $\M = \M'$.
Writing $d:=\rk_{\gs_F}(\M)=\rk_{\gs_F}(\M')$ and applying $\wedge^d$,  we may reduce to
the case $d=1$, and now calculate with bases.
Let $e$ (resp. $e' $) be an $\gs_F$-basis of $\M$ (resp. $\M'$), and let $a\in \gs_F$
be the unique element with $e = a e'$. Since $\O_\E \otimes_ {\gs_F}\M = \O_\E \otimes_ {\gs_F}\M' $, by Weierstrass preparation, we may modify our choices of $e$ and $e'$ to assume that $a = A(u) = u ^s + c_{s-1} u ^{s-1} + \cdots +c_1 u +  c_0$ with $c_i \in \varpi \O_{F_0}$.
As in Example \ref{ex-dim1}, we may
suppose that $\varphi (e') = \gamma'  E(u) ^n e'$ and $\varphi (e)= \gamma E(u)^{n'}e$ for some $\gamma, \gamma' \in \gs_F ^\times$ .
Then
$$\gamma E(u)^{n'}  A(u)e' = \gamma E(u)^{n'} e= \varphi (e)= \varphi (A(u)) \varphi (e')= \varphi (A(u)) \gamma' E(u)^n  e' $$
which necessitates $\gamma A(u)  E(u) ^{n'}= \gamma' \varphi (A(u)) E(u)^{n}$. Reducing
modulo $\varpi$ and comparing degrees $u$, we see easily that $n' \geq n$. We therefore have
\begin{equation}
	\gamma_0  A(u) E(u)^{n '-n} =  \varphi (A(u))\quad\text{for}\quad\gamma_0 =
\gamma (\gamma ') ^{-1} \in \gs ^\times_F.\label{Aurelation}
\end{equation}
As $\gamma_0$ is a unit, it follows from (\ref{Aurelation}) that
$A(u) E(u)^{n '-n}$ and $\varphi (A(u))$ must have the same roots.
Since  $A(u)$,  $\varphi (A(u))$ and $E(u)$ are monic polynomials with roots either 0 or with positive valuation,
we conclude that in fact $A(u) E(u)^{n '-n} =  \varphi (A(u))$.
Let us put $A(u) = u ^l A_0(u)$ with $A_0(0)\not = 0$ and  $m = n'-n$. Then (\ref{Aurelation})
simplifies to
\begin{equation} \label{eqn-disjoint}
A_0 (u) E(u)^m = (f(u)/u)^l \varphi (A_0 (u)).
\end{equation}

We first treat the case $l =0$ (so $A=A_0$); we will then reduce the general case to this one.
Write $A^\varphi (u) = u ^s + \varphi (c_{s-1}) u ^{s-1} + \cdots + \varphi (c_1) u + \varphi (c_0)$.
There is then a bijection between the roots of  $A^\varphi (u)$ and the roots of $A(u)$
which preserves valuation.
Let $x_0$ be a nonzero root of $A(u)$ which achieves the maximal valuation. Then $A (u) E(u)^m  =  \varphi (A(u))$ implies that $x_0$ is root of $\varphi (A (u))= A^\varphi (f(u))$. That is $f(x_0)$ is a root of $A ^\varphi (u)$.
If $f(x_0)\neq 0$, then
since $f(u)\equiv u^p\bmod\varpi$ and $x_0$ has positive valuation, we have $v(f(x_0)) > v(x_0)$,
so there exists a root of $A(u)$ with valuation strictly greater than $v(x_0)$, which
contradicts our choice of $x_0$.
We must therefore have that $f(x_0)= 0$ is root of $A^\varphi (u)$, which contradicts our assumption
that $A (0 ) \not = 0$ ($l=0$). We conclude that $A(u)=A_0(u)$ has degree zero, and hence that
$\M = \M'$ as desired.

Now suppose that $l > 0$
and let $r_1\ge 0$ be the unique integer with $(f(u)/u)^l = E(u)^{r_1} h_1(u)$
for some unique monic $h_1\in \gs_F$ with $E(u)\nmid h_1(u)$.
Comparing $u$-degrees in (\ref{eqn-disjoint}) shows that $r_1 \leq  m $,
so $h_1 (u)| A_0 (u)$ and we my write $A_0 (u)= h_1(u)A_1(u)$ for a unique monic polynomial $A_1$
and $m_1 := m-r_1 \ge 0$. Equation (\ref{eqn-disjoint}) then becomes
$$ A_1(u) E(u)^{m_1}= \varphi (h_1(u)) \varphi(A_1(u)).$$
Now let $r_2\ge 0$ be the unique integer
with $\varphi(h_1(u))= E(u)^{r_2} h_2(u)$ for $h_2$ a monic polynomial with $E(u)\nmid h_2(u)$,
and write $A_1(u)= h_2(u) A_2(u)$ with $A_2$ monic and $m_2 := m_1 -r_2 \ge 0$. We then have
$$A_2(u) E(u)^{m_2}= \varphi (h_2(u)) \varphi (A_2(u)).$$
We continue in this manner, constructing nonnegative integers $r_n, m_n$ with $m_{n+1}:=m_n -r_n$
and monic $A_n, h_n \in \gs_F$ with $E\nmid h_n$, $h_nE^{r_n}= \varphi (h_{n -1})$
and $A_{n-1}= h_{n}A_{n}$ satisfying the equation
\begin{equation}
	A_n (u) E(u)^{m_n}= \varphi (h_n (u)) \varphi (A_n(u)).\label{Anhneqn}
\end{equation}
So long as $h_n$ and $A_n$ are non-constant, we have $\deg A_n < \deg A_{n-1}$, which can not continue
indefinitely. We conclude that there is some $n\ge 1$ with either $h_n$ or $A_n$ constant, which forces
$h_n=1$ or $A_n=1$ by monicity.  In the latter case, (\ref{Anhneqn}) implies that $h_{n+1}=1$,
so in any case there is some $n>0$ with $h_n=1$.
By the construction of the $h_m$, we then have
\begin{equation}
	\varphi^{n-1} ((f(u)/ u )^l )= \prod_{m =1, r_m \not = 0}^{n} \varphi^{n -m} (E(u) ^{r_m}).\label{Eprod}
\end{equation}
We claim that in fact there is only one $m$ with $r_m \not = 0$. Indeed, if there exist $m_1 > m_2$ with
$r_{m _i} \not = 0$ for $i=1,2$, then writing $f_0 (u) = f(u)/ u$, we see that
$ f_0( f ^{(m _i)}(\pi))= 0$ for $i=1,2$.
Since $f(u)= f_0(u) u$, this implies that $f^{(m_2 + 1)} (\pi) = 0$.  Then
$$0 = f_0 (f ^{(m_1)} (\pi ))= f_0 (f ^{(m_1 - m_2 -1)}(f^{(m_2 +1)}(\pi)))=  f_0 (f ^{(m_1 - m_2 -1)}(0)) = f_0 (0),$$
which implies that $u | f_0 (u)$. But this contradicts \eqref{eqn-disjoint} because $u \nmid A_0 (u)$.
We conclude that there is a unique $m$ such that $r_m \not = 0$, and it follows from (\ref{Eprod})
that there exists $n \ge0$
such that $\varphi ^n (f(u)/ u)$ is a power of $E(u)$, contradicting our hypothesis.
We must therefore in fact have $l=0$, whence $\M=\M'$ as we showed above.
\end{proof}

\begin{remark}
The assumption that $\varphi^{(n)}(f(u)/u)$ is not a power of $E(u)$ for any $n\ge 0$
is satisfied in many cases of interest.  For example, it is always satisfied when $a_1=0$
(which includes the classical situation $f(u)=u^p$),
as then $f(u)/u$ has no constant term while any power of $E(u)$ has nonzero constant term.
\end{remark}

\begin{example}\label{ex-cyclotomic2}
	The hypothesis of Proposition \ref{prop-fullfaith} that $\varphi^{(n)}(f(u)/u)$ is not a power of $E(u)$ for any $n\ge 0$
	is genuinely necessary, as the following examples show:
	\begin{enumerate}
		\item For fixed $r$, let $0\le l\le r$ be an integer and suppose that $\varphi^{(n)}(f(u)/u)=E(u)^l$.
		Setting $A(u):=f(u)\cdot \varphi(f(u)/u)\cdots \varphi^{n-1}(f(u)/u)$ if $n>0$
		and $A(u)=u$ if $n=0$, we have $A E^l = \varphi(A)$.  In particular, definining
		$\M=A(u)\gs_F$ and $\M':=\gs_F$, we have $\M\subseteq \M'$
		and both $\M$ and $\M'$ are objects of $\t{Mod}^{\varphi, r}_{\gs_F}$
		with height $l \le r$.  However, $\M\neq \M'$ and it follows that the conclusion of
		Proposition \ref{prop-fullfaith} does not hold.\label{counter1}
		
		\item More concretely,  recall the situation in Example \ref{ex-cyclotomic}: we have
  $K = \Q_p (\zeta_p)$, $F = \Q_p$, $\pi = \zeta_p-1$ and $\varphi (u) = (u +1)^p -1$.
	In this case,  $E(u)= \varphi (u)/ u$, and the Kisin modules $\M':=\gs_F$ and $\M:=u \gs_F$
	are both of height 1 and are non-isomorphic, but $T_{\gs}(\M)\simeq T_{\gs}(\M')$.

	\item  As a less familiar variant, we take
	$\varphi (u) = (u-p)^{p-1} u$ and $E(u) = \varphi (u)  - p$.
	Then $\varphi(f(u)/u) = (E(u)) ^{p -1}$, and the construction of (\ref{counter1})
	provides a counterexample.
	\end{enumerate}
\end{example}

\begin{co}\label{co-more}Suppose that $\varphi^n(f(u)/u)$ is not a power of $E(u)$ for any $n\geq 0$ and  $\psi : V' \to V$ is a morphism of height-$r$ representations. Then there are exact sequences 
\begin{equation*}
	\xymatrix@1{
	0 \ar[r] & {\L} \ar[r] & {\M} \ar[r] & {\N} \ar[r] &  0,
	}\quad\text{and}\quad
	\xymatrix@1{
		0 \ar[r] & {\N} \ar[r] & {\M'} \ar[r] & {\N'} \ar[r] &  0
	}
\end{equation*}
in $\sfi$ which correspond via $V_{\gs}(\cdot)$ to the following exacts sequences in $\rep _F(G_{\ul{\pi}})$:
\begin{equation*}
\xymatrix@1{
	0 \ar[r] & {\psi(V ')}\ar[r] & {V} \ar[r] & {V/ \psi(V ')} \ar[r] & 0
	},\ \text{and}\ 
	\xymatrix@1{
	0 \ar[r] & {\ker(\psi)} \ar[r] & {V'} \ar[r]^-{\psi} & {\psi(V')} \ar[r] &  0.
	}
\end{equation*}
\end{co}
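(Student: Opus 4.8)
The plan is to reduce to a single short exact sequence of representations and then construct the associated sequence of Kisin modules by working with \'etale $\varphi$-modules, along the lines of the proof of Proposition \ref{prop-rep-height r} and \cite[\S 2.3]{liu2}.

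First I would factor $\psi$ as $V'\twoheadrightarrow\psi(V')\hookrightarrow V$ and observe that it is enough to prove the following claim $(\star)$: \emph{for any short exact sequence $0\to W\to U\to U''\to 0$ in $\rep_F(G_{\ul{\pi}})$ with $U$ of height $r$, and any $G_{\ul{\pi}}$-stable $\O_F$-lattice $T\subseteq U$, the representations $W$ and $U''$ are of height $r$, and there is a short exact sequence $0\to\CN_1\to\CM\to\CN_2\to 0$ in $\sfi$ with $T_{\gs}(\CM)\simeq T$, $T_{\gs}(\CN_2)\simeq T\cap W$, and $T_{\gs}(\CN_1)\simeq T/(T\cap W)$, which induces the given sequence under $V_{\gs}$.} Granting $(\star)$, applying it to $0\to\psi(V')\to V\to V/\psi(V')\to 0$ (for any lattice $T\subseteq V$) gives the first sequence, in which $\N$ realizes the lattice $T_1:=T\cap\psi(V')$; then I would pick a $G_{\ul{\pi}}$-stable $\O_F$-lattice $T'\subseteq V'$ with $\psi(T')=T_1$ (possible since $\psi\colon V'\to\psi(V')$ is surjective and $T_1$ is a $G_{\ul{\pi}}$-stable lattice in the target) and apply $(\star)$ to $0\to\ker\psi\to V'\to\psi(V')\to 0$; its last term realizes $\psi(T')=T_1$, hence is isomorphic to $\N$ by full faithfulness of $T_{\gs}$ (Proposition \ref{prop-fullfaith}), giving the second sequence.

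To prove $(\star)$, I would intersect the given sequence with $T$ to get a short exact sequence of $G_{\ul{\pi}}$-stable lattices $0\to T_W\to T\to T_{U''}\to 0$, and then apply the exact anti-equivalence of Corollary \ref{co-TM} to obtain a short exact sequence of \'etale $\varphi$-modules $0\to N''\to M\to N_W\to 0$, with $M$ the module attached to $T$, $N''\subseteq M$ the one attached to $T_{U''}$, and $N_W=M/N''$ the one attached to $T_W$. Since $U$ has height $r$, Proposition \ref{prop-rep-height r} provides $\M\in\sfi$ with $T_{\gs}(\M)\simeq T$, and Proposition \ref{lem-Tgsbasic}(1) lets us identify $\O_\E\otimes_{\gs_F}\M=M$, so $\M\subseteq M$; write $q\colon M\twoheadrightarrow N_W$. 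As in the proof of Proposition \ref{prop-rep-height r}, $q(\M)$ is a torsion-free object of $'\sfi$ and $\ker(q|_{\M})=\M\cap N''$ is one by \cite[B1.3.5]{fo4}, so by Lemma \ref{lem-digfree} the saturations $\CN_2:=q(\M)[1/p]\cap N_W$ and $\CN_1:=(\M\cap N'')[1/p]\cap N''$ lie in $\sfi$; one checks $\O_\E\otimes_{\gs_F}\CN_2\simeq N_W$ and $\O_\E\otimes_{\gs_F}\CN_1\simeq N''$, whence $T_{\gs}(\CN_2)\simeq\ul{T}(N_W)=T_W$ and $T_{\gs}(\CN_1)\simeq\ul{T}(N'')=T_{U''}$, and in particular $W$ and $U''$ are of height $r$. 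For the middle term I would set $\CM:=\M[1/p]\cap M\in\sfi$ (Lemma \ref{lem-digfree}), which satisfies $\CN_1\subseteq\CM$, $\O_\E\otimes_{\gs_F}\CM=M$, hence $T_{\gs}(\CM)\simeq T$ and $\CM\simeq\M$; it then remains to show that $q$ restricts to a surjection $\CM\twoheadrightarrow\CN_2$ with kernel exactly $\CN_1$, so that $0\to\CN_1\to\CM\to\CN_2\to 0$ is exact in $\sfi$ and, by exactness of $T_{\gs}$, induces $0\to T_W\to T\to T_{U''}\to 0$ and hence the required sequence under $V_{\gs}$.

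The main difficulty is precisely the compatibility of these saturation constructions of Lemma \ref{lem-digfree}: that $\M\cap N''$ generates the sub-$\varphi$-module $N''$ over $\O_\E$ (so $\O_\E\otimes_{\gs_F}\CN_1\simeq N''$), and that after saturating $\M$ the map $q$ has image exactly $\CN_2$ and kernel exactly $\CN_1$. Both reduce to the finite $E(u)$-height formalism of \cite[B1]{fo4} (using that $E(u)$ is a unit in the discrete valuation ring $\O_\E$) together with the exactness and full faithfulness of $T_{\gs}$; the remaining points, namely freeness of $\CN_1,\CM,\CN_2$ over $\gs_F$, the fact that $\CM$ has $E(u)$-height $r$ (automatic from $\CM\simeq\M$), and $\varphi$-equivariance of the maps, are routine. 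Alternatively one can build $\CM$ directly as the extension of $\CN_2$ by $\CN_1$ corresponding under $T_{\gs}$ to the class of $0\to T_W\to T\to T_{U''}\to 0$, which makes exactness transparent but shifts the work to identifying this class and checking $V_{\gs}(\CM)\simeq U$.
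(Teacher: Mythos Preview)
Your reduction to the single short exact sequence claim $(\star)$ is natural, and the construction of $\CN_1,\CN_2$ via Lemma~\ref{lem-digfree} and \cite[B1.3.5]{fo4} is the right idea.  You also correctly isolate the crux: showing that $q$ carries $\CM=\M$ \emph{onto} $\CN_2$.  The problem is that your proposed resolution of this point is not valid.

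The assertion ``$q(\CM)=\CN_2$'' is exactly the statement that $q(\M)=\M/(\M\cap N'')$ is \emph{free} over $\gs_F$.  Lemma~\ref{lem-digfree} only tells you that the saturation $\CN_2:=q(\M)[1/p]\cap N_W$ is free; it does not say $q(\M)$ itself is.  Full faithfulness of $T_{\gs}$ (Proposition~\ref{prop-fullfaith}) compares two objects of $\sfi$ with the same \'etale extension, so it cannot be applied to $q(\M)$ until you already know $q(\M)\in\sfi$.  The paper flags precisely this issue (``Unfortunately, it need not be true in general that $\f(\M)=\N$ or $\g(\M')=\N'$'') and fixes it by an explicit basis modification: one chooses an $\gs_F$-basis of $\CN_1$, lifts an $\gs_F$-basis of $\CN_2$ to $\M[1/p]$, and then rescales the $\CN_1$-part by $p^{-m}$ to force the upper-triangular Frobenius matrix to be integral.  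This replaces $\M$ by another object of $\sfi$ inside $\M[1/p]$; in particular the lattice $T_{\gs}(\M)$ changes.  Thus your $(\star)$, which demands $T_{\gs}(\CM)\simeq T$ for a \emph{prescribed} lattice $T$, is too strong: after the modification one only controls $V_{\gs}(\CM)$, which is all the Corollary asks for.

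This also breaks your gluing step.  You want the quotient term $\N$ from the first sequence to agree with the sub term from the second, and you argue this via full faithfulness of $T_{\gs}$ using that both realize the \emph{same lattice} $T_1$.  Once the modification is in play the lattices may differ, so this no longer works.  The paper avoids the problem by treating both sequences simultaneously: it defines $\N:=\ker(\g)\cap\M'$ from one side and $\wt{\N}:=\f(\M)[1/p]\cap N$ from the other, shows each lies in $\sfi$, and then invokes Proposition~\ref{prop-fullfaith} (both have \'etale extension $N$) to get $\wt{\N}=\N$ \emph{before} doing the basis modification on $\M$ and $\M'$.  That simultaneous matching is the step your decomposition into two independent applications of $(\star)$ cannot reproduce.
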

\begin{proof}
We may and do select $G_{\ul{\pi}}$-stable $\O_F$-lattices $T\subseteq V$ and $T'\subseteq V'$
with $\psi(T')\subseteq T$ and $T/\psi(T')$ torsion-free.
Thanks to Proposition \ref{prop-rep-height r}, there exist $\M$ and $\M'$ in $\sfi$
with $T=T_{\gs}(\M)$ and $T'=T_{\gs}(\M')$, and we set
$M := \O_\E \otimes _{\gs_F} \M$ and $M':= \O_\E \otimes _{\gs_F} \M' $, and write
$\f : M\to M '$ for the unique morphism in $\Mod_{\O_{\E}}^{\varphi}$
with $\ul{T}(\f)= \psi |_{T'}$. Let $N ': = M' / \f (M)$ and write $\g: M' \to N '$ for the natural projection.
Writing $N:= \f(M) = \ker \g$, we then have exact sequences in $\Mod^{\varphi}_{\O_\E}$
\begin{equation*}
	\xymatrix@1{
		0 \ar[r] &{\ker(\f )}\ar[r] &{M} \ar[r]^-{\f} & {N} \ar[r] & 0
		}\quad\text{and}\quad
		\xymatrix@1{
			0 \ar[r] &{N} \ar[r] &{M'} \ar[r]^-{\g} &{N'} \ar[r] & 0
			}
\end{equation*}
which correspond, via $\ul{T}(\cdot)$, to the exact sequences in $\rep_{\O_F}(G_{\ul{\pi}})$
\begin{equation*}
	\xymatrix@1{
		0 \ar[r] & {\psi (T')}\ar[r] & {T} \ar[r] & {T/ \psi(T ')} \ar[r] &  0
		},\ \text{and}\ 
	\xymatrix@1{
		0 \ar[r] & {\ker(\psi)} \ar[r] & {T'} \ar[r]^-{\psi} & {\psi(T')}\ar[r] &  0.
		}
\end{equation*}
Since $N'$ corresponds to $T/\psi(T')$, which is torsion-free, it follows that $N'$ is also torsion free
and hence finite and free as an $\O_{\E}$-module.
Define $ \N := \ker (\g)\cap {\M'}$, the intersection taken inside of $M'$.
We claim that $\N$ is an object in $\t{Mod}^{\varphi, r} _{\gs_F}$.
First note that by \cite[B 1.3.5]{fo4}, the fact that $\M'$ has height $r$ implies that both $\g(\M')$ and $\N$ have height $r$, and we need only show that $\N$ is free over $\gs_F$.
To do this, it suffices by Lemma \ref{lem-digfree} to prove that
$\N = \N [1/ p] \cap N $ inside $\E\otimes_{\O_{\E}}N$,
or equivalently that $\N[1/p]\cap N \subseteq \N$.
For any $x \in \N[1/p] \cap N$, we have by the very definition of $\N$ that
$x \in \M'[1/p]\cap M'= \M'$. As  $x \in N =\ker\g$, we then have $x\in \ker\g \cap \M'=\N'$
as desired.
A similar argument shows that
$\L : = \ker(\f)\cap \M $ is a Kisin module in $\sfi$ as well.

Again using Lemma \ref{lem-digfree}, both $\wt{\N} := \f(\M)[1/p]\cap N $ and $\N' : = \g (\M ')[1/p ] \cap N'$ are objects of $\sfi$. As $\O_\E \otimes_{\gs_F} \wt{\N} = N = \O_\E\otimes _{\gs_F} \N $,
it follows from Proposition \ref{prop-fullfaith} that $\wt{\N} = \N$. We therefore have exact sequences
\begin{equation*}
	\xymatrix@C=11pt{
		0 \ar[r] &{\L[1/p]}\ar[r] &{\M[1/p]}\ar[r]^-{\f} &{\N[1/p]}\ar[r] & 0,
		}\quad\text{and}\quad
	\xymatrix@C=11pt{
		 0\ar[r] & {\N[1/p]} \ar[r] &{\M'[1/p]}\ar[r]^-{\g} &{\N'[1/p]}\ar[r] & 0
		 }.
\end{equation*}
Unfortunately, it need not be true in general that $\f(\M) = \N$ or $\g(\M') = \N'$.
To remedy this defect, we modify $\M$ and $\M'$ as follows.
Using the inclusion $\N\subseteq \M'$ and the above exact sequences, we
may select a $\gs_F [1/p]$-basis
$\fe_1 , \dots,  \fe_s, \fe_{s+1}, \dots , \fe_d $ of $\M[1/p]$
with the property that $\fe_1 , \dots,  \fe_s$ is an $\gs_F$-basis of $\L$
and $\fe_{s+1}':=\f(\fe_{s+1}), \dots , \fe_d':=\f(\fe_d)$ is an $\gs_F$-basis of $\N$.
We may further complete $\fe_{s+1}',\dots,\fe_d'$ to a $\gs_F[1/p]$-basis
$\fe_{s+1}' , \dots,  \fe_{d}', \fe'_{d+1}, \dots , \fe'_{d'} $ of $\M'$
with the property that $\fe'_{d+1}, \dots , \fe'_{d'}$ projects via $\g$ to an
$\gs_F$-basis of $\N'$.  We then have matrix equations
$$\varphi (\fe_1, \dots , \fe_d)= (\fe_1, \dots , \fe_d) \begin{pmatrix} A &  C \\ 0 & A' \end{pmatrix}$$
and
$$\varphi (\fe_{s+1}', \dots , \fe_d', \fe'_{d+1}, \dots , \fe' _{d'})= (\fe_{s+1}', \dots , \fe_d', \fe'_{d+1}, \dots , \fe' _{d'})  \begin{pmatrix} B &  D \\ 0 & B' \end{pmatrix},$$
where the entries of  $A, A', B , B'$ are in $\gs _F$, while the entries of $C$ and $D$ are in $\gs_F [1/p]$.
Let $m\ge 0$ be such that $p ^m C$ and $p^m D$ have all entries in $\gs_F$.
Replacing $\M $ by the $\gs_F$-submodule of $\M[1/p]$ generated over $\gs_F$ by
$p^{-m}\fe_1 , \dots , p^{-m}\fe_s , \fe_{s+1}, \dots ,  \fe_{d}$, and $\M '$ by  the $\gs_F$-submodule of
$\M'[1/p]$ generated by  $(\fe_{s+1}', \dots , \fe_d', p^m \fe'_{d+1}, \dots , p^m \fe' _{d'})$
does the trick.
\end{proof}

\section{Constructing Kisin modules from $F$-crystalline representations}\label{sec-4}
In this section, we associate to any $F$-crystalline representation a Kisin module in the sense
of \S\ref{subsec-3.2} and employ our construction to prove Theorems
\ref{thm-intro-1} and \ref{thm-intro-2}.
Throughout, and especially in \S \ref{subsec-3.1}, we make free use of many of the ideas of \cite{kisin2} and \cite{Kisin-Ren}. To surmount the difficulty that we do not in general have a natural $N_{\nabla}$-structure
(see the introduction), we will compare our modules over the Robba ring to those of Kisin's classical setting in
in \S  \ref{subsec-comparions}, which will allow us to descend these modules to the desired Kisin modules.
The proofs of our main results (Theorems \ref{thm-intro-1} and \ref{thm-intro-2}) occupies \S \ref{subsec-mainresults}.

\subsection{Generalities on $F$-crystalline representations}\label{FcrysReps}
 Let $V$ be  an $F$-linear representation of $G=G_K$ or of $G_{\ul{\pi}}$.
 We write $V^\v$ for the $F$-linear dual of $V$ with its natural $G$ or $G_{\ul{\pi}}$-action.
We warn the reader at the outset that our notational conventions regarding Fontaine's functors
are dual to the standard ones; we have chosen to depart from tradition here
as it will be more convenient to deal with the integral theory.

Let $V$ be an object of $\rep_F(G)$.
Then $D_{\dR}(V):=(V^\v \otimes_{\Q_p} B_{\dR})^{G}$ is naturally a module over the semilocal
ring $K_F:=K\otimes_{\Q_p} F$, so we have a decomposition
\begin{equation*}
	D_{\dR}(V)= \prod_{\m} D_{\dR}(V)_{\m}
\end{equation*}
with the product running over all maximal ideals of $K_F$.
We give each $D_{\dR}(V)_{\m}$ the filtration induced from that of $D_{\dR}(V)$,
and we denote by $\m_0$ the kernel of the natural map $K\otimes_{\Q_p} F\rightarrow K$
coming from the given inclusion $F\hookrightarrow K$ and multiplication.
Following \cite{Kisin-Ren}, we define:

\begin{definition}
	We say that $V\in \rep_{F}(G)$ is
	{\em $F$-crystalline}
	if it is crystalline $($when viewed as a $\Q_p$-linear $G$-representation$)$
	and the filtration on $D_{\dR}(V)_{\m}$ is trivial
	$(\Fil^j D_{\dR}(V)_{\m} = 0$ if $j>0$ and $\Fil^0 D_{\dR}(V)_{\m}=D_{\dR}(V)_{\m}$$)$
	unless $\m=\m_0$.  We write $\rep_F^{F\t{-cris}}(G)$ for the category of $F$-crystalline
	$F$-representations of $G$.
\end{definition}

We now wish to describe the category of $F$-crystalline $G$-representations in terms
of filtered $\varphi$-modules.  To do this, we define:

\begin{definition}
	Let $\MF_{F_0,K}^{\varphi}$ be the category of triples $(D,\varphi,\Fil^j D_{F_0,K} )$
	where $D$ is a finite dimensional $F_0$-vector space, $\varphi: D\rightarrow D$
	is a semilinear (over the $F$-linear extension $\varphi$ of the $p$-power Frobenius map $K_0\rightarrow K_0$)
	endomorphism whose linearization is an $F_0$-linear isomorphism, and
	$\Fil^j D_{F_0,K}$ is a separated and exhaustive descending filtration by $K$-subspaces on
	$D_{F_0,K}:=D\otimes_{F_0} K$.  Morphisms in this category are $\varphi$-compatible $F_0$-linear maps
	$D\rightarrow D'$ which are moreover filtration-compatible after applying $\otimes_{F_0} K$.
\end{definition}



Let $V$ be an $F$-crystalline $G$-representation with $F$-dimension $d$.
Then $$D:=D_{\cris}(V):=(V^\v\otimes_{\Q_p} B_{\cris})^{G}$$
is naturally a module over $F\otimes_{\Q_p} K_0$, equipped with a semilinear (over $1\otimes \sigma$
for $\sigma$ the $p$-power Frobenius automorphism of $K_0$)
Frobenius endomorphism $\varphi:D\rightarrow D$ which linearizes to an isomorphism.
By our assumption that $K_0\cap F = \Q_p$, the natural multiplication map $F\otimes_{\Q_p}K_0\rightarrow FK_0=:F_0$
is an isomorphism, so $D$ is an $F_0$-vector space which, as $V$ is crystalline as a $\Q_p$-representation,
has $K_0$-dimension $d[F:\Q_p]$, and so must have $F_0$-dimension $d$.

The natural injective map
$$D\otimes_{K_0} K =D_{\cris}(V)\otimes_{K_0}K\hookrightarrow D_{\dR}(V):=(V^\v\otimes_{\Q_p}B_{\dR})^{G}$$
is necessarily an isomorphism of $F_K:=F\otimes_{\Q_p} K$-modules, so since
$V$ is $F$-crystalline we have a direct sum decomposition of filtered $K$-vector spaces
$D\otimes_{K_0} K=\bigoplus_{\m} D_{K,\m}$, with $D_{K,\m}$ having trivial filtration unless $\m=\m_0$.
Noting the canonical identification
$$D_{K,\m_0} = D\otimes_{F_0} K =: D_{F_0,K},$$
we therefore obtain a filtration on $D_{F_0,K}$.
In this way we obtain an object
$$D_{\cris, F }(V):=(D,\varphi,\Fil^j D_{F_0,K})$$
of $\MF_{F_0,K}^{\varphi}$.

Conversely, if $D$ is any object of  $\MF_{F_0,K}^{\varphi}$, we define
\begin{equation*}
	V_{\cris, F }(D):= \Hom_{F_0,\varphi}(D,B_{\cris,F}^+) \cap
	\Hom_{K,\Fil^{\bullet}}(D_{F_0,K},B^{+}_{\cris}\otimes_{K_0} K)
\end{equation*}
with the intersection taken inside of $\Hom_{K}(D_{F_0,K},B^{+}_{\cris}\otimes_{K_0} K)$,
via the embedding
\begin{equation*}
	\xymatrix{
		{\Hom_{F_0,\varphi}(D,B_{\cris,F}^+)} \ar@{^{(}->}[r] & {\Hom_{K}(D_{F_0,K},B^{+}_{\cris}\otimes_{K_0} K)}
		}
\end{equation*}
that sends an $F_0$-linear map
$f:D\rightarrow B_{\cris,F}^+$ to its linear extension along $F_0\rightarrow K$.


\begin{prop}\label{prop-Dcirs & Vcris} Let $V \in \t{Rep}_F^{F\t{-\cris}}(G) $. Then $ V_{\cris, F}( D_{\cris, F} (V)) \simeq V$.
\end{prop}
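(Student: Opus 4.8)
The strategy is to deduce the statement from the crystalline comparison isomorphism, rewriting every object in an $F$-linear fashion. Since $V$ is crystalline as a $\Q_p$-representation, Fontaine's theory provides a canonical, functorial, $B_\cris$-linear isomorphism
\[
D_\cris(V)\otimes_{K_0}B_\cris\ \xrightarrow{\ \sim\ }\ V^\v\otimes_{\Q_p}B_\cris
\]
compatible with $\varphi$, with the $G$-action, and --- after applying $-\otimes_{K_0}K$ --- with the filtrations (the last via $D_\cris(V)\otimes_{K_0}K\simeq D_{\dR}(V)$).

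First I would recast this isomorphism over the base $B_{\cris,F}$. The $F$-action on $V$ induces an $F$-action on $D:=D_\cris(V)$ commuting with $\varphi$ and $G$; together with the $K_0$-structure and the equality $K_0\cap F=\Q_p$, this makes $D$ into the $F_0$-module underlying $D_{\cris,F}(V)$. As $F\otimes_{\Q_p}B_\cris = B_{\cris,F} = F_0\otimes_{K_0}B_\cris$ (the latter $\varphi$-equivariantly, since the Frobenius on $K_0$ is compatible with that of $B_\cris$) and $V^\v\otimes_{\Q_p}B_\cris = V^\v\otimes_F B_{\cris,F}$, the comparison isomorphism takes the form of a $B_{\cris,F}$-linear isomorphism
\[
\beta\colon D_{\cris,F}(V)\otimes_{F_0}B_{\cris,F}\ \xrightarrow{\ \sim\ }\ V^\v\otimes_F B_{\cris,F},
\]
still compatible with $\varphi$, with $G$, and (after $\otimes_{K_0}K$) with filtrations.

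Next I would compute $V_{\cris,F}(D_{\cris,F}(V))$ using $\beta$. An $F_0$-linear, $\varphi$-equivariant homomorphism $D_{\cris,F}(V)\to B^+_{\cris,F}$ extends uniquely to a $B_{\cris,F}$-linear, $\varphi$-equivariant map $D_{\cris,F}(V)\otimes_{F_0}B_{\cris,F}\to B_{\cris,F}$, so via $\beta$ the space $\Hom_{F_0,\varphi}(D_{\cris,F}(V),B^+_{\cris,F})$ is identified with $\Hom_F(V^\v,B_{\cris,F}^{\varphi=1}) = V\otimes_F B_{\cris,F}^{\varphi=1}$. Intersecting with the filtration condition then amounts to intersecting with $\Fil^0$ inside $V\otimes_{\Q_p}B_{\dR}$; here the hypothesis that $V$ is $F$-crystalline is used to see that the filtration condition on $D_{F_0,K}$ --- which a priori constrains only the component $\m_0$ --- is equivalent to filtration-compatibility along all of $D_{\dR}(V)=\prod_\m D_{\dR}(V)_\m$, because $\Fil^\bullet D_{\dR}(V)_\m$ is trivial for $\m\neq\m_0$. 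Since $\Fil^0 D_{\dR}(V)=D_{\dR}(V)$, the fundamental exact sequence $0\to\Q_p\to B_\cris^{\varphi=1}\to B_{\dR}/B^+_{\dR}\to 0$ gives $B_\cris^{\varphi=1}\cap\Fil^0 B_{\dR}=\Q_p$, hence $B_{\cris,F}^{\varphi=1}\cap\Fil^0 = F$, so that $V_{\cris,F}(D_{\cris,F}(V)) = V\otimes_F F = V$, naturally and compatibly with the actions of $F$ and $G$.

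I expect the main obstacle to be the bookkeeping: checking that the $\varphi$-, $G$-, and filtration structures all transport through the identifications $D_\cris(V)\otimes_{K_0}B_\cris = D_{\cris,F}(V)\otimes_{F_0}B_{\cris,F}$ and $V^\v\otimes_{\Q_p}B_\cris = V^\v\otimes_F B_{\cris,F}$, and --- most delicately --- that the filtration recorded only on $D_{F_0,K}$ in the definition of $D_{\cris,F}(V)$ matches the full de Rham filtration after base change, which is precisely where $F$-crystallinity is needed. These verifications are \emph{soft} but require care because of the semilocal ring $F_0=F\otimes_{\Q_p}K_0$ and the interplay between $B^+_\cris\otimes_{K_0}K$, $B_{\dR}$, and their filtrations.
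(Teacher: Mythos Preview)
Your argument is essentially correct and reaches the conclusion, but it proceeds along a genuinely different path from the paper.

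The paper does \emph{not} invoke the crystalline comparison isomorphism or the fundamental exact sequence. Instead, it takes the classical identification $V\simeq V_\cris(D)$ as a black box and reduces the proposition to constructing an $F$-linear isomorphism
\[
\iota:\Hom_{K_0}(D,B^+_\cris)\ \xrightarrow{\ \sim\ }\ \Hom_{F_0}(D,B^+_{\cris,F}),
\]
obtained by regarding $D\simeq D\otimes_{F_0}F_0$ as an $F_0$-subspace of $D_{F_0}=D\otimes_{K_0}F_0$ and restricting homomorphisms. The paper then checks, by an explicit computation with bases, that $\iota$ matches $\varphi$-equivariant maps with $\varphi$-equivariant maps, and---using a small commutative diagram and the hypothesis $\Fil^i D_K=\Fil^i D_{F_0,K}$ for $i\ge 1$---that the two filtration conditions correspond. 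Your approach is more conceptual (one step via $\beta$ rather than a bespoke $\iota$), while the paper's is more elementary and self-contained: it never leaves the Hom-side and does not need the full $B_\cris$-linear comparison or the fundamental exact sequence.

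One point in your write-up deserves tightening: the line identifying $\Hom_{F_0,\varphi}(D,B^+_{\cris,F})$ with $V\otimes_F B_{\cris,F}^{\varphi=1}$ silently drops the ``$+$''. The extension of scalars along $\beta$ only gives you a map into $\Hom_{F_0,\varphi}(D,B_{\cris,F})$, and conversely an element $v\in V$ a priori produces a map $D\to B_{\cris,F}$. You should say explicitly that since $\Fil^0 D_\dR(V)=D_\dR(V)$ one has $D\subset V^\vee\otimes_{\Q_p}B^+_\cris$, so the map attached to $v\in V$ does land in $B^+_{\cris,F}$; this closes the loop and makes the bijection honest.
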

\begin{proof}
Set $D= D_\cris(V):=( V ^\v \otimes_{\Q_p} B^+_\cris) ^{G}$ and put $D_K := K \otimes_{K_0} D$.
As it is well-known (e.g. \cite[\S5.3.7]{fo7}) that
$V\simeq V_\cris (D)$ as $F[G]$-modules, for
$$V_\cris (D) = \Hom_{K_0, \varphi} (D, B^+_\cris ) \cap \Hom_{K, \Fil} (D_K , B^+_\cris \otimes _{K_0} K),$$
it is enough to prove that $V_\cris (D) \simeq V_{\cris, F} (D)$ as $F[G]$-modules.
We will first construct an  $F$-linear isomorphism
\begin{equation}
	\xymatrix{
	{\iota: \Hom_{K_0} (D, B^+_\cris)} \ar[r]^-{\simeq} & {\Hom_{F_0} (D, B^+_{\cris, F}).}
	}\label{iotamap}
\end{equation} Writing $D_{F_0} = D \otimes_{K_0} F_0$, which is an $F_0 \otimes_{K_0} F_0$-module, we note that
$F_0 \simeq F_0 \otimes _{F_0} F_0$ is a subfield of $F_0 \otimes_{K_0} F_0$, so we may and do
regard $D \simeq D \otimes_{F_0} F_0$ as an $F_0$-subspace of $D_{F_0}$.
Thus, restricting homomorphisms from $D_{F_0}$ to the subspace $D$ gives a natural map of $F$-vector spaces
$\iota' : \Hom_{F_0} (D_{F_0}, B^+_{\cris, F}) \to \Hom_{F_0} (D, B^+_{\cris, F})$.
As $\Hom_{K_0} (D, B^+_\cris)$ is easily checked to be an $F$-subspace of $\Hom_{F_0} (D_{F_0} , B^+_{\cris, F})$, restricting to $\iota '$ to $\Hom_{K_0} (D, B^+_\cris)$ then gives the desired map (\ref{iotamap}).

To check that (\ref{iotamap}) is an isomorphism, we explicitly compute with bases:
Let $e_1 , \dots , e_d$ be an $F_0$-basis of $D$ and $\beta_1 , \dots , \beta_{e_F}$ a $K_0$-basis of
$F_0$. Any $x \in D$ can then be uniquely expressed as a linear combination
$x= \sum_{ij} a_{ij} \beta_j e_i$ with $a_{ij} \in K_0$, while  any $y  \in D_{F_0}$ admits a unique
representation  $y  = \sum_{i, j, l} a_{ijl} \beta_j e_i \otimes \beta_l {\rm  \  with \ } a_{ijl} \in K_0$.
The natural $F$-linear inclusion $D \hookrightarrow D_{F_0}$ induced by $F_0 \otimes_{F_0} F_0 \subset F_0 \otimes_{K_0} F_0$ carries $x\in D$ above to
$$x= \sum_{ij} a_{ij} \beta_j e_i \otimes \beta_j \in D_{F_0}.$$
In particular, if $h \in \Hom_{K_0} (D, B^+_\cris)$, then $h$ is uniquely determined by
the matrix $\{c_{ij}\}$ with $c_{ij} := h (\beta_j e_i) \in B^+_\cris $,  and
it follows from definitions that
$\iota  (f) (x) = \sum _{ij} a_{ij }c_{ij} \otimes \beta_j$ as an element of
$B^+_\cris \otimes _{K_0} F_0$.  From this explicit description of $\iota$, one checks easily that $\iota $ is
indeed an isomorphism.

 From the very definition of (\ref{iotamap}), one checks that $\iota$ induces an isomorphism
$$\Hom_{K_0, \varphi} (D, B^+_\cris ) \simeq \Hom_{F_0, \varphi} (D, B^+_{\cris,F}),$$
so to complete the proof it now remains to show
that for any $h \in \Hom_{K_0} (D, B ^+_\cris)$,
the scalar extension $h\otimes 1: D\otimes_{K_0} K\rightarrow B^+_{\cris}\otimes_{K_0} K$
is compatible with filtrations if and only if this is true of
$\iota(h)\otimes 1: D\otimes_{F_0} K\rightarrow B^+_{\cris,F}\otimes_{F_0} K$.
Observe that
the construction of the map (\ref{iotamap}) gives the following commutative diagram,
\begin{equation}
\xymatrix @C=45pt{ D_K \ar[r] ^- {  h\otimes_{K_0} 1} &    B^+_{\cris} \otimes_{K_0} K \\ D_{F_0, K} \ar@{^(->}[u]\ar[r]^-{ \iota(h) \otimes_{F_0} 1} &   B^+_{\cris, F} \otimes_{F_0} K \ar@{=}[u]}\label{fildiag}
\end{equation}
where we make the identification  $B^+_\cris \otimes_{K_0} K = B^+_{\cris, F} \otimes_{F_0} K$.
As $V$ is $F$-crystalline, we have
$\Fil ^i D_K = \Fil ^i D_{F_0, K}$ for $i \geq 1$ by definition, and it follows from this
and (\ref{fildiag}) that
$$( h \otimes_{K_0} 1) (\Fil ^i D_K) \subset \Fil ^i B^+_\cris \otimes_{K_0}K\  \iff\
( \iota(h) \otimes_{F_0} 1) (\Fil ^i D_{F_0, K}) \subset \Fil ^i B^+_{\cris,F} \otimes_{F_0} K,$$
which completes that proof of $V_{\cris, F} (D) \simeq V_\cris (D) \simeq V$ as $F[G]$-modules.
\end{proof}

Let $V$ be an $F$-linear representation of $G$. For each field embedding $\tau : F \to \ol{K}$, we define the set \emph{$\tau$-Hodge-Tate weights} of $V$:
$$\t{HT}_\tau (V):= \{i \in \Z | (V \otimes _{F, \tau } \C_K (-i))^{G} \not = \{0\}\},$$
where $\C_K$ is the $p$-adic completion of $\ol{K}$. It is easy to see that $V$ is $F$-crystalline if and only if $V$ is crystalline and
$\t{HT}_\tau (V) = \{0\}$ unless  $\tau$ is the trivial embedding $\tau_0 : F \subset K \subset \ol{K}$.  For
the remainder of this paper, we will \emph{fix} a nonnegative integer $r$ with the property that $\t{HT}_{\tau _0} (V) \subset \{0, \dots , r\} $, or equivalently, $\Fil ^{r+1} D_{F_0, K} = \{0\}$. We denote by  $\t{Rep} ^{F\t{-cris}, r} _{F} (G) $ the category of $F$-crystalline representations $V$ of $G$ with $\t{HT}_{\tau_0} (V) \subset \{0, \dots , r\}$.



\subsection{$\varphi$-modules over $\gO$}\label{subsec-3.1}
Recall that we equip $ F_0(\!(u)\!)$ with the Frobenius
endomorphism  $\varphi: F_0(\!(u)\!) \to F_0(\!(u)\!)$  which  acts
as the canonical Frobenius on $K_0$, acts trivially on $F$, and sends $u$ to $f(u)$. For any sub-interval
$I\subset [0,1)$, we write  $\gO_I$ for the subring of
$F_0 (\!(u)\!)$ consisting of those elements which converge for all $x \in \overline K$  with
$ |x| \in I$. For ease of notation, we put  $\gO= \gO_{[0, 1)}$ and
as before we set $K_{n} = K (\pi _{n-1})$. We denote by $\widehat \gs_n $ the completion of $K_{n+1} \otimes_{F_0} \gs_F  $ at the maximal ideal $(u - \pi_n )$. The ring $\wgs_n $ is equipped with its $(u  - \pi _n)$-adic filtration,
which extends to a filtration on the quotient field $\wgs_n [1/(u - \pi _n)].$ Note that for any $n$ we have natural maps of $F_0$-algebras $\gs_F[1/p] \hookrightarrow \gO \hookrightarrow \wgs_n$  given by sending $u$ to $u$, where the first map has dense image. We will write $\varphi_W:  \gs_F \to \gs_F$ for the $\O_F[\![u]\!]$-linear map which acts on $W(k)$ via the canonical lift of Frobenius, and by $\varphi _{\gs/W }:  \gs _F \to \gs _F $
the $\O_{F_0}$-linear map which sends $u$ to $f(u)$. Let $c_0 = E(0) \in F_0 $ and set
$$\lambda  : = \prod \limits _{n = 0}^\infty \varphi ^n(E(u)/c_0 ) \in \gO .$$
A \emph{$\varphi$-module over $\gO$} is a finite free $\gO$-module $\CM$ equipped with a semi-linear endomorphism $\varphi_\CM : \CM \to \CM$. We say that $\CM$ is of \emph{finite $E(u)$-height} if the cokernel of the $\gO$-linear map $1 \otimes \varphi_\CM : \varphi ^* \CM \to \CM$ is killed by $E(u)^r$ for some $r$, and we
write $\t{Mod}^{\varphi,r}_\gO $ for the category of $\varphi$-modules over $\gO$ of $E(u)$-height $r$.
Note that scalar extension along the inclusion $\gs_F\hookrightarrow \gO$ gives a functor
$\sfi\rightarrow\t{Mod}^{\varphi,r}_\gO$ from Kisin modules of height $r$ to
$\varphi$-modules over $\gO$ of $E(u)$-height $r$.

Now let $V\in \Rep$ be any $F$-crystalline representation, and let $D= D_{\cris, F} (V)$
be the corresponding filtered $\varphi$-module.
We functorially associate to $D$ an $\gO$-module $\CM (D)$ as follows:
For each nonnegative integer $n$, let $\iota _n $  be the composite map:
\begin{equation}
\xymatrix@C=35pt{\gO \otimes _{F_0} D    \ar[r]^-{\varphi_W^{-n}\otimes \varphi _D^{-n}} &  \gO \otimes _{F_0} D  \ar[r] & \wgs_n \otimes _{F_0}D = \wgs_n  \otimes _{K} D_{F_0, K}, }
\end{equation}
where the second morphism is induced by the canonical inclusion $\gO \to \wgs_n$.
We again write $\iota_n$ for the canonical extension
$$ \iota _n : \ \gO[1/\lambda] \otimes _{F_0} D \longrightarrow \wgs _n [1/(u - \pi _n)] \otimes_{K} D_{F_0 , K},$$ and we define
$$\CM (D) : = \left \{x \in \gO[1/\lambda] \otimes_{F_0} D\  |  \ \iota_n (x) \in \Fil ^0 \left (\wgs _n [1/(u - \pi _n)] \otimes_{K} D_{F_0 , K} \right ), \  \forall n \geq 0 \right \}.$$

\begin{prop}\label{prop-Kisinconstruct} $\CM (D)$ is a $\varphi$-module over $\gO$ of $E(u)$-height $r$.
\end{prop}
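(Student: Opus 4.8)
The plan is to follow the strategy of \cite[\S1.2]{kisin2} (see also \cite[\S3]{Kisin-Ren}), adapting it to the present coefficient setup. The first task is to show that $\CM(D)$ is a finite free $\gO$-module of rank $d=\dim_{F_0}D$. This is a local problem on $\operatorname{Spec}\gO$, which (being a B\'ezout domain, or at least by the standard argument with the structure of $\gO$ as an adequate ring) reduces to checking that $\CM(D)$ is $u$-adically separated, reflexive, and has the expected rank after inverting $\lambda$. Inverting $\lambda$ kills all the Fil-conditions except possibly at finitely many $n$ in any bounded region — more precisely, since $\varphi^n(E(u)/c_0)$ is a unit in $\gO[1/\lambda]$ for every $n$, and since the filtration on $\wgs_n[1/(u-\pi_n)]\otimes_K D_{F_0,K}$ is exhaustive and separated with $\Fil^{r+1}D_{F_0,K}=0$, one sees that $\gO[1/\lambda]\otimes_{F_0}D \subset \CM(D)[1/\lambda]$, hence $\CM(D)[1/\lambda]=\gO[1/\lambda]\otimes_{F_0}D$ has rank $d$. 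For freeness and finite generation, I would argue as in \cite{kisin2} that $\CM(D)$ is the intersection over $n$ of the lattices $\iota_n^{-1}(\Fil^0)$, each of which differs from $\gO\otimes_{F_0}D$ only near $u=\pi_n$, and these loci are disjoint in the open unit disk; a Mittag-Leffler / approximation argument then shows the intersection is free of rank $d$.

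Next I would verify that $\varphi_{\CM(D)}:=\varphi_{\gO}\otimes\varphi_D$ preserves $\CM(D)$. This is because $\iota_n$ is built from $\varphi_W^{-n}\otimes\varphi_D^{-n}$, so $\iota_n\circ(\varphi_{\gO}\otimes\varphi_D) = \iota_{n-1}$ (up to the evident identifications), and the Fil-conditions for all $n\ge 0$ are stable under the shift $n\mapsto n-1$ together with the extra condition at $n=0$, which only makes $\varphi(\CM(D))$ smaller inside $\CM(D)$. Thus $\varphi$ maps $\CM(D)$ into itself, making $\CM(D)$ a $\varphi$-module over $\gO$.

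The substantive point is the bound on the cokernel of $1\otimes\varphi_{\CM(D)}:\varphi^*\CM(D)\to\CM(D)$. Here I would compute locally at each $u=\pi_n$ using the completions $\wgs_n$: the map $\iota_n$ identifies the localization of $\CM(D)$ there with the lattice $\Fil^0\bigl(\wgs_n[1/(u-\pi_n)]\otimes_K D_{F_0,K}\bigr)$, and $\varphi$ relates the localization at $\pi_n$ to the localization at $\pi_{n-1}$; since $\varphi(u)-\pi_{n-1}=f(u)-f(\pi_n)$ is divisible by $(u-\pi_n)$ to exact order one (as $f'(\pi_n)\neq 0$, because $f'(0)=a_1\not\equiv 0$ forces $f'$ to have no zero of large valuation — or more simply $f(u)\equiv u^q$ makes $f'(u)\equiv qu^{q-1}$, and one checks $f'(\pi_n)\ne 0$ directly), the comparison of the two lattices is governed by the jump of the filtration on $D_{F_0,K}$, which lies in $\{0,\dots,r\}$. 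This yields that the cokernel at $\pi_n$ is killed by $(u-\pi_n)^r$. At all points of the open disk other than the $\pi_n$, both $\varphi^*\CM(D)$ and $\CM(D)$ agree with $\gO\otimes_{F_0}D$ after the appropriate localization and $\varphi$ is an isomorphism there, using that $\lambda$ and its $\varphi$-translates only vanish at the $\pi_n$. Assembling these local statements over the divisor $\prod_n(u-\pi_n)$ — equivalently, over the zero locus of $\prod_{n\ge 0}\varphi^n(E(u))=c_0$-unit times a power series with simple zeros exactly at the $\pi_n$ — gives that the cokernel is killed by $E(u)^r$, as desired.

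The main obstacle I anticipate is the freeness/finite-generation of $\CM(D)$: unlike the classical situation one must be careful that $\gO$ here is $F_0(\!(u)\!)$-convergent power series with the twisted Frobenius $\varphi(u)=f(u)$ rather than $u\mapsto u^p$, so the places $\pi_n$ are the roots of the iterates $\varphi^n(E(u))$ and one needs that these remain a discrete set of simple zeros in the open unit disk accumulating only at the boundary; this is exactly the content needed to run the Mittag-Leffler patching argument and is where the hypothesis $f(u)\equiv u^q\bmod\mathfrak m_F$ (ensuring $v(\pi_n)\to 0$) is used. Once that geometric picture is in place, the rest is a routine transcription of \cite[1.2.6--1.2.8]{kisin2}.
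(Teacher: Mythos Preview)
Your approach is essentially the same as the paper's: both defer to Kisin's argument in \cite[Lemma 1.2.2]{kisin2} (cf.\ \cite[Lemma 2.2.1]{Kisin-Ren}), with the paper simply noting that one drops the logarithm element $\ell_n$ and the $N_\nabla$-structure (taking $\D_0=\gO\otimes_{F_0}D$), and that the analogue of \cite[1.1.4]{kisin2} holds for $\gO$ in this generality. One small correction to your sketch: your justification of $f'(\pi_n)\neq 0$ is off, since $a_1$ may well be zero (e.g.\ the classical case $f(u)=u^p$) and $f'(u)\equiv pu^{p-1}\equiv 0\bmod\varpi$ here; the clean reason is that $f(u)-\pi_{n-1}$ is Eisenstein over $K(\pi_{n-1})$, hence irreducible and therefore separable in characteristic zero.
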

\begin{proof} This is Lemma 1.2.2 in \cite{kisin2} (also see Lemma (2.2.1) in \cite{Kisin-Ren}) with the following minor modifications:  first note that we only discuss crystalline representation here, so we do not need the ``logarithm element" $\ell_n$ which occurs in Kisin's classical setting (strictly speaking, we do not know how to construct $\ell_n$
in our general setting).
Likewise, we may replace $\D_0 : = (\gO [\ell_n] \otimes _{K_0} D) ^{N=0}$ in the proof of \cite[1.2.2]{kisin2}
with $\D_0 = \gO \otimes_{F_0} D$ throughout. In the classical setting, Kisin showed that $\CM(D)$ also has an
$N_\nabla$-structure, which we entirely ignore here (once again, we do not know how to construct $N_\nabla$ in general).
This is of no harm, as the proof of Lemma 1.2.2 does not use the $N_\nabla$-structure of $\gO$ in any way.
Finally, we note that Lemma 1.1.4 of \cite{kisin2}, which plays an important role in the proof of
\cite[1.2.2]{kisin2}, is well-known for $\gO$-modules in our more general context.\footnote{Indeed,
Kisin's proof of \cite[1.1.4]{kisin2} relies on
\S 4 of Berger's paper \cite{Ber0} as well as results of
Lazard \cite[\S7--8]{Lazard} and Lemma 2.4.1 of \cite{Kedlaya},
while the required facts in \cite{Ber0} build on Lazard's work in a natural way.
But \cite{Lazard} already deals in the generality we need, as does Kedlaya \cite{Kedlaya}.
Thus, one checks that all the proofs of the results needed to establish
\cite[1.1.4]{kisin2} (as well as Kisin's argument itself) carry over {\em mutatis mutandis}
to our more general situation.
}
\end{proof}

As above, let us write $\D_0 := \gO \otimes _{F_0} D$. We record here the following useful facts,
which arise out of (our adaptation of) Kisin's proof of Proposition \ref{prop-Kisinconstruct}:
 \begin{enumerate}
 \item $ \D_0 \subset \CM \subset \lambda^{-r} \D_0$.
 \item $\iota_0$ induces an isomorphism of $\wgs_0$-modules $$ \wgs_0 \otimes_{\gO} \CM(D) \simeq \sum_{j \geq 0} (u - \pi) ^{-j} \wgs_0 \otimes_K \Fil ^j D_{F_0, K} = \sum_{j \geq 0} E(u) ^{-j}\wgs_0  \otimes_K \Fil ^j D_{F_0, K}.$$

 \end{enumerate}

 Consider now the obvious inclusions $D \inj \D_0 \subset \CM(D)$.
As Frobenius induces a linear isomorphism $\varphi^* D \simeq D$, we obtain a linear isomorphism
$\varphi ^*\D_0 \simeq \D_0$ and hence an injection $\xi : \D_0  \to \varphi ^*(\CM(D))$.
Defining $\gO_\alpha: = \gs_F [\![ \frac {E(u)^p }{\varpi}]\!][1/p]$, one checks that
$\gO_\alpha = \gs_F [\![\frac {u ^{e_0 p}}{\varpi}]\!][1/p]$ and  that $\gO\subset \gO_\alpha \subset \gO_{[0, |\pi|^{1/p})}$.

\begin{lemma}\label{lem-phisection} 
The map $\xi_\alpha : = \gO_\alpha   \otimes_{\gO} \xi : \gO_\alpha \otimes_{F_0} D \to \gO_\alpha \otimes_{\gO} \varphi ^*\CM(D)$ is an isomorphism.
\end{lemma}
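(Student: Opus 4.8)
\textbf{Proof proposal for Lemma \ref{lem-phisection}.}

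The plan is to exploit the two structural facts recorded after Proposition \ref{prop-Kisinconstruct}: the sandwich $\D_0 \subset \CM(D) \subset \lambda^{-r}\D_0$ and the explicit description of $\wgs_0 \otimes_\gO \CM(D)$ via $\iota_0$. First I would observe that the map $\xi$ is visibly injective (it is the composite of the inclusion $\D_0 \hookrightarrow \CM(D)$ with the $\varphi$-pullback and the canonical isomorphism $\varphi^*\D_0 \simeq \D_0$ coming from $\varphi^* D \simeq D$), hence $\xi_\alpha$ is injective as well because $\gO_\alpha$ is flat over $\gO$ (both being contained in $F_0(\!(u)\!)$ and $\gO_\alpha$ being a localization/completion-type extension with no torsion). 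So the content is surjectivity. Since $\gO \subset \gO_\alpha \subset \gO_{[0,|\pi|^{1/p})}$, the poles introduced by $\lambda^{-r}$ in $\lambda^{-r}\D_0$ are located at the zeros of $\varphi^n(E(u))$ for $n \geq 0$; a point $x$ with $|x| < |\pi|^{1/p}$ can be a zero of $\varphi^n(E(u))$ only for $n = 0$, i.e.\ only the factor $E(u)$ of $\lambda$ is non-invertible in $\gO_{[0,|\pi|^{1/p})}$, and $\varphi^n(E(u))$ for $n \geq 1$ becomes a unit. Therefore after base change to $\gO_\alpha$ we have $\gO_\alpha \otimes_\gO \lambda^{-r}\D_0 = E(u)^{-r}\gO_\alpha \otimes_{F_0} D$, and the sandwich gives $\gO_\alpha \otimes_\gO \CM(D) \subseteq E(u)^{-r}\gO_\alpha \otimes_{F_0} D$.

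The key step is then a local computation at the maximal ideal $(u - \pi)$, i.e.\ after completing along $E(u)$. I would argue that the cokernel of $\xi$ is supported set-theoretically at $u = \pi$ (and its $\varphi$-translates, which die in $\gO_\alpha$), because away from the zeros of $\lambda$ we have $\CM(D)[1/\lambda] = \D_0[1/\lambda]$ and $\varphi$ is an isomorphism there; so it suffices to check that $\xi$ becomes an isomorphism after $\otimes_\gO \wgs_0$. Using fact (2), $\wgs_0 \otimes_\gO \CM(D) \simeq \sum_{j\geq 0} E(u)^{-j}\wgs_0 \otimes_K \Fil^j D_{F_0,K}$; pulling back by $\varphi$ and identifying $\varphi^*$ of this with the analogous filtered object, together with the fact that $\varphi(E(u))$ differs from $E(u)^?$ by a unit near $u = \pi$ is \emph{false} — rather, $\varphi(E(u))$ is a unit at $u = \pi$ since $\varphi(E(u))$ vanishes only where $E(u)$ does after applying $\varphi^{-1}$, i.e.\ at $u = \pi_1 \neq \pi$. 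Hence $\varphi^*\CM(D)$ has \emph{no} poles at $u=\pi$ after the appropriate identification, and the comparison reduces to checking that $\D_0 \to \varphi^*\CM(D)$ is an isomorphism over $\wgs_0$, which follows because $\varphi^*\CM(D) \otimes \wgs_0$ is free over $\wgs_0$ of the same rank as $D$ and the map is the identity on associated gradeds (both being $\wgs_0 \otimes_K D_{F_0,K}$ with trivial filtration jump contributions killed by the unit $\varphi(E(u))$). I would make this precise by a determinant/length argument: $\xi$ has image of finite colength, and that colength is computed by $\sum_j j\cdot \dim_K \gr^j D_{F_0,K}$ from fact (2) on one side and by the same number on the other side via the $E(u)^r$-height bound, forcing the colength over $\gO_\alpha$ to vanish.

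The main obstacle I anticipate is handling the base change $\gO \rightsquigarrow \gO_\alpha$ cleanly: one must be sure that (a) $\gO_\alpha$ is flat over $\gO$ so injectivity and the sandwich are preserved, and (b) the \emph{only} obstruction to $\xi$ being an isomorphism over $\gO$ lives at the zeros of $\lambda$ that are \emph{not} in the disk $|x| < |\pi|^{1/p}$ except $u = \pi$ itself, and that at $u = \pi$ the pullback $\varphi^*\CM(D)$ is actually pole-free so that $\xi$ is already surjective there. The cleanest route is probably to avoid the completion entirely: note $\gO_\alpha \otimes_\gO \varphi^*\CM(D) = \varphi^*(\gO_{[0,|\pi|^{1/p p^{-1}}... }) \otimes \CM(D))$ is free, show both sides of $\xi_\alpha$ are free $\gO_\alpha$-modules of rank $d = \dim_{F_0} D$, and that $\det(\xi_\alpha)$ is a unit in $\gO_\alpha$ — this last point follows because $\det(\xi)$ as an element of $\gO[1/\lambda]$ differs from a power of $E(u)$ by a unit (by the height-$r$ condition and fact (2)), and then $\det(\xi_\alpha)$ is, up to a unit, a power of $E(u)$ divided by the same power of $E(u)$ coming from the pole structure of $\CM(D)$ at $u = \pi$, hence a unit. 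Verifying that these two powers of $E(u)$ cancel exactly is the crux, and it is exactly the statement that $\CM(D)$ has $E(u)$-height $r$ with the filtration jumps prescribed by $D_{F_0,K}$, i.e.\ Proposition \ref{prop-Kisinconstruct} together with fact (2).
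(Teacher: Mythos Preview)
Your proposal contains the essential observation but buries it under a great deal of unnecessary machinery, and there is a genuine confusion about which function governs the support of the cokernel.

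The paper's proof is three lines: apply $\varphi^*$ to the sandwich $\D_0 \subset \CM(D) \subset \lambda^{-r}\D_0$, use $\varphi^*\D_0 \simeq \D_0$ to get $\D_0 \subset \varphi^*\CM(D) \subset \varphi(\lambda)^{-r}\D_0$, and observe that $\varphi(\lambda) = \prod_{n\ge 1}\varphi^n(E(u)/c_0)$ is a \emph{unit} in $\gO_\alpha$ (its zeros all lie on or outside $|x|=|\pi|^{1/p}$). Tensoring the sandwich with $\gO_\alpha$ then collapses both ends to $\gO_\alpha\otimes_{F_0}D$, and $\xi_\alpha$ is an isomorphism.

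Your error is in the sentence ``the cokernel of $\xi$ is supported set-theoretically at $u=\pi$ (and its $\varphi$-translates, which die in $\gO_\alpha$).'' You derived this from $\CM(D)[1/\lambda]=\D_0[1/\lambda]$, but the target of $\xi$ is $\varphi^*\CM(D)$, not $\CM(D)$; the $\varphi$-twist replaces $\lambda$ by $\varphi(\lambda)$, so the cokernel is killed by $\varphi(\lambda)^r$ and is supported at the zeros of $\varphi(\lambda)$ --- none of which is $u=\pi$. You actually noticed this yourself (``$\varphi^*\CM(D)$ has no poles at $u=\pi$''), but then failed to draw the immediate conclusion and instead launched into a local computation at $\wgs_0$ and a determinant argument about cancelling powers of $E(u)$. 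There are no powers of $E(u)$ to cancel: once you know $\varphi(\lambda)\in\gO_\alpha^\times$, the sandwich does all the work. The determinant/length argument and the appeal to fact (2) are entirely superfluous for this lemma.
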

\begin{proof} Since $ \D_0 \subset \CM (D) \subset \lambda^{-r} \D_0$ and $\D_0 \simeq \varphi ^* \D_0$, we have $\D_0 \subset \varphi ^*(
\CM (D)) \subset \varphi(\lambda)^{-r} \D_0$. It is easy to check that $\varphi (\lambda)$ is a unit in $\gO_\alpha$, and it follows that $\xi_\alpha$ is an isomorphism.
\end{proof}

For an object $\CM \in {\rm Mod} ^{\varphi, r}_{\gO}$, we define a decreasing filtration on $\varphi ^*\CM$ by:
\begin{equation}\label{eqn-definefil}
	\Fil ^i (\varphi ^* \CM ) := \{x \in \varphi ^*\CM | (1 \otimes \varphi)(x) \in E(u)^i \CM \}.
\end{equation}
On the other hand, using the evident inclusion $\gO_\alpha \subset \gO_{[0 , |\pi|^{1/p})} \subset \wgs_0 $
we obtain a canonical injection $\gO_\alpha  \otimes_{F_0} D  \inj \wgs_0  \otimes _K D_{F_0, K}$,
which allows us to equip $\gO_\alpha   \otimes _\gO \D_0$
with the natural subspace filtration, using the tensor product filtration on $\wgs_0  \otimes _K D_{F_0, K}$.


 \begin{lemma}\label{lem-fil}
 The inverse isomorphism
 $$
 \xymatrix{
 	{\xi'_ {\alpha}:  \gO_\alpha \otimes_{\gO}   \varphi ^*\CM (D)} \ar[r]^-{\simeq}_-{{(\xi_\alpha )}^{-1 }} & {\gO_\alpha \otimes_{F_0}  D}
	}$$
of Lemma $\ref{lem-phisection}$ is compatible with filtrations and Frobenius.

 \end{lemma}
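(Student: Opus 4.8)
The plan is to verify compatibility with Frobenius and with filtrations separately, working through the identifications already set up in Lemma~\ref{lem-phisection}. For the Frobenius compatibility, I would trace through the definitions: the map $\xi: \D_0 \to \varphi^*(\CM(D))$ was constructed from the linear isomorphism $\varphi^* D \simeq D$, hence from the $\varphi$-structure on $D$, so $\xi$ (and therefore $\xi_\alpha$ and its inverse $\xi'_\alpha$) is $\varphi$-equivariant essentially by construction; I would just spell out that the diagram relating $\varphi_{\CM(D)}$, $\varphi_D$, and the inclusion $D \hookrightarrow \CM(D)$ commutes after applying $\varphi^*$ and base-changing to $\gO_\alpha$. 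This is the easy half.

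The substance is the filtration compatibility. Here I would use the two facts recorded after Proposition~\ref{prop-Kisinconstruct}: first, $\iota_0$ induces an isomorphism $\wgs_0 \otimes_{\gO} \CM(D) \simeq \sum_{j\ge 0} E(u)^{-j}\wgs_0 \otimes_K \Fil^j D_{F_0,K}$; and second, $\D_0 \subset \CM(D) \subset \lambda^{-r}\D_0$. The filtration on $\gO_\alpha \otimes_{\gO} \varphi^*\CM(D)$ is the one from \eqref{eqn-definefil}, namely $x \in \Fil^i$ iff $(1\otimes\varphi)(x) \in E(u)^i\CM(D)$, while the filtration on $\gO_\alpha \otimes_{F_0} D$ is the subspace filtration coming from the tensor-product filtration on $\wgs_0 \otimes_K D_{F_0,K}$ via the inclusion $\gO_\alpha \subset \wgs_0$. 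I would show $\xi'_\alpha$ carries one into the other by base-changing everything to $\wgs_0$ (legitimate since $\gO_\alpha \hookrightarrow \wgs_0$ is faithfully flat, or at least injective, on the relevant finite free modules, so filtration membership can be checked after this extension), where $1\otimes\varphi: \varphi^*\CM(D) \to \CM(D)$ becomes, under $\iota_0$, an explicit map whose image of $E(u)^i\CM(D)$ is visibly $\sum_{j} E(u)^{i-j}\wgs_0 \otimes_K \Fil^j D_{F_0,K}$; unwinding, $(1\otimes\varphi)(x) \in E(u)^i\CM(D)$ translates precisely into $\xi'_\alpha(x)$ lying in $\Fil^i$ of the tensor-product filtration. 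This is the same local computation at $u=\pi$ that underlies Kisin's \cite[1.2.6]{kisin2}, adapted to $F_0,K$ coefficients.

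The main obstacle I anticipate is bookkeeping around the two different ambient rings: the filtration on the target lives naturally over $\wgs_0$ (equivalently $\gO_{[0,|\pi|^{1/p})}$), and I must be careful that restricting the subspace filtration from $\wgs_0 \otimes_K D_{F_0,K}$ down to $\gO_\alpha \otimes_{F_0} D$ really does match, under $\xi_\alpha$, the $E(u)$-divisibility condition \eqref{eqn-definefil} pulled back to $\gO_\alpha$ rather than picking up extra elements. The key point making this work is that $\varphi(\lambda)$ is a unit in $\gO_\alpha$ (used already in Lemma~\ref{lem-phisection}), so $\varphi^*\CM(D)$ and $\D_0$ become equal after $\otimes_{\gO}\gO_\alpha$, and hence the comparison of filtrations can be checked on the single module $\gO_\alpha \otimes_{F_0} D$, where both filtrations are now defined and can be compared entry-by-entry in a basis of $D$ adapted to the filtration $\Fil^\bullet D_{F_0,K}$. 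Concretely I would fix such a basis, compute both filtrations explicitly, and observe they agree; I would then remark that $\xi'_\alpha$ being filtered and bijective with filtered inverse follows formally once both directions are checked, or alternatively note that a filtered bijection of finite free modules over $\gO_\alpha$ whose graded pieces are isomorphisms is automatically a filtered isomorphism.
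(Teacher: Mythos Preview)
Your proposal is correct and follows essentially the same approach as the paper: dispatch Frobenius compatibility by construction, then base-change to $\wgs_0$ and use the two facts recorded after Proposition~\ref{prop-Kisinconstruct} (in particular the $\iota_0$-description of $\wgs_0\otimes_\gO\CM(D)$ and the fact that $\varphi(\lambda)$ is a unit) to identify the $E(u)$-divisibility filtration on $\varphi^*\CM(D)$ with the tensor-product filtration on $D_{F_0,K}$. The paper's write-up is marginally cleaner in that it introduces an auxiliary filtration $\wt{\Fil}^i\widehat\D_0 := \widehat\D_0\cap E(u)^i(\wgs_0\otimes_\gO\CM(D))$ and checks directly that it agrees with both filtrations, avoiding any explicit basis computation; but this is a cosmetic difference, not a substantive one.
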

\begin{proof}Clearly, $\xi'_\alpha$ is compatible with Frobenius.
To prove that $\xi'_\alpha$ is filtration compatible,
we use two facts recorded after Proposition \ref{prop-Kisinconstruct}.
As noted above,  $\varphi(\lambda)$ is a unit in $\wgs_0$, so
the first fact implies that the injection $\xi : \D_0 \simeq \varphi ^*\D_0 \hookrightarrow \varphi^* \CM(D) $ is an isomorphism after tensoring with $\wgs_0$. Put $\widehat \D_0 := \wgs_0 \otimes_\gO \D_0$ and define an auxillary filtration on $\widehat \D_0$ by
$$\wt{\Fil}^i \widehat  \D_0 : = \widehat \D_0  \cap E(u)^i (\wgs_0 \otimes_\gO \CM(D)). $$
From the very definition (\ref{eqn-definefil}), it is clear that
$1\otimes \xi: \widehat{\D}_0\simeq \wgs_0 \otimes_\gO  \varphi ^*\CM(D)$
carries $\wt{\Fil}^ i \widehat  \D_0$ isomorphically onto
$\Fil ^i (\wgs_0 \otimes_\gO  \varphi ^*\CM(D) )$.
On the other hand, the second fact recorded above
implies that an element $d \in \widehat \D_0$ lies in $E(u)^i( \wgs_0 \otimes_\gO \CM(D))$ if and only if $\iota_0(d) \in \Fil ^i (\wgs_0 \otimes _K D_{F_0, K})$, from which it follows that
$\wt \Fil ^i \widehat \D_0 = \Fil^i \widehat \D_0$.
Hence $\xi'_\alpha$ is indeed compatible with filtrations.
\end{proof}

For simplicity, let us put $\CM : = \CM(D)$.
It follows from Lemma \ref{lem-fil} that the isomorphism $\xi_{\alpha}'$
specializes to give a natural identification of $\varphi$-modules $D\simeq\varphi^*\CM/ u \varphi ^* \CM$
as well as a natural identification of filtered $K$-vector spaces
$D_{F_0, K}\simeq \varphi ^*\CM/ E(u) \varphi ^*\CM $.
Writing $\psi_{\pi}$ for the composite mapping
$$\psi_\pi : \varphi ^*\CM \twoheadrightarrow \varphi ^*\CM/ E(u) \varphi ^*\CM \simeq D_{F_0, K},$$
we therefore obtain:

\begin{co}\label{co-comp fil} The map  $\psi_\pi: \varphi^*\CM(D)  \to D_{F_0, K}$ is compatible with the
given filtrations.
\end{co}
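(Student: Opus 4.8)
The plan is to deduce this directly from Lemma~\ref{lem-fil} by passing to the fibre at $u=\pi$. Recall from \S\ref{subsec-3.1} that $\wgs_0$ is a complete discrete valuation ring with residue field $K$ and uniformizer $u-\pi$ (as $\pi_0=\pi\in K$), that $E(\pi)=0$ so that $E(u)$ lies in the maximal ideal $(u-\pi)\wgs_0$, and that $\gO/E(u)\gO\simeq K$ by Weierstrass division by the distinguished polynomial $E$, and likewise $\gO_\alpha/E(u)\gO_\alpha\simeq K$. In particular $(\gO_\alpha/E(u))\otimes_{F_0}D=D_{F_0,K}$, and since $E(u)\in(u-\pi)\wgs_0$ the composite $\gO_\alpha\otimes_{F_0}D\hookrightarrow\wgs_0\otimes_K D_{F_0,K}\to D_{F_0,K}$ (reduction modulo $u-\pi$) factors through reduction modulo $E(u)$. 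Tracing through the definitions of $\xi'_\alpha$ and of $\psi_\pi$, one checks that $\psi_\pi$ is exactly the composite
\[
\varphi^*\CM\longrightarrow \gO_\alpha\otimes_\gO\varphi^*\CM \xrightarrow{\ \xi'_\alpha\ } \gO_\alpha\otimes_{F_0}D \lhook\joinrel\longrightarrow \wgs_0\otimes_K D_{F_0,K}\longrightarrow D_{F_0,K}.
\]

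Next I would trace filtrations along this composite. The first arrow is filtration-compatible directly from \eqref{eqn-definefil}: if $(1\otimes\varphi)(x)\in E(u)^i\CM$ then $(1\otimes\varphi)(x\otimes 1)\in E(u)^i(\gO_\alpha\otimes_\gO\CM)$. The second arrow $\xi'_\alpha$ is filtration-compatible by Lemma~\ref{lem-fil}, the filtration on $\gO_\alpha\otimes_{F_0}D$ being the subspace filtration induced from the tensor-product filtration on $\wgs_0\otimes_K D_{F_0,K}$; hence the image of $\Fil^i(\varphi^*\CM)$ in $\wgs_0\otimes_K D_{F_0,K}$ lies in $\Fil^i(\wgs_0\otimes_K D_{F_0,K})$. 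It then remains to compute the image of this last space under reduction modulo $u-\pi$. Writing $\Fil^a\wgs_0=(u-\pi)^{\max(a,0)}\wgs_0$, one has
\[
\Fil^i(\wgs_0\otimes_K D_{F_0,K})=\sum_{b\in\Z}(u-\pi)^{\max(i-b,0)}\wgs_0\otimes_K\Fil^b D_{F_0,K},
\]
and reduction modulo $u-\pi$ annihilates every term with $b<i$ while carrying the term with $b\ge i$ onto $\Fil^b D_{F_0,K}$; thus the image is $\sum_{b\ge i}\Fil^b D_{F_0,K}=\Fil^i D_{F_0,K}$. Combining the three steps gives $\psi_\pi(\Fil^i(\varphi^*\CM))\subseteq\Fil^i D_{F_0,K}$ for all $i$, which is the asserted compatibility; one obtains equality after noting that $\wgs_0\otimes_\gO\Fil^i(\varphi^*\CM)=\Fil^i(\wgs_0\otimes_\gO\varphi^*\CM)$ by flatness of $\wgs_0$ over $\gO$.

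The one point I would verify with care — and which I regard as the only real content beyond Lemma~\ref{lem-fil} — is the claim in the first paragraph that the reduction of $\xi'_\alpha$ modulo $u-\pi$ coincides with the isomorphism $\varphi^*\CM/E(u)\varphi^*\CM\simeq D_{F_0,K}$ built into the definition of $\psi_\pi$; this uses the identifications $\gO/E(u)\gO\simeq\gO_\alpha/E(u)\gO_\alpha\simeq K$ together with the naturality of the constructions recorded after Proposition~\ref{prop-Kisinconstruct}. Once this bookkeeping is pinned down, the filtration computation above is entirely formal.
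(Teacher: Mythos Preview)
Your proof is correct and follows the same approach as the paper: the corollary is stated there as an immediate consequence of Lemma~\ref{lem-fil}, obtained by reducing the filtration-compatible isomorphism $\xi'_\alpha$ modulo $E(u)$, and your argument is exactly an unpacking of this step (factoring through $\wgs_0$ and computing the tensor-product filtration explicitly). Your final remark about equality via flatness is extra---the corollary only claims containment---and the flatness assertion would want a word of justification, but this does not affect the main argument.
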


\begin{remark}
In the classical situation where $F=\Q_p$ and $f(u)= u ^p$,
to {\em any} object $\CM$ of  $\t{Mod}_{\gO}^{\varphi, r}$, Kisin functorially associates a filtered $\varphi$-module
via $D(\CM):= \varphi ^*\CM/ u \varphi ^*\CM $ with  $\Fil ^ i(D(\CM)_K) := \psi_\pi (\Fil ^i \varphi^*\CM ) $.
That this is possible rests crucially on the existence of a unique
$\varphi$-equivariant isomorphism
$$\xi_\alpha : \gO_\alpha \otimes_{F_0} D(\CM)  \simeq  \gO_\alpha \otimes_{\gO} \varphi^* \CM$$
reducing modulo $u$ to the given identification $D(\CM)=\varphi^*\CM/u\varphi^*\CM$,
which is Lemma 1.2.6 of \cite{kisin2}.
For more general $F$ and $f(u)=u^p + \cdots + a_1u$, we are only able to
construct such a map $\xi_\alpha$ under the restriction $\varpi ^{r+1}|a_1$ in $\O_F$;  see Lemma \ref{lem-phisection 2}.

\end{remark}

To conclude this section, we record the following further consequence of Lemma \ref{lem-fil}:
Setting $\wt B_\alpha  : = W(R)[\![\frac{E(u)^p}{\varpi}]\!] [ 1/p] \subset B^+_{\cris, F }$,
one checks that the subspace filtration $\{\Fil^n \wt B_\alpha\}_n$ coincides with
the filtration $\{E(u)^n \wt B_\alpha\}_n$. As $\gs_F \subset W(R)_F$, we have
a canonical inclusion $\gO_\alpha \subset \wt B_\alpha $, and the map $\xi_{\alpha}$ of Lemma \ref{lem-phisection}
induces a natural isomorphism
\begin{equation}
	\xi'_{\wt B_\alpha} : \wt B_\alpha  \otimes_{\gO} \varphi ^*\CM  \simeq \wt B_\alpha\otimes_{F_0} D.
	\label{Balphaiso}
\end{equation}	
As the inclusion $ \wgs_0 \subset B^+_\dR$ is compatible with filtrations,  we deduce:

\begin{co}\label{co-comparison}
  	The isomorphism $\xi'_ {\wt B_\alpha}$  is compatible with Frobenius and filtrations.
\end{co}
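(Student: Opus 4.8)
The plan is to deduce Corollary \ref{co-comparison} as a routine upgrade of Lemma \ref{lem-fil} by base change along the natural filtered and $\varphi$-equivariant inclusion $\gO_\alpha \hookrightarrow \wt B_\alpha$. First I would recall the two observations that immediately precede the statement: that $\{\Fil^n \wt B_\alpha\}_n$ agrees with $\{E(u)^n \wt B_\alpha\}_n$ (this is the computation noted right after the definition of $\wt B_\alpha$, and it is the exact $\wt B_\alpha$-analogue of the defining filtration \eqref{eqn-definefil} on $\varphi^*\CM$), and that $\gO_\alpha \subset \wt B_\alpha$ is a subring inclusion. Tensoring the isomorphism $\xi_\alpha$ of Lemma \ref{lem-phisection} with $\wt B_\alpha$ over $\gO_\alpha$ produces the isomorphism $\xi'_{\wt B_\alpha}$ of \eqref{Balphaiso}; since $\xi_\alpha$ (equivalently its inverse $\xi'_\alpha$) is $\varphi$-equivariant by Lemma \ref{lem-fil}, and Frobenius on both $\wt B_\alpha \otimes_\gO \varphi^*\CM$ and $\wt B_\alpha \otimes_{F_0} D$ is defined by the usual tensor recipe (using $\varphi$ on $\wt B_\alpha$ extending $\varphi$ on $\gO_\alpha$), compatibility with Frobenius is immediate.

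The substance, such as it is, lies in the filtration compatibility. Here I would argue exactly as in the proof of Lemma \ref{lem-fil}, but with $\wgs_0$ replaced by $\wt B_\alpha$. The point is that all three ingredients of that proof survive base change: (i) $\varphi(\lambda)$ is a unit in $\wgs_0$ hence a fortiori in any ring containing $\gO_\alpha$ through which $\wgs_0$ maps filteredly — but more directly, $\varphi(\lambda)$ is already a unit in $\gO_\alpha$ by the proof of Lemma \ref{lem-phisection}, so $\xi$ becomes an isomorphism after $\otimes \wt B_\alpha$; (ii) the defining filtration \eqref{eqn-definefil} on $\varphi^*\CM$, namely $x \in \Fil^i$ iff $(1\otimes\varphi)(x) \in E(u)^i \CM$, base-changes to say $x \in \Fil^i(\wt B_\alpha \otimes_\gO \varphi^*\CM)$ iff $(1\otimes\varphi)(x) \in E(u)^i(\wt B_\alpha \otimes_\gO \CM)$, using precisely that $\Fil^n \wt B_\alpha = E(u)^n \wt B_\alpha$; and (iii) the second fact recorded after Proposition \ref{prop-Kisinconstruct}, that $\iota_0$ realizes $\wgs_0 \otimes_\gO \CM$ as $\sum_j E(u)^{-j} \wgs_0 \otimes_K \Fil^j D_{F_0,K}$, lets one identify $E(u)^i$-divisibility in $\wt B_\alpha \otimes_\gO \CM$ with membership of $\iota_0$ in $\Fil^i(\wt B_\alpha \otimes_K D_{F_0,K})$, via the filtered embedding $\wgs_0 \hookrightarrow B^+_\dR$ together with $\gO_\alpha \subset \wgs_0$. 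Chaining these identifications, $\xi'_{\wt B_\alpha}$ carries $\Fil^i(\wt B_\alpha \otimes_\gO \varphi^*\CM)$ onto the subspace filtration on $\wt B_\alpha \otimes_{F_0} D$ induced from the tensor-product filtration on $\wt B_\alpha \otimes_K D_{F_0,K}$, which is what compatibility with filtrations means.

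I would also remark that this is genuinely just the content of Lemma \ref{lem-fil} spread out to a larger coefficient ring: the subspace filtration on $\gO_\alpha \otimes_\gO \D_0$ used in that lemma is induced via $\gO_\alpha \subset \wgs_0 \subset B^+_\dR$, and since the inclusion $\wgs_0 \hookrightarrow B^+_\dR$ is filtration-compatible (as stated just before Corollary \ref{co-comparison}) and $\wt B_\alpha \subset B^+_{\cris,F} \subset B^+_\dR$ is likewise filtration-compatible with $\Fil^n \wt B_\alpha = E(u)^n \wt B_\alpha$, all four filtrations in play (on $\wgs_0 \otimes_K D_{F_0,K}$, on $B^+_\dR \otimes_K D_{F_0,K}$, on $\wt B_\alpha \otimes_K D_{F_0,K}$, and the induced ones on the $\D_0$- and $\varphi^*\CM$-sides) are compatible with the evident maps among them. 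So the verification is entirely formal once Lemma \ref{lem-fil} is in hand.

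The main obstacle — and it is a mild one — is bookkeeping: making sure that ``subspace filtration'' on the various tensor products is taken consistently, and that the identification $\wt B_\alpha \otimes_\gO \CM \simeq \wt B_\alpha \otimes_{\wgs_0}(\wgs_0 \otimes_\gO \CM)$ used to transport fact (ii) above is legitimate (it is, because $\gO_\alpha$ factors through $\wgs_0$ and $\wt B_\alpha$ is flat over nothing in particular but the relevant modules are finite free). No new analytic input beyond what was used for Lemma \ref{lem-fil} is required; in particular Kedlaya/Lazard-type slope arguments play no role here. I would therefore present the proof as: ``This follows from Lemma \ref{lem-fil} by base change along $\gO_\alpha \hookrightarrow \wt B_\alpha$, using $\Fil^n \wt B_\alpha = E(u)^n \wt B_\alpha$ and the filtered inclusion $\wgs_0 \hookrightarrow B^+_\dR$,'' and then spell out the three-step chain of identifications above in a few lines.
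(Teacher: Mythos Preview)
Your proposal is correct and follows the same approach as the paper: the paper's proof is essentially the one-sentence remark ``As the inclusion $\wgs_0 \subset B^+_\dR$ is compatible with filtrations, we deduce'' immediately preceding the statement, together with the observations $\Fil^n \wt B_\alpha = E(u)^n \wt B_\alpha$ and $\gO_\alpha \subset \wt B_\alpha$ recorded just before. You have simply unpacked this base-change argument in detail; the only quibble is that your displayed identification $\wt B_\alpha \otimes_\gO \CM \simeq \wt B_\alpha \otimes_{\wgs_0}(\wgs_0 \otimes_\gO \CM)$ is not literally well-posed since $\wt B_\alpha$ is not a $\wgs_0$-algebra, but as you yourself note the comparison takes place inside $B^+_\dR$, which is all that is needed.
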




 \subsection{Results of classical settings}
 For future reference, we now recall the main results in Kisin's classical situation, where $F=\Q_p$
 and $f(u)=u^p$. Throughout this section, we fix a choice $\ul{\pi}:=\{\pi_n\}_n$ of $p$-power compatible
 roots of a fixed uniformizer $\pi=\pi_0$ in $K$, and set $K_{\infty}:=K_{\ul{\pi}}$
 and $G_{\infty}:=G_{\ul{\pi}}$.
 The following summarizes the main results in this setting:
 \begin{theorem}[\cite{kisin2}]\label{thm-classical}Let $V$ be a $\Q_p$-valued
 crystalline representation of $G$ with Hodge-Tate weights in $\{0, \dots , r\}$ and
  $T \subset V$ a $G_\infty$-stable $\Z_p$-lattice. Then:
 \begin{enumerate}
 \item There exists a unique Kisin module $\M$  so that $T_\gs(\M) \simeq T$ as $\Z_p[G_{\infty}]$-modules.
 \item  If $D= D_\cris (V)$ is the associated filtered $\varphi$-module, then
	$\CM(D) \simeq \gO \otimes_\gs \M$ as $\varphi$-modules.
 \end{enumerate}
 \end{theorem}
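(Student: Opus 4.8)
This is \cite{kisin2}; here I outline the shape of Kisin's argument and where I expect the essential work to lie. The hypotheses force $D:=D_\cris(V)$ to be a weakly admissible filtered $\varphi$-module (Colmez--Fontaine), which in the classical situation $F=\Q_p$, $f(u)=u^p$ moreover carries a monodromy operator $N$; consequently the $\varphi$-module $\CM(D)$ of Proposition \ref{prop-Kisinconstruct} is equipped with a connection $N_\nabla$ over $\gO$. The crux of the matter is to prove that the scalar extension $\R\otimes_\gO\CM(D)$ to the Robba ring $\R$, with its induced $(\varphi,N_\nabla)$-structure, is \emph{pure of slope $0$} in the sense of Kedlaya --- this is \cite[Cor.~1.3.5]{kisin2}, obtained from weak admissibility of $D$ via Kedlaya's slope filtration theorem. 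I expect this to be the main obstacle: it is exactly the step that breaks down in the generality of the present paper for want of an $N_\nabla$, and is the reason one must instead argue in \S\ref{subsec-comparions} by comparing $\CM(D)$ with its classical counterpart $\CM^c(D)$.

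Granting purity of slope $0$, one descends from the Robba ring to $\gs$: using the inclusions $\D_0\subseteq\CM(D)\subseteq\lambda^{-r}\D_0$ recorded after Proposition \ref{prop-Kisinconstruct}, purity produces a finite free $\varphi$-stable $\gs$-submodule $\M\subseteq\CM(D)$ of $E$-height $r$ with $\gO\otimes_\gs\M\simeq\CM(D)$, which is already part (2). To recover the Galois side one computes $T_\gs(\M)=\Hom_{\gs,\varphi}(\M,\gs^\ur)=\Hom_{\gs,\varphi}(\M,W(R))$ (the second equality by Proposition \ref{lem-Tgsbasic}) and shows $T_\gs(\M)[1/p]\simeq V|_{G_\infty}$. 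This last comparison uses that the embeddings $\iota_n$ realize $\CM(D)=\gO\otimes_\gs\M$ inside $\gO[1/\lambda]\otimes_{K_0}D$ compatibly with $\varphi$ and filtrations, together with the standard period-ring comparison relating $\gO\otimes_\gs\M$, $W(R)\otimes_\gs\M$ and $D$ inside $B^+_\cris$ and the fundamental exact sequences (notably $(\Fil^0 B^+_\cris)^{\varphi=1}=\Q_p$); this is \cite[\S2]{kisin2}, and in our setting the analogue is carried out through $\wt B_\alpha$ using Corollary \ref{co-comparison}.

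Finally, part (1) for an \emph{arbitrary} $G_\infty$-stable lattice $T\subseteq V$, and uniqueness. For existence I would argue exactly as in Proposition \ref{prop-rep-height r}: by the Fontaine equivalence (Theorem \ref{thm-equi of etal stuffs}, Corollary \ref{co-TM}), $T$ corresponds to a $\varphi$-stable $\O_\E$-lattice $N$ inside the common étale module $M=\O_\E\otimes_\gs\M$, and the saturation construction of Lemma \ref{lem-digfree} then yields a Kisin module $\N$ of height $r$ with $\O_\E\otimes_\gs\N\simeq N$, hence $T_\gs(\N)\simeq T$. Part (2) holds for this $\M:=\N$ as well, since $\gO\otimes_\gs\N$ depends only on $\N[1/p]$, hence --- by the full faithfulness recalled next --- only on $V|_{G_\infty}$, so it agrees with the $\CM(D)$ of the preceding paragraph. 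For uniqueness one invokes the classical analogue of Proposition \ref{prop-fullfaith}: the functor $T_\gs$ on Kisin modules of $E$-height $r$ is fully faithful (\cite[Prop.~2.1.12]{kisin2}; its hypothesis that $\varphi^n(f(u)/u)$ is never a power of $E(u)$ is automatic when $f(u)=u^p$, as noted in the Remark after Proposition \ref{prop-fullfaith}), so $T_\gs(\M)\simeq T_\gs(\M')$ forces $\M\simeq\M'$.
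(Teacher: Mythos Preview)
Your sketch is a faithful outline of Kisin's argument in \cite{kisin2}, and there is no mathematical gap. Note, however, that the paper itself does not reprove this theorem: its ``proof'' is the single sentence ``These are the main results of \cite{kisin2} restricted to crystalline representations,'' since Theorem \ref{thm-classical} is invoked purely as known input from the literature. Your write-up is thus considerably more expansive than what the paper supplies, but it correctly identifies the key steps (weak admissibility $\Rightarrow$ slope $0$ via the $N_\nabla$-structure and Kedlaya, descent to $\gs$ as in \cite[1.3.13]{kisin2}, the period-ring comparison for $T_\gs(\M)[1/p]\simeq V|_{G_\infty}$, and uniqueness via full faithfulness with $f(u)=u^p$).

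One minor point of presentation: your argument that part (2) persists for an arbitrary lattice $\N$ (``$\gO\otimes_\gs\N$ depends only on $\N[1/p]$, hence by full faithfulness only on $V|_{G_\infty}$'') is correct but slightly telegraphic; it relies on the observation that full faithfulness of $T_\gs$ at the integral level forces $\M[1/p]\simeq\N[1/p]$ whenever $V_\gs(\M)\simeq V_\gs(\N)$, which you might make explicit.
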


 \begin{proof} These  are  the main results of \cite{kisin2} restricted to crystalline representations. \end{proof}

 Now let $F$ be an arbitrary extension of $\Q_p$ contained in $K$ and let $V$ be
an $F$-crystalline representation and $T$ a $G_{\infty}$-stable $\O_F$-lattice in $V$.
Viewing $V$ as a crystalline $\Q_p$-valued representation and $T$ as a $G_{\infty}$-stable
$\Z_p$-module, by Theorem \ref{thm-classical} there is a unique (classical) Kisin module $\M$ attached to $T$,
which is of finite $\wt{E}(u)$-height, for $\wt{E}(u)$ the minimal polynomial of
$\pi$ over $K_0$ (we write $\wt{E}(u)$ to distinguish this polynomial from $E(u)$,
which by definition is the minimal polynomial of $\pi$ over $F_0=FK_0$).
The additional $\O_F$-structure on $T$ is reflected on the classical Kisin module in the following way:

\begin{co}\label{cor-classical to F}
 	The classical Kisin module $\M$ is
 	naturally a finite and free $\gs_F$-module and as such has $E(u)$-height $r$.
\end{co}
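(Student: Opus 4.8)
The plan is to exploit the $\O_F$-action on $T$ directly, transporting it through the functor $T_\gs$ to the classical Kisin module $\M$. Since $T$ is a $G_\infty$-stable $\O_F$-lattice in the crystalline representation $V$, the ring $\O_F$ acts $\Z_p$-linearly and $G_\infty$-equivariantly on $T$. By the uniqueness statement in Theorem \ref{thm-classical}(1), the functor $\M \mapsto T_\gs(\M)$ is fully faithful on classical Kisin modules (of finite $\wt E(u)$-height), so the $\O_F$-action on $T = T_\gs(\M)$ comes from a unique ring homomorphism $\O_F \to \End_{\Sfi}(\M)$, where the endomorphisms are taken in the category of $\varphi$-modules over $\gs = W[\![u]\!]$. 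Concretely, each $a \in \O_F$ acts on $\M$ by a $\varphi_\gs$-equivariant $\gs$-linear endomorphism, giving $\M$ the structure of a module over $\gs \otimes_{\Z_p} \O_F = \gs_F$, compatibly with $\varphi$. This is the crux: the $\O_F$-structure is \emph{already present} on $\M$, inherited from $T$ via full faithfulness, and $\varphi_\M$ is automatically $\gs_F$-semilinear over the Frobenius on $\gs_F$ because the $\O_F$-action commutes with $\varphi_\M$ (as $\varphi$ is $\O_F$-linear on $\gs_F$).

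Next I would check that $\M$ is finite and free over $\gs_F$, not merely a $\gs_F$-module. Freeness over $\gs_F = \O_{F_0}[\![u]\!]$ can be verified by a local argument: $\gs_F$ is a two-dimensional regular local ring with maximal ideal $(\varpi, u)$ (here using that $F/\Q_p$ is totally ramified, so $\O_{F_0} = \O_F \otimes_{\Z_p} W$ is a complete DVR with uniformizer $\varpi$), and $\M$ is $\varpi$- and $u$-adically separated and finitely generated, with $\M/(\varpi, u)\M$ a $k$-vector space whose dimension bounds the number of generators. Since $\M$ is free over $\gs = W[\![u]\!]$, it is in particular $\gs$-torsion-free, hence $\varpi$-torsion-free, and one deduces that $\M$ is a reflexive (indeed free) module over the regular local ring $\gs_F$ — for instance by showing $\M$ is free over $\gs_F[1/\varpi] = F_0[\![u]\!][1/\varpi]$... more cleanly, I would invoke that a finitely generated module over a two-dimensional regular local ring that is free over the subring $\gs$ and stable under the larger structure is free, using the Auslander–Buchsbaum formula or a direct argument that $\mathrm{pd}_{\gs_F}\M = \mathrm{pd}_\gs \M = 0$. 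Then $\mathrm{rank}_{\gs_F}\M = \mathrm{rank}_{\O_F}T = \dim_F V$, as expected.

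Finally I would identify the $E(u)$-height. We know the linearization $1 \otimes \varphi : \varphi^*\M \to \M$ has cokernel killed by $\wt E(u)^r$ over $\gs$. But $\wt E(u)$, the minimal polynomial of $\pi$ over $K_0$, factors over $F_0 = FK_0$ as a product of Galois-conjugates of $E(u)$, the minimal polynomial of $\pi$ over $F_0$; in fact $\wt E(u) = \prod_\sigma \sigma(E(u))$ with $\sigma$ running over the relevant embeddings, and all these conjugates generate the same ideal $(E(u))$ in the semilocal ring $\O_{F_0}[\![u]\!]$ localized appropriately — more precisely, after base change to $\gs_F$ the polynomial $\wt E(u)$ and $E(u)$ generate the same height-one prime, since $E(u)$ divides $\wt E(u)$ in $\gs_F$ and the quotient is a unit times a product of conjugates not in the ideal $(\varpi, E(u))$. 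Using the two recorded facts after Proposition \ref{prop-Kisinconstruct} together with Corollary \ref{co-comparison} — or more simply by comparing with $\CM(D) \simeq \gO \otimes_\gs \M$ from Theorem \ref{thm-classical}(2) and Proposition \ref{prop-Kisinconstruct}, which shows $\CM(D)$ has $E(u)$-height $r$ — one gets that the cokernel of $1 \otimes \varphi$ on $\varphi^*\M \to \M$ over $\gs_F$ is killed by $E(u)^r$.

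The main obstacle I anticipate is the freeness of $\M$ over $\gs_F$: transferring the $\O_F$-action via full faithfulness is formal, and pinning down the height is a matter of comparing $E(u)$ and $\wt E(u)$, but proving $\M$ is \emph{free} (not just finitely generated or reflexive) over the two-dimensional ring $\gs_F$ requires a genuine commutative-algebra argument. The cleanest route is probably to show directly that $\M/\varpi\M$ is free over $\gs_F/\varpi = k_{F_0}[\![u]\!]$ — a PID — using that $\M/\varpi\M$ is a submodule of the étale $\varphi$-module $M/\varpi M$ over a field and is $u$-torsion-free, hence free over the DVR $k_{F_0}[\![u]\!]$ after checking finite generation; then lift freeness to $\M$ by Nakayama plus $\varpi$-adic completeness and $\varpi$-torsion-freeness. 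I would present this last step carefully since it is where the hypothesis that $F/\Q_p$ is totally ramified (so that $\O_{F_0}$ is a DVR rather than a product) is genuinely used.
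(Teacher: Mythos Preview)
Your treatment of the $\gs_F$-module structure and freeness is essentially what the paper does (by citing \cite[Prop.~3.4]{GLS} and \cite[Prop.~1.6.4]{kisin4}): transport the $\O_F$-action to $\M$ via full faithfulness of $T_\gs$, then argue freeness over the regular local ring $\gs_F$ by reducing modulo $\varpi$ and lifting. That part is fine.

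The gap is in your $E(u)$-height argument. Neither of your two routes works as written, and the reason is that you never invoke the $F$-crystalline hypothesis, which is precisely what is needed. Over $\gs_F$ the polynomial $\wt E(u)$ factors as $E_1(u)\cdots E_{e_F}(u)$ with $E_1=E$ and the $E_i$ \emph{pairwise coprime} distinguished (Eisenstein-type) polynomials; they do \emph{not} generate the same ideal, and $\wt E(u)/E(u)$ is not a unit in $\gs_F$ nor in any localization that sees all the support of the cokernel. Knowing only that $\wt E(u)^r$ kills $\mathrm{coker}(1\otimes\varphi)$ tells you the cokernel is supported on $\bigcup_i V(E_i)$, and you must rule out the components $i>1$. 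Your second route has the same problem in disguise: Theorem~\ref{thm-classical}(2) identifies $\gO^c\otimes_\gs \M$ with the \emph{classical} $\CM^c(D_{\cris}(V))$, whose height statement is relative to $\wt E(u)$; Proposition~\ref{prop-Kisinconstruct} for $D=D_{\cris,F}(V)$ gives $E(u)$-height $r$ for a \emph{different} object $\CM(D)$, and identifying the two already requires the $F$-crystalline condition (it is exactly the statement that the filtration conditions at the $E_i$ with $i>1$ are vacuous).

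The paper's proof does this directly: using that $\psi_\pi:\varphi^*\CM\to D_K$ is filtration-compatible (Corollary~\ref{co-comp fil}) and that $\Fil^1 D_{K,\m_i}=0$ for $i>1$ by $F$-crystallinity, one shows $\Fil^1\varphi^*\M\subset E_i\varphi^*\M$ for each $i>1$, and deduces that the map $1\otimes\varphi$ becomes an isomorphism after completing at $(E_i)$ for $i>1$. That is the missing step in your argument.
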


 \begin{proof}   Proposition 3.4 of \cite{GLS} shows that $\M$ is naturally a finite and free $\gs_F$-module
 (see also the proof of \cite[Prop. 1.6.4]{kisin4}). Factor $\wt{E}(u)$
 in $\O_{F_0}[u]$ as $\wt{E}(u) = E_1(u) \cdots E_{e_F}(u)$ with $E_1(u) = E(u)$, and for each $i$ write
 $\wgs_{E_i}$ for the completion of the localization of $\gs_F$ at the ideal $(E_i)$.
 We must prove that the injective map $1\otimes \varphi: \varphi^* \M \to \M$ has cokernel killed by a power of $E=E_1$.
 To do this, it suffices to prove that the scalar extension
 \begin{equation}
 	\xymatrix@C=35pt{
 		{\varphi_i^{\sharp}: \wgs_{E_i} \otimes_{\gs_F} \varphi^*\M}
		\ar[r]^-{1\otimes (1\otimes\varphi)} & {\wgs_{E_i}\otimes_{\gs_F} \M}
		}\label{localmap}
 \end{equation}
 of  $1\otimes \varphi$ along $\gs_F\rightarrow \wgs_{E_i}$ is an isomorphism when $i>1$.
 Writing $\CM : = \CM (D)$, we recall that the map $\psi_\pi : \varphi^*\CM \to D_K$ is compatible with filtrations
 thanks to Corollary  \ref{co-comp fil}, from which it follows that the map
	$$\xymatrix{
		{\psi_\pi|_{\varphi^*\M} : \varphi^*\M} \ar[r] & {\varphi^*\M/ \wt E(u)\varphi^*\M} \ar@{^{(}->}[r] & {D_K}}$$
is also filtration-compatible.
As $V$ is $F$-crystalline, for any $i> 1$ we have
$\Fil ^j  D_{K, \m_i}  = 0$ for all $j \geq 1$, where $\m_i$ is the maximal ideal of
$F \otimes_{\Q_p} K $ corresponding to $E_i(u)$, and it follows that
$\Fil^1 \varphi^*\M \subset E_i \varphi ^*\M$ for all $i>1$.
We then claim that for $i>1$ the map $\ol{1\otimes \varphi}:\varphi ^*\M / (E_i) \to  \M/ (E_i)$
induced from $1\otimes \varphi$ by reduction modulo $E_i$ is {\em injective}.
To see this, observe that if $x\in \varphi^*\M$ has $(1\otimes \varphi)(x)=E_i m$ for $m\in \M$,
then writing $y:=\prod _{j \not = i} E_j x$, we have
$(1\otimes\varphi)(y)  = \wt{E} m $ so that $y\in \Fil^1 \varphi^*\M$ by the very definition of the filtration on
$\varphi^*\M$.  By what we have seen above, we then have $y\in E_i \varphi^*\M$,
so since $E_i$ is coprime to $\prod_{j\neq i} E_j$, we obtain $x\in E_i\varphi^*\M$ as claimed.
Now both $\varphi^*\M $ and $\M$ are $\gs_F$-free of the same rank, so as $\ol{1\otimes \varphi}$
is injective, we see that $\Q_p \otimes_{\Z_p} \ol{1\otimes\varphi}$
is an isomorphism for $i>1$. But this map coincides with the
map $\ol{\varphi}^{\sharp}_i$ obtained from (\ref{localmap}) by reduction modulo $E_i$,
so it follows that $\varphi_i^{\sharp}$ is an isomorphism as well, as desired.
\end{proof}

\subsection{Comparing constructions}\label{subsec-comparions}

Let us first recall some standard facts on the Robba ring as in \cite{kisin2}. For finer details of the Robba ring
$\R$ and its subring $\R^b $, we refer to \S 2 (in particular \S2.3) of \cite{Kedlaya}, noting that
several different notations are commonly used (in particular, we advise the
reader that $\R ^b = \E ^\dag = \Gamma^{k((t))}_{{\rm con}}[1/p] $).
The {\em Robba ring} is defined as
$$\R := \lim_{s\to 1 ^{-}} \gO_{(s, 1)}$$
and comes equipped with a Frobenius endomorphism, which is induced by the canonical maps
$\varphi: \gO_{(s,1)} \to \gO_{(s^{1/p}, 1) }$.
Writing $\gO^b _{(s,1 )} \subset \gO _{(s,1 )}$ for the subring of functions which are bounded,
we also define the {\em bounded Robba ring}:
$$\R^b := \lim _{s \to 1 ^-} \gO^b _{(s,1 )},$$
which is naturally a Frobenius-stable subring of $\R$.
Finally, we put
$$\O_{\R^b}:= \{f = \sum _{n \in \Z} a_n u ^n \in \R^b , a_n \in \O_{F_0},  \forall n \in \Z\};$$
this is a Henselian discrete valuation ring with uniformizer $\varpi$ and residue field $k (\!(u )\!)$.  One checks that the fraction field of
$\O_{\R^b}$ is $\R^b$, justifying our notation; in particular, $\R^b$ is a field.
Note that $\gO$ is canonically a Frobenius-stable subring of $\R$.

By definition, \emph{a $\varphi$-module over $\R$} is a finite dimensional $\R$-module
$\CM$ together with a  $\varphi$-semilinear endomorphism $\varphi_\CM : \CM \to \CM$
whose linearization $ 1\otimes \varphi : \varphi ^*\CM \to \CM $ is an isomorphism.
One checks that $E(u)\in \gO$ is a unit in $\R$, so that scalar extension along $\gO\hookrightarrow\R$
gives a functor from $\varphi$-modules over $\gO$ to $\varphi$-modules over $\R$.
A $\varphi$-module $\CM$ over $\R$ is \emph{\'etale} if $\CM$ admits a basis
with the property that the corresponding matrix of $\varphi_\CM$ lies in $\GL_d (\O_{\R^b})$;
by a slight abuse of terminology, we will say that a $\varphi$-module over $\gO$
is \'etale if its scalar extension to $\R$ is.
Our main result of this subsection is the following:

\begin{theorem}\label{thm-maintech} Let $V \in \Rep$ and $D := D_{\cris, F} (V)$ the corresponding filtered $\varphi$-module.  Writing $\CM(D)$ for the $\varphi$-module over $\gO$ attached to $D$ as in $\S \ref{subsec-3.1}$, we have:
\begin{enumerate}
\item $\CM (D)$ is \'etale;
\item There exists a Kisin module $\M\in \sfi$ such that $\gO \otimes_{\gs_F} \M\simeq \CM (D)$.

\end{enumerate}
\end{theorem}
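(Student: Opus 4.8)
The strategy is to leverage the fact that the classical situation is already understood (Theorem \ref{thm-classical} and Corollary \ref{cor-classical to F}) and to transport the information across the period ring $\wt B_\alpha$. Write a superscript ``$c$'' for all data in Kisin's classical setting attached to the \emph{same} crystalline $\Q_p$-representation underlying $V$ (so $\M^c$ is the classical Kisin module over $\gs$ attached to a $G_\infty$-stable $\Z_p$-lattice $T\subseteq V$, which by Corollary \ref{cor-classical to F} is in fact finite free over $\gs_F$ of $E(u)$-height $r$, and $\CM^c(D)=\gO^c\otimes_{\gs}\M^c$ by Theorem \ref{thm-classical}(2)). First I would note that both $\CM(D)$ and $\CM^c(D)$ are functorially built out of the very same filtered $\varphi$-module $D=D_{\cris,F}(V)=D_{\cris}(V)$ (with its two a priori different filtrations, which agree on the relevant factor $D_{F_0,K}$ precisely because $V$ is $F$-crystalline). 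By Corollary \ref{co-comparison}, applied both in our setting and in the classical setting, there are Frobenius- and filtration-compatible isomorphisms
\begin{equation*}
	\wt B_\alpha\otimes_{\gO}\varphi^*\CM(D)\simeq \wt B_\alpha\otimes_{F_0}D\simeq \wt B_\alpha\otimes_{\gO^c}\varphi^*\CM^c(D),
\end{equation*}
where I use that $\wt B_\alpha$ contains both $\gO_\alpha$ and $\gO^c_\alpha$ (the classical analogue lives inside the same $\wt B_\alpha\subset B^+_{\cris,F}$ via $\gs\subset\gs_F\subset W(R)_F$). Composing and untwisting the $\varphi$-pullback (using that $1\otimes\varphi$ is an isomorphism after inverting $E(u)$, which is a unit in $\wt B_\alpha[1/p]$ away from $\Fil^1$, but more simply after base change since both modules have the same $E(u)$-height), one obtains an isomorphism of $\varphi$-modules $\wt B_\alpha\otimes_{\gO}\CM(D)\simeq \wt B_\alpha\otimes_{\gO^c}\CM^c(D)$.

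Next I would descend this isomorphism from $\wt B_\alpha$ down to $\wt B^+_{\rig,F}$. Here the key input is Lemma \ref{lem-ringresults}, which gives $\gO\subset\wt B^+_{\rig,F}\subset\wt B_\alpha$ (and likewise $\gO^c\subset\wt B^+_{\rig,F}$), together with Lemma \ref{lem-inBrig}, which is precisely the tool that lets one move a $\varphi^m$-divisibility statement from $B^+_{\tmax,F}$ back after dividing by powers of $E(u)$. Concretely: choosing bases of $\CM(D)$ and $\CM^c(D)$, the transition matrix over $\wt B_\alpha$ has entries which I would show, using the height-$r$ property of both modules and $\varphi$-equivariance, satisfy the hypothesis of Lemma \ref{lem-inBrig} after applying suitable powers of $\varphi$; iterating $\varphi^n(B^+_{\tmax,F})$ and intersecting gives entries in $\wt B^+_{\rig,F}=\bigcap_n\varphi^n(B^+_{\tmax,F})$ (intersected with the $F$-version). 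The upshot is a $\varphi$-module isomorphism
\begin{equation*}
	\wt B^+_{\rig,F}\otimes_{\gO}\CM(D)\simeq \wt B^+_{\rig,F}\otimes_{\gO^c}\CM^c(D).
\end{equation*}
Since $\CM^c(D)=\gO^c\otimes_{\gs}\M^c$ with $\M^c$ a genuine (classical) Kisin module, the right-hand side is visibly étale as a $\varphi$-module over $\wt B^+_{\rig,F}$ (its slopes are all zero, as this is the content of the classical Theorem \ref{thm-classical}). Now I invoke Kedlaya's slope theory: the slope filtration and purity are insensitive to base change from the Robba ring $\R$ to $\wt B^+_{\rig,F}$ (this is exactly the remark in the introduction, and is Kisin's Lemma 1.1.4 extended to our setting as in the proof of Proposition \ref{prop-Kisinconstruct}). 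Therefore $\CM(D)$, whose base change to $\wt B^+_{\rig,F}$ agrees with the pure-slope-zero module coming from $\CM^c(D)$, is itself pure of slope zero over $\R$; that is, $\CM(D)$ is étale. This proves part (1).

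For part (2), with étaleness of $\CM(D)$ in hand I would follow Kisin's argument (Corollary 1.3.15 of \cite{kisin2}, or the analogous step in \cite{Kisin-Ren} \S2) essentially verbatim. The point is that an étale $\varphi$-module $\CM$ over $\gO$ of $E(u)$-height $r$ descends to a finite free $\gs_F$-module: one intersects $\CM$ with its image in the étale $\varphi$-module $\O_\E\otimes_{\gs_F[1/u]}(\gO[1/u]\otimes_{\gO}\CM)$ over $\O_\E$ (using that $\gO\cap\O_\E=\gs_F[1/p]$ inside appropriate ambient rings, together with $\gO^b$, $\R^b$, $\O_{\R^b}$ machinery recalled in \S\ref{subsec-comparions}), producing $\M$ with $\gO\otimes_{\gs_F}\M\simeq\CM(D)$ and $\M$ of $E(u)$-height $r$; finite-freeness follows from étaleness as in Kisin's proof. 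The main obstacle in the whole argument is the descent from $\wt B_\alpha$ to $\wt B^+_{\rig,F}$: this is the step where we genuinely need Lemma \ref{lem-inBrig} and must be careful that the $\varphi$-equivariance combined with the height-$r$ hypothesis really does force the transition-matrix entries into $\bigcap_n\varphi^n(B^+_{\tmax,F})$; everything else is either a formal consequence of Corollary \ref{co-comparison} or a direct transplant of Kisin's arguments, which we have already checked go through in our generality.
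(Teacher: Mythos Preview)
Your overall strategy matches the paper's exactly: compare with the classical $\CM^c(D)$ over the period ring $\wt B_\alpha$ via Corollary~\ref{co-comparison}, descend to $\wt B^+_{\rig,F}$ using Lemma~\ref{lem-inBrig}, invoke Kedlaya's slope theory to transport purity of slope zero, and then run Kisin's descent argument for part~(2).

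The one step where your sketch is underspecified, and where the paper does real work, is the passage from the isomorphism
\[
\wt\xi:\ \wt B_\alpha\otimes_{\gO}\varphi^*\CM(D)\ \simeq\ \wt B_\alpha\otimes_{\gO^c}\varphi^*\CM^c(D)
\]
to an isomorphism of $\CM(D)$ with $\CM^c(D)$ over $\wt B_\alpha$. Your proposed ``untwisting'' via $1\otimes\varphi$ being an isomorphism after inverting $E(u)$ does not work as stated: $E(u)$ is \emph{not} a unit in $\wt B_\alpha$ (it generates $\Fil^1\wt B_\alpha$), so inverting it would take you outside the ring over which you need the comparison. Nor does ``same $E(u)$-height'' alone let you descend an isomorphism from $\varphi^*\CM$ to $\CM$. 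The paper's Lemma~\ref{lem-iso-crystal} handles this by an explicit matrix computation that uses \emph{both} the Frobenius-compatibility \emph{and} the filtration-compatibility of $\wt\xi$: writing $A,B$ (resp.\ $A^c,B^c$) for the Frobenius matrix and its ``inverse up to $E(u)^r$'' on each side, filtration-compatibility gives $XB=B^cY$ with $Y$ invertible, and combining with $X\varphi(A)=\varphi(A^c)\varphi(X)$ yields $A^c\varphi(\mu^rY)=\mu^rY A$ for $\mu=E(u^c)/E(u)\in W(R)_F^\times$. It is this $\mu^rY$ that defines the map $\eta$ on $\CM$ itself. Once you have $\eta$, your description of the descent to $\wt B^+_{\rig,F}$ (the paper's Corollary~\ref{co-descent}, by induction on $m$ using $E(u)^rZ=A^c\varphi(Z)B$ and Lemma~\ref{lem-inBrig}) and of the remaining steps is accurate.
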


First note that there is a canonical inclusion $\gs_F\hookrightarrow \O_{\R^b}$,
so that (2) implies (1). It follows that the above theorem is true in the classical setting of Kisin
by Theorem \ref{thm-classical}. In what follows, we will reduce the general case of Theorem \ref{thm-maintech}
to the known instance of it in the classical setting.  To ease notation,
we will adorn various objects with a superscript of ``$c$'' to signify that they are objects
in the classical setting.  We likewise abbreviate $\CM := \CM(D)$ and $\CM ^c := \CM ^c (D)$.
We note that $\gO^c_\alpha  \subset \wt{B}_\alpha$, as
$E(u ^c)$ is another generator of $\Fil ^1 W(R)$ so $E(u ^c) = \mu E(u)$ for some
$\mu \in W(R) ^\times$ thanks to Lemma \ref{lem-generater}.

By Corollary \ref{co-comparison}, the  $\wt{B}_\alpha$-linear isomorphism $\xi'_{\wt{B}_\alpha} : \wt B_\alpha  \otimes_{\gO} \varphi ^*\CM \simeq \wt B_\alpha \otimes _{F_0} D $ is compatible with Frobenius and filtration.
The key point is that the Frobenius and filtration on $\wt B_\alpha \otimes _{F_0}D$ are {\em canonical}
(recall that the filtration on $\wt B_\alpha\otimes _{F_0} D$ is induced by the inclusion $\wt B_\alpha\otimes_{F_0} D  \inj B^+_\dr \otimes_ K D_{F_0, K}$) and are independent of the choice of $\varphi (u)= f(u)$.
We therefore have a natural isomorphism
\begin{equation}\label{eqn-eta}
\wt \xi:\ \ \wt B_\alpha\otimes_\gO \varphi ^* \CM  \simeq \wt B_\alpha  \otimes_{\gO^c} \varphi ^* \CM ^c,
\end{equation}
that is compatible Frobenius and filtration.
\begin{lemma}\label{lem-iso-crystal}
There is a $\wt B_\alpha $-linear and Frobenius-compatible isomorphism
$$\eta : \wt B_\alpha \otimes _{\gO} \CM \simeq  \wt B_\alpha  \otimes_{\gO ^c} \CM^c.$$
\end{lemma}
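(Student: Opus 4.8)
The plan is to descend the isomorphism $\wt\xi$ of \eqref{eqn-eta} from the level of $\varphi^*\CM$ down to $\CM$ itself. Recall that $\wt\xi$ is compatible with both Frobenius and filtration, being built from $\xi'_{\wt B_\alpha}$ and its classical counterpart (Corollary~\ref{co-comparison}) together with the fact that the Frobenius and filtration on $\wt B_\alpha\otimes_{F_0}D$ are intrinsic to $D$, hence independent of the choice of $f(u)$. The natural candidate is
$$\eta \ :=\ (1\otimes\varphi_{\CM^c})\circ\wt\xi\circ(1\otimes\varphi_{\CM})^{-1}.$$
Since the cokernel of $1\otimes\varphi_{\CM}\colon\varphi^*\CM\to\CM$ is killed by $E(u)^r$, the map $1\otimes\varphi_{\CM}$ becomes an isomorphism after inverting $E(u)$, so $\eta$ is at least a well-defined $\wt B_\alpha[1/E(u)]$-linear isomorphism $\wt B_\alpha[1/E(u)]\otimes_\gO\CM\simeq\wt B_\alpha[1/E(u)]\otimes_{\gO^c}\CM^c$. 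That $\eta$ commutes with Frobenius is a short matrix computation: writing $A,A^c,P$ for the matrices of $\varphi_{\CM},\varphi_{\CM^c}$ and $\wt\xi$ in suitable bases, Frobenius-compatibility of $\wt\xi$ says $P\varphi(A)=\varphi(A^c)\varphi(P)$, and then the matrix $A^cPA^{-1}$ of $\eta$ satisfies $(A^cPA^{-1})A=A^c\varphi(A^cPA^{-1})$, which is precisely the Frobenius-compatibility of $\eta$.

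The crux is to show that $\eta$ has no pole along $E(u)$, i.e.\ that it restricts to an isomorphism $\wt B_\alpha\otimes_\gO\CM\simeq\wt B_\alpha\otimes_{\gO^c}\CM^c$; here it is the filtration-compatibility of $\wt\xi$ that does the work. Let $m\in\wt B_\alpha\otimes_\gO\CM$. Because $\CM$ has $E(u)$-height $r$, scalar extension gives $E(u)^r\cdot(\wt B_\alpha\otimes_\gO\CM)\subseteq(1\otimes\varphi_{\CM})(\wt B_\alpha\otimes_\gO\varphi^*\CM)$, so we may write $E(u)^rm=(1\otimes\varphi_{\CM})(x)$ with $x\in\wt B_\alpha\otimes_\gO\varphi^*\CM$; by the very definition \eqref{eqn-definefil} of the filtration, $x\in\Fil^r(\wt B_\alpha\otimes_\gO\varphi^*\CM)$. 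Filtration-compatibility of $\wt\xi$ then gives $\wt\xi(x)\in\Fil^r(\wt B_\alpha\otimes_{\gO^c}\varphi^*\CM^c)$, whence $(1\otimes\varphi_{\CM^c})(\wt\xi(x))\in E(u)^r(\wt B_\alpha\otimes_{\gO^c}\CM^c)$; dividing by $E(u)^r$ yields $\eta(m)=E(u)^{-r}(1\otimes\varphi_{\CM^c})(\wt\xi(x))\in\wt B_\alpha\otimes_{\gO^c}\CM^c$. The symmetric argument applied to $\eta^{-1}=(1\otimes\varphi_{\CM})\circ\wt\xi^{-1}\circ(1\otimes\varphi_{\CM^c})^{-1}$ shows $\eta^{-1}(\wt B_\alpha\otimes_{\gO^c}\CM^c)\subseteq\wt B_\alpha\otimes_\gO\CM$, so $\eta$ and $\eta^{-1}$ restrict to mutually inverse $\wt B_\alpha$-linear maps, which is the assertion.

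The step requiring the most care --- and the only place where anything beyond formal manipulation enters --- is the bookkeeping around the scalar extension $\gO\hookrightarrow\wt B_\alpha$: one must check that $1\otimes\varphi_{\CM}$ remains injective with cokernel killed by $E(u)^r$ after base change (routine, since $\wt B_\alpha\subset B^{+}_{\dR}$ is an integral domain on which $E(u)$ is a nonzerodivisor by Lemma~\ref{lem-generater}, and $\gO$ is a B\'ezout domain), and, more importantly, that the filtration on $\wt B_\alpha\otimes_\gO\varphi^*\CM$ appearing in \eqref{eqn-eta} really coincides with the one defined over $\wt B_\alpha$ by \eqref{eqn-definefil}, so that the identity $\Fil^r(\wt B_\alpha\otimes_\gO\varphi^*\CM)=\{x:(1\otimes\varphi_{\CM})(x)\in E(u)^r(\wt B_\alpha\otimes_\gO\CM)\}$ used above is legitimate; this follows from Lemma~\ref{lem-fil}, Corollary~\ref{co-comparison}, and the compatibility of $\wgs_0\subset B^{+}_{\dR}$ with filtrations, but should be spelled out.
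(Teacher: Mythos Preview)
Your proof is correct and is essentially the same as the paper's. Unwinding your composition $\eta=(1\otimes\varphi_{\CM^c})\circ\wt\xi\circ(1\otimes\varphi_{\CM})^{-1}$ in coordinates gives exactly the matrix $A^cXA^{-1}$, which equals the paper's matrix $\mu^rY$ (since $Y=(B^c)^{-1}XB$ with $B=E(u)^rA^{-1}$ and $B^c=E(u^c)^r(A^c)^{-1}$); your pole-free argument via the filtration is precisely the paper's observation that $XB=B^cY$ with $Y\in\GL_d(\wt B_\alpha)$, only phrased element-wise rather than matricially. The one small point you leave implicit is that on the classical side $\Fil^r$ is defined via $E(u^c)$ rather than $E(u)$, so one must note that $E(u^c)=\mu E(u)$ with $\mu\in W(R)_F^\times$ (Lemma~\ref{lem-generater}) before dividing out $E(u)^r$; but this is harmless.
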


\begin{proof}
Choose an $\gO$-basis $e_1 , \dots ,e _d$ of $\CM$, and let  $A \in \t{M}_d (\gO)$
be the corresponding matrix of Frobenius, so
$(\varphi(e_1), \dots, \varphi(e_d))= (e_1, \dots , e_d)A$.
We write $\fe _i :  = 1 \otimes e_i \in \varphi^* \CM$ for the induced basis of
$\varphi^*\CM$.
Using the definition of $\Fil ^i \varphi^* \CM$, it is not difficult to see that there
is a matrix $B\in  \t{M}_d(\gO)$ satisfying $AB = BA = E(u)^r I_d$
and with the property that $\Fil^r \varphi ^* \CM$ is generated by
$(\alpha _1,\dots,  \alpha _d):= (\fe_1 , \dots , \fe _d)B$.
As promised, we denote by $\fe^c_i$, $A^c$, {\em etc.} the objects in the classical setting
corresponding to a choice $e_1^c , \dots ,e^c_d$ of $\gO^c$-basis of $\CM^c$.
Let $X \in \GL_d (\wt B_\alpha)$ be the matrix determined by the requirement
$ \wt \xi (\fe_1, \dots , \fe_d) = (\fe_1^c , \dots, \fe_d ^c) X.$
As $\wt \xi$ is compatible with both Frobenius and filtrations, we find that
\begin{equation*}
	\wt \xi \circ  \varphi(\fe_1 , \dots, \fe_d) = (\fe_1^c,  \dots, \fe_d ^c ) X \varphi (A)= \varphi\circ \wt \xi(\fe_1, \dots, \fe_d)= (\fe_1 ^c \dots, \fe_d ^c) \varphi(A^c)   \varphi (X)
\end{equation*}
and there exists a matrix $Y \in \GL_d (\wt B_\alpha)$ with
\begin{equation*}
	\wt \xi(\alpha _1 , \dots, \alpha _d) = (\alpha _1 ^c , \dots, \alpha _d ^c) Y.
\end{equation*}
We conclude that  $X\varphi(A) = \varphi (A^c) \varphi(X)$ and $XB = B^c Y$. Since $\wt B_\alpha$ is an integral domain, the facts that $B = E(u)^r A^{-1}$ and $B^c = E(u^c )^r {(A^c)}^{-1}$ imply that $A^c X E(u)^r= E(u ^c) ^r  Y A$.
Due to Lemma \ref{lem-generater}, both $E(u)$ and $E(u ^c)$ are generators of $\Fil ^1 W(R)_F$,  so
$\mu: = E(u ^c)/ E(u)$ is a unit in $W(R)_F$. We therefore have the relation
$A^c X= \mu^r YA$. Combining this with the equality $X\varphi (A) = \varphi(A^c) \varphi (X)$
yields $X = \varphi (\mu^r Y)$, and we deduce $A^c \varphi (\mu^r Y) =  \mu^r Y A$.
Defining a $\wt B_\alpha $-linear map
$$\xymatrix@1{
	{\eta: \wt B_\alpha \otimes_\gO \CM}\ar[r] & {\wt B_\alpha \otimes _{\gO^c} \CM^c}
	}$$
by the requirement $(\eta(e_1), \dots , \eta(e_d))= (e^c_1, \dots, e^c _d)\mu^r Y$,
we then see that $\eta$ provided the claimed Frobenius-compatible isomorphism.
\end{proof}


Recall that Lemma \ref{lem-ringresults} gives inclusions $\gO \subset \wt B^+_{\rig, F} \subset \wt B_\alpha $.

\begin{co}\label{co-descent}
The isomorphism $\eta$
of Lemma $\ref{lem-iso-crystal}$ descends to a $\wt B^+_{\rig, F}$-linear and Frobenius-compatible isomorphism
$$\eta_\rig:  \wt B^{+}_{\rig, F } \otimes _{\gO} \CM \simeq  \wt B^{+}_{\rig, F }  \otimes_{\gO ^c}  \CM ^c .$$
\end{co}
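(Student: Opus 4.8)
The plan is to descend the explicit transition matrix of $\eta$. Recall from the proof of Lemma~\ref{lem-iso-crystal} that, with respect to the chosen $\gO$-basis $e_1,\dots,e_d$ of $\CM$ and $\gO^c$-basis $e_1^c,\dots,e_d^c$ of $\CM^c$, the isomorphism $\eta$ is given by a matrix $Z:=\mu^r Y\in\GL_d(\wt B_\alpha)$ satisfying the relation $A^c\varphi(Z)=ZA$, where $A\in\Md(\gO)$ and $A^c\in\Md(\gO^c)$ are the matrices of Frobenius on $\CM$ and $\CM^c$. Since $\eta$ is already an isomorphism of $\varphi$-modules over $\wt B_\alpha$, it will restrict to the desired $\wt B^+_{\rig,F}$-linear, Frobenius-compatible isomorphism $\eta_\rig$ as soon as we know that both $Z$ and $Z^{-1}$ have all of their entries in $\wt B^+_{\rig,F}$. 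Thus the whole problem reduces to this entry-wise integrality statement; and since $Z^{-1}$ satisfies the symmetric relation $A\varphi(Z^{-1})=Z^{-1}A^c$, it suffices to treat $Z$.

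Recall that $\wt B^+_{\rig,F}=\bigcap_{m\ge1}\varphi^m(B^+_{\tmax,F})$ and that, by Lemma~\ref{lem-Bmaxf}, one has $\wt B_\alpha\subset B^+_{\tmax,F}$ (a power series in $E(u)^p/\varpi$ converges faster than one in $E(u)/\varpi$). Thus $Z$ already has entries in $B^+_{\tmax,F}$, and it is enough to prove $Z\in\Md(\varphi^m(B^+_{\tmax,F}))$ for every $m\ge1$. The key input is that both $\CM$ and $\CM^c$ are of $E(u)$-height $r$ --- the latter by Theorem~\ref{thm-classical}(2) together with Corollary~\ref{cor-classical to F} --- so we may write $A^{-1}=E(u)^{-r}B$ with $B\in\Md(\gO)$ and $(A^c)^{-1}=E(u^c)^{-r}B^c$ with $B^c\in\Md(\gO^c)$. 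Iterating the relation in the form $Z=A^c\varphi(Z)A^{-1}$ a total of $m$ times produces the identity
\begin{equation*}
Z\cdot\prod_{j=0}^{m-1}\varphi^j(E(u))^r
=\Bigl(\prod_{j=0}^{m-1}\varphi^j(A^c)\Bigr)\cdot\varphi^m(Z)\cdot\bigl(\varphi^{m-1}(B)\cdots\varphi(B)B\bigr).
\end{equation*}
Every factor on the right lies in $\varphi^m(B^+_{\tmax,F})$: indeed $\prod_j\varphi^j(A^c)$ has entries in $\gO^c\subset\wt B^+_{\rig}\subset\varphi^m(B^+_{\tmax,F})$, the product $\varphi^{m-1}(B)\cdots B$ has entries in $\gO\subset\wt B^+_{\rig,F}\subset\varphi^m(B^+_{\tmax,F})$ by Lemma~\ref{lem-ringresults}, and $\varphi^m(Z)$ has entries in $\varphi^m(\wt B_\alpha)\subset\varphi^m(B^+_{\tmax,F})$. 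Hence for each entry $z$ of $Z$ we obtain $z\cdot\prod_{j=0}^{m-1}\varphi^j(E(u))^r=\varphi^m(w)$ for some $w\in B^+_{\tmax,F}$.

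What remains is to peel off the scalar $\prod_{j=0}^{m-1}\varphi^j(E(u))^r$, and this is exactly what Lemma~\ref{lem-inBrig} is designed for. I would argue by induction on $m$: the case $m=1$ is immediate, since the identity then reads $zE(u)^r=\varphi(w)$ and Lemma~\ref{lem-inBrig} yields $z\in\varphi(B^+_{\tmax,F})$; for the inductive step one writes $\prod_{j=0}^{m-1}\varphi^j(E(u))^r=E(u)^r\cdot\varphi\bigl(\prod_{j=0}^{m-2}\varphi^j(E(u))^r\bigr)$, applies Lemma~\ref{lem-inBrig} to strip the single factor $E(u)^r$, and then --- using that $B^+_{\tmax,F}$ is an integral domain on which $\varphi$ is injective, together with the $m=1$ case to see that $z\in\varphi(B^+_{\tmax,F})$ --- divides through by $\varphi$ to land back in the statement one level down. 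Carrying this out gives $z\in\varphi^m(B^+_{\tmax,F})$ for all $m$, hence $z\in\wt B^+_{\rig,F}$; the same reasoning applied via $A\varphi(Z^{-1})=Z^{-1}A^c$ shows $Z^{-1}\in\Md(\wt B^+_{\rig,F})$, and $\eta$ descends as claimed. The step I expect to be the genuine obstacle is precisely this peeling: keeping track of the many factors $\varphi^j(E(u))$ against the single $E(u)^r$ and single $\varphi^m$ that appear in Lemma~\ref{lem-inBrig} is delicate, and making the induction close cleanly --- rather than leaving a residual factor of $\varphi^j(E(u))$ stuck to $z$ --- may require a small sharpening of Lemma~\ref{lem-inBrig} or an auxiliary divisibility fact in $B^+_{\tmax,F}$.
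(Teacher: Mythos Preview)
Your overall strategy—reduce to showing that the transition matrix $Z=\mu^rY$ and its inverse have entries in $\wt B^+_{\rig,F}$, and prove $Z\in\Md(\varphi^m(B^+_{\tmax,F}))$ for all $m$ by induction using Lemma~\ref{lem-inBrig}—is exactly the paper's approach. The difference lies in how the induction is organized, and this difference is precisely what creates the peeling obstacle you correctly flag.

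The paper avoids the obstacle by \emph{not} iterating the Frobenius relation. It inducts on $m$ using only the single relation $E(u)^rZ=A^c\varphi(Z)B$ at each step: assuming $Z=\varphi^m(Z_m)$ with $Z_m\in\Md(B^+_{\tmax,F})$, one has $\varphi(Z)=\varphi^{m+1}(Z_m)$, and since $A^c\in\Md(\gO^c)$ and $B\in\Md(\gO)$ both lie in $\Md(\wt B^+_{\rig,F})$ by Lemma~\ref{lem-ringresults}, one may write $A^c=\varphi^{m+1}(A_{m+1})$ and $B=\varphi^{m+1}(B_{m+1})$ with $A_{m+1},B_{m+1}\in\Md(B^+_{\tmax,F})$. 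This yields
\[
E(u)^rZ=\varphi^{m+1}(A_{m+1}Z_mB_{m+1}),
\]
which has a single factor $E(u)^r$ on the left and a single $\varphi^{m+1}$ on the right—exactly the shape of Lemma~\ref{lem-inBrig}. One application closes the induction; the product $\prod_j\varphi^j(E(u))^r$ never appears.

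Your iterated identity is correct, but your peeling recursion does not close as written. After stripping $E(u)^r$ and invoking the $m=1$ case to write $z=\varphi(z_1)$, you arrive at $z_1\cdot\prod_{j=0}^{m-2}\varphi^j(E(u))^r=\varphi^{m-1}(w')$ and wish to apply the inductive hypothesis to $z_1$. But $z_1$ is not an entry of $Z$, so the ``$m=1$ case'' (which used the specific relation $E(u)^rZ=A^c\varphi(Z)B$) is not available at the next level of the recursion; and if instead you try to prove the general implication ``$z\prod_{j=0}^{m-1}\varphi^j(E(u))^r\in\varphi^m(B^+_{\tmax,F})\Rightarrow z\in\varphi^m(B^+_{\tmax,F})$'' for arbitrary $z$, you no longer have access to $zE(u)^r\in\varphi(B^+_{\tmax,F})$ to get started. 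The fix is simply to abandon the iteration and run the induction on the un-iterated relation as above; no sharpening of Lemma~\ref{lem-inBrig} is needed.
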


\begin{proof}
We use the notation of the proof of Lemma \ref{lem-iso-crystal}.
Set $ Z := \mu ^rY \in \GL_d (\wt B_\alpha)$, so that $\eta (e_1, \dots , e_d) = (e_1, \dots, e_d) Z$,
and note that $A^c \varphi (Z) = ZA$ as $\eta$ is compatible with Frobenius.
To prove the corollary, it suffices to show that both $Z$ and $Z ^{-1}$ have entries
in $\wt B_{\rig, F}^+ $. We will show that $Z \in \t{M}_d (\wt B_{\rig , F}^+)$; the proof
of the corresponding fact for $Z^{-1}$ is similar and is left to the reader.
By the definition of $\wt B_{\rig , F}^+$, it
suffices to show that for any $m$, there exists $Z_m \in \t{M}_d (B^+_{\tmax , F})$ with $\varphi ^m (Z_m) = Z$,
which we prove by induction on $m$.  The base case $m = 0$ is obvious, as $\wt B_\alpha  \subset B^+_{\max, F}$. Now suppose that $Z_m$ exists, and note that  from the equality
$A^c \varphi (Z) = Z A$ we obtain $E(u)^r Z = A^c \varphi (Z) B$.
We may write $A^c = \varphi ^{m+1}(A_{m+1})$ and $B = \varphi^{m+1} (B_{m+1})$ thanks to Lemma \ref{lem-ringresults},
and we then have $E(u)^r Z = \varphi ^{m+1} (A_{m+1} Z _m B_{m+1})$.
Finally, Lemma \ref{lem-inBrig} implies the existence of $Z_{m+1}$.
\end{proof}

We can now prove Theorem \ref{thm-maintech}.

\begin{proof}[Proof of Theorem $\ref{thm-maintech}$]
We first prove that $\CM$ is \'etale, and to do so we will freely use the results
and notation of \cite{Kedlaya}.  By the main theorem of \cite{Kedlaya}, $\CM$ is \'etale if and only if $\CM$ is  \emph{pure of slope 0}. Hence $\CM^c$ is pure of slope $0$ thanks Theorem \ref{thm-classical}
and our remarks immediately following Theorem \ref{thm-maintech}.
Since the slope filtration of $\CM$ does not change after tensoring with the ring
$\Gamma ^{\rm alg}_{\rm an, con}$ constructed in \cite{Kedlaya}, it is enough to show that
\begin{equation*}
	\Gamma ^{\rm alg}_{\rm an, con} \otimes _{\gO} \CM \simeq \Gamma ^{\rm alg}_{\rm an, con} \otimes _{\gO^c} \CM^c.
\end{equation*}
as $\varphi$-modules over $\Gamma ^{\rm alg}_{\rm an, con}$, and to do this
it is enough thanks to Lemma \ref{lem-iso-crystal} to prove
that $\wt B^+_{\rig, F} \subset  \Gamma ^{\rm alg}_{\rm an, con}$.
But this follows from
Berger's construction \cite[\S 2]{Ber0} (strictly speaking, \cite[\S2]{Ber0} deals only with
the case $F= \Q_p$, but see \cite[\S3]{Ber-PGMLAV} for the general case), as he proves that
$\wt B^+_{\rig, F} \subset B^\dag_{\rig, F } = \Gamma ^{\rm alg}_{\rm an, con}$.
It follows that $\CM$ is \'etale.

Now the proof that $\CM:=\CM(D)$ admits a descent to a Kisin module $\M$ is exactly the same as the proof of
Lemma 1.3.13 in \cite{kisin2}, so we just sketch the highlights.
As $\CM$ is \'etale, there exists a finite free $\O_{\R^b}$-module $\CN$ with Frobenius endomorphism $\varphi_\CN$
satisfying
\begin{equation}
	\R \otimes_{\O_{\R^b}} \CN \simeq \R \otimes_\gO \CM=:\CM_\R\label{keyisoms}
\end{equation}
Proposition 6.5 in \cite{Kedlaya} shows that it is possible to select an $\R$-basis of $\CM_\R$
whose $\R^b$-span is exactly $\CN[1/p]$ and whose $\gO$-span is $\CM$ via the identifications (\ref{keyisoms}).
Define $\CM ^b\subseteq \CM$ to be the $\gs _F[1/p]$-span of this basis.
Since $\gs_F [1/ p] = \R^b \cap \gO$, we have the intrinsic description
$\CM ^b= \CM \cap \CN[1/p]$;  in particular, $\CM ^b$ is $\varphi$-stable and of $E(u)$-height $r$.
Let $\M':  = \CM ^b \cap \CN$ and $\M : = (\O_{\R^b } \otimes_{\gs_F} \M ') \cap \M'[1/p] \subset \CN [1/p]$. Then $\M$ is a finite and $\varphi$-stable $\gs_F$-submodule of $\CN [1/p]$. It follows from the structure theorem of finite
$\gs_F$-modules \cite[Proposition 1.2.4]{fo4} that $\M$ is in fact finite and {\em free} over $\gs_F$.
To see that $\M$ has $E(u)$-height $r$, it suffices to check that $\det (\varphi_\M)=E(u)^s w$ for some
$w \in \gs_F^\times$.
But  $\M [1/ p] = \CM^b$ and $\CM^b$ is of finite  $E(u)$-height, so $\det (\varphi_\M) = p ^m E(u)^s w $ for some $w \in \gs _F ^\times$; as $\CM$ is pure of slope $0$ (equivalently, $\det (\varphi _\M)\in \O_{\R^b}^{\times}$),
we must in fact have $m=0$.
\end{proof}

\subsection{Full-faithfulness of restriction}\label{subsec-mainresults}

Let $V$ be an $F$-crystalline representation of $G=G_K$ with Hodge--Tate weights in $\{0,\ldots,r\}$,
and let $\M$ be a Kisin module associated to $V$ by Theorem  \ref{thm-maintech} (2).

\begin{prop} \label{prop-Ginftycompatible}
	There exists a natural $\O_F$-linear injection
	$$\iota: T_{\gs_F}(\M) \inj V \simeq V_{\cris, F}(D)$$
	that is moreover $G_{\ul{\pi}}$-equivariant. In particular, $V_{\gs} (\M) \simeq V$
	as $F[G_{\ul{\pi}}]$-modules.
\end{prop}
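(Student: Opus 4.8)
The strategy is the standard one from Kisin's paper, adapted to our setting using the comparison isomorphisms established in \S\ref{subsec-comparions}. First, I would recall that $\M$ is finite free over $\gs_F$ of $E(u)$-height $r$, so by Proposition \ref{lem-Tgsbasic} we have $T_{\gs_F}(\M) = \Hom_{\gs_F,\varphi}(\M, W(R)_F)$, an $\O_F[G_{\ul{\pi}}]$-module of rank $d = \rk_{\gs_F}\M = \dim_F V$. The goal is to produce a $\varphi$- and $G_{\ul{\pi}}$-equivariant, filtration-compatible pairing that realizes each element of $T_{\gs_F}(\M)$ as an element of $V_{\cris,F}(D) = \Hom_{F_0,\varphi}(D, B^+_{\cris,F}) \cap \Hom_{K,\Fil^\bullet}(D_{F_0,K}, B^+_{\cris}\otimes_{K_0}K)$.

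The key step is to construct, from the isomorphism $\xi'_{\wt B_\alpha}: \wt B_\alpha \otimes_\gO \varphi^*\CM \simeq \wt B_\alpha \otimes_{F_0} D$ of Corollary \ref{co-comparison} (compatible with Frobenius and filtrations), a canonical $W(R)_F$-linear, $\varphi$-equivariant embedding $\M \inj W(R)_F \otimes_{F_0} D$ which is also compatible with filtrations after inverting $p$ and extending scalars to $B^+_{\dR}$. Concretely: we have $\M \subset \CM := \gO\otimes_{\gs_F}\M \subset \CM[1/\lambda]$, and the composite $\M \hookrightarrow \CM \xrightarrow{\ 1\otimes\varphi_\CM^{-1}\ } \varphi^*\CM[1/\lambda]$ followed by $\xi'_{\wt B_\alpha}$ lands (after checking denominators, using $E(u)^r$-height and that $E(u)$ generates $\Fil^1 W(R)_F$) in $W(R)_F \otimes_{F_0} D$; I would need the étaleness of $\CM$ from Theorem \ref{thm-maintech}(1) together with the structure of $\gs_F^{\ur} = W(R)_F \cap \O_{\wh\E^{\ur}}$ to see that the image actually lies in $W(R)_F \otimes_{F_0} D$ rather than just $B^+_{\cris,F}\otimes_{F_0}D$. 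This gives a map $\M \to W(R)_F \otimes_{F_0} D$; dualizing, any $h \in T_{\gs_F}(\M) = \Hom_{\gs_F,\varphi}(\M, W(R)_F)$ combined with this map produces an element of $(W(R)_F \otimes_{F_0} D)^\vee$-valued functional, i.e.\ (after the canonical identification and passing to $D$'s dual role) an $F_0$-linear $\varphi$-compatible map $D \to B^+_{\cris,F}$. One checks $G_{\ul{\pi}}$-equivariance: $G_{\ul{\pi}}$ acts trivially on $u \in \gs_F$ (hence on $\M$ and on $T_{\gs_F}(\M)$), acts on $W(R)_F$ and $D$ in the natural way, and the comparison isomorphism $\xi'_{\wt B_\alpha}$ is built from canonical data, so it is automatically $G_{\ul{\pi}}$-equivariant.

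It then remains to verify that the resulting map $\iota: T_{\gs_F}(\M) \to \Hom_{F_0,\varphi}(D, B^+_{\cris,F})$ actually factors through $V_{\cris,F}(D)$, i.e.\ that the filtration-compatibility condition $(\iota(h)\otimes 1)(\Fil^i D_{F_0,K}) \subset \Fil^i(B^+_{\cris}\otimes_{K_0}K)$ holds; this is exactly where Corollary \ref{co-comparison}'s filtration-compatibility of $\xi'_{\wt B_\alpha}$ is used, passing through $\wgs_0 \subset B^+_{\dR}$ as in Lemma \ref{lem-fil}. Finally, to see that $\iota$ is injective with the right rank --- hence that $V_{\gs}(\M) = T_{\gs_F}(\M)[1/p] \simeq V$ --- I would argue that $\iota$ is injective because $T_{\gs_F}(\M) \inj T_{\gs_F}(\M) \otimes_{\O_F} W(R)_F$ (faithful flatness / the comparison $T_{\gs}(\M)\otimes_{\O_F}\gs_F^{\ur} \simeq \M^\vee \otimes \gs_F^{\ur}$ from Fontaine's formalism), and then a dimension count: $\dim_F V_{\gs}(\M) = d = \dim_F V$ and $V_{\gs}(\M)\subseteq V$ forces equality.

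\textbf{Main obstacle.} The crux is the construction and the well-definedness of the embedding $\M \inj W(R)_F \otimes_{F_0} D$ --- specifically, controlling the $\lambda$- and $p$-denominators so that the image lands in the \emph{integral} ring $W(R)_F \otimes_{F_0} D$ (not merely $\wt B_\alpha \otimes_{F_0} D$ or $B^+_{\cris,F}\otimes_{F_0} D$), and then checking $\varphi$-equivariance on the nose. In Kisin's classical argument this uses the interplay of $\CM$, $\varphi^*\CM$, the filtration \eqref{eqn-definefil}, and the ring $\gs^{\ur}$; here the absence of an $N_\nabla$-structure means we must lean entirely on the comparison isomorphism $\xi'_{\wt B_\alpha}$ and Lemma \ref{lem-inBrig}-type descent arguments rather than on a monodromy operator. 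The filtration-compatibility of $\iota$, while conceptually a direct consequence of Corollary \ref{co-comparison}, requires care in matching the subspace filtration on $\wgs_0 \otimes_K D_{F_0,K}$ with the $E(u)$-adic filtration coming from $\varphi^*\CM$, exactly as in Lemma \ref{lem-fil}.
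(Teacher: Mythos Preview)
Your proposal has the right ingredients---the isomorphism $\xi_\alpha$ of Lemma~\ref{lem-phisection}, the filtration compatibility of Lemma~\ref{lem-fil}/Corollary~\ref{co-comparison}, and the identification of $V_{\cris,F}(D)$ with a Hom-space over $\gO_\alpha$---but the way you assemble them is backwards and, as written, does not produce a well-defined map.

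The central problem is your ``dualizing'' step. You propose to build an embedding $\M \hookrightarrow W(R)_F \otimes_{F_0} D$ via $(1\otimes\varphi)^{-1}$ and $\xi'_{\wt B_\alpha}$, and then ``combine'' this with $h:\M\to W(R)_F$ to obtain a map $D\to B^+_{\cris,F}$. But two maps \emph{out of} $\M$ do not naturally compose to give a map out of $D$; applying $\Hom_{\gs_F,\varphi}(-,W(R)_F)$ to an embedding $\M\hookrightarrow W(R)_F\otimes_{F_0}D$ goes in the wrong direction. Moreover, your integrality claim---that the image lands in $W(R)_F\otimes_{F_0}D$ rather than $\wt B_\alpha\otimes_{F_0}D$---is not obvious and would require separate justification; the \'etaleness of $\CM$ you invoke plays no role here.

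The paper's construction is more direct and avoids all of this. One sends $h\in T_{\gs_F}(\M)=\Hom_{\gs_F,\varphi}(\M,\gs_F^{\ur})$ to the map $\iota'(h):\varphi^*\CM\to B^+_{\cris,F}$ defined by $\iota'(h)(s\otimes m):=s\,\varphi(h(m))$; the height condition and $E(u)\in\Fil^1 W(R)_F$ ensure this is filtration-compatible. Then one uses $\xi_\alpha$ in the \emph{opposite} direction from what you suggest: it embeds $D$ \emph{into} $\gO_\alpha\otimes_\gO\varphi^*\CM$ (rather than embedding $\M$ into something built from $D$), so restricting $\iota'(h)$ along this embedding gives the desired map $D\to B^+_{\cris,F}$. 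Lemma~\ref{lem-fil} then gives filtration compatibility, and one checks directly that $\Hom_{\gO_\alpha,\varphi,\Fil}(\gO_\alpha\otimes_{F_0}D,B^+_{\cris,F})=V_{\cris,F}(D)$. No descent to $W(R)_F$, no appeal to Theorem~\ref{thm-maintech}(1), and no Lemma~\ref{lem-inBrig} is needed.
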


\begin{proof} As usual, we put $\CM: = \CM (D)$.
As $\CM \simeq \gO \otimes_{\gs_F} \M$, we have a natural injection
$$\iota ' :  T _{\gs_F}(\M) = \Hom_{\gs_F, \varphi} (\M , \gs _F ^\ur ) \inj \Hom_{\gO, \varphi, \Fil} (\varphi ^* \CM , B ^+_{\cris, F }),$$
uniquely determined by the requirement that for any homomorphism $h: \M\rightarrow \gs_F^\ur$,
the value of $\iota'(h)$ on any simple tensor $s\otimes m\in  \gO \otimes_{\varphi,\gs_F} \M\simeq \varphi^*\CM $
is given by
$$\iota'(h) (s \otimes m) = s \varphi (h(m));$$
Using the fact that $E(u) \in \Fil ^1 W(R)_F$, one checks that this really does define a
filtration-compatible $\gO$-linear homomorphism $\iota'(h):\varphi^*\CM\rightarrow B_{\cris,F}^+$.

On the other hand, the isomorphism
$\xi_\alpha :  \gO_\alpha \otimes_{F_0}  D \xrightarrow{\sim}\gO_\alpha \otimes_{\gO}   \varphi ^*\CM$
of Lemma \ref{lem-phisection} induces  a  natural injection
\begin{equation*}
\xymatrix@C=30pt{
	{\Hom_{\gO, \varphi, \Fil} (\varphi ^* \CM , B ^+_{\cris, F })}  \ar@{^{(}->}[r]^-{h\mapsto 1\otimes h}
	& {\Hom_{\gO_\alpha, \varphi, \Fil}
	(\gO_\alpha 	\otimes_\gO\varphi ^* \CM , B ^+_{\cris, F })}\ar[d]_-{\simeq}^-{h\mapsto h\circ\xi_{\alpha}} \\
	 &  {\Hom_{\gO_\alpha, \varphi, \Fil} (\gO_\alpha \otimes _{F_0} D  , B ^+_{\cris, F })}
	}
\end{equation*}
thanks to Lemma \ref{lem-fil}, and we claim that $\Hom_{\gO_\alpha, \varphi, \Fil} (\gO_\alpha  \otimes _{F_0} D  , B ^+_{\cris, F }) = V_{\cris, F }(D)$.
By definition, we have
$$  V_{\cris, F }(D) =  \Hom_{F_0 , \varphi} (D, B^+_{\cris, F }) \cap \Hom_{K, \Fil} (D_{F_0, K}, B^+_\dR),$$
and it is clear that $\Hom_{\gO_\alpha, \varphi} (\gO _\alpha \otimes _{F_0} D  , B ^+_{\cris, F })= \Hom_{F_0, \varphi} (D  , B ^+_{\cris, F })$. Since
the injection $\gO_\alpha \otimes _\gO D \inj \wgs _0 \otimes D_{F_0, K}$ is compatible with filtrations by the very construction of the filtration on $\gO_\alpha \otimes _{F_0} D$, we conclude that
\begin{eqnarray*}
\Hom_{F_0} (D, B^+_{\cris, F }) \cap  \Hom_{\gO_\alpha, \Fil} (\gO_\alpha \otimes _{F_0} D  , B ^+_{\cris, F }) \\ \simeq \Hom_{F_0} (D, B^+_{\cris, F}) \cap \Hom_{K, \Fil} (D_{F_0, K}, B^+_\dR),
\end{eqnarray*}
which gives our claim.

Putting everything together, we obtain a  natural injection $\iota :  T_{\gs_F} (\M) \inj V_{\cris, F} (D)$ which is visibly compatible with the given $G_{\ul{\pi}}$-actions.
 \end{proof}

Combining Theorem \ref{thm-maintech} and Proposition \ref{prop-rep-height r}, we deduce
Theorem \ref{thm-intro-1}:

\begin{co}\label{co-Ginftylattice} Let $V$ be an object of $\Rep$ and $T\subset V$  a $G_{\ul{\pi}}$-stable $\O_F$-lattice. Then there exists a Kisin module $\M$ of $E(u)$-height $r$ with $T_{\gs_F} (\M) \simeq T$.
\end{co}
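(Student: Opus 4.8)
The plan is to deduce Corollary \ref{co-Ginftylattice} by combining the existence result for Kisin modules attached to $F$-crystalline representations (Theorem \ref{thm-maintech}) with the lattice-descent statement for representations of finite height (Proposition \ref{prop-rep-height r}). First I would observe that since $V$ is an object of $\Rep$, i.e.~an $F$-crystalline representation with Hodge--Tate weights in $\{0,\dots,r\}$, Theorem \ref{thm-maintech}(2) produces a Kisin module $\M_0 \in \sfi$ with $\gO \otimes_{\gs_F} \M_0 \simeq \CM(D)$, where $D = D_{\cris,F}(V)$. By Proposition \ref{prop-Ginftycompatible}, we have $V_{\gs}(\M_0) = T_{\gs_F}(\M_0)[1/p] \simeq V$ as $F[G_{\ul{\pi}}]$-modules; in particular $V$, viewed as an object of $\rep_F(G_{\ul{\pi}})$ by restriction, is of height $r$ in the sense of \S\ref{subsec-3.2}.

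Next, given the arbitrary $G_{\ul{\pi}}$-stable $\O_F$-lattice $T \subset V$, I would simply apply Proposition \ref{prop-rep-height r} to the height-$r$ representation $V|_{G_{\ul{\pi}}}$ and the lattice $T$: this yields a Kisin module $\M \in \sfi$ with $T_{\gs_F}(\M) \simeq T$ as objects of $\rep_{\O_F}(G_{\ul{\pi}})$, which is exactly the assertion of the corollary. The isomorphism $V_{\gs}(\M)\simeq V$ as $F[G_{\ul\pi}]$-modules then follows by inverting $p$.

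In short, the proof is essentially a two-line assembly of previously established results, and there is no genuine obstacle remaining at this stage: the two hard inputs --- that $\CM(D)$ is \'etale and descends to a Kisin module (established in Theorem \ref{thm-maintech} via the comparison with the classical setting and Kedlaya's slope theory), and that finite-height representations admit Kisin modules for every choice of lattice (Proposition \ref{prop-rep-height r}, proved by adapting the arguments of \cite{liu2} using Lemma \ref{lem-digfree}) --- have already been done. The only point worth spelling out is the logical step that Theorem \ref{thm-maintech} together with Proposition \ref{prop-Ginftycompatible} exhibits $V|_{G_{\ul\pi}}$ as a representation of height $r$, so that Proposition \ref{prop-rep-height r} is applicable; everything else is formal.
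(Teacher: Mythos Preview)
Your proposal is correct and follows essentially the same approach as the paper, which simply states that the corollary follows by combining Theorem \ref{thm-maintech} and Proposition \ref{prop-rep-height r}. You have in fact been more explicit than the paper by spelling out the role of Proposition \ref{prop-Ginftycompatible} in showing that $V|_{G_{\ul\pi}}$ is of height $r$, which is the logical bridge needed to apply Proposition \ref{prop-rep-height r}.
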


\begin{remark}\label{rem-nonunique}
It is an important point that in our general setup, the Kisin module
$\M$ may {\em not} be unique for a given $T$, contrary to the classical situation.
Indeed, the ``cyclotomic case" of Example \ref{ex-cyclotomic} is a prototypical instance of such non-uniqueness:
let $T$ be the trivial character, $\M= \gs_F$  the trivial rank-1 Kisin module, and $\M' = u \gs_F \subset \M $. Since $\varphi (u) = E(u) u$, one sees that  $\M'$ is also a Kisin module and $T_{\gs_F} (\M) = T_{\gs _F} (\M') =T$.

\end{remark}

We now prove Theorem \ref{thm-intro-2}:

\begin{theorem}\label{thm-main1}
Assume that $\varphi ^n (f(u)/u)$ is not a power of $E(u)$ for any $n \geq 0$ and
that $v_F(a_1)>r$.
Then the natural functor $\Rep \rightarrow \t{Rep}_F (G_{\ul{\pi}})$ given by
$V \rightsquigarrow V|_{G_{\ul{\pi}}}$ is fully faithfull.
\end{theorem}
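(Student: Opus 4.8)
The plan is to deduce full-faithfulness of $V\rightsquigarrow V|_{G_{\ul\pi}}$ from the full-faithfulness of $T_{\gs_F}$ (Proposition \ref{prop-fullfaith}), which is available under the first hypothesis, together with the ability to recover the $G_K$-action --- or at least the filtered $\varphi$-module $D_{\cris,F}(V)$ --- from the Kisin module $\M$. Concretely, given $V,V'\in\Rep$ and a $G_{\ul\pi}$-equivariant $F$-linear map $g\colon V|_{G_{\ul\pi}}\to V'|_{G_{\ul\pi}}$, I would first pick $G_{\ul\pi}$-stable $\O_F$-lattices $T\subset V$, $T'\subset V'$ with $g(T)\subset T'$, then invoke Corollary \ref{co-Ginftylattice} to produce Kisin modules $\M,\M'\in\sfi$ with $T_{\gs_F}(\M)\simeq T$, $T_{\gs_F}(\M')\simeq T'$. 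By Proposition \ref{prop-fullfaith}, $g$ is induced by a (unique) morphism $\beta\colon\M'\to\M$ in $\sfi$; scalar-extending along $\gs_F\hookrightarrow\gO$ gives $\gO\otimes\beta\colon\CM(D')\to\CM(D)$ (using $\gO\otimes_{\gs_F}\M\simeq\CM(D)$ from Theorem \ref{thm-maintech}(2), where $D=D_{\cris,F}(V)$, $D'=D_{\cris,F}(V')$). The goal is then to show this map of $\varphi$-modules over $\gO$ arises from a morphism $D'\to D$ of filtered $\varphi$-modules, for then applying $V_{\cris,F}(\cdot)$ and Proposition \ref{prop-Dcirs & Vcris} recovers a $G_K$-equivariant map $V\to V'$ restricting to $g$ on $G_{\ul\pi}$; faithfulness is clear since $V\to V|_{G_{\ul\pi}}$ is injective on underlying spaces.

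The crux is therefore: a $\varphi$-module morphism $\gO\otimes\beta\colon\CM(D')\to\CM(D)$ induces, via reduction, a $\varphi$-module map $D'\to D$ over $F_0$, and one must check this map is filtration-compatible and that it is ``the same'' as $\beta$ in the sense that $V_{\cris,F}$ applied to it gives back $g$. The key technical input is the $\varphi$-equivariant section $\xi\colon D\to \gO_\alpha\otimes_{\gO}\varphi^*\CM(D)$ --- equivalently the isomorphism $\xi_\alpha$ of Lemma \ref{lem-phisection} and its filtration-compatible inverse $\xi'_\alpha$ of Lemma \ref{lem-fil} --- which under the hypothesis $v_F(a_1)>r$ is \emph{unique} (this is where $v_F(a_1)>r$ enters; I would establish the uniqueness of such a $\varphi$-equivariant section reducing to the given identification mod $u$, arguing that the difference of two sections lands in $u\,\gO_\alpha\otimes\varphi^*\CM$ and is killed by a contraction estimate coming from $\varphi^n(u)/\varpi^{rn}\to 0$ in $\gO_\alpha$, cf. Lemma \ref{lem-f(n)}). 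Because $\xi$ is natural (uniqueness forces functoriality with respect to $\gO\otimes\beta$), the square relating $\xi$ for $D$ and for $D'$ commutes, so $\beta$ induces $D'\to D$ compatibly with the identifications $D\simeq\varphi^*\CM/u\varphi^*\CM$ and with the filtrations via Corollary \ref{co-comp fil} and Lemma \ref{lem-fil}. Hence $D'\to D$ is a morphism in $\MF^{\varphi}_{F_0,K}$.

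Next I would chase through the construction of the injection $\iota\colon T_{\gs_F}(\M)\hookrightarrow V_{\cris,F}(D)$ from Proposition \ref{prop-Ginftycompatible} to see that it is functorial in $\M$ (and in $D$) with respect to the maps just produced: the compatibility $\iota'\colon\Hom_{\gs_F,\varphi}(\M,\gs_F^{\ur})\hookrightarrow\Hom_{\gO,\varphi,\Fil}(\varphi^*\CM,B^+_{\cris,F})$ followed by precomposition with $\xi_\alpha$ identifies this with $V_{\cris,F}(D)$, and this identification is compatible with $\beta$ precisely because $\xi_\alpha$ is the unique such section. Therefore the diagram
\begin{equation*}
\xymatrix{
T_{\gs_F}(\M')\ar[r]^-{\iota}\ar[d]_-{\beta^{\ast}} & V_{\cris,F}(D')\simeq V'\ar[d]^-{V_{\cris,F}(D'\to D)}\\
T_{\gs_F}(\M)\ar[r]^-{\iota} & V_{\cris,F}(D)\simeq V
}
\end{equation*}
commutes after inverting $p$, which shows the $G_K$-map $h:=V_{\cris,F}(D'\to D)[1/p]\colon V\to V'$ restricts on $G_{\ul\pi}$ to $g$ on a lattice, hence on all of $V$.

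The main obstacle I anticipate is the uniqueness of the section $\xi$ under $v_F(a_1)>r$, and --- more subtly --- extracting genuine \emph{functoriality} of the whole package $\M\mapsto(\CM(D),\xi,\text{Fil})\mapsto D_{\cris,F}(V)$ from that uniqueness, since a priori $\M$ is not functorially determined by $V$ (Remark \ref{rem-nonunique}); one must check that whatever choices of $\M,\M'$ and of $\beta$ are made, the resulting map $D'\to D$ and hence $h\colon V\to V'$ is independent of them and recovers $g$. I expect this to be handled exactly as in Kisin's classical argument (the analogue of \cite[Cor.~1.3.15 / Prop.~2.1.12]{kisin2}), with the role of Kisin's $N_\nabla$-based rigidity replaced throughout by the uniqueness of $\xi$; the remaining verifications (filtration-compatibility, injectivity of $V\to V|_{G_{\ul\pi}}$, passing between lattices and their $p$-isogeny classes) are routine.
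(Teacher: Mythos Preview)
Your proposal is correct and follows essentially the same route as the paper: reduce to a morphism of Kisin modules via Proposition~\ref{prop-fullfaith}, identify $D(\M)=D_{\cris,F}(V)$ inside $\gO_\alpha\otimes_{\gs_F}\varphi^*\M$ via the unique section $\xi_\alpha$ of Lemma~\ref{lem-phisection 2} (this is precisely Corollary~\ref{co-D(M)=D-cris}, and is where $v_F(a_1)>r$ enters), show the Kisin-module map restricts to a map $D\to D'$, and conclude by unwinding the injection $\iota$ of Proposition~\ref{prop-Ginftycompatible} that the given $G_{\ul\pi}$-map is precomposition with a morphism of filtered $\varphi$-modules, hence $G_K$-equivariant.

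The one place you diverge slightly from the paper is in showing $\gf_\alpha(D(\M))\subset D(\M')$ (the paper's Lemma~\ref{lem-inside}). You propose to deduce this directly from uniqueness of $\xi_\alpha$: both $(\gO_\alpha\otimes\varphi^*\gf)\circ\xi^{\M}_\alpha$ and $\xi^{\M'}_\alpha\circ(1\otimes\overline{\varphi^*\gf})$ are $\varphi$-equivariant lifts of the same reduction mod $u$, and the contraction argument of Lemma~\ref{lem-phisection 2} (extended trivially to non-square matrices) forces them to coincide. The paper instead decomposes $\gf$ into short exact sequences in $\sfi$ via Corollary~\ref{co-more} and reads off functoriality from the block-triangular shape of the limit matrix $Y$ in~\eqref{eqn-Yn}. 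Your uniqueness argument is cleaner and avoids this detour; both rest on the same convergence $\varphi^n(u)/\varpi^{rn}\to 0$ of Lemma~\ref{lem-f(n)}. Finally, your worry about independence of the choices of $\M,\M',\beta$ is unnecessary: full-faithfulness only requires that the given $G_{\ul\pi}$-map extend to \emph{some} $G_K$-map, not that the extension procedure be canonical.
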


\begin{remark}
	We suspect that the theorem remains valid if we drop the assumption that
	$\varphi ^n (f(u)/u)$ is not a power of $E(u)$ for any $n \geq 0$. When $r =1$, we
	will show that this is indeed the case in the next section.
\end{remark}

In order to prove Theorem \ref{thm-main1}, we prepare several preliminary results.
In what follows, we keep our running notation with $f(u)=u^p + a_{p-1}u^{p-1}+ \cdots + a_1 u$,
and we assume throughout that $\varpi^{r+1}| a_1$ in $\O_F$.

Let $\M \in \t{Mod}_{\gs_F}^{\varphi, r}$ and set
$M : = \varphi ^* \M / u \varphi ^*\M$. 
\begin{lemma}\label{lem-phisection 2}
	There exists a unique  $\varphi$-equivariant isomorphism
\begin{equation*}
	\xymatrix{
		{\xi_\alpha:  \gO_\alpha \otimes_{\O_{F_0}}M}\ar[r]^-{\simeq}  & {\gO_\alpha \otimes_{\gs_F} \varphi ^* \M}
		}
\end{equation*}
whose reduction modulo $u$ is the identity on $M$.
\end{lemma}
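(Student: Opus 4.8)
The plan is to construct $\xi_\alpha$ by successive approximation, building it up $u$-adically from the identity map modulo $u$, using the hypothesis $\varpi^{r+1}\mid a_1$ to control the error terms. First I would fix a $\gs_F$-basis $\fe_1,\dots,\fe_d$ of $\varphi^*\M$ and let $\bar\fe_1,\dots,\bar\fe_d$ be the induced $\O_{F_0}$-basis of $M=\varphi^*\M/u\varphi^*\M$. Defining $\xi_\alpha$ amounts to choosing elements $\xi_\alpha(\bar\fe_j)\in\gO_\alpha\otimes_{\gs_F}\varphi^*\M$ lifting $\bar\fe_j$ and satisfying the $\varphi$-equivariance relation; equivalently, writing the matrix $A\in\t{M}_d(\gs_F)$ of $\varphi_\M$ in the basis $\fe_j$ and the matrix $A_0\in\t{M}_d(\O_{F_0})$ of $\varphi_M$ on $M$ (so $A_0 = A\bmod u$), one seeks $X\in\GL_d(\gO_\alpha)$ with $X\equiv I_d\bmod u$ and $A\,\varphi(X) = X\,A_0$, i.e.\ $X = A\,\varphi(X)\,A_0^{-1}$. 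Note $A_0$ is invertible over $\O_{F_0}$ since $\varphi^*\M/u\varphi^*\M$ has $1\otimes\varphi$ becoming an isomorphism modulo $E(0)\ne 0$... more carefully, $A_0$ is invertible over $F_0$ because $D$-type modules have bijective Frobenius; I would record that $A_0\in\GL_d(F_0)$ and absorb denominators into $\gO_\alpha[1/p]=\gO_\alpha$.

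The key mechanism is that the operator $T(X) := A\,\varphi(X)\,A_0^{-1}$ is a contraction for the $u$-adic (equivalently $\frac{u^{e_0p}}{\varpi}$-adic) topology on the relevant completion $\gO_\alpha$, provided we use $\varpi^{r+1}\mid a_1$. The point is that $\varphi(u) = f(u) = u^p + \cdots + a_1 u$, and by Lemma \ref{lem-f(n)} (with the hypothesis $\varpi^{r+1}\mid a_1$) the powers $\varphi^n(u)/\varpi^{rn}$ converge to $0$ in $\gO_\alpha$; more to the point $\varphi$ strictly raises the $u$-order of any element of $u\gO_\alpha$ after dividing by an appropriate power of $\varpi$ — and in $\gO_\alpha = \gs_F[\![u^{e_0p}/\varpi]\!][1/p]$ the element $\varphi(u)$ lies in the maximal-type ideal generated by $u$, with $\varphi$ acting as a topologically nilpotent (contracting) operator on $u\gO_\alpha\otimes\varphi^*\M$. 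I would make this precise by showing: if $X, X'\equiv I_d\bmod u$ with $X\equiv X'\bmod u^N\gO_\alpha$ for suitable $N$, then $T(X)\equiv T(X')\bmod u^{N'}\gO_\alpha$ with $N' > N$ (here $\varphi(u^N\gO_\alpha)\subseteq u^{N}\varphi(\gO_\alpha)$ and one uses that $f(u)/u\in(\varpi,u)$ together with $\varpi^{r+1}\mid a_1$ to gain contraction in the completed ring). Then by completeness of $\gO_\alpha$ the iteration $X_{n+1} = T(X_n)$ starting from $X_0 = I_d$ converges to a fixed point $X$, which is the unique solution with $X\equiv I_d\bmod u$; uniqueness follows because the difference of two solutions is fixed by the contraction $T$ and reduces to $0$ modulo $u$.

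Finally I would check that the resulting $X$ actually lies in $\GL_d(\gO_\alpha)$, not merely $\t{M}_d(\gO_\alpha)$: this is automatic since $X\equiv I_d\bmod u$ and $\gO_\alpha$ is a ring in which $1 + (\text{topologically nilpotent})$ is invertible — concretely $u$ generates an ideal with $\gO_\alpha$ complete for it after inverting $p$, so $\det X\equiv 1$ modulo a topologically nilpotent element and is therefore a unit. The induced map $\xi_\alpha\colon\gO_\alpha\otimes_{\O_{F_0}}M\to\gO_\alpha\otimes_{\gs_F}\varphi^*\M$ sending $\bar\fe_j$ to the $j$-th column of the lift determined by $X$ is then $\varphi$-equivariant by construction of the relation $A\varphi(X) = XA_0$, and reduces to the identity modulo $u$. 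I expect the main obstacle to be making the contraction estimate genuinely rigorous in the ring $\gO_\alpha = \gs_F[\![u^{e_0p}/\varpi]\!][1/p]$ — one must track carefully how $\varphi$ interacts with the two ``gradings'' (the $u$-order and the $\varpi$-order) so that the factor $a_1$ with $v_F(a_1)\ge r+1$ compensates exactly for the denominators $\varpi$ appearing in $\gO_\alpha$; this is precisely where the hypothesis $\varpi^{r+1}\mid a_1$ (equivalently $v_F(a_1) > r$) is used, and Lemma \ref{lem-f(n)} is the technical input that makes it work.
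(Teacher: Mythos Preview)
Your overall strategy---set up the matrix equation, produce a solution by iterating $X_{n+1}=A\,\varphi(X_n)\,A_0^{-1}$ from $X_0=I_d$, invoke Lemma~\ref{lem-f(n)} for convergence, and argue uniqueness the same way---is exactly what the paper does. In the paper's notation (basis of $\M$ rather than $\varphi^*\M$) the iterates are $Y_n=\varphi(A)\cdots\varphi^n(A)\,\varphi^n(A_0^{-1})\cdots\varphi(A_0^{-1})$, and $Y_n-Y_{n-1}$ is a product of matrices in $\Md(\gs_F)$ times $\varphi^n(u)/\varpi^{rn}$, which Lemma~\ref{lem-f(n)} drives to zero. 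Your remark that the contraction is \emph{not} a pure $u$-adic one, and that the hypothesis $\varpi^{r+1}\mid a_1$ is exactly what makes Lemma~\ref{lem-f(n)} go through, is on the mark.

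There is, however, a genuine gap in your invertibility step. You assert that $X\equiv I_d\bmod u$ forces $\det X\in\gO_\alpha^\times$ because ``$\gO_\alpha$ is complete for the ideal $(u)$ after inverting $p$.'' This is false: $\gO_\alpha=\gs_F[\![u^{e_0p}/\varpi]\!][1/p]$ is \emph{not} $u$-adically complete, and elements congruent to $1$ modulo $u$ need not be units. A concrete counterexample is $E(u)/E(0)\in\gs_F[1/p]\subset\gO_\alpha$: it is $\equiv 1\bmod u$, yet $E(u)$ generates $\Fil^1\wt B_\alpha$ and is a nonunit there, hence a nonunit in the subring $\gO_\alpha$. (Equivalently, the $u$-adic Cauchy series $\sum_{n\ge 0}(u/\varpi)^n$ has unbounded $\varpi$-denominators and does not lie in $\gO_\alpha$.) So the inference ``$\det X\equiv 1\bmod u\Rightarrow \det X$ is a unit'' does not hold in this ring.

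The paper repairs this by computing $\det Y$ directly. Since $\wedge^d\M$ has finite $E$-height, one has $\det A=\gamma\,E(u)^m$ with $\gamma\in\gs_F^\times$, whence $\det(\varphi(A)\varphi(A_0^{-1}))=\gamma'(\varphi(E(u))/\varpi)^m$ for a unit $\gamma'$; one then checks that $\varphi(E(u))/\varpi$ (and more generally $\varphi(\lambda)$) is a unit in $\gO_\alpha$, so the infinite product $\det Y=\lim_n\det Y_n$ is a unit. In other words, invertibility uses the height hypothesis on $\M$ in an essential way, not merely the congruence $Y\equiv I_d\bmod u$. You should replace your topological-nilpotence argument with this determinant computation.
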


\begin{proof} The proof is similar to that of Proposition 2.4.1 in \cite{liu6}, and is motivated by
the proof of Lemma 1.2.6 in \cite{kisin2}.
Choose an $\gs_F$-basis $\fe_1 , \dots , \fe_d$ of $\M$ and let $A \in \Md (\gs_F)$
be the resulting matrix of $\varphi$; {\em i.e.}~ $(\varphi (\fe_1), \dots , \varphi (\fe_d)) = (\fe_1, \dots, \fe_d) A $. Then $e_i := 1 \otimes \fe_i$ forms a basis of $\varphi ^*\M$ and we have $ (\varphi(e_1), \dots , \varphi(e_d) ) = (e_1, \dots, e_d ) \varphi (A)$. Put $A_0 : = A \bmod u$ and $\bar e_i : = e_i \bmod u$.
Then we have $ (\varphi(\bar e_1) , \dots , \varphi(\bar e_d)) = (\bar e_1, \dots, \bar e_d) \varphi (A_0)$.
If the map $\xi_\alpha$ of the Lemma exists,  then writing $f_i :=  {\xi}_\alpha (\bar e_i) \in \gO_\alpha \otimes _{\gs_F} \varphi ^* \M $ and denoting by $Y \in \GL_d (\gO_\alpha)$ the matrix with
$(f_1 , \dots, f_d ) = (e_1, \dots , e_d)Y$, we necessarily have $Y \equiv I _d \bmod u$ and
\begin{equation}\label{eqn-xi_I exists}
 Y \varphi (A_0) = \varphi (A) \varphi(Y).
\end{equation}
Conversely, if (\ref{eqn-xi_I exists}) has a solution $Y \in \GL_d (\gO_\alpha)$ satisfying $Y \equiv I_d \mod u$, then we may define ${\xi}_\alpha$ by ${\xi}_\alpha(\bar e_1 , \dots , \bar e_d ) = (e_1, \dots ,e _d) Y$.
Thus, it remains to solve  Equation \eqref{eqn-xi_I exists}. Put
\begin{equation}\label{eqn-Yn}
 Y_n : = \varphi (A) \cdots \varphi ^n (A) \varphi ^n (A^{-1}_0) \cdots \varphi (A_0^{-1}).
 \end{equation}
We claim that the sequence $\{Y_n\}_n$ converges to a matrix $Y \in \Md (\gO_\alpha)$.
To see this, note that there exists $B_0\in \GL_d (\gO_\alpha)$ with $A_0 B_0 = \varpi ^r  I_d$ since $\M $ has height $r$. It follows that $ A A_0^{-1} = I_d + \frac{u}{\varpi^r } Z$ for $Z \in \Md (\gs_F).$ Thus,
$$ Y_n = Y_{n-1} + \varphi (A) \cdots \varphi^{n-1} (A) \frac{\varphi ^n (u)}{\varpi ^{rn}}\varphi ^n (Z) \varphi^{n- 1 }( B_0) \dots  \varphi (B_0),$$
so to prove our claim it suffices to show that $\varphi^n(u)/ \varpi ^{rn} $ converges to $0$ in $\gO_\alpha$, which
is the content of Lemma \ref{lem-f(n)}.

To prove that $Y$ is invertible, we compute the determinant of $Y$.
Writing $d:=\t{rank}_{\gs_F}\M$, we observe that
since $\wedge ^d  \M$ has finite $E(u)$-height, we have $\det (A) = \gamma E(u)^m$ for some
$\gamma \in \gs _F^\times$. It follows that $\det (\varphi (A) \varphi (A_0^{-1}))=\gamma ' ( \frac{\varphi (E(u))}{\varpi}) ^m$ for some $\gamma ' \in \gs_F ^\times$. One then checks that  $\varphi (E(u))/ \varpi$ is a unit in $\gO_\alpha$, and hence that $\det (Y)$ is a unit in $\gO_\alpha$ so $Y$ is invertible as desired.

Finally, we prove that the solution $Y$ to (\ref{eqn-xi_I exists}) that we have constructed is unique.
Suppose that equation \eqref{eqn-xi_I exists} admits two solutions $Y, Y'$ satisfying $Y, Y' \equiv I_d \bmod u$.
Then their difference is also a solution $Y-Y' = u Z $ for $Z \in \Md (\gO_\alpha)$.
Equation \eqref{eqn-xi_I exists} then implies that
 \begin{eqnarray*}
 Y-Y' = \varphi (A) \varphi (Y) \varphi (A_0^{-1}) &= &  \varphi (A) \cdots \varphi ^n (A)  \varphi ^n (Y) \varphi ^n (A_0^{-1})  \cdots \varphi (A^{-1}_0)\\  &=&  \varphi (A) \cdots \varphi ^n (A) \frac{ \varphi ^n (u)}{\varpi^{rn} } \varphi^n  (Z) \varphi ^n (B_0)  \cdots \varphi (B_0)
\end{eqnarray*}
for all $n$. As $\varphi ^n (u)/ \varpi ^{rn} $ converges to $0$ in $\gO_\alpha$, we conslude that $Y=Y'$
as desired.
\end{proof}

For $\M \in \t{Mod}_{\gs_F}^{\varphi, r}$, let us write $D(\M) = \xi_\alpha (M[1/p]) \subset \gO_\alpha \otimes_{\gs_F}\varphi^* \M$ for the image of $M[1/p]$ under the map of Lemma \ref{lem-phisection 2}.
If $\M$ is a Kisin module associated to some $F$-crystalline $G$-representation $V$
with $D:= D_{\cris, F} (V)$
({\em i.e.}~$\gO\otimes_{\gs_F} \M \simeq \CM (D)$), then
by the very construction of $\CM(D)$ there is a
natural $\varphi$-compatible inclusion $D\simeq \varphi^*D\hookrightarrow \varphi^*\CM(D)$
which, thanks to Lemma \ref{lem-phisection},
becomes an isomorphism after tensoring over $\gO$ with $\gO_\alpha$.
Recalling that $\gO_\alpha \otimes_{\gs_F} \M \simeq \CM(D)$, we therefore
have a $\varphi$-equivariant inclusion
\begin{equation}
		\xymatrix@C=30pt{
			{D}\ar@{^{(}->}[r]^-{d\mapsto 1\otimes d} &
			{\gO_{\alpha}\otimes_{F_0} D} \ar[r]_-{\simeq}^-{\ref{lem-phisection}} &
			{\gO_{\alpha} \otimes_{\gO} \varphi^*\CM(D) \simeq \gO_{\alpha} \otimes_{\gs_F} \varphi^*\M}
		}\label{HowDisSub}
	\end{equation}
via which we view $D$ as a $\varphi$-stable $F_0$-subspace of $\gO_{\alpha} \otimes_{\gs_F} \varphi^*\M$.

\begin{co}\label{co-D(M)=D-cris}
	For $V \in \t{Rep}_F^{F\t{-\cris}, r}(G)$, with $D:= D_{\cris, F} (V)$ and $\M \in \sfi$
	a Kisin module attached to $D$, we have $D(\M)=D$ inside $\gO_\alpha \otimes_{\gs} \varphi^*\M$.
\end{co}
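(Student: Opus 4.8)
The plan is to deduce the corollary from the uniqueness statements in Lemmas~\ref{lem-phisection} and~\ref{lem-phisection 2}: the point is that the embedding \eqref{HowDisSub} used to regard $D$ as a subspace of $\gO_\alpha\otimes_{\gs_F}\varphi^*\M$ and the canonical section $\xi_\alpha$ of Lemma~\ref{lem-phisection 2} are two incarnations of one and the same $\varphi$-equivariant isomorphism, so their images coincide.

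First I would set up the relevant reductions modulo $u$. Abbreviate $\CM:=\CM(D)$ and $M:=\varphi^*\M/u\varphi^*\M$ as in Lemma~\ref{lem-phisection 2}. The isomorphism $\CM\simeq\gO\otimes_{\gs_F}\M$ gives the $\varphi$-compatible identification $\gO_\alpha\otimes_\gO\varphi^*\CM\simeq\gO_\alpha\otimes_{\gs_F}\varphi^*\M$ appearing in \eqref{HowDisSub}; since $\gO_\alpha/u\gO_\alpha=F_0$, reducing it modulo $u$ produces a $\varphi$-module isomorphism $\varphi^*\CM/u\varphi^*\CM\simeq M[1/p]$. On the other hand, the consequence of Lemma~\ref{lem-fil} recorded just before Corollary~\ref{co-comp fil} says precisely that the isomorphism $\xi_\alpha$ of Lemma~\ref{lem-phisection} reduces modulo $u$ to a natural identification of $\varphi$-modules $D\simeq\varphi^*\CM/u\varphi^*\CM$ (here one uses $\D_0\subset\CM\subset\lambda^{-r}\D_0$ together with the fact that $\lambda(0)=1$, so that reduction modulo $u$ identifies $D=\D_0/u\D_0$ with $\CM/u\CM$). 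Composing these, I obtain a $\varphi$-equivariant isomorphism $\iota_0\colon D\xrightarrow{\sim}M[1/p]$.

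With these identifications in hand I would run the uniqueness argument. Let $\phi_1\colon\gO_\alpha\otimes_{F_0}D\to\gO_\alpha\otimes_{\gs_F}\varphi^*\M$ be the $\gO_\alpha$-linear extension of the composite \eqref{HowDisSub}; then $\phi_1$ is a $\varphi$-equivariant isomorphism whose reduction modulo $u$ is, by the previous paragraph, exactly $\iota_0$. Let $\phi_2:=\xi_\alpha\colon\gO_\alpha\otimes_{F_0}M[1/p]\to\gO_\alpha\otimes_{\gs_F}\varphi^*\M$ be the map furnished by Lemma~\ref{lem-phisection 2} after inverting $p$; it is $\varphi$-equivariant and reduces modulo $u$ to $\id_{M[1/p]}$, so $\phi_2\circ(\id_{\gO_\alpha}\otimes\,\iota_0)$ is a $\varphi$-equivariant isomorphism $\gO_\alpha\otimes_{F_0}D\to\gO_\alpha\otimes_{\gs_F}\varphi^*\M$ whose reduction modulo $u$ is again $\iota_0$. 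The uniqueness argument in the proof of Lemma~\ref{lem-phisection 2} --- the difference of two such solutions has the form $uZ$ with $Z$ a matrix over $\gO_\alpha$, and $\varphi^n(u)/\varpi^{rn}\to 0$ in $\gO_\alpha$ (Lemma~\ref{lem-f(n)}) forces $Z=0$ --- goes through verbatim over $\gO_\alpha[1/p]$ and for sections reducing to any prescribed isomorphism, whence $\phi_1=\phi_2\circ(\id_{\gO_\alpha}\otimes\,\iota_0)$. Evaluating on $1\otimes D$ now gives
\[
	D=\phi_1(1\otimes D)=\phi_2\bigl(1\otimes\iota_0(D)\bigr)=\phi_2\bigl(1\otimes M[1/p]\bigr)=\xi_\alpha\bigl(M[1/p]\bigr)=D(\M)
\]
inside $\gO_\alpha\otimes_{\gs_F}\varphi^*\M$, which is the assertion.

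I expect the only genuine obstacle to be bookkeeping: one must keep straight the three distinct ``modulo $u$'' operations in play ($\gO_\alpha/u\gO_\alpha=F_0$; the reduction of the identification coming from $\CM\simeq\gO\otimes_{\gs_F}\M$; and the identification coming from Lemma~\ref{lem-fil}), and verify that the characterization of the section in Lemma~\ref{lem-phisection 2} --- a $\varphi$-equivariant isomorphism with prescribed reduction modulo $u$ --- is insensitive both to inverting $p$ and to replacing the identity of $M$ by an arbitrary fixed isomorphism. Both of these are immediate from the cited statements, so the proof itself is short.
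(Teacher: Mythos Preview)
Your proof is correct and follows essentially the same approach as the paper: both argue that the inclusion \eqref{HowDisSub} and the section $\xi_\alpha$ of Lemma~\ref{lem-phisection 2} are $\varphi$-equivariant isomorphisms with the same reduction modulo $u$, so the uniqueness assertion of Lemma~\ref{lem-phisection 2} forces them to agree. The paper states this in two lines, whereas you spell out more carefully the intermediate identification $\iota_0:D\simeq M[1/p]$ and the fact that the uniqueness argument is insensitive to precomposing with a fixed isomorphism of the source; this extra bookkeeping is harmless (and arguably clarifying), but the underlying argument is the same.
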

\begin{proof}
	The reduction of (\ref{HowDisSub}) modulo $u$ is the $\varphi$-compatible isomorphism
	\begin{equation*}
			{D \simeq (\gO_{\alpha} \otimes_{\gs_F} \varphi^*\M)\bmod (u) \simeq	
			M[1/p]}. 
	\end{equation*}
	Since the map $\xi_{\alpha}$ of Lemma \ref{lem-phisection 2} reduces to the identity modulo $u$,
	we conclude that {\em both} $D$ and $D(\M)$ inside $\gO_\alpha \otimes _{\gs_F} \varphi ^*\M$
	are $\varphi$-equivariant liftings of $M[1/p]$; the uniqueness aspect of Lemma
\ref{lem-phisection 2} then forces $D = D(\M)$ as claimed.
\end{proof}

It follows from Corollary \ref{co-D(M)=D-cris} that the map
$\xi_\alpha$ of Lemma \ref{lem-phisection 2} coincides with that of Lemma \ref{lem-phisection}, which justifies our notation.

Recall that $V_{\gs}(\M) = T_{\gs} (\M)[1/p]$ for $\M \in \sfi$.

\begin{lemma} \label{lem-inside}
	Let  $\gf : \M \to \M'$ be any morphism of height-$r$ Kisin modules, and let $\gf_{\alpha}$
	be the scalar extension
	$\gf_\alpha : \gO _\alpha \otimes_{\gs_F} \varphi ^* \M \to \gO _\alpha \otimes_{\gs_F} \varphi ^*\M'$
	of $\varphi^*\gf$ along $\gs_F\rightarrow \gO_{\alpha}$. Then
	$\gf_\alpha (D(\M)) \subset D(\M')$.
\end{lemma}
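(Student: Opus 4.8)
The plan is to compare $\M$ and $\M'$ through the canonical isomorphisms furnished by Lemma~\ref{lem-phisection 2}. Set $M:=\varphi^*\M/u\varphi^*\M$ and $M':=\varphi^*\M'/u\varphi^*\M'$, and let $\overline{\gf}\colon M\to M'$ denote the reduction modulo $u$ of $\varphi^*\gf$; since $\gf$ is $\varphi$-equivariant, so is $\varphi^*\gf$, and hence $\overline{\gf}$ is a morphism of the $\varphi$-modules $M,M'$ over $\O_{F_0}$. Writing $\xi_\alpha$ and $\xi_\alpha'$ for the isomorphisms attached to $\M$ and $\M'$ by Lemma~\ref{lem-phisection 2}, I would first prove that the square
\[
\xymatrix@C=44pt{
\gO_\alpha\otimes_{\O_{F_0}}M \ar[r]^-{\xi_\alpha} \ar[d]_-{1\otimes\overline{\gf}} & \gO_\alpha\otimes_{\gs_F}\varphi^*\M \ar[d]^-{\gf_\alpha} \\
\gO_\alpha\otimes_{\O_{F_0}}M' \ar[r]^-{\xi_\alpha'} & \gO_\alpha\otimes_{\gs_F}\varphi^*\M'
}
\]
commutes. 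Granting this, restricting the square to the $F_0$-subspace $M[1/p]\subset\gO_\alpha\otimes_{\O_{F_0}}M$ and using $\overline{\gf}(M[1/p])\subset M'[1/p]$ gives
\[
\gf_\alpha(D(\M))=\gf_\alpha(\xi_\alpha(M[1/p]))=\xi_\alpha'\bigl((1\otimes\overline{\gf})(M[1/p])\bigr)\subset\xi_\alpha'(M'[1/p])=D(\M'),
\]
which is the assertion.

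To prove that the square commutes, I would note that its two composites $\gO_\alpha\otimes_{\O_{F_0}}M\to\gO_\alpha\otimes_{\gs_F}\varphi^*\M'$ are each $\gO_\alpha$-linear and $\varphi$-equivariant: the maps $\xi_\alpha,\xi_\alpha'$ by Lemma~\ref{lem-phisection 2}, the map $\gf_\alpha$ as the scalar extension along $\gs_F\to\gO_\alpha$ of the morphism $\varphi^*\gf$, and $1\otimes\overline{\gf}$ because $\overline{\gf}$ is a morphism of $\varphi$-modules. Moreover, both composites reduce modulo $u$ to $\overline{\gf}$ (base-changed to $F_0$), since $\xi_\alpha$ and $\xi_\alpha'$ reduce to the respective identities by their construction in Lemma~\ref{lem-phisection 2} while $\gf_\alpha$ reduces to $\overline{\gf}$ by definition. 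It therefore suffices to prove that a $\gO_\alpha$-linear, $\varphi$-equivariant map $\gO_\alpha\otimes_{\O_{F_0}}M\to\gO_\alpha\otimes_{\gs_F}\varphi^*\M'$ is uniquely determined by its reduction modulo $u$; equivalently, that such a map which reduces to $0$ is itself $0$.

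For this uniqueness I would run the same argument as in the uniqueness part of the proof of Lemma~\ref{lem-phisection 2}, now with $\varphi^*\M'$ as the target. Fixing $\gs_F$-bases of $\M$ and $\M'$, writing $A_0\in\Md(\O_{F_0})$ for the reduction modulo $u$ of the Frobenius matrix of $\M$ and $A'\in\Md(\gs_F)$ for the Frobenius matrix of $\M'$, such a map is multiplication by a matrix $Y\in\Md(\gO_\alpha)$ with $Y\equiv 0\bmod u$ satisfying $Y\varphi(A_0)=\varphi(A')\varphi(Y)$. Exactly as in the proof of Lemma~\ref{lem-phisection 2}, $A_0$ is invertible over $\gO_\alpha$ and, since $\M$ has $E(u)$-height $r$, one has $A_0^{-1}\in\varpi^{-r}\Md(\O_{F_0})$; iterating $Y=\varphi(A')\varphi(Y)\varphi(A_0)^{-1}$ then exhibits $Y$, for every $n\ge 1$, as a product of matrices with entries in $\gs_F$ times $\varphi^n(Z)$ (where $Y=uZ$) times the scalar $\varphi^n(u)/\varpi^{rn}$. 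By Lemma~\ref{lem-f(n)}, valid under our standing assumption $\varpi^{r+1}\mid a_1$, this scalar tends to $0$ in $\gO_\alpha$, forcing $Y=0$. The one delicate point is the convergence bookkeeping in this final step, but it is literally the estimate already carried out in the proof of Lemma~\ref{lem-phisection 2}, so I expect no genuinely new difficulty there.
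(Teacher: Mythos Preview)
Your proof is correct and takes a genuinely different route from the paper's. The paper argues by first modifying $\M$ and $\M'$ inside $\M[1/p]$ and $\M'[1/p]$ (using the construction behind Corollary~\ref{co-more}) so that $\gf$ factors through a pair of short exact sequences $0\to\L\to\M\to\N\to 0$ and $0\to\N\to\M'\to\N'\to 0$ in $\sfi$; it then invokes the explicit limit formula \eqref{eqn-Yn} for the matrix $Y$ to see that, for a short exact sequence of Kisin modules, the corresponding $\xi_\alpha$'s are block upper-triangular, yielding exact sequences of the $D(\cdot)$'s and hence the inclusion $\gf_\alpha(D(\M))\subset D(\M')$.

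Your argument is cleaner: you observe that functoriality of $D(\cdot)$ follows directly from the \emph{uniqueness} clause of Lemma~\ref{lem-phisection 2}, applied not just to self-maps but to $\varphi$-equivariant maps $\gO_\alpha\otimes_{\O_{F_0}}M\to\gO_\alpha\otimes_{\gs_F}\varphi^*\M'$ with prescribed reduction mod $u$. This avoids the exact-sequence decomposition entirely and makes transparent that $\M\mapsto D(\M)$ is a functor. The only point to note is that the uniqueness argument in Lemma~\ref{lem-phisection 2} is stated for square matrices with $A'=A$, whereas you need it for rectangular matrices and possibly different Frobenius matrices on source and target; but as you say, the relevant iteration $Y=\varphi(A')\varphi(Y)\varphi(A_0)^{-1}$ and the convergence estimate via Lemma~\ref{lem-f(n)} go through verbatim, since the argument only uses that $A_0^{-1}\in\varpi^{-r}\Md(\O_{F_0})$ and $A'\in\Md(\gs_F)$. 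The paper's approach has the mild advantage of also producing exact sequences of $D(\cdot)$'s, but for the stated lemma your direct method is more efficient.
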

\begin{proof}
	Put $V = V_{\gs}(\M)$ and  $ V' = V_{\gs}(\M')$ and denote by $\psi = V_{\gs} (\gf) : V' \to V$ the induced map.
	By Proposition \ref{prop-rep-height r}, we can modify $\M$ and $\M'$ (inside $\M[1/p]$ and $\M' [1/p]$,
	respectively) so that $\gf$ may be decomposed by two exact sequences inside $\sfi$:
	\begin{equation*}
		\xymatrix@1{
			 0 \ar[r]& {\L} \ar[r] &{\M}\ar[r]^-{\gf}& {\N} \ar[r] & 0
			 }\quad\text{and}\quad
		\xymatrix@1{
			 0 \ar[r]& {\N} \ar[r]& {\M'}\ar[r]& {\N'} \ar[r]& 0
			 },
		\end{equation*}
where $\N = \gf(\M)$.
From the explicit construction of  $\xi_\alpha$ in Lemma \ref{lem-phisection 2}
(in particular, from the explicit construction of $Y$ in \eqref{eqn-Yn}),
we obtain the following exact sequences
\begin{equation*}
	\xymatrix@C=15pt{
	0 \ar[r]& {D(\L)} \ar[r]& {D(\M)} \ar[r]^-{\gf_\alpha } & {D(\N)}\ar[r]& 0
	}\quad\text{and}\quad
	\xymatrix@C=15pt{
		 0 \ar[r]& {D(\N)} \ar[r]& {D(\M')} \ar[r]& {D(\N')} \ar[r]&  0
		 },
\end{equation*}
which shows that  $\gf_\alpha (D(\M) ) \subset D(\M ')$ as claimed.
\end{proof}


\begin{proof}[Proof of Theorem $\ref{thm-main1}$]
	Let $V'$, $V$ be two objects of $\Rep$, and set  $D= D_{\cris, F} (V)$ and $D' = D_{\cris,F} (V')$.
	Suppose that there exists $h : V _{\cris, F} (D' ) \to V_{\cris, F} (D)$ that is $G_{\ul{\pi}}$-equivariant.
	 By Corollary \ref{co-Ginftylattice}, there exist  $G_{\ul{\pi}}$-stable $\O_F$-lattices  $T $ and  $T '$
	 inside $V_{\cris, F} (D)$ and $V_{\cris, F }(D')$, respectively, with $h(T')\subseteq T$,
	 and objects $\M$ and $\M '$ of $\sfi$ such that $T_{\gs}(\M) \simeq T $ and $T _{\gs} (\M ') \simeq T'$
	 via the map $\iota$ of Proposition \ref{prop-Ginftycompatible}. By Proposition \ref{prop-fullfaith}, there exists 	
	  a map $\gf : \M \to \M ' $ in $\sfi$ with $V _{\gs _F} (\gf) \simeq h$.
	  We may therefore realize $h$ as the composite
$$\xymatrix{V_{\cris ,F } (D' ) \ar[r]^-{\iota^{-1}}_-{\sim} &  V_{\gs} (\M')[1/p] \ar[r] ^{V_{\gs}(\gf)}  & V_{\gs} (\M)[\frac 1 p ] \ar[r] ^-{\iota}_-\sim & V_{\cris, F} (D)}, $$
where $\iota$ is constructed using the isomorphism $\xi'_\alpha : \gO_\alpha \otimes_{\gs_F} \varphi^* \M \simeq \gO_\alpha \otimes D$ of Lemma \ref{lem-fil}.  By Lemma \ref{lem-inside} and Corollary \ref{co-D(M)=D-cris}, the map $\gf_\alpha: \gO_\alpha \otimes_{\gs_F} \varphi^* \M  \to \gO_\alpha \otimes_{\gs_F} \varphi^* \M'$ induced by $\gf$
carries $D$ to $D'$, so for any  $a \in V_{\cris, F}(D') \subset \Hom_{F_0, \varphi} (D, B^+_{\cris,F})$
we have $h (a)= a\circ \bar \gf  \in V_{\cris, F } (D') \subset \Hom_{F_0, \varphi} (D' , B^+_{\cris , F})$ where
we write $\bar \gf :D \to D'$ for the map $\gf_\alpha |_{D}$.
It follows at once that $h$ is compatible with the action of $G=G_K$, as desired.
\end{proof}

We note that Theorem  \ref{thm-main1} is false if we replace ``$\t{Rep}^{F\t{-cris}}_{F} (G)$" with
``$\t{Rep}^{\Q_p\t{-cris}}_{F}(G)$," as the following example shows:

\begin{example}\label{ex-counter}
	Consider again the situation in Example \ref{ex-cyclotomic}, where
	$K = \Q_p(\zeta_p)$ with $\pi = \zeta_p -1$ and $\varphi (u) = f(u) = (1+ u)^p -1 $ and
	$K_{\ul{\pi}} = \bigcup_{n \geq 1} \Q_p (\zeta_{p ^n})$.
	Let $F= \Q_p$. Then the assumption of Theorem \ref{thm-main1} is not satisfied as $a_1 = p$, and
	the restriction functor $\t{Rep}^{\Q_p\t{-cris}}_{F}(G)\rightsquigarrow \rep_F (G_\infty)$ induced by $V \mapsto V|_{G_{\ul{\pi}}}$ is visibly {\em not} fully faithful: letting $\chi$ denote the $p$-adic cyclotomic character,
	 we have $\chi |_{G_{\ul{\pi}}} = 1 |_{G_{\ul{\pi}}}$, but $\chi\not\simeq 1$ as $G$-representations.
	 On the other hand, if $F=K$ then we easily check that the assumptions of Theorem \ref{thm-main1} are satisfied.
	 Of course, there is no contradiction here as $\chi$ is \emph{not} an $F$-crystalline representation
	 because $\t{HT}_\tau(\chi) =1$ for \emph{all} $\tau$.
\end{example}

\section{$F$-Barsotti-Tate groups}\label{Sec-BT}

Recall that by an {\em $F$-Barsotti--Tate group} over $\O_K$, we mean
a Barsotti--Tate group over $\O_K$ whose $p$-adic Tate module is an $F$-crystalline representation
of $G:=G_K$.
In this section, we prove that the category of $F$-Barsotti--Tate groups over $\O_K$
is (anti)equivalent to the category of height-1 Kisin modules:

\begin{theorem} \label{thm-F-BT} Assume $v_F(a_1)>1$.  Then there is an $($anti$)$equivalence of
categories between the category of Kisin modules of height $1$ and the category of $F$-Barsotti-Tate groups.
\end{theorem}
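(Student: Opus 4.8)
The plan is to deduce Theorem~\ref{thm-F-BT} by combining the classical classification of Barsotti--Tate groups with the constructions of \S\ref{sec-4}. Over $\O_K$ the Tate module functor $H\mapsto T_p(H)$ is an anti-equivalence between Barsotti--Tate groups over $\O_K$ and $G_K$-stable $\Z_p$-lattices in crystalline $\Q_p$-representations with Hodge--Tate weights in $\{0,1\}$ (Tate and Raynaud together with \cite{kisin2}; for $F=\Q_p$ this is the input used in \cite{CaisLau}). Since $F/\Q_p$ is totally ramified, an $F$-crystalline representation with $\tau_0$-Hodge--Tate weights in $\{0,1\}$ has all $\Q_p$-Hodge--Tate weights in $\{0,1\}$: the filtration on $D_{\dR}$ is nontrivial only on the component attached to $\m_0$, and there $\Fil^2=0$. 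Hence this anti-equivalence restricts to one between $F$-Barsotti--Tate groups over $\O_K$ and $G_K$-stable $\O_F$-lattices in $\t{Rep}_F^{F\t{-\cris},1}(G)$, and it suffices to prove that, once $T_{\gs}(\M)$ is equipped with a suitable $G_K$-action, the functor $\M\mapsto T_{\gs}(\M)$ is an anti-equivalence between height-$1$ Kisin modules and $G_K$-stable $\O_F$-lattices in $\t{Rep}_F^{F\t{-\cris},1}(G)$.

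The technical core is the construction of the $G_K$-action, carried out in the forthcoming Lemma~\ref{lem-extend Ginfty action}. Fix a height-$1$ Kisin module $\M$, a $\gs_F$-basis with matrix of Frobenius $A\in\Md(\gs_F)$ (so $AB=BA=E(u)I_d$ for some $B\in\Md(\gs_F)$), and set $\hat\M:=W(R)_F\otimes_{\gs_F}\M$ with its semilinear Frobenius. For $g\in G$ I will produce $X_g\in\GL_d(W(R)_F)$ with $X_g\equiv I_d\bmod\varphi(\gt_F)$ (hence $X_g=I_d$ when $g\in G_{\ul{\pi}}$, since then $g(A)=A$) solving the $\varphi$-semilinear relation $X_g\,g(A)=A\,\varphi(X_g)$, via the successive approximation $X_g^{(0)}=I_d$, $X_g^{(m+1)}=A\,\varphi(X_g^{(m)})\,g(A)^{-1}$. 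Here $g$ fixes $\gs_F$ up to its action on $u$, so (by Lemma~\ref{lem-z in I plus.}, applicable because $v_F(a_1)>1$ forces $\varpi^2\mid a_1$) the entries of $g(A)-A$ lie in $I_F^{[1]}W(R)_F=\varphi(\gt_F)W(R)_F$; since $\varphi^{m+1}(\gt_F)=\bigl(\prod_{j=0}^{m}\varphi^j(E(u))\bigr)\gt_F$ and the generators $\prod_{j=0}^{m}\varphi^j(E(u))$ tend to $0$ in $W(R)_F$ (this is the convergence of $\lambda=\prod_n\varphi^n(E(u)/c_0)$ in $\gO$ together with $v_F(c_0)=1$, {\em cf.}~\S\ref{GactsOnu}), the sequence converges, the analogous iteration yields $X_g^{-1}$, and the uniqueness of a solution congruent to $I_d$ modulo $\varphi(\gt_F)$ forces the cocycle identity $X_{gh}=X_g\,g(X_h)$, using that $g$ and $\varphi$ commute on $W(R)_F$. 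The resulting $\varphi$-compatible semilinear $G$-action $\hat g$ on $\hat\M$ extends the natural $G_{\ul{\pi}}$-action; transporting it to $T_{\gs}(\M)=\Hom_{\gs_F,\varphi}(\M,W(R)_F)=\Hom_{W(R)_F,\varphi}(\hat\M,W(R)_F)$ by $(g\cdot\psi)(x):=g\bigl(\psi(\hat g^{-1}x)\bigr)$ gives a continuous $\O_F$-linear $G_K$-action refining the $G_{\ul{\pi}}$-action of \S\ref{subsec-3.2}.

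Next I must show $V:=T_{\gs}(\M)[1/p]$, with this action, lies in $\t{Rep}_F^{F\t{-\cris},1}(G)$. Extending scalars along $\hat\M\hookrightarrow B^+_{\cris,F}\otimes_{\gs_F}\M$, equipping $\varphi^*(B^+_{\cris,F}\otimes_{\gs_F}\M)$ with the filtration $\Fil^i=\{x:(1\otimes\varphi)(x)\in E(u)^i(B^+_{\cris,F}\otimes_{\gs_F}\M)\}$, and taking $G_K$-invariants produces a filtered $\varphi$-module $D$ over $F_0$ with $\Fil^2 D_{F_0,K}=0$ (the height-$1$ hypothesis); a Fontaine-type comparison, as in the proof of Proposition~\ref{prop-Dcirs & Vcris}, then identifies $V$ with $V_{\cris,F}(D)$ as $G_K$-representations. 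Since $\dim_F V=\rk_{\O_F}T_{\gs}(\M)=\rk_{\gs_F}\M=\dim_{F_0}D$ by Proposition~\ref{lem-Tgsbasic}, $D$ is weakly admissible, so $V$ is $F$-crystalline with Hodge--Tate weights in $\{0,1\}$. When $\M$ is the Kisin module attached by Theorem~\ref{thm-maintech} to an $F$-crystalline $V_0$, one checks (using Lemma~\ref{lem-phisection 2}, valid as $\varpi^2\mid a_1$, and Corollary~\ref{co-D(M)=D-cris}) that $D\simeq D_{\cris,F}(V_0)$ and that the comparison above refines the $G_{\ul{\pi}}$-equivariant map $\iota$ of Proposition~\ref{prop-Ginftycompatible}, so that $V\simeq V_0$ as $G_K$-representations.

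Finally, faithfulness of $\M\mapsto T_{\gs}(\M)$ follows from Corollary~\ref{co-TM} and the injectivity $\M\hookrightarrow\O_\E\otimes_{\gs_F}\M$; essential surjectivity onto $G_K$-stable $\O_F$-lattices in $\t{Rep}_F^{F\t{-\cris},1}(G)$ follows by applying Theorem~\ref{thm-maintech} and Proposition~\ref{prop-rep-height r} to realize an arbitrary such lattice and then matching $G_K$-actions via the canonical identifications $V\simeq V_{\cris,F}(D_{\cris,F}(V))$; and fullness is obtained as follows: a $G_K$-equivariant map $T_{\gs}(\M')\to T_{\gs}(\M)$ induces a morphism of filtered $\varphi$-modules, hence, by the uniqueness in Lemma~\ref{lem-phisection 2} together with Lemma~\ref{lem-inside}, a $\gO_\alpha$-linear $\varphi$-compatible map carrying the relevant subspaces into one another, which when combined with the underlying \'etale $\varphi$-module morphism over $\O_\E$ and the fact that it sends the lattice $T_{\gs}(\M')$ into $T_{\gs}(\M)$ descends, via $\gs_F[1/p]=\R^b\cap\gO$ exactly as in the proof of Theorem~\ref{thm-maintech}, to a morphism $\M\to\M'$ in $\sfi$. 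This bypasses Proposition~\ref{prop-fullfaith}, whose hypothesis genuinely fails for $r=1$ ({\em cf.}~Example~\ref{ex-cyclotomic2}) but is irrelevant here since $G_K$-equivariance excludes the pathological $G_{\ul{\pi}}$-isomorphisms. I expect the construction and uniqueness of the $G_K$-action on $\hat\M$ (Lemma~\ref{lem-extend Ginfty action}) to be the main obstacle: this is precisely where the monodromy operator of the classical theory is unavailable, where the convergence of the iteration must be established by hand, and where both the standing hypothesis $v_F(a_1)>1$ and the restriction to height $1$ are essential.
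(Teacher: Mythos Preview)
Your overall architecture matches the paper's: reduce to an anti-equivalence with $\t{Rep}_{\O_F}^{F\t{-cris},1}(G)$, build the $G$-action on $W(R)_F\otimes_{\gs_F}\M$ by a successive-approximation $X_g$, and then verify crystallinity and essential surjectivity via the section $\xi_\alpha$. Two points need correction.

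First, a precision issue in the $G$-action. You state the congruence as $X_g\equiv I_d\bmod\varphi(\gt_F)$, but the paper works modulo the finer ideal $\gt_F I^+$. This matters for uniqueness: writing a putative difference as $\gt_F Z$ with $Z\in\Md(I^+)$, one gets $Z=A\varphi(Z)C$ with $C\in\Md(W(R)_F)$, and then a valuation argument on $\bar Z$ in $R$ forces $Z=0$ precisely because the entries of $Z$ have \emph{strictly positive} $v_R$. With your weaker congruence you end up with a factor $\varphi(E(u))/E(u)$ which is not in $W(R)_F$, and the argument stalls. The needed input is exactly the second clause of Lemma~\ref{lem-z in I plus.} (that $(g(u)-u)/\varphi(\gt_F)\in I^+$), which is where $v_F(a_1)>1$ is used; you cite that lemma but only extract the first clause.

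Second, and more seriously, your fullness argument has a gap. You propose to combine the $\O_\E$-map (from \'etale $\varphi$-modules) with an $\gO_\alpha$-map (from $\xi_\alpha$ and the induced map $D(\M)\to D(\M')$) and then descend via $\gs_F[1/p]=\R^b\cap\gO$. But that intersection is between $\R^b$ and $\gO$, not between $\O_\E$ and $\gO_\alpha$; there is no evident way to push the $\gO_\alpha$-map on $\varphi^*\M$ down to an $\gO$-map on $\M$ (the matrices $Y$ of Lemma~\ref{lem-phisection 2} live only in $\GL_d(\gO_\alpha)$), nor do you know that $\gO\otimes_{\gs_F}\M\simeq\CM(D(\M))$ for an \emph{arbitrary} height-$1$ $\M$---this is exactly the non-uniqueness of Remark~\ref{rem-nonunique}. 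The paper sidesteps all of this with a short invariant: after reducing (as in Proposition~\ref{prop-fullfaith}) to the statement ``$\M\subset\M'\subset\O_\E\otimes_{\gs_F}\M$ implies $\M=\M'$'' and passing to $\wedge^d$, it uses Lemma~\ref{lem-fil-revisit}(2), namely that the minimal height of $\wedge^d\M$ equals $\dim_K\Fil^1 D(\M)_{F_0,K}$. Since $T_{\gs}(\M)\simeq T_{\gs}(\M')$ as $G_K$-modules forces $D(\M)\simeq D(\M')$ as filtered $\varphi$-modules, the minimal heights agree and $\M=\M'$. This is the missing idea; it is what replaces, in height $1$, the hypothesis on $\varphi^n(f(u)/u)$ that you correctly note can fail.
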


Using well-known results of Breuil, Kisin, Raynaud, and Tate, one shows as in
\cite[Theorem 2.2.1]{liu-BT} that the $p$-adic Tate module gives an equivalence
between thet category of $F$-Barsotti--Tate groups over $\O_K$
and the category $\t{Rep}_{\O_F} ^{F\t{-cris},1 } (G)$ of $G$-stable $\O_F$-lattices inside $F$-crystalline representations with Hodge-Tate weights in $\{0, 1\}$.
Thus, to prove Theorem \ref{thm-F-BT}
we must construct an (anti)equivalence between $\t{Mod}_{\gs_F}^{\varphi,1}$ and  $\t{Rep}_{\O_F} ^{ F\t{-cris},1 } (G)$. In what follows, we show that for each $\M \in \t{Mod}_{\gs_F}^{\varphi,1}$
the natural $G_{\ul{\pi}}$-action on $T_{\gs} (\M)$ can be functorially extended to
to a $G$-action such that $T_{\gs}(\M) \in \t{Rep}_{\O_F} ^{ F\t{-cris},1 } (G)$.
This construction will provide a contravariant functor $ T_{\gs} : \t{Mod}_{\gs_F}^{\varphi,1}\rightarrow  \t{Rep}_{\O_F} ^{F\t{-cris}, 1 } (G) $ that we will then prove is an (anti)equivalence.

\subsection{A natural $G$-action on $T_{\gs}(\M)$.}\label{subsec-6.1}

Fix a Kisin module $\M$ of height 1.  In this subsection,
we will construct a natural $G$-action on $T_{\gs}(\M)$ which extends the given action
of $G_{\ul{\pi}}$.  The key input to this construction is:

\begin{lemma}\label{lem-extend Ginfty action}
	There exists a unique $W(R)_F$-semilinear
	$G$-action on $W(R)_F \otimes_{\gs_F} \M$ that commutes with $\varphi$ and satisfies
\begin{enumerate}
	\item If $g \in G_{\ul{\pi}}$ and $m \in \M$ then $g(1\otimes m)= 1\otimes m$;
	\item If $ m \in \M$ then  $1\otimes (g(m)-m) \in \gt_FI ^+ (W(R)_F \otimes_{\gs_F}\M )$.
\end{enumerate}
\end{lemma}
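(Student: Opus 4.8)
The plan is to construct the $G$-action on $W(R)_F \otimes_{\gs_F} \M$ by a successive-approximation argument, working modulo increasing powers of the ideal $\gt_F I^+$, and to obtain both existence and uniqueness simultaneously from the contracting nature of the relevant operator. First I would fix a $\gs_F$-basis $\fe_1, \dots, \fe_d$ of $\M$ with matrix of Frobenius $A \in \Md(\gs_F)$, so that $\varphi(\fe) = \fe A$ where $\fe = (\fe_1, \dots, \fe_d)$. Since $\M$ has height $1$, there is a matrix $B \in \Md(\gs_F)$ with $AB = BA = E(u) I_d$. Given $g \in G$, I seek a matrix $X_g \in \GL_d(W(R)_F)$ describing the desired action, $g(\fe) = \fe X_g$, subject to: (i) $X_g = I_d$ when $g \in G_{\ul\pi}$; (ii) $X_g \equiv I_d \bmod \gt_F I^+ W(R)_F$; and (iii) the Frobenius-compatibility relation. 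Writing out the requirement that the $G$-action commute with $\varphi$ gives the cocycle-type equation
\begin{equation*}
	X_g \, \varphi(g(A)) = g(A) \, \varphi(X_g),
\end{equation*}
and since $g(A) \equiv A \bmod \gt_F I^+$ by Lemma \ref{lem-z in I plus.} (here we use $\varpi^2 \mid a_1$, which follows from $v_F(a_1) > 1$), this is a perturbation of the trivial equation $X = \varphi(X)$ with $X = I_d$ a solution when $g(A) = A$.

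The main step is to show the operator governing \eqref{eqn-xi_I exists}-type iteration is a contraction for the $\gt_F I^+$-adic topology. Rearranging the displayed equation using $B$ gives $X_g = g(A)\varphi(X_g)\varphi(g(B))/E(u)$, or after one sees that $g(A)\varphi(g(B)) = g(\varphi(E(u)) \cdot \text{something})$ appropriately, an identity of the form $X_g = I_d + (\text{correction involving } g(A) - A)$. The key observation is that the substitution $X \mapsto g(A)\varphi(X)g(A)^{-1}$ (suitably interpreted via $B$ to avoid inverting $E(u)$ in $W(R)_F$) moves any element congruent to $I_d$ modulo $(\gt_F I^+)^n$ to one congruent to $I_d$ modulo $(\gt_F I^+)^{n+1}$: indeed $\varphi$ sends $\gt_F I^+$ into $\varphi(\gt_F) \varphi(I^+) = E(u)\gt_F \cdot \varphi(I^+) \subset E(u) \gt_F I^+$ since $\varphi(I^+) \subset I^+$, and multiplying by $B/E(u)$ (or rather by $g(B)$ and dividing) exactly cancels the $E(u)$; meanwhile the error term from replacing $A$ by $g(A)$ lies in $\gt_F I^+$ by Lemma \ref{lem-z in I plus.}. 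I would set $X_g^{(0)} := I_d$ and $X_g^{(n+1)} := g(A)\varphi(X_g^{(n)})\varphi(g(B)) E(u)^{-1}$, check inductively that each iterate has entries in $W(R)_F$ (the division by $E(u)$ being legitimate because the numerator is seen to lie in $E(u) W(R)_F$, using that $E(u)$ generates $\Fil^1 W(R)_F$ and the congruences above), that $X_g^{(n+1)} \equiv X_g^{(n)} \bmod (\gt_F I^+)^{n+1}$, and hence that $X_g := \lim_n X_g^{(n)}$ exists in the $(\gt_F I^+)$-adically complete ring $\Md(W(R)_F)$ and solves the equation. Invertibility of $X_g$ follows since $X_g \equiv I_d \bmod \gt_F I^+$ and $\gt_F I^+ \subset \varpi W(R)_F + \Fil^1$, so a determinant/Nakayama argument applies; uniqueness follows because any two solutions differ by $X - X' = \gt_F I^+$-small and the same iteration shows the difference is infinitely divisible, hence zero.

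Finally I would verify that $g \mapsto X_g$ is a cocycle, i.e.\ $X_{gh} = X_g \cdot g(X_h)$, so that the formula $g(\fe) = \fe X_g$ genuinely defines a $W(R)_F$-semilinear action of $G$; this is automatic from uniqueness once one checks that $\fe X_g \, g(X_h)$ satisfies the defining equation for $gh$ (which it does, by $G$-semilinearity of the equation and $G$-stability of the ideal $\gt_F I^+$, the latter being Lemma \ref{lem-stable}). Conditions (1) and (2) then hold by construction: (1) because $X_g = I_d$ when $g$ fixes $u$ and hence $A$, and (2) because $g(m) - m$ for $m = \sum c_i \fe_i$ works out to $\fe(X_g - I_d) c$ with $X_g - I_d \in \gt_F I^+ \Md(W(R)_F)$. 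The main obstacle I anticipate is the careful bookkeeping showing each iterate $X_g^{(n)}$ actually has entries in $W(R)_F$ rather than merely in $W(R)_F[1/E(u)]$ or $W(R)_F[1/p]$ — this requires a precise matching of the $E(u)$ in the denominator against the divisibility produced by $\varphi$ applied to $\gt_F I^+$, exploiting $\varphi(\gt_F) = E(u)\gt_F$ and Lemma \ref{lem-z in I plus.}'s refined statement that $(g(u)-u)/\varphi(\gt_F) \in I^+$. Getting the powers of $\gt_F$ versus $\varphi(\gt_F)$ to line up correctly across the iteration is the delicate point.
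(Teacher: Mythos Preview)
Your overall architecture matches the paper's: fix a basis, reduce to a matrix equation for $X_g$, solve by iteration, and deduce the cocycle condition from uniqueness. Two points need correction.

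First, the commutation equation is misstated. With $\varphi(\fe)=\fe A$ and $g(\fe)=\fe X_g$, the condition $g\varphi=\varphi g$ gives
\[
X_g\, g(A) \;=\; A\,\varphi(X_g),
\]
not $X_g\,\varphi(g(A))=g(A)\,\varphi(X_g)$. Consequently the correct iteration is $X^{(n+1)}=A\,\varphi(X^{(n)})\,g(A)^{-1}$, which unravels to the paper's explicit formula
\[
X_n \;=\; A\,\varphi(A)\cdots\varphi^n(A)\,\varphi^n(g(A^{-1}))\cdots g(A^{-1}).
\]

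Second, and more seriously, the contraction you describe does not hold in the $(\gt_F I^+)$-adic filtration. You claim $X^{(n+1)}\equiv X^{(n)}\bmod (\gt_F I^+)^{n+1}$, but a direct computation shows only $X^{(n+1)}-X^{(n)}\in \gt_F I^+\Md(W(R)_F)$ for every $n$: writing $X^{(n)}-X^{(n-1)}=\gt_F D$ with $D\in\Md(I^+)$, one gets $\varphi(\gt_F D)=E(u)\gt_F\,\varphi(D)$, and the single factor of $E(u)$ is exactly cancelled by $g(B)/(\mu_g E(u))$, so no extra power of $\gt_F I^+$ is gained. (Recall that $I^+\bmod\varpi$ is the maximal ideal $\m_R$ of the non-discrete valuation ring $R$, so in fact $(I^+)^2=I^+$ and $(\gt_F I^+)^n=\gt_F^n I^+$; your filtration does not get finer via powers of $I^+$ alone.) The paper's mechanism for convergence is different: the difference $X_{n+1}-X_n$ is shown to have the form $D_n\cdot\varphi^{n+1}(C_0)\cdot D_n'$ with $C_0\in\Md(I^+)$ fixed and $D_n,D_n'\in\Md(W(R)_F)$, and the point is that $\varphi^{n+1}(C_0)\to 0$ in the \emph{weak} topology on $W(R)_F$ because Frobenius raises elements of $\m_R=I^+\bmod\varpi$ to successively higher $p$-power valuations. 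Likewise, the paper's uniqueness is not an ``infinite divisibility'' argument but a direct reduction modulo $\varpi$ followed by a valuation comparison in $R$: if $\gt_F Z\,g(A)=A\,\varphi(\gt_F Z)$ with $Z\in\Md(I^+)$, then height $1$ gives $Z=A\varphi(Z)C$ for some $C\in\Md(W(R)_F)$, and comparing minimal $v_R$ of entries of $\bar Z$ versus $\bar A\,\varphi(\bar Z)\,\bar C$ forces $\bar Z=0$, hence $Z=0$ by $\varpi$-adic bootstrapping.

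Once you fix the equation and replace the $(\gt_F I^+)$-adic claim with the weak-topology convergence (and the mod-$\varpi$ valuation argument for uniqueness), your proof becomes essentially the paper's.
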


Here, we remind the reader that $\gt_F \in W(R)_F$, constructed in Example \ref{ex-dim1},
satisfies $\varphi(\gt_F)=E(u)\gt_F$ and $\gt_F\not\equiv 0\bmod \varpi$.

\begin{proof}
	Fix an $\gs_F$-basis $\fe_1 , \dots , \fe_d$ of $\M $ and let $A$ be the resulting matrix of Frobenius,
	so $ (\varphi(\fe_1), \dots , \varphi(\fe_d)) = (\fe_1, \dots, \fe_d) A$.
	Supposing that the required $G$-action exists, for any $g \in G$
	we have a matrix $X_g \in \t{M}_d (W(R)_F)$ with
	$(g\fe_1, \dots, g\fe_d)= (\fe_1, \dots, \fe_d) X_g$, and the
	requirement that $g$ and $\varphi$ commute
	is equivalent the matrix equation
\begin{equation}\label{eqn-existence of X_g}
  X_g g(A) = A \varphi (X_g).
\end{equation}
	We claim that for each $g\in G$, equation \eqref{eqn-existence of X_g}
	has a \emph{unique} solution $X_g$ satisfying the condition  $X _g- I_d \in \Md(\gt_F I ^+).$
	Granting this for a moment, it is easy to see that the Lemma follows once we check
	that $g\mapsto X_g$ really does define an action of $G$, which is equivlant to
	the cocycle condition $X_\sigma \sigma(X_\tau)=X_{\sigma\tau}$
	for all $\sigma, \tau\in G$.  But $X_\sigma \sigma(X_\tau)$
	and $X_{\sigma\tau}$ are visibly two solutions to $X\sigma\tau(A)=A\varphi(X)$,
	and the condition $X-I_d\in \Md(\gt_F I ^+)$ holds for $X=X_{\sigma\tau}$
	by our claim and for $X=X_{\sigma}\sigma X_{\tau}$ thanks to Lemma \ref{lem-stable}.
	Thus, the uniqueness aspect of our claim gives $X_\sigma \sigma(X_\tau)=X_{\sigma\tau}$,
	as desired.
	
	It remains to prove our claim.	Let us first dispense with the uniqueness aspect.
	Suppose that for some $g\in G$, equation (\ref{eqn-existence of X_g})
	has two solutions $X_1, X_2$ satisfying $X_i- I_d \in \Md(\gt_F I ^+)$ for $i=1,2$.
	Then their difference is a solution as well, and has the form $X_1-X_2=\gt_F Z$ for some $Z \in \Md (I^+)$.
	Equation (\ref{eqn-existence of X_g}) then takes the shape
	\begin{equation}
		\gt_F Z g(A) = A \varphi (\gt_F Z),\label{Zeqn}
	\end{equation}
	and we will show that this forces $Z=0$, giving uniqueness.
	Writing $\bar Z := Z \bmod \varpi \in \Md(R)$, note that
	it suffices to prove that $\bar Z = 0$: indeed, if $Z=\varpi Z_1$ for some $Z_1\in \Md(W(R)_F)$,
	the necessarily $Z_1\in \Md(I^+)$ is another solution to (\ref{Zeqn}),
	so boot-strapping the argument one gets $Z\in \cap_{n\ge 1} \varpi^n W(R)_F=\{0\}$.
	Now since $\M$ has height 1, there exists a matrix $B\in \Md(\gs_F)$ with $AB=E(u) I_d$.
	On the other hand, we have $\varphi (\gt _F) = E(u) \gt_F$ as noted above,
	so it follows from (\ref{Zeqn}) that there exists
	a matrix $C \in \Md (W(R)_F)$ with $ Z = A \varphi (Z) C$.
	Reducing modulo $\varpi$ gives a matrix equation
	$\bar Z = \bar A \varphi (\bar Z) \bar C$ in $\Md(R)$.
	If $\bar Z \not = 0 $, then there exists an  entry $z$, say, of  $\bar Z$ which has minimal valuation.
	On the other hand, as $Z \in \Md (I ^+)$, we must have $v_R(z) > 0$.
	But the minimal possible valuation of entries in $\bar A \varphi (\bar Z) \bar C$ is $p v_R(z) > v_R(z)$,
	which is a contradiction.  Thus $\bar Z=0$, settling uniqueness.

	Finally, let us prove the existence of $X_g$ solving (\ref{eqn-existence of X_g}) for each $g\in G$.
	Defining
	\begin{equation}
		X_n :=A  \varphi (A) \cdots\varphi ^n (A) \varphi ^n (g(A^{-1})) \cdots \varphi (g(A^{-1}))  g(A^{-1}),
		\label{Xndef}
	\end{equation}
	it suffices to prove the following:
\begin{enumerate}
	\item $X_n \in \Md (W(R)_F)$ for all $n$;
	\item $X_n- I_d \in \Md (\gt_F  I ^+)$ for all $n$;
	\item $X_n$ converges as $n\rightarrow\infty$.
\end{enumerate}
	For (1) and (2), we argue by induction on $n$.
	When $n = 0$, by definition we have $X_0 = A g (A^{-1})= g ( g ^{-1} (A) A^{-1})$.
	On the other hand, thanks to Lemma \ref{lem-z in I plus.}, we may write $g ^{-1} A = A + \varphi (\gt_F ) C$ for some $C \in \Md(I ^+)$,
	which gives
	$$g ^{-1}(A) A^{-1} =  I_d + \varphi (\gt_F) C A^{-1} = I_d +  \gt_F C E(u) A^{-1}= I_d + \gt_F CB$$
	thus proving (1) and (2) in the base case $n=0$.

 Now suppose we have proved $X_n = I_d + \gt _F C_n $ with $C_n \in \Md (I^{+})$, and
	let us show that $X_{n +1}$ satisfies the same equation for some $C_{n+1}\in \Md (I^{+})$.
	Writing $A g(A^{-1}) = I_d + \gt_F  C_0 $ with $C_0 \in \Md (I ^+)$, we have
$$
X_{n +1} = X_n + A  \varphi (A) \cdots\varphi ^n (A)\varphi^{n+1}(\gt_F) \varphi ^{n+1} (C_0)   \varphi ^n (g(A^{-1})) \cdots \varphi (g(A)^{-1})(g(A)^{-1}).
$$
Now $E(u) g(A^{-1}) \in \Md (W(R)_F)$ as $g(E(u)) = \mu_g E(u)$ for some unit $\mu_g \in W(R)_F$,
and we have
$\varphi^{n+1}(\gt_F) = \varphi ^n (E(u)) \cdots \varphi (E(u)) E(u) \gt_F$.
We conclude that the matrix
$$\varphi^{n+1}(\gt_F) \varphi ^n (g(A^{-1})) \cdots \varphi (g(A)^{-1}) g(A)^{-1}$$ lies
in $ \Md (\gt_F W(R)_F)$, which gives $X_{n +1} \in \Md (W(R)_F )$ and $X_{n+1} - I_d \in \Md (\gt_F  I ^+)$
as desired.  By construction, we then have $X_{n+1}-X_n =D_n \cdot \varphi^{n+1}(C_0)\cdot D'_n$,
for some matrices $D_n, D'_n  \in \Md(W(R)_F)$ and since $\varphi ^{n+1} (C_0)$ converges to $0$ in $W(R)_F$,
we conclude that $X_n$ converges, which gives (3) and completes the proof.
\end{proof}


\begin{co}\label{co-G-action} The natural $G_{\ul{\pi}}$-action on $T_{\gs} (\M)$ can be functorially
extended to an action of  $G$. In particular, $T_{\gs}$ extends to a contravariant functor from
$\t{Mod}_{\gs_F}^{\varphi,1}$ to  $\t{Rep}_{\O_F}(G) $.
\end{co}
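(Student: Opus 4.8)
The plan is to deduce Corollary \ref{co-G-action} directly from Lemma \ref{lem-extend Ginfty action}. Recall the definition $T_{\gs}(\M) = \Hom_{\gs_F,\varphi}(\M, \gs_F^{\ur})$, and that by Proposition \ref{lem-Tgsbasic}(2) the inclusion $\gs_F^{\ur}\hookrightarrow W(R)_F$ induces an isomorphism $T_{\gs}(\M)\simeq \Hom_{\gs_F,\varphi}(\M, W(R)_F)$ of $\O_F[G_{\ul{\pi}}]$-modules. The point is that the right-hand description makes a $G$-action visible: given $h\in \Hom_{\gs_F,\varphi}(\M, W(R)_F)$ and $g\in G$, I want to define $(g\cdot h)(m)$ using both the canonical $G$-action on $W(R)_F$ and the $G$-action on $W(R)_F\otimes_{\gs_F}\M$ furnished by Lemma \ref{lem-extend Ginfty action}. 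Concretely, first extend $h$ to a $W(R)_F$-linear, $\varphi$-compatible map $\wh h: W(R)_F\otimes_{\gs_F}\M \to W(R)_F$, then set $(g\cdot h) := g\circ \wh h\circ g^{-1}$, restricted back to $\M = 1\otimes \M$. Since the $G$-action on $W(R)_F\otimes_{\gs_F}\M$ commutes with $\varphi$ (part of Lemma \ref{lem-extend Ginfty action}) and the canonical action on $W(R)_F$ commutes with $\varphi$, the composite $g\circ\wh h\circ g^{-1}$ is again $\varphi$-compatible and $W(R)_F$-linear, so it restricts to an element of $\Hom_{\gs_F,\varphi}(\M, W(R)_F)\simeq T_{\gs}(\M)$.

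Next I would check the axioms of a group action: $(gh)\cdot \cdot = g\cdot(h\cdot\cdot)$ is immediate from associativity of composition, and $e\cdot h = h$ is clear. I would also verify $\O_F$-linearity, which is automatic since the $G$-action on $W(R)_F\otimes_{\gs_F}\M$ is $W(R)_F$-semilinear and in particular $\O_F$-linear while fixing $\O_F\subset W(R)_F$. Then I would confirm that this $G$-action restricts on $G_{\ul{\pi}}$ to the original one: by condition (1) of Lemma \ref{lem-extend Ginfty action}, for $g\in G_{\ul{\pi}}$ the action on $W(R)_F\otimes_{\gs_F}\M$ fixes $1\otimes\M$ and hence agrees with the standard $W(R)_F$-semilinear action via $\iota:\gs_F\hookrightarrow W(R)_F$ (which is $G_{\ul{\pi}}$-equivariant with trivial action on $\gs_F$), so the recipe above reduces to the standard $G_{\ul{\pi}}$-action on $\Hom_{\gs_F,\varphi}(\M, W(R)_F)$. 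Functoriality in $\M$ follows from the uniqueness in Lemma \ref{lem-extend Ginfty action}: any morphism $\gf:\M\to\M'$ in $\t{Mod}_{\gs_F}^{\varphi,1}$ induces $W(R)_F\otimes\gf$, and the two ways of transporting the $G$-action (via $\M$ then $\gf$, or via $\gf$ then $\M'$) both satisfy conditions (1) and (2), hence coincide; dualizing, $T_{\gs}(\gf)$ is $G$-equivariant.

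The remaining point is continuity of the extended $G$-action on $T_{\gs}(\M)$, so that $T_{\gs}(\M)$ lands in $\t{Rep}_{\O_F}(G)$ rather than merely in $\O_F[G]$-modules. Here I would argue that the $G$-action on $W(R)_F\otimes_{\gs_F}\M$ is continuous for the weak topology, using that $g\mapsto X_g$ is built (via \eqref{Xndef}) as a limit of matrices whose entries are continuous functions of $g$ — this in turn rests on the continuity of $g\mapsto g(A)$, which reduces to continuity of $g\mapsto g(u)$ in $W(R)_F$, a standard fact. Passing to the $\varphi$-invariants of $W(R)_F\otimes_{\gs_F}\M$, or rather to $\Hom_{\gs_F,\varphi}(\M,W(R)_F)$ which is a finite free $\O_F$-module sitting $G$-stably inside $\Hom_{W(R)_F}(W(R)_F\otimes_{\gs_F}\M, W(R)_F)$, the induced action is continuous for the $\varpi$-adic topology. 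I expect this continuity verification to be the only genuinely delicate point; everything else is formal manipulation with the uniqueness statement of Lemma \ref{lem-extend Ginfty action}. Thus $T_{\gs}$ extends to a contravariant functor $\t{Mod}_{\gs_F}^{\varphi,1}\to\t{Rep}_{\O_F}(G)$, as claimed; the promotion to $\t{Rep}_{\O_F}^{F\t{-cris},1}(G)$ is a separate matter to be addressed subsequently.
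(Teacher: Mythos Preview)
Your construction of the $G$-action on $T_{\gs}(\M)$ is exactly the paper's: pass to $\Hom_{W(R)_F,\varphi}(W(R)_F\otimes_{\gs_F}\M, W(R)_F)$ via Proposition \ref{lem-Tgsbasic}(2) and conjugate by the $G$-action of Lemma \ref{lem-extend Ginfty action}. The verification that this extends the $G_{\ul{\pi}}$-action is likewise the same.

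Where you diverge is functoriality. The paper proves that $1\otimes\gf:W(R)_F\otimes_{\gs_F}\M\to W(R)_F\otimes_{\gs_F}\M'$ is $G$-equivariant by an explicit matrix computation: writing $Z$ for the matrix of $\gf$, the relation $A'\varphi(Z)=ZA$ (from $\varphi$-compatibility of $\gf$) is used to show directly that $X'_n\,g(Z)=Z\,X_n$ for the approximating matrices \eqref{Xndef}, and passing to the limit gives $X'_g\,g(Z)=Z\,X_g$. Your appeal to uniqueness is in the right spirit but, as phrased, does not quite apply: the uniqueness clause of Lemma \ref{lem-extend Ginfty action} concerns $G$-actions on a \emph{single} $W(R)_F\otimes_{\gs_F}\M$, whereas you need to compare two $g$-semilinear, $\varphi$-compatible maps $W(R)_F\otimes_{\gs_F}\M\to W(R)_F\otimes_{\gs_F}\M'$ (namely $(1\otimes\gf)\circ\rho_g^{\M}$ and $\rho_g^{\M'}\circ(1\otimes\gf)$) that agree modulo $\gt_F I^{+}$. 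The uniqueness \emph{proof} in Lemma \ref{lem-extend Ginfty action} does adapt to this situation---the difference $\gt_F W$ satisfies $Wg(A)=A'E(u)\varphi(W)$ and the same mod-$\varpi$ valuation argument, using height $1$ of $\M'$, forces $W=0$---but you should say this rather than invoke the lemma's statement verbatim. Your continuity remark is a point the paper does not address explicitly.
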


\begin{proof} By Lemma \ref{lem-Tgsbasic} (2), we have an isomorphism of $\O_F[G_{\ul{\pi}}]$-modules
\begin{equation}
	 T_{\gs} (\M) \simeq \Hom_{\gs_F, \varphi} (\M , W(R)_F) \simeq \Hom_{W(R)_F, \varphi} (W(R)_F \otimes_{\gs _F} \M , W(R)_F).  \label{Tisom}
\end{equation}
Thanks to Lemma \ref{lem-extend Ginfty action}, we have an action of $G$ on $W(R)_F \otimes_{\gs _F} \M $
that extends the given action of $G_{\ul{\pi}}$, so the final term in (\ref{Tisom})
has an action of $G$ given by
$$(g \circ h) (x) = g (h(g ^{-1} (x))), \ \forall g \in G, \ \forall h \in \Hom_{W(R)_F, \varphi} (W(R)_F \otimes_{\gs _F} \M , W(R)_F)$$
and one checks easily that this action extends the given action of $G_{\ul{\pi}}$ on $T_{\gs}(\M)$.

It remains to prove that $T_{\gs}$ is a  functor.
So suppose that $h : \M \to \M '$ is a map in $\Mod ^{\varphi, 1}_{\gs_F}$ and let us check that
the induced map $T_{\gs} (h) : T_{\gs} (\M ' ) \to T_{\gs} (\M)$ is indeed a map of $\O_F[G]$-modules.
To do this, using (\ref{Tisom}), it suffices to show that the map
$$1\otimes h : W(R)_F \otimes _{\gs_F} \M \to W(R)_F \otimes_{\gs _F} \M' $$ is $G$-equivariant, that is,
that $(1\otimes h) \circ g = g \circ (1\otimes h) $ for all $g \in G$.
Choose  $\gs_F$-bases $\fe_1, \dots, \fe_d$ and $\fe'_1, \dots, \fe'_{d'}$ of $\M$ and $\M'$, respectively,
and let $A$ and $A'$ be the corresponding matrices of Frobenius,
so $(\varphi(\fe_1), \dots, \varphi(\fe_d)) = (\fe_1, \dots, \fe_d) A$ and
$(\varphi(\fe'_1), \dots, \varphi(\fe'_{d'})) = (\fe'_1, \dots, \fe'_{d'}) A'$.
Letting $Z$ be the $d'\times d$-matrix with entries in $\gs_F$ determined by the relation
$h(\fe_1, \dots , \fe_d) = (\fe'_1, \dots, \fe'_{d'}) Z$,
we seek to prove that
$g \circ (1\otimes h)(\fe_1, \dots, \fe_d) = (1\otimes h) \circ g (\fe_1, \dots, \fe_d)$,
which is equivalent to the matrix equation
$$ X'_g g (Z) =Z X_g ,  $$
where $X_g$ (resp. $X'_g$) is the matrix constructed in the proof of Lemma \ref{lem-extend Ginfty action}
giving the action of $g$ on $\M$ (resp. $\M'$).
By construction, $X_g = \lim \limits_{n \to \infty} X_n$, and similarly for $X_g'$, so it suffices to check that
$X'_n  g (Z) =Z X_n$ for all $n$.  From the definition (\ref{Xndef}) of $X_n$ and $X_n'$, this amounts to the equation
\begin{equation}
A' \cdots \varphi ^n (A') \varphi ^n (g ({A'}^{-1})) \cdots g ({A'} ^{-1})g(Z) = Z A \cdots \varphi ^n (A) \varphi ^{n} (g(A^{-1})) \cdots g(A^{-1}).\label{finalcheck}
\end{equation}
But as $\varphi \circ h = h \circ \varphi$, we have $A' \varphi (Z) =  ZA$, or equivalently,
${A'} ^{-1} Z = \varphi (Z) A^{-1}$, and equation (\ref{finalcheck}) follows easily.
\end{proof}

\subsection{An equivalence of categories}



In this subsection, we prove Theorem \ref{thm-F-BT}.  Let us first recall the setup and some notation.
For $\M \in {\rm Mod} ^{\varphi, 1}_{\gs_F}$, put  $\CM : = \gO \otimes_{\gs _F} \M$ and define a decreasing filtration on $\varphi ^*\CM$ as in \eqref{eqn-definefil}.
Since $\M$ has height 1, we have $\Fil ^i \varphi^* \CM = E(u)^{i -1} \Fil ^1\varphi^* \CM$ for $i \geq 2$.
Recall that $M : = \varphi ^*\M/ u \varphi ^*\M$ and
let us put $D:=D(\M):= \xi_\alpha (M[1/p])$, which is naturally a $\varphi$-stable $F_0$-subspace
of $\gO_\alpha \otimes_{\gs_F} \varphi ^* \M$ via the unique isomorphism $\xi_\alpha$ constructed in Lemma \ref{lem-phisection 2}.
Since ${\xi}_\alpha : \gO_\alpha \otimes _{F_0} D \to \gO_\alpha  \otimes \varphi ^*\CM  $ is an isomorphism, we may identify $\varphi ^* \CM/ E(u) \varphi^*\CM$ with $D_{F_0, K} = K \otimes_{F_0} D$, and we write
$\psi _\pi : \varphi^*\CM \to D_{F_0, K}$ for the natural projection.
We define $\Fil ^i D_{F_0,  K} : = \psi_\pi (\Fil ^i \varphi ^*\CM)$, and note that since
$\Fil ^2 \varphi ^*\CM \subset E(u) \varphi ^* \CM$, we  have $\Fil ^2 D_{F_0, K}= 0$.
In this way we obtain from $\M$ a filtered $\varphi$-module $D= D(\M)$ in $\t{MF}^{\varphi}_{F_0, K}$.

Suppose that $\M'= \gs_F \cdot \fe $  is  a rank-1 Kisin module with $\gs_F$-basis $\fe$.
Then we have $\varphi (\fe)= \gamma E(u) ^m \fe$ with $\gamma \in \gs ^\times_F$  a unit
thanks to Example \ref{ex-dim1}, and we call $m$ the \emph{minimal height} of $\M'$.
\begin{lemma}\label{lem-fil-revisit}
With notation as above,
\begin{enumerate}
	\item  The natural injection
	\begin{equation*}
		\xymatrix{
			{\gO_\alpha  \otimes _\gO \varphi^* \CM} \ar[r]^-{\xi'_\alpha} &
			{\gO _\alpha \otimes _{F_0} D} \ar@{^{(}->}[r] & {\wgs_ 0 \otimes D_{F_0, K}}
			}
	\end{equation*}
	is compatible with filtrations, where $\xi'_\alpha = (\xi_\alpha)^{-1}$.
	
\item Suppose $\M$ has rank $d$. Then the minimal height of $\wedge ^d \M$ is $\dim_{K_0}\Fil ^1 D_{F_0, K}$.

\end{enumerate}
\end{lemma}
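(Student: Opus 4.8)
\textbf{Proof plan for Lemma \ref{lem-fil-revisit}.}

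For part (1), the natural strategy is to reduce to the statement already proved in Lemma \ref{lem-fil}, which handles exactly the case when $\M$ is a Kisin module attached to an $F$-crystalline representation. The point is that the filtration-compatibility of the composite
$$\gO_\alpha \otimes_\gO \varphi^*\CM \xrightarrow{\xi'_\alpha} \gO_\alpha \otimes_{F_0} D \hookrightarrow \wgs_0 \otimes D_{F_0,K}$$
is a purely local statement near $u = \pi$ and does not actually require $\M$ to come from a genuine representation; the proof of Lemma \ref{lem-fil} only used the auxiliary filtration $\wt{\Fil}^i\widehat{\D}_0 := \widehat{\D}_0 \cap E(u)^i(\wgs_0 \otimes_\gO \CM)$ together with the observation that after tensoring with $\wgs_0$ the map $\xi$ becomes an isomorphism (since $\varphi(\lambda)$ is a unit in $\wgs_0$ — but here, for a general $\M$ of height $1$, the relevant fact is simply that $\xi_\alpha$ is an isomorphism over $\gO_\alpha$ by Lemma \ref{lem-phisection 2}, hence over $\wgs_0 \supset \gO_\alpha$). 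So I would set $\widehat{\D}_0 := \wgs_0 \otimes_\gO D$ (viewing $D$ inside $\gO_\alpha \otimes_{\gs_F}\varphi^*\M$), define $\wt{\Fil}^i\widehat{\D}_0 := \widehat{\D}_0 \cap E(u)^i(\wgs_0 \otimes_\gO \varphi^*\CM)$, and check two things: first, that $1 \otimes \xi_\alpha$ carries $\wt{\Fil}^i\widehat{\D}_0$ isomorphically onto $\Fil^i(\wgs_0 \otimes_\gO \varphi^*\CM)$ as defined by \eqref{eqn-definefil}, which is immediate from the definitions since $(1\otimes\varphi)$ intertwines the two; and second, that $\wt{\Fil}^i\widehat{\D}_0$ coincides with the subspace filtration on $\widehat{\D}_0$ coming from $\wgs_0\otimes D_{F_0,K}$, which is exactly the statement that $\Fil^i\varphi^*\CM$ maps onto $\Fil^i D_{F_0,K}$ under $\psi_\pi$ — but that is now a \emph{definition} in our general setting rather than a theorem, so there is nothing to prove. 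Thus part (1) should follow essentially formally once the identifications are set up carefully.

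For part (2), I would compute the minimal height of $\wedge^d\M$ directly in terms of the filtration jump of $D_{F_0,K}$. Write $\fe_1,\dots,\fe_d$ for an $\gs_F$-basis of $\M$ with Frobenius matrix $A$, so $\wedge^d\M = \gs_F\cdot(\fe_1\wedge\cdots\wedge\fe_d)$ with $\varphi$ acting by $\det A$; by Example \ref{ex-dim1} and Weierstrass preparation, $\det A = \gamma E(u)^m$ for a unit $\gamma \in \gs_F^\times$, and $m$ is the minimal height of $\wedge^d\M$. On the other hand, the cokernel of $1\otimes\varphi : \varphi^*\M \to \M$ has $\gs_F$-length (in the appropriate sense) governed by $\det A$, and localizing/completing at $(u-\pi)$ — equivalently passing to $\wgs_0$ — the cokernel of $1\otimes\varphi$ on $\wgs_0\otimes\varphi^*\CM$ is killed by $E(u)^m$ with $m$ minimal. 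Now via the identification $\varphi^*\CM/E(u)\varphi^*\CM \simeq D_{F_0,K}$ and the definition $\Fil^1 D_{F_0,K} = \psi_\pi(\Fil^1\varphi^*\CM)$, the integer $m$ equals the ``total jump'' of the filtration, i.e. $\sum_i \dim_K \gr^i = $ (in the height-$1$ case, where only $\Fil^0$ and $\Fil^1$ appear) exactly $\dim_K \Fil^1 D_{F_0,K}$. Here one must be careful about $K$ versus $K_0$ dimensions: $D_{F_0,K} = K\otimes_{F_0} D$ and $\Fil^1 D_{F_0,K}$ is a $K$-subspace, so $\dim_{K_0}\Fil^1 D_{F_0,K} = [K:F_0]\cdot\dim_K\Fil^1 D_{F_0,K}$ — but this matches because the relevant Hodge number $m$ for $\wedge^d\M$ over $\gs_F$ already counts with the ramification of $E(u)$ built in (that is, $E(u)$ has degree $e_0 = [K:F_0]$ in $u$ but is a single prime). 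I would make this precise by base-changing to $\wgs_0$ (a complete DVR with uniformizer $u-\pi$) and using part (1) together with the two structural facts recorded after Proposition \ref{prop-Kisinconstruct}: namely that $\iota_0$ identifies $\wgs_0\otimes_\gO\CM$ with $\sum_j E(u)^{-j}\wgs_0\otimes_K\Fil^j D_{F_0,K}$, which over the DVR $\wgs_0$ immediately yields the length of the cokernel of $\varphi$ as $\sum_j j\cdot\dim_K\gr^j D_{F_0,K}$, equal to $\dim_K\Fil^1 D_{F_0,K}$ in the height-$1$ case; comparing with $\det A = \gamma E(u)^m$ gives $m = \dim_{K_0}\Fil^1 D_{F_0,K}$ after accounting for the $[K:F_0]$ factor correctly via the degree of $E(u)$.

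The main obstacle I anticipate is purely bookkeeping: keeping straight the three rings $\gO \subset \gO_\alpha \subset \wgs_0$, the three filtrations (the $E(u)$-adic one on $\varphi^*\CM$ from \eqref{eqn-definefil}, the subspace one on $\gO_\alpha\otimes_{F_0}D$, and the tensor-product one on $\wgs_0\otimes D_{F_0,K}$), and the $K_0$ versus $K$ versus $F_0$ coefficient issues in part (2). There is no genuinely deep input needed beyond Lemma \ref{lem-phisection 2} (existence and uniqueness of $\xi_\alpha$, which is where $\varpi^{r+1}\mid a_1$ — here $\varpi^2\mid a_1$ — is used) and the already-established structural facts about $\CM(D)$; the lemma is essentially a consistency check that the \emph{definitions} of the filtration on $D(\M)$ in the general Kisin-module setting agree with what one expects from the local theory at $u=\pi$, mirroring \cite[Lemma 1.2.6 ff.]{kisin2} in the classical case.
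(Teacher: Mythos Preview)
Your approach to part (1) is correct in spirit and reaches the key insight---that filtration compatibility is essentially definitional here since $\Fil^i D_{F_0,K} := \psi_\pi(\Fil^i\varphi^*\CM)$---but it is more roundabout than the paper's argument. The paper first uses the height-$1$ condition to observe $\Fil^i\varphi^*\CM = E(u)^{i-1}\Fil^1\varphi^*\CM$ for $i\ge 2$, so it suffices to treat $\Fil^1$. There the check reduces to the equivalence ``$x\in\Fil^1\varphi^*\CM$ iff $\psi_\pi(x)\in\Fil^1 D_{F_0,K}$,'' and the nontrivial direction follows immediately from $\ker\psi_\pi = E(u)\varphi^*\CM\subset\Fil^1\varphi^*\CM$. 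No auxiliary filtration $\wt\Fil$ or reduction to Lemma~\ref{lem-fil} is needed.

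Your approach to part (2) has a genuine gap. You invoke the two structural facts recorded after Proposition~\ref{prop-Kisinconstruct}, but those concern $\CM(D)$ built from a filtered $\varphi$-module $D$ coming from an $F$-crystalline representation. In the setting of this lemma, $\M$ is an \emph{arbitrary} height-$1$ Kisin module and $D=D(\M)$ is \emph{defined} from $\M$; one does not yet know that $\gO\otimes_{\gs_F}\M\simeq\CM(D(\M))$ (that comes later, inside the proof of Proposition~\ref{prop-crystalline}). So the $\iota_0$-identification you want to cite is simply not available. The paper instead argues directly with a basis: writing $AB=E(u)I_d$, one shows $(e_1,\dots,e_d)B$ is an $\gO$-basis of $\Fil^1\varphi^*\CM$, then chooses a basis $f_1,\dots,f_d$ of $\varphi^*\CM$ adapted to the filtration so that $\Fil^1\varphi^*\CM$ is generated by $f_1,\dots,f_s,E(u)f_{s+1},\dots,E(u)f_d$ with $s=\dim_K\Fil^1 D_{F_0,K}$. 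Comparing the two generating sets gives $B=X\Lambda Y$ with $X,Y\in\GL_d(\gO)$ and $\Lambda=\diag(1,\dots,1,E(u),\dots,E(u))$, whence $\det A=E(u)^s\cdot(\text{unit})$. Your length-over-$\wgs_0$ idea can be made to work without citing those structural facts (compute $\mathrm{length}_{\wgs_0}(\CM/(1\otimes\varphi)\varphi^*\CM)=m$ directly from $\det A$ and relate it to $d-\dim_K(\varphi^*\CM/\Fil^1\varphi^*\CM)$ via $E(u)\CM\subset(1\otimes\varphi)(\varphi^*\CM)$), but as written you have not done this.

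Finally, your attempt to reconcile $\dim_{K_0}$ with $\dim_K$ via a $[K:F_0]$ factor is misguided: the ``$K_0$'' in the statement is a typo for $K$, as the paper's own proof concludes with $s=\dim_K\Fil^1 D_{F_0,K}$. The exponent $m$ in $\det A=\gamma E(u)^m$ is the $E(u)$-adic order, not the $u$-degree, so no ramification correction enters.
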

\begin{proof} Since $\Fil ^i \varphi ^*\CM = E(u)^{i -1}\Fil ^1 \varphi ^*\CM$ for $i \geq 2$,
to prove (1) it suffices to check the given injection is compatible with $\Fil^1$.
As $E(u)$ is a generator of $\Fil ^1 \wgs_0$,
such compatibility is equivalent to the condition
that $ x \in \Fil ^1 \varphi ^* \CM $ if and only if $\psi_{\pi} (x) \in \Fil ^1 D_{F_0,  K}$.
But this is clear as $E(u) \varphi ^* \CM \subset \Fil ^1 \varphi^* \CM$.

We now prove (2).
Fix an $\gs_F$-basis $\fe _1, \dots , \fe _d$ of $\M$ and let $A\in \Md (\gs _F)$
be the corresponding matrix of Frobenius.
Since $\M$ has height $1$, there exists a matrix $B\in \Md(\gs_F)$ with $AB = E(u)I_d$.
Defining $e_i =1 \otimes \fe _i \in \varphi ^* \CM$, we easily check that $\{e_i\} $
is an $\gO$-basis of $\varphi ^*\CM$ with
$(\alpha _1, \dots, \alpha _d):= (e_1, \dots , e_d)B $ an $\gO$-basis of $\Fil ^1 \varphi ^*\CM$.

 Now the inclusion $\varphi ^*\CM / \Fil ^1 \varphi ^* \CM \subset \varphi ^*\CM / E(u) \varphi ^* \CM = D_{F_0, K}$ realizes $\varphi ^*\CM / \Fil ^1 \varphi ^* \CM$ as a $K$-subspace of $D_{F_0,K}$, so there
 exists a basis $f_1, \dots , f_d$ of $\varphi ^* \CM$ with the property that
 $f_1 , \dots f_{s}, E(u) f_{s+1}, \dots , E(u) f_d$ generates $\Fil ^1 \varphi^* \CM$.
 Since $\Fil ^1 D_{F_0, K} = \psi_\pi (\Fil ^1 \varphi ^* \CM  )$ we have $\dim _{K}\Fil ^1 D_{F_0, K}= s$.
 On the other hand, since $\alpha _1, \dots , \alpha_d$ also generates $\Fil ^1 \varphi ^*\CM$,  there exist invertible matrices $X, Y \in \GL_d (\gO)$ with
 $$B = X \Lambda Y\quad\text{for}\quad\Lambda= \diag(1, \dots ,1 , E(u),\dots ,  E(u ))$$
 the diagonal matrix with $s$ many 1's and $d-s$ many $E(u)$'s along the diagonal.
 Thus, $\det B = E(u)^{d-s} \gamma $ for $\gamma \in \gO ^\times$ a unit and $AB = E(u) I _d $
 implies that $\det (A) = E(u) ^s \gamma ^{-1} $.
 It follows that the minimal height of $\wedge ^d \M$ is $s=\dim_K \Fil ^1 D_{F_0, K}$,
 as desired.
\end{proof}

Recall that  we have defined $V_{\gs} (\M):  = F \otimes_{\O_{F} } T_{\gs} (\M )$.

\begin{prop}\label{prop-crystalline}
	With notation as above, we have $V_{\gs} (\M) \simeq V_{\cris, F} (D(\M))$ as $F[G]$-modules.
	In particular, $V_{\gs} (\M)$ is crystalline with Hodge-Tate weights in $\{0, 1\}$.
\end{prop}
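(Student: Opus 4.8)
Here is the strategy I would follow. The aim is to identify $V_{\gs}(\M)$, equipped with the $G$-action furnished by Corollary \ref{co-G-action}, with $V_{\cris,F}(D(\M))$.

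First I would produce a natural $G_{\ul{\pi}}$-equivariant $F$-linear injection $\iota\colon V_{\gs}(\M)\hookrightarrow V_{\cris,F}(D(\M))$, following verbatim the construction in Proposition \ref{prop-Ginftycompatible}: sending $h\in T_{\gs_F}(\M)$ to the map $s\otimes m\mapsto s\varphi(h(m))$ on $\varphi^*\CM$ gives an injection $T_{\gs_F}(\M)\hookrightarrow\Hom_{\gO,\varphi,\Fil}(\varphi^*\CM,B^+_{\cris,F})$ (filtration-compatibility coming from $E(u)\in\Fil^1 W(R)_F$), and precomposing with the isomorphism $\xi_\alpha$ of Lemma \ref{lem-phisection 2}---which is filtration-compatible by Lemma \ref{lem-fil-revisit}(1)---identifies the target with $\Hom_{\gO_\alpha,\varphi,\Fil}(\gO_\alpha\otimes_{F_0}D(\M),B^+_{\cris,F})=V_{\cris,F}(D(\M))$, exactly as in the proof of Proposition \ref{prop-Ginftycompatible}. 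Inverting $p$ gives $\iota$, which is manifestly $G_{\ul{\pi}}$-equivariant.

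The heart of the matter is to upgrade $\iota$ to a $G$-equivariant map. I would base-change $\xi_\alpha$ to $\wt B_\alpha$-- and hence $B^+_{\cris,F}$--coefficients, obtaining a $\varphi$-compatible isomorphism $B^+_{\cris,F}\otimes_{F_0}D(\M)\xrightarrow{\sim}B^+_{\cris,F}\otimes_{\gs_F}\varphi^*\M$, and then show that it intertwines the trivial $G$-action on $D(\M)$ with the $G$-action that Lemma \ref{lem-extend Ginfty action} induces on $B^+_{\cris,F}\otimes_{\gs_F}\varphi^*\M$ (via $\varphi^*$); dualising against $B^+_{\cris,F}$ then yields $G$-equivariance of $\iota$. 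To check the intertwining, transport the latter action along $\xi_\alpha^{-1}$ to a $B^+_{\cris,F}$-semilinear, $\varphi$-commuting action $g'$ on $B^+_{\cris,F}\otimes_{F_0}D(\M)$; because $\xi_\alpha\equiv\mathrm{id}\bmod u$, because $g(u)-u\in E(u)\gt_F I^+$ (Lemma \ref{lem-z in I plus.}, using $\varpi^2\mid a_1$) and because the $G$-action on $W(R)_F\otimes_{\gs_F}\M$ differs from the trivial one by $\gt_F I^+$ (Lemma \ref{lem-extend Ginfty action}(2)), a careful valuation bookkeeping gives $g'(1\otimes d)-1\otimes d\in\gt_F I^+\bigl(B^+_{\cris,F}\otimes_{F_0}D(\M)\bigr)$ for all $d$. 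Writing $Y$ for the matrix of $g'-\mathrm{id}$ in an $F_0$-basis of $D(\M)$ and $\mathcal A\in\GL_d(F_0)$ for that of $\varphi_{D(\M)}$, commutation with $\varphi$ forces $Y=\mathcal A\,\varphi(Y)\,\mathcal A^{-1}$; iterating and using $\varphi^n(\gt_F)=\gt_F\prod_{i=0}^{n-1}\varphi^i(E(u))$ together with $\varphi^n(I^+)\subseteq I^+$ shows $Y$ has entries in $\gt_F\bigl(\prod_{i=0}^{n-1}\varphi^i(E(u))\bigr)I^+B^+_{\cris,F}$ for every $n$. Since $\prod_{i=0}^{n-1}\varphi^i(E(u))=\bigl(\prod_{i=0}^{n-1}\varphi^i(c_0)\bigr)\prod_{i=0}^{n-1}\varphi^i(E(u)/c_0)$ with $v(\varphi^i(c_0))=v(\varpi)$ and $\prod_{i=0}^{n-1}\varphi^i(E(u)/c_0)\to\lambda$, this product converges to $0$, hence $Y=0$ and $g'$ is trivial on $D(\M)$. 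I expect this Frobenius-contraction argument---parallel to the uniqueness parts of Lemmas \ref{lem-extend Ginfty action} and \ref{lem-phisection 2}, together with the bookkeeping of which ideal the various $G$-translates fall into---to be the main obstacle.

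Finally, $V_{\cris,F}(D(\M))$ is automatically $F$-crystalline with $\t{HT}_{\tau_0}\subseteq\{0,1\}$: it is $V_{\cris,F}$ of a filtered $\varphi$-module whose filtration on $D(\M)_{F_0,K}$ has $\Fil^0=D(\M)_{F_0,K}$ and $\Fil^2=0$ (as $\M$ has height $1$; cf. the discussion preceding Lemma \ref{lem-fil-revisit}) and which is supported at $\m_0$ by construction. The $G$-equivariant injection $\iota$ therefore realises $V_{\gs}(\M)$ as a subrepresentation of such a representation. That $\iota$ is in fact an isomorphism follows by comparing dimensions: $\dim_F V_{\gs}(\M)=\t{rank}_{\gs_F}\M=\dim_{F_0}D(\M)$ by Proposition \ref{lem-Tgsbasic} and the definition $D(\M)=\xi_\alpha(M[1/p])$, while $\dim_F V_{\cris,F}(D(\M))\le\dim_{F_0}D(\M)$ by the standard weak-admissibility bound, so injectivity of $\iota$ forces equality. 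Hence $V_{\gs}(\M)\simeq V_{\cris,F}(D(\M))$ as $F[G]$-modules, and in particular is crystalline with Hodge--Tate weights in $\{0,1\}$.
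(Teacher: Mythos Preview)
Your overall architecture matches the paper exactly: construct the $G_{\ul{\pi}}$-equivariant injection $\iota$ as in Proposition~\ref{prop-Ginftycompatible} (using Lemmas~\ref{lem-phisection 2} and~\ref{lem-fil-revisit} in place of Lemma~\ref{lem-fil}), deduce admissibility by comparing dimensions, and then prove $G$-equivariance by showing that the $G$-action of Lemma~\ref{lem-extend Ginfty action} is trivial on $D(\M)\subset B^+_{\cris,F}\otimes_{\gs_F}\varphi^*\M$. Where you diverge from the paper is in \emph{how} you establish that triviality, and your contraction argument has a genuine gap.

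The problem is your vanishing step. You claim that $Y\in\gt_F\bigl(\prod_{i=0}^{n-1}\varphi^i(E(u))\bigr)I^+B^+_{\cris,F}$ for all $n$ forces $Y=0$ because $\prod_i\varphi^i(E(u))\to 0$. But the iteration $Y=\mathcal A\varphi(Y)\mathcal A^{-1}$ involves $\mathcal A^{-1}=\varphi(A_0)^{-1}=\varphi(c_0)^{-1}\varphi(B_0)$, which contributes a factor of $\varpi^{-1}$ at each step; after $n$ iterations you have absorbed $\varpi^{-n}$ into your ``$B^+_{\cris,F}$''. Since $\varpi$ is a \emph{unit} in $B^+_{\cris,F}$, the ideal $\varpi^n B^+_{\cris,F}$ does not shrink, and your factorisation $\prod_i\varphi^i(E(u))=\bigl(\prod_i\varphi^i(c_0)\bigr)\lambda_n$ shows only that this ideal equals $\gt_F\lambda_n I^+B^+_{\cris,F}$, which is independent of $n$. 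So you cannot conclude $Y=0$ this way. (Equivalently: the uniqueness argument in Lemma~\ref{lem-extend Ginfty action} works because one can reduce modulo $\varpi$ in $W(R)_F$; over $B^+_{\cris,F}$ that move is unavailable, and the $\varpi$-denominators coming from $\mathcal A^{-1}$ exactly cancel the $\varpi$-gain from $\varphi^n(\gt_F)$.)

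The paper sidesteps this entirely by a direct telescoping computation. In the bases of Lemmas~\ref{lem-phisection 2} and~\ref{lem-extend Ginfty action}, the matrix of the transported $g$-action on $D(\M)$ is $Y^{-1}\varphi(X_g)g(Y)$, and one simply checks on finite approximations that
\[
\varphi(X_n)\,g(Y_n)=\bigl(\varphi(A)\cdots\varphi^n(A)\varphi^n(g(A^{-1}))\cdots\varphi(g(A^{-1}))\bigr)\bigl(\varphi(g(A))\cdots\varphi^n(g(A))\varphi^n(A_0^{-1})\cdots\varphi(A_0^{-1})\bigr)=Y_n,
\]
using that $g$ fixes $A_0\in\Md(\O_{F_0})$. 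Passing to the limit gives $\varphi(X_g)g(Y)=Y$, hence the action is trivial on $D(\M)$. No convergence estimate beyond those already built into the constructions of $X_g$ and $Y$ is needed. If you want to rescue your approach, you would have to keep the argument inside $W(R)_F$ (with bounded $\varpi$-denominators) and mimic the mod-$\varpi$ valuation trick of Lemma~\ref{lem-extend Ginfty action}; but the explicit cancellation above is both shorter and cleaner.
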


\begin{proof}
The proof of Proposition \ref{prop-Ginftycompatible} carries over {\em mutatis mutandis}
to show that there exists a natural injection of $\O_F[G_{\ul{\pi}}]$-modules
\begin{eqnarray*}
\iota :  T _{\gs}(\M) \inj \Hom_{\gO, \varphi, \Fil} (\varphi ^*\CM, B^+_{\cris, F} ) \inj \Hom_{\gO_\alpha, \varphi, \Fil} (\gO_\alpha \otimes _{\gO} \varphi ^*\CM , B^+_{\cris, F}) \\
\simeq \Hom _{\gO_\alpha, \varphi, \Fil} (\gO_\alpha \otimes_{F_0} D, B^+_{\cris, F} ) \simeq V_{\cris, F} (D),
\end{eqnarray*}
where instead of using Lemma \ref{lem-fil}, we must appeal to
Lemma \ref{lem-phisection 2} and Lemma \ref{lem-fil-revisit} (note that
{\em a priori} we know neither that $\CM (D) \simeq \CM $ nor that $D$ is admissible).
Since $ \dim_{F_0} (D) = \t{rank}_{\gs_F} \M $ and $\iota$ is injective, we conclude that $D$ is admissible. In particular, $V_{\cris, F }(D)$ is crystalline with Hodge-Tate weights in $\{0, 1\}$.

It remains to show that $\iota$ is compatible with the given actions
of $G=G_K$.
By construction, the $G$-action on $T_{\gs}(\M)$ is induced from the identification
$$T_{\gs}(\M) \simeq \Hom_{W(R)_F, \varphi} (W(R)_F  \otimes_{\gs_F}\M, W(R)_F) $$
of (\ref{Tisom}) with $G$-action on the right side that of Lemma \ref{lem-extend Ginfty action}.
We clearly have an isomorphism of $\O_{F}[G]$-modules
$$\Hom_{W(R)_F, \varphi} (W(R)_F  \otimes_{\gs_F}\M, W(R)_F) \simeq \Hom_{W(R)_F, \varphi, \Fil} ( W(R)_F \otimes_ {\gs_F}\varphi ^* \M, W(R)_F ),$$
with the right side an $\O_{F}$-lattice in $\Hom_{B^+_{\cris, F}, \varphi, \Fil} (B^+_{\cris, F} \otimes_{F_0} D, B^+_{\cris, F} )$. Thus, to prove that $\iota$ is $G$-equivariant, we must show that the
$G$-action on $B^+_{\cris, F} \otimes_{\gs_F} \varphi^* \M$ deduced from Lemma \ref{lem-extend Ginfty action}
agrees with the $G$-action on $B^+_{\cris, F} \otimes_{F_0} D$ via the map
$$
\xymatrix{
	{B^+_{\cris, F} \otimes_{F_0} D} \ar[r]^-{\simeq} & {B^+_{\cris, F} \otimes_{\gs_F} \varphi^* \M}
	}
$$
deduced from (\ref{HowDisSub}) (which is an isomorphism thanks
to Lemma \ref{lem-phisection 2}); here, $G$ acts trivially on $D$.
Equivalently, we must show that the $G$-action on $B^+_{\cris, F} \otimes_{\gs_F} \varphi^* \M$
provided by Lemma \ref{lem-extend Ginfty action} restricts to the \emph{trivial} action on $D(\M)$,
viewed as a subspace of this tensor product again via (\ref{HowDisSub}).

As in the proofs of Lemma \ref{lem-phisection 2} and Lemma \ref{lem-extend Ginfty action},
let $\fe_1, \dots, \fe_d$ be an $\gs_F$-basis of $\M$ and put $\{e_i := 1 \otimes \fe_i\}$, which
is then an $\gs_F$-basis of $\varphi ^*\M$.
The proof of Lemma \ref{lem-phisection 2} shows that $(f_1, \dots , f_d) := (e_1, \dots ,  e_d) Y$ is a basis of
$D(\M)$ for $$Y= \lim_{n \to \infty }  \varphi (A) \cdots \varphi ^n (A) \varphi ^n (A^{-1}_0) \cdots \varphi (A_0^{-1}). $$
For any $g \in G$, by the proof of Lemma \ref{lem-extend Ginfty action} we have
$g (e_ 1 , \dots, e_d) = (e_1, \dots ,e _d)\varphi (X_g)$ with
$$\varphi (X_g)  = \lim_{n \to \infty }  \varphi (A) \cdots \varphi ^n (A) \varphi ^n (g(A^{-1}) ) \cdots \varphi (g(A^{-1})).  $$
Thus,
$
g(f_1, \dots, f_d) =  (e_1, \dots, e_d) \varphi (X_g)g(Y)
=  (e_1, \dots , e_d) \lim\limits_{n\to  \infty }  \varphi (X_n) g(Y_n)
$
where
\begin{eqnarray*}
\varphi (X_n) g(Y_n ) & =  & \left ( \varphi (A) \cdots \varphi ^n (A) \varphi ^n (g(A^{-1}) ) \cdots \varphi (g(A^{-1})) \right ) \\ & & \left ( \varphi (g(A)) \cdots \varphi ^n (g(A)) \varphi ^n (g(A^{-1}_0) ) \cdots \varphi (g(A_0^{-1})) \right ) \\ &= & Y_n
\end{eqnarray*}
In other words, $g(f_1, \dots, f_d) = (f_1, \dots , f_d)$, which completes the proof.
\end{proof}

\begin{proof}[Proof of Theorem $\ref{thm-F-BT}$]

Thanks to Proposition \ref{prop-crystalline} and Corollary \ref{co-G-action}, we have
a contravariant functor $T_{\gs} : \t{Mod}^{\varphi,1}_{\gs_F} \rightarrow \t{Rep}^{F\t{-cris},1}_{\O_F} (G)$,
which it remains to prove is fully faithful and essentially surjective.

For full-faithfulness, let us suppose given a map
$h:T_{\gs}(\M) \to T_{\gs} (\M')$ of $\O_F[G]$-modules. Restricting to $G_{\ul{\pi}}$ gives
a map $h\big|_{G_{\ul{\pi}}} :T_{\gs}(\M)|_{G_{\ul{\pi}}} \to T_{\gs _F} (\M')|_{G_{\ul{\pi}}}$,
and by Corollary \ref{co-TM} we obtain a morphism
$\gf : \O_\E \otimes_{\gs_F} \M ' \to \O_E \otimes_{\gs_F} \M $ with $T_{\gs} (\gf) = h|_{G_{\ul{\pi}}}$.
It then suffices to show that $\gf(\M' )\subset \M$.
Arguing as in the proof of Proposition \ref{prop-fullfaith}, it suffices to check that if
$\M \subset \M ' \subset \O_\E \otimes_{\gs_F} \M$ then $\M = \M'$. Applying
$\wedge^d$, we then easily reduce to proving that
$\wedge ^d \M $ and $\wedge ^d \M '$ have the same minimal height.
By our reductions we now have $T_{\gs} (\M) \simeq T_{\gs} (\M ')$ as $\O_F[G]$-modules thanks to Corollary \ref{co-G-action},
so by Proposition \ref{prop-crystalline} we have $D(\M) \simeq D(\M')$ as filtered $\varphi$-modules.
In particular, $\Fil ^1 D(\M)_{F_0, K} \simeq \Fil ^1 D(\M')_{F_0, K}$ and the minimal heights of
$\wedge^d \M$ and $\wedge^d\M'$
are the same by Lemma \ref{lem-fil-revisit} (2).  Thus, $T_{\gs}$ is fully faithful.

We now show that $T_{\gs}$ is essentially surjective. Fix $T \in \t{Rep}_{\O_F} ^{F\t{-\cris}, 1 } (G)$,
put  $V := F \otimes _{\O_F} T$ and let $D:= D_{\cris, F } (V)$ be the corresponding filtered $\varphi$-module.  By Corollary \ref{co-Ginftylattice},  there exists $\M \in \t{Mod}_{\gs_F} ^{\varphi, 1}$ with $\CM(D) \simeq \gO \otimes_{\gs_F}\M$ and
$\iota: T_{\gs} (\M) \xrightarrow{\sim} T|_{G_{\ul{\pi}}}$.
It suffices to show that $\iota$ is compatible with the actions of $G$ on source and target, with
the $G$-action on the source provided by Corollary \ref{co-G-action}.
Using Proposition \ref{prop-crystalline}, we obtain an isomorphism
of $F[G]$-modules $\iota' : V_{\gs} (\M) \simeq V_{\cris, F} (D(\M))$,
which one verifies is compatible with the identification $\iota$.
It therefore remains to check that $ D(\M) \simeq D$ as filtered $\varphi$-modules.
Thanks to Lemma \ref{lem-phisection} and Lemma \ref{lem-phisection 2},
we can identify each of $D$ and $D(\M)$ as the image of the unique $\varphi$-equivariant section to projection
$\varphi ^* \CM(D) \twoheadrightarrow \varphi ^* \CM(D)/ u \varphi^* \CM(D)$, which gives $D\simeq D(\M)$
as $\varphi$-modules.
Thus, it remains to prove that $\Fil ^i D_K = \Fil ^i  D(\M)_K$ for all $i>0$,
or equivalently that $ \Fil ^1 D_{F_0, K} = \Fil ^1 D(\M)_{F_0, K} $.
Thanks to Corollary \ref{co-comp fil},
the projection $ \psi_\pi : \varphi^*\CM (D) \twoheadrightarrow \varphi^* \CM(D)/ E(u) \simeq D_{F_0, K} $ is compatible with filtrations, and one checks using the very definition of $\Fil^1 \varphi^*\CM(D)$
that $x \in \Fil ^1 \varphi ^* \CM (D) $ if and only if $ f_\pi (x) \in \Fil ^1 D_{F_0, K}$.
Thus, $\Fil ^1 D_{F_0, K} = f_\pi (\varphi ^* \CM(D)) = \Fil ^1 D(\M)_{F_0, K}$, as desired.
\end{proof}

\begin{remark}\label{rem-final}
	In the classical situation, let $S$ be the $p$-adic completion of the divided-power envelope
	of the surjection $W(k)[\![u]\!]\twoheadrightarrow \O_K$ sending $u$ to $\pi$.
	If $\M$ is the Kisin module attached to a Barsotti--Tate group $H$ over $\O_K$,
	then one can show (\cite[\S2.2.3]{kisin2}) that there is a functorial isomorphism
	$\varphi^*\M\simeq \mathbb D (H)_S$ where $\mathbb D (H)$ is the Dieudonn\'e crystal attached to $H$,
	which gives a {\em geometric interpretation} of $\M$ in terms of the crystalline cohomology of $H$.
	It is natural to ask for such an interpretation in the general case, for arbitrary $F$ and $f(u)$
	as in the introduction of this paper.  If $F/\Q_p$ is unramified, then this interpretation is provided
	by \cite{CaisLau}.  However, for $F$ ramified over $\Q_p$, things are more subtle as
	it is necessary to use the $\O$-divided powers of Faltings \cite{faO}; this case is work in progress
	of A. Henniges. In any case, we conjecture that one has a natural isomorphism
	$A_{\cris,F} \otimes_{\gs _F} \varphi ^*\M \simeq \mathbb D (H)_{A_\cris}$,
	and expect to be able to prove this conjecture using the ideas of \S \ref{subsec-6.3}.
\end{remark}

\section{Further Questions}

As Theorem \ref{thm-intro-1} and Theorem \ref{thm-intro-2} provide the foundations of the theory of Kisin modules and its variants ({\em e.g.}~the theory of $(\varphi, \hat G)$-modules as in \cite{liu4}),
it is natural to ask to what extent we can extend these theories to accommodate general $F$ and $f(u)$.
In this section, we list some questions that are natural next steps to consider in furthering
the general theory we have laid out in this paper.

\subsection{The case $q= p^s$}
Recall the setup of the introduction: $F/\Q_p$ is an arbitrary finite extension
with uniformizer $\varpi$ and residue field $k_F$ of cardinality $q=p^s$, and $f(u)\in \O_F[\![u]\!]$
is any power series $f(u)=a_1u+\cdots$ with $f(u)\equiv u^q\bmod \varpi$.
We allow $K$ to be any finite extension of $F$ with uniformizer $\pi=\pi_0$ amd residue field $k\supseteq k_F$,
and consider the Frobenius-iterate extension $K_{\ul{\pi}}$ formed by adjoing to $K$
an $f$-compatible system $\{\pi_n\}_n$ in $\ol{K}$ with, $f(\pi_{n})=\pi_{n-1}$.
Such extensions and their associated norm fields are considered in \cite{CaisDavis} and \cite{CaisDavisLubin}.
In this paper, we have restricted ourselves to $q=p$, or what is the same, that $F/\Q_p$ is totally ramified.
Certainly this restriction is unnecessary, and we are confident that the results of this paper can be adapted
to the general case of arbitrary $F$ with minor modifications. In particular,
in this general case, for any $W(k)$-algebra $A$ we set $A_F := A \otimes_{W(k_F)} \O_F$,
and we equip $\gs_F$ with the ``$q$-power Frobenius'' $\varphi_q$ which acts on $F$-trivially, acts on $W(k)$ via $\varphi_{W(k)}^s$ and sends $u$ to $f(u)$. We write $F_0:=K_0F$ and again denote by $E(u)\in \O_{F_0}[u]$
the minimal polynomial of $\pi$ over $F_0$.
Then our theory should be able to be adapted to functorially associate Kisin modules of finite $E$-height
to $\O_F$-lattices in $F$-crystalline $G$-representations.
We note that such a theory is already known in the ``Lubin--Tate'' case that $v(a_1)=v(\varpi)$
and $K\subseteq F_{\ul{\varpi}}$
thanks to the work of Kisin and Ren \cite{Kisin-Ren}, but that there are many details in our general setup that
still need to be checked.

\subsection{Semi-stable representations and Breuil theory}
In the classical situation, Theorem \ref{thm-maintech} includes semi-stable representations.  This fact is one of the key inputs for Breuil's classification of lattices in semistable representations via strongly divisible lattices over $S$ (see \cite{liu3}).   It is therefore natural to ask if   Theorem \ref{thm-maintech} remains valid for semi-stable representations and general $f(u)$. This appears to be a rather nontrivial question, as
the case of semi-stable representations requires a monodromy operator. But for general $F$ and $f$,
we do not even know how to define a reasonable monodromy operator over $\gs_F$
({\em i.e.}, one satisfying $N \varphi = p \varphi N $ as in the classical situation).
New ideas are needed for this direction.

\subsection{Comparison between different choices of $f(u)$}\label{subsec-6.3}
For a fixed $F$-crystalline representation $V$ of $G$ and a fixed uniformizer $\pi \in K$, we may select different $f(u)$. It is then natural to ask for the relationship between the associated Kisin modules attached to $V$
and $f(u)$, as $f$ varies.
Motivated  by \cite{liu-comparison},  we conjecture that all such Kisin modules become isomorphic after base change to $W(R)_F$. Note that if true, this result provides a proof of the conjecture mentioned in Remark \ref{rem-final}, because we know that $A_{\cris,F} \otimes_{\gs _F} \varphi ^*\M \simeq \mathbb D (H) (A_\cris)$ in the classical situation. To prove such comparison results, the key point is to generalize \cite[Theorem 3.2.2]{liu2} to allow general $f(u)$. This is likely relatively straightforward, as we have recovered many results of  \cite{liu2} in \S\ref{sec-3} already.

\subsection{Torsion theory} A major advantage of the theory of Kisin modules is that
it provides a powerful set of tools for dealing with torsion representations.
It is therefore natural to try and rebuild the torsion theory in our general situation, and we hope that such a theory will have some striking applications, for example, to the computation of the reduction of potentially crystalline representations as discussed in the introduction.
One obvious initial goal is to establish the equivalence between torsion Kisin modules of height 1 and finite flat group schemes over $\O_K$; this would be achievable quickly once we know the truth of the conjecture formulated in Remark \ref{rem-final}.

\bibliographystyle{amsalpha}
\bibliography{BIBLIO1}


\end{document}